\newtheorem{lemma}{Lemma}[section]
\newtheorem{problemL}[lemma]{Problem}
\newtheorem{theorem}[lemma]{Theorem}
\newtheorem{corollary}[lemma]{Corollary}
\newtheorem{remark}[lemma]{Remark}
\newtheorem{proposition}[lemma]{Proposition}
\newtheorem{definition}[lemma]{Definition}
\newtheorem{condition}[lemma]{Condition}
\theoremstyle{lemma}
\newtheorem*{note}{Note}
\theoremstyle{definition}
\newcommand \gam{\gamma}
\newcommand \R{\mathbb{R}}
\newcommand \Om{\Omega}
\newcommand \der{\partial}
\newcommand \vphi{\varphi}
\newcommand \mcl{\mathcal}
\newcommand \Gam{\Gamma}
\newcommand \alp{\alpha}
\newcommand \tx{\text}
\newcommand \til{\tilde}
\newcommand \ol{\overline}
\newcommand \eps{\varepsilon}
\newcommand \iu{u_{\infty}}
\newcommand \irho{\rho_{\infty}}
\newcommand \om{\omega}
\newcommand \rx{{\rm{\bf x}}}
\newcommand \Div{\rm{div}}
\newcommand \rhoi{\rho_{\infty}}
\newcommand \ui{u_{\infty}}
\newcommand \lb{P_1}
\newcommand \rb{P_0}
\newcommand \shock{\Gam_{\rm{sh}}}
\newcommand \fshock{f_{\rm sh}}
\newcommand \ipsi{\psi_{\infty}}
\newcommand \tw{\theta_w}
\newcommand \rhosonic{\rho_{\rm{sonic}}}
\newcommand \epsmod{\mathfrak{q}_{\gam}(\eps)}
\numberwithin{equation}{section}
\numberwithin{figure}{section}
\begin{document}

\title[Detached shock past a blunt body]{Detached shock past a blunt body}

\author{Myoungjean Bae}
\address{Myoungjean Bae, Department of Mathematics\\
         POSTECH\\
          San 31, Hyojadong, Namgu, Pohang, Gyungbuk, Republic of Korea 37673;
         Korea Institute for Advanced Study
85 Hoegiro, Dongdaemun-gu,
Seoul 130-722,
Republic of Korea
}
\email{mjbae@postech.ac.kr}

\author{Wei Xiang}
\address{Wei Xiang, Department of Mathematics, City Univerisity of Hong Kong, Hong Kong, China}
\email{weixiang@cityu.edu.hk}

\begin{abstract}
In $\R^2$, a symmetric blunt body $W_b$ is fixed by smoothing out the tip of a symmetric wedge $W_0$ with the half-wedge angle $\theta_w\in (0, \frac{\pi}{2})$. We first show that if a horizontal supersonic flow of uniform state moves toward $W_0$ with a Mach number $M_{\infty}>1$ being sufficiently large depending on $\theta_w$, then the half-wedge angle $\theta_w$ is less than {\emph{the detachment angle}} so that there exist two shock solutions, {\emph{a weak shock solution and a strong shock solution}}, with the shocks being straight and attached to the vertex of the wedge $W_0$. Such shock solutions are given by a shock polar analysis, and they satisfy entropy conditions. The main goal of this work is to construct a detached shock solution of the steady Euler system for inviscid compressible irrotational flow in $\R^2\setminus W_b$. Especially, we seek a shock solution with the far-field state given as the strong shock solution obtained from the shock polar analysis. Furthermore, we prove that the detached shock forms a convex curve around the blunt body $W_b$ if the Mach number of the incoming supersonic flow is sufficiently large, and if the boundary of $W_b$ is convex.
\end{abstract}

\keywords{blunt body, detached shock, Euler system, free boundary problem,  inviscid compressible flow, irrotational,  shock polar, strong shock, transonic shock }
\subjclass[2010]{
35A01, 35J25, 35J62, 35M10, 35Q31, 35R35, 76H05, 76L05, 76N10
}

\date{\today}

\maketitle


\section{Detached shock problem and its motivation}
For a constant $\gam > 1$, called {\emph{an adiabatic exponent}}, we define a function $\mathfrak{h}$ by
\begin{equation}
\label{definition-h}
  \mathfrak{h}(\rho)=
  \frac{\rho^{\gam-1}}{\gam-1}.
\end{equation}
The steady {\emph{Euler system}} of irrotational flow,
\begin{equation}
\label{E-system}
\begin{split}
&\der_{x_1}(\rho u_1)+\der_{x_2}(\rho u_2)=0\\
&\der_{x_1}u_{2}-\der_{x_2}{u_1}=0\\
&\frac 12|{\bf u}|^2+ \mathfrak{h}(\rho)
=B_0 \quad\tx{($B_0>0$: a constant)}
\end{split}
\end{equation}
governs two dimensional isentropic irrotational steady flow of inviscid compressible ideal polytropic gas.
The constant $B_0>0$ is called the {\emph{Bernoulli's constant}}. And, the functions
$(\rho, u_1, u_2)$ represent density, horizontal and vertical components of velocity, respectively.
The velocity ${\bf u}$ is expressed as ${\bf u}=u_1{\hat{\bf e}_1}+u_2{\hat{\bf e}_2}$, where $\hat{\bf e}_i$ is the unit vector in the $x_i$-direction for $i=1, 2$.

The local sound speed $c=c(\rho)$ and the Mach number $M=M(\rho, {\bf u})$ of the system \eqref{E-system} are given by
\begin{equation}
\label{definition-soundsp-mach}
c(\rho)=\rho^{\frac{\gam-1}{2}},\quad M(\rho, {\bf u})=\frac{|{\bf u}|}{c}.
\end{equation}
The flow governed by \eqref{E-system} is called {\emph{subsonic}} if $M(\rho, {\bf u})<1$, {\emph{sonic}} if $M(\rho, {\bf u})=1$, and {\emph{supersonic}} if $M(\rho, {\bf u})>1$.

The goal of this paper is to solve the following problem:\\
{\emph{{\textbf{Problem I.}}
Given an incoming horizontal supersonic flow of uniform state, find an entropy solution of \eqref{E-system} in $\R^2\setminus W_b$ with a detached shock past a blunt body $W_b$ so that the solution satisfies the slip boundary condition
\begin{equation}
\label{slip-bc-Wb}
		{\bf u}\cdot{\bf n}_b=0\quad\tx{on $\der W_b$},
		\end{equation}
where ${\bf n}_b$ is the unit  normal on $\der W_b$ pointing interior to $\R^2\setminus W_b$.
(Fig.\ref{figure-dshock-intro}). }}
\begin{figure}[htp]
\centering
\begin{psfrags}
\psfrag{x}[cc][][0.8][0]{$x_1$}
\psfrag{b}[cc][][0.8][0]{$W_b$}
\psfrag{sh}[cc][][0.8][0]{$\Gam^{\rm det}_{\rm shock}\phantom{aa}$}
\psfrag{ic}[cc][][0.8][0]{$M_{\infty}>1$}
\psfrag{tw}[cc][][0.8][0]{$\theta_w$}
\includegraphics[scale=0.5]{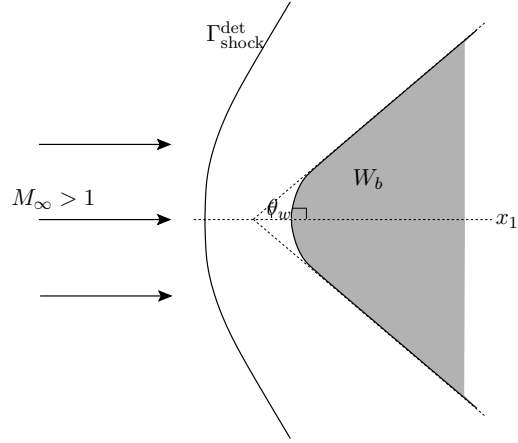}
\caption{Datached shock past a blunt body $W_b$}\label{figure-dshock-intro}
\end{psfrags}
\end{figure}
\smallskip

The rigorous definitions of {\emph{a shock}} and {\emph{an entropy solution}} are given in \S \ref{section-2}.

\begin{figure}[htp]
\centering
\begin{psfrags}
\psfrag{W}[cc][][0.8][0]{$W_0\cap \{x_2\ge 0\}$}
\psfrag{Ub}[cc][][0.8][0]{$\phantom{aaaaaaaa}(\rho_1, u_1, u_1\tan \tw)$}
\psfrag{Uf}[cc][][0.8][0]{$(\irho, \iu,0)$}
\psfrag{s}[cc][][0.8][0]{$\Gam^{\rm att}_{\rm shock}$}
\psfrag{tw}[cc][][0.8][0]{$\theta_w$}
\includegraphics[scale=1.0]{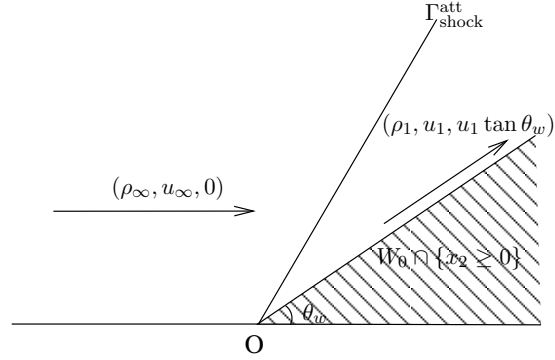}
\caption{Attached oblique shock past $W_0$ in $\R^2\cap\{x_2\ge 0\}$}\label{figure-obshock-intro}
\end{psfrags}
\end{figure}

\smallskip

For a fixed angle $\theta_w\in(0, \frac{\pi}{2})$, let a symmetric wedge $W_0$ in $\R^2$ with the half-angle $\theta_w$ be given by
\begin{equation}
\label{definition-W0}
W_0:=\{{\bf x}=(x_1,x_2)\in \R^2: x_1\ge |x_2|\cot \theta_w\}.
\end{equation}
Given incoming supersonic flow of a uniform state $(\rho, u_1, u_2)=(\irho, \iu,0)$ with $M_{\infty}=\frac{\iu}{\irho^{\frac{\gam-1}{2}}}>1$, it is well known that if $\theta_w$ is less than a critical angle $\theta_{\rm det}$, called {\emph{the detachment angle}}, then {\emph{the shock polar curve}}(\cite{CF, E, K}) yields two entropy solutions of \eqref{E-system} in $\R^2\setminus W_0$, called {\emph{a strong shock soluiton}} and {\emph{a weak shock solution}} so that each of these solutions contains a straight oblique shock $\Gam_{\rm shock}^{\rm att}$ attached to the vertex of the wedge $W_0$ with the downstream state behind the shock given by a uniform state (Fig.\ref{figure-obshock-intro}).
The structural stability of the attached oblique shocks under small perturbations of the incoming supersonic flow,  or under small perturbations of the wedge boundary away from the vertex point are extensively studied in \cite{CCF1, CCJ, ChenFang, F,  FX, YZ} and the references therein. And, the dynamical stability/instability of the attached oblique shock solutions past the symmetric wedge $W_0$ have been studied in \cite{BCF, CF2,  CF3, CFX, Elling2, EL} and the references therein.
\medskip

{\textbf{\emph{Motivation.}}} Our main goal is to find an entropy solution past a blunt body $W_b$(Fig. \ref{figure-dshock-intro}). As we shall see in Definition \ref{definition-bluntbody-ftn}, the boundary $\der W_b$ of the blunt body is given from perturbing the boundary $\der W_0$ of the wedge $W_0$, especially near the vertex $O$. We assume that $\der W_b$ is $C^3$,
 and symmetric about the $x_1$-axis. Therefore, $\der W_b$ is perpendicular to the $x_1$-axis at its $x_1$-intercept. But, we do not assume that $\der W_b$ is necessarily a small perturbation of $\der W_0$. Then it can be checked by using the shock polar and the slip boundary condition \eqref{slip-bc-Wb} that if an entropy solution to \eqref{E-system} contains a shock, then the shock must be completely detached from the blunt body $W_b$, even from its smooth peak. This observation naturally raises a question on the existence of a detached shock solution past the blunt body $W_b$.
\medskip

 In this paper, we give a rigorous proof of the existence of a detached shock solution past $W_b$. Furthermore, we prove that the solution constructed in this work is convex if $M_{\infty}$ is sufficiently large, and if $\der W_b$ is convex (see Fig. \ref{figure-dshock-intro}). To the best of our knowledge, this is the very first rigorous result on a detached shock past a blunt body.

In \cite{CCJ}, the authors proved the existence of a family of detached shock solutions of \eqref{E-system} in a bounded domain past the symmetric wedge $W_0$,where the half-wedge angle $\tw$ of $W_0$ is less than {\emph{the detachment angle}}. In this case, the shock polar analysis yields two entropy solutions with attached straight oblique shocks: {\emph{a strong shock}} and {\emph{a weak shock}} (see Fig. \ref{figure-obshock-intro}). The main focus of \cite{CCJ} is on showing that the value of functional $\int_{\Om} \rho^{\gam}\,d{\rm x}$ evaluated for any detached shock solution is greater than the one evaluated for the attached strong oblique shock solution.
Differently from the case of $W_0$, however, it is impossible to have an attached oblique shock solution past the blunt body $W_b$. See Lemma \ref{lem:3.3} in \S \ref{subsection-intro-to-d-shock} for further detailed analysis. Another different feature of the work in this paper from the one of \cite{CCJ} is that we intend to construct a global-in-space detached shock solution in $\R^2\setminus W_b$. 

\smallskip

For a given $\tw\in(0, \frac{\pi}{2})$, let a symmetric wedge $W_0$ be given by \eqref{definition-W0}. The blunt body $W_b$ considered in this paper is given as a perturbation of $W_0$ as follows:
\begin{definition}
	\label{definition-bluntbody-ftn}
	For a fixed constant $h_0>0$, let a function $b:\R\rightarrow \R$ satisfy the following properties:
\begin{figure}[htp]
\centering
\begin{psfrags}
\psfrag{x}[cc][][0.8][0]{$x_1$}
\psfrag{b}[cc][][0.8][0]{$x_1=b(x_2)\phantom{aaa}$}
\psfrag{Uf}[cc][][0.8][0]{${\bf u}_{\infty}$}
\psfrag{ic}[cc][][0.8][0]{$x_2=s_{\rm st}x_1$}
\psfrag{tw}[cc][][0.8][0]{$\theta_w$}
\includegraphics[scale=0.6]{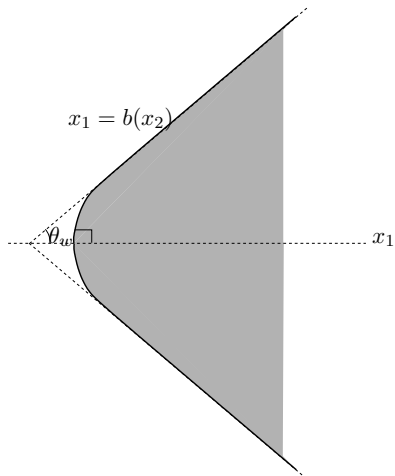}
\caption{Blunt body $W_b$ induced from a symmetric wedge $W_0$}\label{figure-Wb}
\end{psfrags}
\end{figure}
	\begin{itemize}
		\item [($b_1$)] $b(x_2)=b(-x_2)$ for all $x_2\in\R$;
		\item [($b_2$)] $b\in C^3(\R)$;
		\item [($b_3$)] $b'(x_2)>0$ for all $x_2>0$;
		\item [($b_4$)] $b''(x_2)\ge 0$ for all $x_2\ge 0$;
		\item[($b_5$)] $b(x_2)=x_2\cot\theta_w$ for $x_2\ge h_0$.
	\end{itemize}

For such a function $b$, we define a blunt body $W_b$ by
	\begin{equation}\label{definition-Wb}
	W_b:=\{{\rx}=(x_1,x_2)\in \R^2: x_1\ge  b(x_2)\}.
	\end{equation}
For simplicity of notations, we set
\begin{equation}
\label{definition-b0}
  b_0:=b(0).
\end{equation}
\end{definition}

\begin{remark}
\label{remark-b-function}
	It can be directly checked from $(b_1)$--$(b_5)$ stated in Definition \ref{definition-bluntbody-ftn} that the function $b$ satisfies the following properties:
	\begin{align}
	\label{b-property1}
	& b'(0)=0, \\
\label{b-property3}
	& 0\le b'(x_2)\le \cot \theta_w,\quad\tx{and}\quad b(x_2)\ge x_2 \cot \theta_w\quad\tx{for all $x_2\ge 0$},\\
	\label{b-property2}
	& \sum_{k=1}^3 \|\frac{d^k b}{dx_2^k}\|_{C^0(\R)}\le m_b
\end{align}
	for some constant $m_b>0$.
\end{remark}
Let us define
\begin{equation}
\label{definition-half-blunt-body}
W_b^+:=W_b\cap \{x_2 \ge 0\},
\end{equation}
and set
\begin{equation*}
		\Gam_{\rm{sym}}:=\{(x_1,0):x_1\le b_0\},\quad
		\Gam_b:=\{(b(x_2),x_2):x_2>0\}.
		\end{equation*}
The incoming horizontal supersonic flow is given by
\begin{equation}
\label{definition-incoming-flow}
(\rho, {\bf u})=(\irho, \iu,0)\quad \tx{with $\iu>\irho^{\frac{\gam-1}{2}}>0$.}
\end{equation}

Since $W_b$ is symmetric about $x_1$-axis, and the incoming supersonic flow is in a horizontal direction and in a uniform state, we can restate {\emph{Problem I}} as follows:
\quad\\
{\emph{{\textbf{Problem II.}}
For incoming horizontal supersonic flow given by \eqref{definition-incoming-flow}, find an entropy solution of \eqref{E-system} with a detached shock past a blunt body so that the solution satisfies the slip boundary condition
		\begin{equation}
		\label{slip-BC-original}
		{\bf u}\cdot{\bf n}_w=0\quad\tx{on $\Gam_{\rm{sym}}\cup \Gam_b$},
		\end{equation}
where ${\bf n}_w$ is the unit  normal on $\Gam_{\rm{sym}}\cup \Gam_b$ pointing interior to $(\R^2\cap\{x_2>0\})\setminus W_b^+$.
 }}

\smallskip

The rest of the paper is devoted to solving Problem II.
\smallskip

In Section \ref{section-2}, we provide preliminaries and state our main theorem.
In \S \ref{subsec-2-2}, we explain about attached oblique shocks past symmetric wedges in $\R^2$ through the well known {\emph{shock polar analysis}}. Particularly, we show in Lemma \ref{lemma-shock-polar1} that for any symmetric wedge $W_0$ of the half-wedge angle $\theta_w\in(0, \frac{\pi}{2})$, if the Mach number $M_{\infty}$ of incoming horizontal supersonic flow is sufficiently large depending on $\theta_w$, then the half-wedge angle $\theta_w$ is less than {\emph{the detachment angle so that there exist two distinct attached oblique shock solutions (a strong shock solution and a weak shock solution) so that both of which satisfy the entropy condition. This lemma plays an important role in establishing the existence of a detached shock solution past a blunt body. More specifically, Lemma \ref{lemma-shock-polar1} yields a far-field asymptotic state of a detached shock solution past a blunt body $W_b$, where $W_b$ is given from smoothing out the tip of a symmetric wedge $W_0$ with the half-wedge angle $\tw\in(0, \frac{\pi}{2})$. In \S \ref{subsection-intro-to-d-shock}, we explain in detail why a shock past a blunt body $W_b$ must be completely detached from $\der W_b$. 
}}
\smallskip

In Section \ref{section-stream-function}, we formulate a free boundary problem by using a stream function $\psi$ so that any solution to the free boundary problem yields a detached shock solution in $(\R^2\cap\{x_2\ge 0\})\setminus W_b^+$
in the sense of the main theorem. And, the solvability of this free boundary problem is stated as Theorem \ref{main-theorem-psi}.
Then we prove that the solvability of the free boundary problem of the stream function $\psi$ implies the existence of a detached shock solution past the blunt body $W_b$.
Theorem \ref{main-theorem-psi}, the solvability theorem of the free boundary problem of a stream function $\psi$ in $(\R^2\cap\{x_2\ge 0\})\setminus W_b^+$, is proved in Sections \ref{section-fbp-cf} and \ref{section-main-thm-pf}.
In Section \ref{section-fbp-cf}, we introduce a free boundary problem in a cut-off domain of a finite height $L$ from the $x_1$-axis, and prove the existence of a solution to the free boundary problem. In Section \ref{section-main-thm-pf}, we take a strictly increasing sequence $\{L_n\}_{n=1}^{\infty}$ with $\displaystyle{\lim_{n\to \infty}L_n=\infty}$. For each $n\in \mathbb{N}$, we fix a solution to the free boundary problem in the cut-off domain of the height $L_n$ from the $x_1$-axis to get a sequence $\{(f_n, \psi_n)\}_{n=1}^{\infty}$ of solutions to the free boundary problems in cut-off domains. For each $n\in \mathbb{N}$, the graph of $x_1=f_n(x_2)$ for $0\le x_2\le L_n$ represents the detached shock for a cut-off detached shock problem, and $\psi_n$ is the corresponding stream function that satisfies $\nabla^{\perp}\psi=\rho(u_1,u_2)$. In Section \ref{section-main-thm-pf} , we take a subsequence of $\{f_n\}$  so that it converges to a function $f_{*}: \R_+\to \R$ on any bounded interval in an appropriately defined norm. Then, we show that there exists a function $\psi_*$ so that $(f_*, \psi_*)$ yields a solution to the free boundary problem in $(\R^2\cap\{x_2> 0\})\setminus W_b^+$, stated in Section \ref{section-stream-function} thus prove Theorem \ref{main-theorem-psi}.
\smallskip

Finally, we prove the convexity of the detached shock in Section \ref{section-convexity} by employing the method developed in \cite{CFX}. But, one cannot directly apply the result from \cite{CFX} because the detached shock solutions constructed in this paper do not necessarily satisfy all the conditions required to apply \cite[Theorem 2.1]{CFX}. Especially, the detached shock solutions do not necessarily satisfy Condition (A6) in the statement of \cite[Theorem 2.1]{CFX}.
Instead, maximum principles for the speed, horizontal velocity and vertical velocity are additionally taken into account, and a new argument has been developed in Section \ref{section-convexity}. Interestingly, it turns out that the convexity of the blunt body $W_b$, which is stated in Definition \ref{definition-bluntbody-ftn}($b_4$), plays a crucial role in proving the convexity of the detached shock, although it is unclear whether the convexity of the blunt body is a necessary condition for the convexity of the detached shock. This will be investigated further in a future work.

\section{Preliminaries and main theorems}
\label{section-2}

\subsection{Basic definitions}
\label{subsection-prelim}

Let $\Om$ be a domain in $\R^2$. Suppose that a non self-intersecting $C^1$ curve $\mcl{S}$ divides $\Om$ into two open and connected subsets $\Om^-$ and $\Om^+$ so that $\Om^-\cap\Om^+=\emptyset$ and $\Om^-\cup \mcl{S} \cup \Om^+=\Om$.
We define $(\rho, u_1, u_2)\in [L^{\infty}(\Om)\cap C^0(\ol{\Om^{\pm}})\cap C^1_{\rm loc}(\Om^{\pm})]^3 $ to be a {\emph{weak solution of \eqref{E-system}}} with a shock $\mcl{S}$ if the following properties are satisfied:
\begin{itemize}
\item[($s_1$)] $\displaystyle{\rho>0}$ in $\Om$;

\item[($s_2$)] The Bernoulli's law $\displaystyle{\frac 12|{\bf u}|^2+\mathfrak{h}(\rho)=B_0}$ holds in $\Om$;

\item[($s_3$)] For any test function $\phi \in C^{\infty}_{c}(\R^2)$, we have
\begin{equation*}
  \int_{\Om} \rho u_1\phi_{x_1}+\rho u_2\phi_{x_2}\, d\rx=
  \int_{\Om} u_2\phi_{x_1}-u_1\phi_{x_2}\,d\rx=0;
\end{equation*}

\item[($s_4$)] 
For any $\rx_0\in \mcl{S}$, we have
    \begin{equation*}
    \lim_{{\rx\to \rx_0}\atop{\rx\in \Om^-}}{\bf u}\cdot{\bm\nu}\neq 0,\quad \lim_{{\rx\to \rx_0}\atop{\rx\in \Om^+}}{\bf u}\cdot{\bm\nu}\neq 0,\quad \tx{and}\lim_{{\rx\to \rx_0}\atop{\rx\in \Om^-}}{\bf u}\cdot{\bm\nu}\neq \lim_{{\rx\to \rx_0}\atop{\rx\in \Om^+}}{\bf u}\cdot{\bm\nu}
    \end{equation*}
    for a unit normal ${\bm\nu}$ of $\mcl{S}$ at $\rx_0$.
\end{itemize}

In the following, an equivalent definition is given through integration by parts.
\begin{definition}[Shock]
\label{definition-shock-solution}
We define $(\rho, u_1, u_2)\in [L^{\infty}(\Om)\cap C^0(\ol{\Om^{\pm}})\cap C^1_{\rm loc}(\Om^{\pm})]^3 $ to be a {\emph{weak solution to \eqref{E-system}}} with a shock $\mcl{S}$ if the following properties are satisfied:
\begin{itemize}
\item[($S_1$)] The properties {\emph{($s_1$)}} and {\emph{($s_2$)}} hold, and $S$ is $C^1$;

\item[($S_2$)] In $\Om^{\pm}$, $(\rho, u_1, u_2)$ satisfy the equations
\begin{equation*}
  \der_{x_1}(\rho u_1)+\der_{x_2}(\rho u_2)=0,\quad\tx{and}\quad
  \der_{x_1}u_{2}-\der_{x_2}{u_1}=0 \quad \tx{pointwisely;}
\end{equation*}

\item[($S_3$)] For each point ${\rx_*}\in \mcl{S}$, define
		\begin{equation*}
		(\rho^+, u_1^+, u_2^+)(\rx_*):=\lim_{{\rx\to \rx_*}\atop{\rx\in \Om^+}} (\rho, u_1, u_2)(\rx),\quad
		(\rho^-, u_1^-, u_2^-)(\rx_*):=\lim_{{\rx\to \rx_*}\atop{\rx\in \Om^-}} (\rho, u_1, u_2)(\rx).
		\end{equation*}
Then, $(\rho, u_1, u_2)$ satisfy the Rankine-Hugoniot conditions
\begin{equation}
\label{RH-potential}
  \rho^+(u_1^+, u_2^+)\cdot{\bm \nu}=\rho^-(u_1^-, u_2^-)\cdot{\bm \nu},\quad\tx{and}\quad
  (u_1^+, u_2^+)\cdot{\bm \tau}=(u_1^-, u_2^-)\cdot{\bm \tau}\quad\tx{on $\mcl{S}$},
\end{equation}
where ${\bm\nu}$ is a unit normal, and ${\bm \tau}$ is a unit tangential on $\mcl{S}$.

\item[($S_4$)] 
On $\mcl{S}$, we have $(u_1^+, u_2^+)\cdot{\bm \nu}\neq 0$ (or equivalently $(u_1^-, u_2^-)\cdot{\bm \nu}\neq 0$) and $(u_1^+, u_2^+)\cdot{\bm \nu}\neq (u_1^-, u_2^-)\cdot{\bm \nu}$.

\item[($S_5$)] On $\der \Om$, the slip boundary condition
\begin{equation*}
  (u_1, u_2)\cdot {\bf n}=0
\end{equation*}
holds for the inward unit normal vector field ${\bf n}$ on $\der \Om$.
\end{itemize}
\end{definition}

\begin{definition}[Entropy solution]
\label{definition-shock-admsble}

Let $(\rho, u_1, u_2)$ be a weak solution in $\Om$ with a shock $\mcl{S}$ in the sense of Definition \ref{definition-shock-solution}. We call the solution {\emph{an entropy solution}} if
\begin{equation}
\label{condition-admsbl}
  0<\rho^-<\rho^+<\infty,\quad\tx{and}\quad
  0<(u_1^+, u_2^+)\cdot{\bm\nu}<(u_1^-, u_2^-)\cdot{\bm\nu}<\infty
\end{equation}
hold on ${\mcl{S}}$, where the unit normal ${\bm\nu}(=\frac{(u_1^-, u_2^-)-(u_1^+, u_2^+)}{|(u_1^-, u_2^-)-(u_1^+, u_2^+)|})$ on $\mcl{S}$ points interior to $\Om^+$.

\end{definition}

\subsection{Attached oblique shocks past $W_0$}
\label{subsec-2-2}

For fixed constants $\gam > 1$ and $B_0>0$, define
\begin{equation}
\label{definition-Dinfty}
D_{\infty}(\gam, B_0):=\{( \rho_{\infty}, u_{\infty})\in \R^2: \frac 12 u_{\infty}^2+\mathfrak{h}(\rho_{\infty})=B_0,\,\,
\irho>0,\,\,\iu>\irho^{\frac{\gam-1}{2}} \}.
\end{equation}
The set $D_{\infty}(\gam, B_0)$ contains all horizontal supersonic flows with the Bernoulli constant $B_0$.

\begin{lemma}
\label{lemma-shock-polar-general}
For each fixed $(\irho, \iu)\in D_{\infty}(\gam, B_0)$, there exist a unique constant $u_0\in(0, \iu)$ and a unique function $\mathfrak{f}_{\rm polar}:[u_0, \iu]\rightarrow \R_+$ satisfying the following properties:

\begin{itemize}
\item[($p_0$)] $\mathfrak{f}_{\rm polar}\in C^0([u_0, \iu])\cap C^{\infty}((u_0, \iu))$
\item[($p_1$)] $\displaystyle{\mathfrak{f}_{\rm polar}(u_0)=\mathfrak{f}_{\rm polar}(\iu)=0}$;

\item[($p_2$)] $\displaystyle{\mathfrak{f}_{\rm polar}(u)>0}$ for $u_0<u<\iu$;

\item[($p_3$)]If we set
$\displaystyle{
  {\bf u}_{\infty}:=(\iu, 0),\,\,
  {\bf u}:=(u,\mathfrak{f}_{\rm polar}(u)),\,\,
  \rho:=\mathfrak{h}^{-1}(B_0-\frac 12|{\bf u}|^2),}
$
then we have
\begin{equation}
\label{RH1-spolar}
  \rho{\bf u}\cdot {\bm\nu}=\irho {\bf u}_{\infty}\cdot{\bm\nu}\quad\tx{for}\,\,{\bm\nu}=\frac{{\bf u}_{\infty}-{\bf u}}{|{\bf u}_{\infty}-{\bf u}|},
\end{equation}

\begin{equation}
\label{RH2-spolar}
\tx{and}\quad  0<{\bf u}\cdot {\bm\nu}<{\bf u}_{\infty}\cdot{\bm\nu}<\infty\quad\tx{for}\,\,u_0<u<\iu.
\end{equation}

\item[($p_4$)] Any vector ${\bf u}=(u,v)\in \R^2$ satisfying \eqref{RH1-spolar} and \eqref{RH2-spolar} lies either on the curve $v=\mathfrak{f}_{\rm polar}(u)$, or $v=-\mathfrak{f}_{\rm polar}(u)$.

\end{itemize}

Furthermore, such a function $\mathfrak{f}_{\rm polar}$ satisfies
\begin{equation}
\label{spolar-convexity}
  \mathfrak{f}_{\rm polar}''(u)<0\quad\tx{for}\,\,u_0<u<\iu.
\end{equation}
The curve $v=\mathfrak{f}_{\rm polar}(u)$ on the $uv$-plane is called {\emph{the shock polar curve}} of the incoming supersonic flow $(\irho, \iu)$.
\end{lemma}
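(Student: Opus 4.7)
The plan is to reduce $(p_0)$--$(p_4)$ to a single scalar equation in $(u,v)$, solve it via the implicit function theorem, and then establish the concavity \eqref{spolar-convexity} by implicit differentiation.

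First, because $\bm\nu$ is parallel to ${\bf u}_\infty-{\bf u}$, the tangential Rankine--Hugoniot condition $({\bf u}_\infty-{\bf u})\cdot\bm\tau=0$ holds automatically, so irrotationality is absorbed. Writing \eqref{RH1-spolar} in coordinates and eliminating $\rho$ through Bernoulli, the whole system reduces to
\begin{equation*}
F(u,v):=\rho(u,v)\bigl(u\iu-u^2-v^2\bigr)-\irho\iu(\iu-u)=0,\qquad \rho(u,v):=\mathfrak{h}^{-1}\!\bigl(B_0-\tfrac12(u^2+v^2)\bigr),
\end{equation*}
which is even in $v$, accounting for the reflected branch in $(p_4)$. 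On $\{v=0\}$ the equation factors as $F(u,0)=(\iu-u)\bigl(\rho(u,0)\,u-\irho\iu\bigr)$, yielding the trivial root $u=\iu$ and a second root $u_0\in(0,\iu)$ determined by $\rho(u_0,0)\,u_0=\irho\iu$; existence and uniqueness of $u_0$ follow from the supersonic hypothesis $\iu>\irho^{(\gam-1)/2}$ combined with the monotonicity of $u\mapsto\rho(u,0)\,u$ on the subsonic side, and $u_0$ is the downstream speed after a normal shock.

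For each $u\in(u_0,\iu)$ I would invoke the implicit function theorem to obtain $v=\mathfrak{f}_{\rm polar}(u)>0$ with $C^{\infty}$ regularity. The nondegeneracy $\der_v F\neq 0$ on the upper branch reduces, via the Bernoulli identity $d\rho/d(q^2)=-\rho/(2c^2(\rho))$ with $q^2=u^2+v^2$, to strict downstream-subsonicity in the shock-normal direction, which is exactly the Lax inequality in \eqref{RH2-spolar}. An intermediate-value argument bridging the endpoints $(\iu,0)$ and $(u_0,0)$ produces at least one positive root, while the quadratic-in-$v^2$ structure of $F$ (inherited from the mass balance after the Bernoulli substitution) caps the count at one. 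Conditions $(p_0)$--$(p_3)$ follow immediately, and $(p_4)$ is a consequence of the even-in-$v$ symmetry together with this root count.

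The main obstacle is \eqref{spolar-convexity}. My approach is to differentiate the identity $F(u,\mathfrak{f}_{\rm polar}(u))\equiv 0$ twice and simplify using $c^2(\rho)=\rho^{\gam-1}$ together with Bernoulli. I expect the second-derivative identity to reduce to an expression whose sign is dictated by $(\iu-u)>0$ and a Mach-number combination that is negative throughout $(u_0,\iu)$ because of the entropy condition. Near the endpoints, where $\mathfrak{f}_{\rm polar}$ vanishes and the naive calculation degenerates, I would pass to the shock-angle parameterization $\beta\mapsto(u(\beta),v(\beta))$ obtained by inverting the oblique-shock relations, in which the two branches meet smoothly at $\beta=\pi/2$ and $\beta=\arcsin(1/M_\infty)$, and read off one-sided concavity from the leading-order expansions.
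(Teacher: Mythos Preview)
Your overall strategy matches the paper's: the paper does not give a self-contained proof but outlines exactly the same two-step plan you describe---obtain $\mathfrak{f}_{\rm polar}$ from the implicit function theorem applied to the scalar Rankine--Hugoniot relation (referring to \cite[Proposition~2.1]{K} for the existence of the admissible-state set), and then establish the strict concavity \eqref{spolar-convexity} separately (referring to \cite[Theorem~1]{E} or \cite[Appendix~A]{BCF1}). So at the level of architecture there is nothing to correct.

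There is, however, a concrete gap in your uniqueness argument. You write that ``the quadratic-in-$v^2$ structure of $F$ (inherited from the mass balance after the Bernoulli substitution) caps the count at one.'' That is true for the full Euler polar in Courant--Friedrichs, where the algebra closes, but it is \emph{not} true for the isentropic irrotational model treated here: after Bernoulli, $\rho(u,v)=\bigl((\gam-1)(B_0-\tfrac12(u^2+v^2))\bigr)^{1/(\gam-1)}$, and $F$ is not polynomial in $v^2$ except for special $\gam$. The paper flags exactly this distinction (``there is no known explicit formula of $\mathfrak{f}_{\rm polar}$''). A correct route to uniqueness is the monotonicity computation you already set up for nondegeneracy: writing $w=v^2$ one finds $\partial_w F=-\tfrac{\rho}{2c^2}\bigl(u\iu-q^2+2c^2\bigr)$, and on any admissible root $u\iu-q^2={\bf u}\cdot({\bf u}_\infty-{\bf u})=|{\bf u}_\infty-{\bf u}|\,({\bf u}\cdot\bm\nu)>0$, so $\partial_w F<0$ there; combining this with the sign of $F(u,0)$ and the behaviour of $F$ as $q^2\uparrow 2B_0$ gives a clean single-root argument without invoking any quadratic structure. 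Alternatively, parameterize by the shock angle (one admissible downstream state per angle, by the 1D normal-shock analysis) and then show the resulting curve is a graph over $u$; this is closer in spirit to the Keyfitz--Warnecke reference the paper invokes.

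For the concavity \eqref{spolar-convexity}, your plan---implicit differentiation plus an endpoint analysis in the $\beta$-parameterization---is reasonable and is essentially what the cited references carry out, but be aware that the sign determination after the second differentiation is delicate and does not reduce to a one-line Mach-number inequality; the paper defers entirely to \cite{E} and \cite{BCF1} precisely because this computation is lengthy. Your sketch does not yet contain the key identity that controls the sign, so as written this part remains a statement of intent rather than a proof.
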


Note that the shock polar curve discussed in Lemma \ref{lemma-shock-polar-general} is different from the one in \cite[\S121 and \S122]{CF}. The model used in \cite[\S121 and \S122]{CF} is the full Euler system for which the function $ \mathfrak{f}_{\rm polar}$ can be explicitly computed, while we consider irrotational flow in this paper.
To prove Lemma \ref{lemma-shock-polar-general} for the irrotational flow model \eqref{E-system}, a different approach is needed because there is no known explicit formula of $ \mathfrak{f}_{\rm polar}$. In this paper, we briefly demonstrate how to prove Lemma \ref{lemma-shock-polar-general} without details as it is already well known in other references. First, one can refer to \cite[Proposition 2.1]{K} for the existence of the set $\{{\bf u}=(u,v)\}$ that satisfies \eqref{RH1-spolar} and \eqref{RH2-spolar}. Then the implicit function theorem is applied to obtain the unique function $ \mathfrak{f}_{\rm polar}$ that satisfies  the properties ($p_0$)--($p_4$). Finally, by adjusting the proof of \cite[Theorem 1]{E}, one can check that $ \mathfrak{f}_{\rm polar}$ satisfies \eqref{spolar-convexity}. Or, one can refer to \cite[Appendix A]{BCF1} for a detailed proof of \eqref{spolar-convexity}.

\begin{corollary}
\label{corollary-detach-angle}
For each $(\irho, \iu)\in D_{\infty}(\gam, B_0)$, there exists a unique constant $\theta_{\rm det}\in(0, \frac{\pi}{2})$ so that
\begin{itemize}
\item[(i)] if $0\le \theta<\theta_{\rm det}$, then the line $v=u\tan\theta$ intersects $v=\mathfrak{f}_{\rm polar}(u)$ at two distinct points;

\item[(ii)] if $\theta=\theta_{\rm det}$, the line $v=u\tan \theta_{\rm det}$ intersects $v=\mathfrak{f}_{\rm polar}(u)$ at a unique point;

\item[(iii)] if $\theta_{\rm det}<\theta< \frac{\pi}{2}$, then there is no intersection of $v=u\tan\theta$ and $v=\mathfrak{f}_{\rm polar}(u)$.
\end{itemize}
Such $\theta_{\rm det}$ is called the detachment angle.
\end{corollary}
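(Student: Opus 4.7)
The plan is to parametrize intersections of the line $v = u\tan\theta$ with the shock polar curve $v = \mathfrak{f}_{\rm polar}(u)$ via the angle function $\phi(u) := \mathfrak{f}_{\rm polar}(u)/u$ on $[u_0, \iu]$, whose value is the tangent of the angle made by the ray from the origin to $(u, \mathfrak{f}_{\rm polar}(u))$. Since $u_0>0$, property $(p_0)$ gives $\phi\in C^0([u_0,\iu])\cap C^\infty((u_0,\iu))$, $(p_2)$ gives $\phi>0$ on $(u_0,\iu)$, and $(p_1)$ gives $\phi(u_0)=\phi(\iu)=0$. Hence $\phi$ attains its maximum at some interior point, and the natural definition is
\[
\theta_{\rm det}:=\arctan\bigl(\max_{[u_0,\iu]}\phi\bigr)\in(0,\tfrac{\pi}{2}).
\]

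The key step is to show this maximum is attained at a unique point $u_*\in(u_0,\iu)$. A direct computation gives
\[
\phi'(u)=\frac{N(u)}{u^2},\qquad N(u):=u\mathfrak{f}_{\rm polar}'(u)-\mathfrak{f}_{\rm polar}(u),
\]
and $N'(u)=u\mathfrak{f}_{\rm polar}''(u)<0$ on $(u_0,\iu)$ by \eqref{spolar-convexity}, so $N$ is strictly decreasing. The strict concavity of $\mathfrak{f}_{\rm polar}$ combined with $\mathfrak{f}_{\rm polar}(u_0)=\mathfrak{f}_{\rm polar}(\iu)=0$ and $\mathfrak{f}_{\rm polar}>0$ on $(u_0,\iu)$ forces the one-sided derivatives to satisfy $\mathfrak{f}_{\rm polar}'(u_0^+)>0$ and $\mathfrak{f}_{\rm polar}'(\iu^-)<0$ (possibly infinite), so $N$ is positive near $u_0$ and negative near $\iu$. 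Being strictly monotone, $N$ therefore has a unique zero $u_*\in(u_0,\iu)$, which is the unique critical point, and hence the unique maximizer, of $\phi$.

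The three cases then follow from monotonicity. Since $\phi'>0$ on $(u_0,u_*)$ and $\phi'<0$ on $(u_*,\iu)$, $\phi$ strictly increases from $0$ to $\phi(u_*)$ on $[u_0,u_*]$ and strictly decreases back to $0$ on $[u_*,\iu]$. For $0\le\theta<\theta_{\rm det}$ the intermediate value theorem, applied to each monotonic branch, gives exactly one solution of $\phi(u)=\tan\theta$ in each; at $\theta=0$ these two solutions are precisely the endpoints $u_0$ and $\iu$. This yields case (i). For $\theta=\theta_{\rm det}$, the equation $\phi(u)=\tan\theta_{\rm det}$ has the single solution $u_*$ by uniqueness of the maximizer, giving case (ii). For $\theta_{\rm det}<\theta<\frac{\pi}{2}$ we have $\tan\theta>\max_{[u_0,\iu]}\phi$, so no solution exists, giving case (iii). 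Uniqueness of $\theta_{\rm det}$ characterized by (i)--(iii) is then automatic.

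The only subtlety I anticipate is handling the one-sided derivatives of $\mathfrak{f}_{\rm polar}$ at the endpoints $u_0$ and $\iu$ rigorously, since Lemma \ref{lemma-shock-polar-general} only asserts $C^\infty$ regularity on the open interval. However, strict concavity together with the endpoint conditions $\mathfrak{f}_{\rm polar}(u_0)=\mathfrak{f}_{\rm polar}(\iu)=0$ and positivity on $(u_0,\iu)$ force the requisite signs of the one-sided limits (whether finite or infinite), which is all that the sign analysis of $N$ near the endpoints requires.
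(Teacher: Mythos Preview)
Your argument is correct and is precisely the standard way to extract this corollary from the properties $(p_0)$--$(p_2)$ and \eqref{spolar-convexity} of Lemma~\ref{lemma-shock-polar-general}. The paper itself states Corollary~\ref{corollary-detach-angle} without proof, treating it as an immediate consequence of the strict concavity of the shock polar; your write-up simply makes explicit the unimodality argument for $\phi(u)=\mathfrak{f}_{\rm polar}(u)/u$ that is implicit in that claim.
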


For a symmetric wedge $W_0$ given by \eqref{definition-W0} with the half-wedge angle $\tw\in(0, \frac{\pi}{2})$, let us define
\begin{equation}
\label{definition-upper-wedge}
  \R^2_+:=\{{\rx}=(x_1,x_2)\in \R^2: x_2\ge 0 \},\quad\tx{and}\quad W_0^+:=W_0\cap \R^2_+.
\end{equation}
Given $(\irho, \iu)\in D_{\infty}(\gam, B_0)$, suppose that the half-angle $\theta_w$ of the wedge $W_0$ is less than the detachment angle $\theta_{\rm det}$. By Corollary \ref{corollary-detach-angle}, the curve $v=\mathfrak{f}_{\rm polar}(u)$ intersect the line $v=u\tan \theta_w$ at two distinct points ${\bf u}_{\rm st}=(u_{\rm st}, \mathfrak{f}_{\rm polar}(u_{\rm st}))$ and ${\bf u}_{\rm wk}=(u_{\rm wk}, \mathfrak{f}_{\rm polar}(u_{\rm wk}))$. Without loss of generality, let us assume that $0<u_{\rm st}<u_{\rm wk}<\iu$. And, let us set
\begin{equation*}
  \rho_{\rm st}:=\mathfrak{h}^{-1}(B_0-\frac 12|{\bf u}_{\rm st}|^2),\quad\tx{and}\quad
  \rho_{\rm wk}:=\mathfrak{h}^{-1}(B_0-\frac 12|{\bf u}_{\rm wk}|^2).
\end{equation*}

Let $x_1=s_{\rm st}x_2$ be the line perpendicular to the vector ${\bm\nu}_{\rm st}:=\frac{{\bf u}_{\infty}-{\bf u}_{\rm st}}{|{\bf u}_{\infty}-{\bf u}_{\rm st}|}$. For ${\rm x}=(x_1,x_2)\in \R^2_+$, we define a vector valued function
\begin{equation*}
  {\bf u}(\rm x):=\begin{cases}
  {\bf u}_{\infty}\quad\mbox{for $x_1<s_{\rm st}x_2$},\\
  {\bf u}_{\rm st}\quad\mbox{for $x_1>s_{\rm st}x_2$},
  \end{cases}
\end{equation*}
and we set $\rho({\rm x}):=\mathfrak{h}^{-1}(B_0-\frac 12|{\bf u}|^2)$. Then $(\rho, {\bf u})$ is a shock solution in $\R^2_+\setminus W_0^+$ with the shock $\mcl{S}:x_1=s_{\rm st}x_2$ in the sense of Definition \ref{definition-shock-solution}(Fig.\ref{figure-obshock}). And, it satisfies the entropy condition stated in Definition \ref{definition-shock-admsble}.

\begin{figure}[htp]
\centering
\begin{psfrags}
\psfrag{W}[cc][][0.8][0]{$W_0^+$}
\psfrag{Ub}[cc][][0.8][0]{${\bf u}_{\rm st}$}
\psfrag{Uf}[cc][][0.8][0]{${\bf u}_{\infty}$}
\psfrag{s}[cc][][0.8][0]{$x_1=s_{\rm st}x_2$}
\psfrag{tw}[cc][][0.8][0]{$\theta_w$}
\includegraphics[scale=1.0]{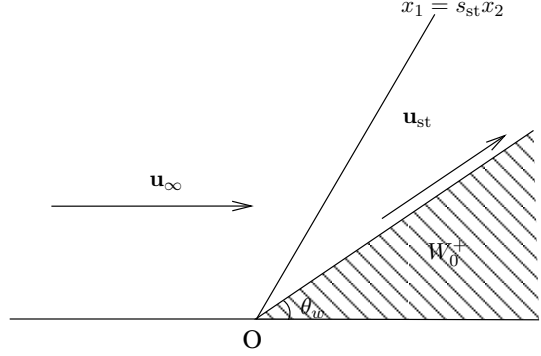}
\caption{Attached oblique shock past $W_0^+$ in $\R^2_+$}\label{figure-obshock}
\end{psfrags}
\end{figure}
By replacing ${\bf u}_{\rm st}$ by ${\bf u}_{\rm wk}$ in the argument right above, another shock solution past $W_0^+$ is given. The shock solution with ${\bf u}_{\rm st}$ is called {\emph{the strong shock solution}}, while the shock solution with ${\bf u}_{\rm wk}$ is called {\emph{the weak shock solution}}.

For $(\irho, \iu)\in D_{\infty}(\gam, B_0)$, set
\begin{equation*}
  M_{\infty}:=\frac{\iu}{\irho^{(\gam-1)/2}}.
\end{equation*}

Next, we show that for any given $\theta_w\in(0, \frac{\pi}{2})$, we have $\theta_w<\theta_{\rm det}$ if $M_{\infty}>1$ is sufficiently large depending on $\theta_w$. Hereafter, we fix the adiabatic exponent $\gam >1$ and the Bernoulli constant $B_0>0$ unless otherwise specified.
\begin{lemma}
	\label{lemma-shock-polar1}
	For any given $\theta_w\in(0, \frac{\pi}{2})$, there exists a small constant $\eps_0\in(0,1)$ depending on $(\gam, B_0, \theta_w)$ so that if $(\irho, \iu)\in D_{\infty}(\gam, B_0)$ satisfies $M_{\infty}\ge \frac{1}{\eps_0}$, then we have
	\begin{equation*}
	\label{inequality-angle-detach}
	\theta_w<\theta_{\rm{det}}.
	\end{equation*}
\end{lemma}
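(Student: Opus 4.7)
The plan is to show that as $M_{\infty}\to\infty$, one can locate a point on the shock polar whose associated deflection angle exceeds any prescribed $\theta_w\in(0,\pi/2)$; by Corollary \ref{corollary-detach-angle} this immediately yields $\theta_w<\theta_{\rm det}$. The natural way to access such a point is to parametrize the shock polar by the shock angle $\beta$ (the angle between the shock line and the incoming horizontal flow) rather than by the downstream horizontal velocity $u$. The tangential Rankine--Hugoniot condition yields $v=(\iu-u)\cot\beta$, and combining the normal mass-flux condition with Bernoulli's law reduces the existence of a downstream state to the single implicit equation
\[
\irho^{\gam-1}\left[\left(\frac{\sin^2\beta}{w-\cos^2\beta}\right)^{\gam-1}-1\right]
=\tfrac{\gam-1}{2}\iu^2\bigl[1-w^2-(1-w)^2\cot^2\beta\bigr],
\]
in the single unknown $w:=u/\iu$. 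For each fixed $\beta\in(0,\pi/2)$ this equation admits a weak-shock root near $w=1$ and a strong-shock root lying in $(\cos^2\beta,1)$ whenever $M_{\infty}$ is large enough.

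Next I would establish the large Mach-number asymptotics of the strong-shock root. On $D_{\infty}(\gam,B_0)$, the condition $M_{\infty}\to\infty$ forces $\irho\to 0$ and $\iu\to q_{\rm max}:=\sqrt{2B_0}$. Along the strong-shock branch, the downstream density $\rho=\irho\sin^2\beta/(w-\cos^2\beta)$ remains bounded away from zero, which compels $w-\cos^2\beta=O(\irho)\to 0$. Passing to the limit in the displayed equation (whose right-hand side equals $(\gam-1)B_0\sin^2\beta$ at $w=\cos^2\beta$) and using $(\gam-1)B_0=\tfrac{\gam-1}{2}q_{\rm max}^2$, I obtain $u\to q_{\rm max}\cos^2\beta$, $v\to q_{\rm max}\sin\beta\cos\beta$, and $\rho\to[(\gam-1)B_0\sin^2\beta]^{1/(\gam-1)}$. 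Hence the deflection angle satisfies $\arctan(v/u)\to\beta$; equivalently, in the $(u,v)$-plane the shock polar converges to the arc $\{q_{\rm max}\cos\beta(\cos\beta,\sin\beta):\beta\in[0,\pi/2]\}$, which sweeps through deflection angles arbitrarily close to $\pi/2$.

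To conclude, given $\theta_w\in(0,\pi/2)$, I would fix $\beta^*:=\tfrac{1}{2}(\theta_w+\tfrac{\pi}{2})\in(\theta_w,\pi/2)$. By the previous step, there exists $\eps_0=\eps_0(\gam,B_0,\theta_w)\in(0,1)$ such that whenever $M_{\infty}\ge 1/\eps_0$, the strong-shock state at shock angle $\beta^*$ produces a downstream point $(u^*,v^*)=(u^*,\mathfrak{f}_{\rm polar}(u^*))$ with $v^*/u^*>\tan\theta_w$. Since the polar also passes through $(\iu,0)$ where $v/u=0<\tan\theta_w$, the continuity of $\mathfrak{f}_{\rm polar}$ (property $(p_0)$) and the intermediate value theorem yield a $\bar u\in(u^*,\iu)$ with $\mathfrak{f}_{\rm polar}(\bar u)=\bar u\tan\theta_w$. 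Corollary \ref{corollary-detach-angle} then gives $\theta_w<\theta_{\rm det}$, as required.

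The main obstacle is the degenerate limit in the second step, because the left-hand side of the implicit equation blows up as $w\to(\cos^2\beta)^+$ at exactly the expected location of the strong-shock root, so a direct implicit function theorem argument is not available when $\irho\to 0$. I would circumvent this by introducing the rescaled variable $s:=\sin^2\beta/(w-\cos^2\beta)$ (so that $\rho=\irho\,s$), which converts the equation into $\irho^{\gam-1}(s^{\gam-1}-1)=\tfrac{\gam-1}{2}\iu^2\,G(\cos^2\beta+\sin^2\beta/s,\beta)$ with right-hand side smooth in $1/s$ and bounded; the strong-shock root corresponds to $s\to\infty$ at the prescribed rate $s\sim[(\gam-1)B_0\sin^2\beta]^{1/(\gam-1)}/\irho$, which is compatible since $\rho=\irho s$ then stays bounded and positive. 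Uniformity in $\beta$ on a small neighbourhood of $\beta^*$ follows from the strict monotonicity in $w$ of the left-hand side of the original equation on $(\cos^2\beta,1)$.
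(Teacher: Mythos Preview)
Your argument is correct and reaches the conclusion, but the route is genuinely different from the paper's. The paper fixes the deflection angle $\theta_w$ from the outset, writes the Rankine--Hugoniot/Bernoulli system for the downstream state $u(1,\tan\theta_w)$ together with the shock slope $s$, and applies the implicit function theorem around the degenerate limit $\eps=0$; at $\eps=0$ the system has two explicit solutions (a normal-shock solution with $u=0$ and a solution on the semicircle $|{\bf u}-(\sqrt{2B_0}/2,0)|=\sqrt{2B_0}/2$), and the IFT perturbs both to small $\eps>0$, so one obtains two distinct intersections of $v=u\tan\theta_w$ with the polar directly. You instead fix a shock angle $\beta^*>\theta_w$, show that the associated downstream state converges (after your rescaling $\rho=\irho s$) to $q_{\max}\cos\beta^*(\cos\beta^*,\sin\beta^*)$ with deflection angle $\beta^*>\theta_w$, and then invoke Corollary~\ref{corollary-detach-angle}. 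Two remarks: (i) what you call the ``strong-shock root'' is simply the unique nontrivial root in $(\cos^2\beta,1)$ (the root at $w=1$ is the trivial no-shock state, not a weak shock), and your limiting point actually lies on the same semicircle that the paper identifies as the weak-shock limit---this mislabelling is harmless for the lemma; (ii) the paper's parametrization by $\theta_w$ rather than $\beta$ pays off immediately afterwards, since it yields the strong-shock state $(\rho_{\rm st}^\eps,u_{\rm st}^\eps,s_{\rm st}^\eps)$ at the \emph{given} wedge angle together with the quantitative estimate of Lemma~\ref{lemma-shock-polar2}, which drives the entire construction of the detached shock. Your approach proves Lemma~\ref{lemma-shock-polar1} cleanly but would require a separate argument to recover those asymptotics.
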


	\begin{proof} The proof is divided into four steps.
		
		{\textbf{1}.} For each $(\irho, \iu)\in D_{\infty}(\gam, B_0)$, let us set
		$
		\eps:=\frac{1}{M_{\infty}}.
		$
Then, $(\irho, \iu)$ can be represented in terms of $\eps$ as follows:
		\begin{equation}
		\label{definition-incoming-epsilon}
\begin{split}
&\iu=\sqrt{\frac{(\gam-1)B_0}{\frac{\gam-1}{2}+\eps^2}},\\
&\irho=\mathfrak{h}^{-1}\left(B_0-\frac{(\gam-1)B_0}{(\gam-1)+2\eps^2}\right)
=
\eps^{\frac{2}{\gam-1}}
\left(\frac{(\gam-1)B_0}{\frac{\gam-1}{2}+\eps^2}\right)^{\frac{1}{\gam-1}}.
\end{split}
		\end{equation}
		Set
		\begin{equation*}
		  \kappa_w:=\tan\theta_w.
		\end{equation*}
		To prove Lemma \ref{inequality-angle-detach}, we need to show that the following nonlinear system for $(\rho, u, s)$
		\begin{equation}
		\label{RH-spolar-Prelim}
		\begin{cases}
		\rho u(1,\kappa_w)\cdot (1, -s)=\rhoi(\ui,0)\cdot (1,-s)\\
		u(1, \kappa_w)\cdot (s, 1)=(\ui, 0)\cdot (s,1)\\
		\frac 12 u^2(1+\kappa_w^2)+\mathfrak{h}(\rho)=B_0
		\end{cases}
		\end{equation}
has two distinct solutions $(\rho, u, s)\in (\irho, \infty)\times (0, \iu) \times (0, \infty)$ so that if we set ${\bf u}=u(1,\kappa_w)$ and ${\bm\nu}=(1,-s)$, then the inequality $0<{\bf u}\cdot {\bm\nu}<{\bf u}_{\infty}\cdot {\bm\nu}$ hold provided that $\eps>0$ is sufficiently small.
Note that ${\bm\nu}$ represents a normal vector of an attached oblique shock: $x_1=sx_2$.

By substituting the expressions given in \eqref{definition-incoming-epsilon} into \eqref{RH-spolar-Prelim}, we get
		\begin{equation}\label{RH-spolar}
		\begin{cases}
		R_1(\rho, u, s,\eps):=\rho u(s\kappa_w-1)
		+ \mathfrak{h}^{-1}\left(B_0-\frac{(\gam-1)B_0}{(\gam-1)+2\eps^2}\right)
\sqrt{\frac{(\gam-1)B_0}{\frac{\gam-1}{2}+\eps^2}}=0\\
		R_2(\rho, u, s,\eps):=u(s+\kappa_w)-s\sqrt{\frac{(\gam-1)B_0}{\frac{\gam-1}{2}+\eps^2}}=0\\
		R_3(\rho, u, s,\eps):=\frac 12 u^2(1+\kappa_w^2)+\mathfrak{h}(\rho)-B_0=0
		\end{cases}.
		\end{equation}

		\medskip
		
		{\textbf{2}.}
		We first solve \eqref{RH-spolar} when $\eps=0(\Rightarrow \irho=0, \iu=\sqrt{2B_0})$. By substituting $\eps=0$ into \eqref{RH-spolar}, we get
		\begin{equation}
		\label{RH-spolar-epsilon-zero}
		\begin{cases}
		\rho u(s\kappa_w-1)=0\\
		u(s+\kappa_w)- s\sqrt{2B_0}=0\\
		\frac 12 u^2(1+\kappa_w^2)+\mathfrak{h}(\rho)-B_0=0
		\end{cases}.
		\end{equation}
		
The first equation in \eqref{RH-spolar-epsilon-zero} holds if either one of the following three cases holds:
		\[\mbox{(i) $\rho=0$},\qquad\mbox{ (ii) $u=0$},\qquad\mbox{ (iii) $s=\frac{1}{\kappa_w}$.}
		\]
We consider these three cases separately.
		\begin{figure}[htp]
			\centering
			\begin{psfrags}
				\psfrag{v}[cc][][1.0][0]{$v$}
				\psfrag{u}[cc][][1.0][0]{$u$}
				\psfrag{np}[cc][][1.0][0]{$\rho=0$}
				\psfrag{w}[cc][][1.0][0]{$s=\frac{1}{\kappa_w}$}
				\psfrag{s}[cc][][1.0][0]{$u=0$}
				\psfrag{ui}[cc][][1.0][0]{$\sqrt{2B_0}$}
				\psfrag{uc}[cc][][1.0][0]{$u_{\rm{sonic}}$}
				\psfrag{tw}[cc][][1.0][0]{$\theta_w$}
				\includegraphics[scale=1.4]{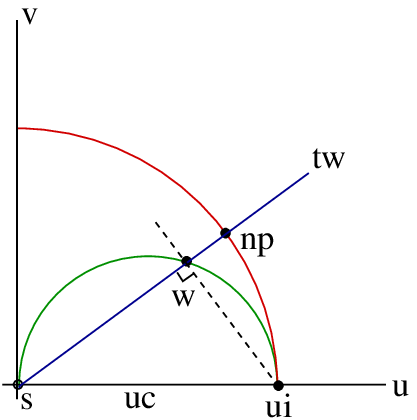}
				\caption{Shock polar for $M_{\infty}=\infty$($\eps=0$)}\label{figure2}
			\end{psfrags}
		\end{figure}

		(Case 1) If $\rho=0$, the last equation in \eqref{RH-spolar-epsilon-zero} becomes $u^2(1+\kappa_w^2)=2B_0$, which implies that the vector ${\bf u}=u(1,\kappa_w)$ lies on the circle of radius $\sqrt{2B_0}$ with the center at the origin (Figure \ref{figure2}). We rewrite the second equation in \eqref{RH-spolar-epsilon-zero} as $u^2(s+\kappa_w)^2=2B_0s^2$ then substitute the expression $u^2=\frac{2B_0}{1+\kappa_w^2}$ into this equation to get
		$$
		\left(\frac{1}{s}\right)^2\kappa_w+2\frac{1}{s}-\kappa_w=0,
		$$
		which yields
		\begin{equation*}
		\frac{1}{s}=\frac{-1\pm \sqrt{1+\kappa_w^2}}{\kappa_w}.
		\end{equation*}
		Then we have $u=\frac{\pm\sqrt{2B_0}}{\sqrt{1+\kappa_w^2}}$. And, this implies that
\begin{equation*}
  u(1,\kappa_w)\cdot(1,-s)=-\sqrt{2B_0}<0,\quad
  \iu(1,0)\cdot (1,-s)=\sqrt{2B_0}>0
  \Rightarrow {\bf u}\cdot{\bm\nu}<0<{\bf u}_{\infty}\cdot{\bm\nu}.
\end{equation*}
Since the signs of ${\bf u}\cdot{\bm\nu}$ and ${\bf u}_{\infty}\cdot{\bm\nu}$ are opposite, we exclude this case from our consideration for the rest of the proof.

		\medskip
		
		(Case 2) If $u=0$, then we obtain from the last two equations in \eqref{RH-spolar-epsilon-zero} that
		\begin{equation}
		\label{downstream-case2}
		(s, \rho)=(0,\mathfrak{h}^{-1}(B_0))=:(0,\rho^{(1)}).
		\end{equation}
Since the function $\mathfrak{h}$ given by \eqref{definition-h} is strictly increasing, we have $\rho^{(1)}>\irho(=0)$. Since $s=0$, the shock solution corresponding to \eqref{downstream-case2} is a normal shock solution, and it corresponds to the origin in the shock polar for $M_{\infty}=\infty$ given in Figure \ref{figure2}.

		\medskip
		(Case 3) If $s=\frac{1}{\kappa_w}$, then the last two equations in \eqref{RH-spolar-epsilon-zero} yield
		\begin{equation}
		\label{downstream-case3}
		(\rho, u)=\left(\mathfrak{h}^{-1}
\left(\frac{B_0\kappa_w^2}{1+\kappa_w^2}\right), \frac{\sqrt{2B_0}}{1+\kappa_w^2}\right):=(\rho^{(2)}, u^{(2)}).
		\end{equation}
Note that $\rho^{(2)}>\irho(=0)$.
The vector ${\bf u}=u(1,\kappa_w)$ satisfies
		\begin{equation*}
		|{\bf u}-(\frac{\sqrt{2B_0}}{2},0)|=\frac{\sqrt{2B_0}}{2}.
		\end{equation*}
This implies that the upper semi-circle $\{|{\bf u}-(\frac{\sqrt{2B_0}}{2},0)|=\frac{\sqrt{2B_0}}{2}:v\ge 0\}$ in Figure \ref{figure2} yields the solution to \eqref{RH-spolar-epsilon-zero} for the case of $s=\frac{1}{\kappa_w}$ for each $\theta_w\in(0, \frac{\pi}{2})$ when $\eps=0$.

\medskip

{\textbf{3}.} We directly differentiate ${\bf R}=(R_1, R_2, R_3)^{T}$, given by \eqref{RH-spolar}, to get
		\begin{equation*}
		D_{(\rho, u, s)}{\bf R}(\rho, u, s, \eps)
		=\begin{pmatrix}
		u(s\kappa_w-1) & \rho(s\kappa_w-1) & \rho u\kappa_w\\
		0 & s+\kappa_w & u-\sqrt{\frac{(\gam-1)B_0}{\frac{\gam-1}{2}+\eps^2}}\\
		\rho^{\gam-2} & u(1+\kappa_w^2) & 0
		\end{pmatrix},
		\end{equation*}
		from which we obtain that
		\begin{equation}
		\label{spolar-nonzero-jacobian}
		\det D_{(\rho, u, s)}{\bf R}(\rho, u, s, 0)=
		\begin{cases}
		(\rho^{(1)})^{\gam-1}\sqrt{\frac{(\gam-1)B_0}{\frac{\gam-1}{2}}}\neq 0, & \mbox{at $(\rho, u, s)=(\rho^{(1)}, 0, 0)$ }, \\
		-(\rho^{(2)})^{\gam-1}u^{(2)}(1+\kappa_w^2)\neq 0, & \mbox{at $(\rho, u, s)=(\rho^{(2)}, u^{(2)}, \frac{1}{\kappa_w})$}.
		\end{cases}
		\end{equation}
Then the implicit function theorem implies that if $\eps$ is sufficiently small depending on $(\gam, B_0, \tw)$, then the system \eqref{RH-spolar} has two solutions $(\rho^{(j)}_{\eps}, u^{(j)}_{\eps}, s^{(j)}_{\eps} )$ for $j=1,2$ with $(\rho^{(1)}_{\eps}, u^{(1)}_{\eps}, s^{(1)}_{\eps} )$ being close to $(\rho^{(1)}, 0, 0)$, and $(\rho^{(2)}_{\eps}, u^{(2)}_{\eps}, s^{(2)}_{\eps} )$ being close to $(\rho^{(2)}, u^{(2)}, \frac{1}{\kappa_w})$, respectively. Since each $(\rho^{(j)}_{\eps}, u^{(j)}_{\eps}, s^{(j)}_{\eps} )$ for $j=1,2$ varies continuously depending on $\eps$, we have $s_{\eps}^{(1)}<s_{\eps}^{(2)}$ for $\eps$ sufficiently small, thus $(\rho^{(1)}_{\eps}, u^{(1)}_{\eps}, s^{(1)}_{\eps} )\neq (\rho^{(2)}_{\eps}, u^{(2)}_{\eps}, s^{(2)}_{\eps} )$ holds.
\medskip

{\textbf{4.}}
By \eqref{definition-incoming-epsilon} and the implicit function theorem, $\irho$ and $\rho^{(j)}_{\eps}$ for $j=1,2$ vary depending continuously  on $\eps$. Therefore, we have $\rho^{(j)}_{\eps}-\irho>0$ for $j=1,2$ and $u_{\eps}^{(2)}>0$ if $\eps$ is sufficiently small because $\rho^{(j)}-\irho>0$ and $u^{(2)}>0$ hold when $\eps=0$. Since $s^{(1)}=0$, we have $s_{\eps}^{(1)}\kappa_w-1<0$ for $\eps$ sufficiently small. Then we solve the equation $R_1(\rho^{(1)}_{\eps}, u^{(1)}_{\eps}, s^{(1)}_{\eps}, \eps)=0$ for $u_{\eps}^{(1)}$, and use $s_{\eps}^{(1)}\kappa_w-1<0$ to obtain that $u_{\eps}^{(1)}>0$ for $\eps>0$ sufficiently small.

To conclude that $\theta_w<\theta_{\rm{det}}$, it remains to check that each $(\rho^{(j)}_{\eps}, u^{(j)}_{\eps}, s^{(j)}_{\eps} )$ for $j=1,2$ satisfies the entropy condition \eqref{condition-admsbl}.
According to \eqref{RH-spolar-Prelim} and \eqref{RH-spolar}, the equations $R_k(\rho^{(j)}_{\eps}, u^{(j)}_{\eps}, s^{(j)}_{\eps}, \eps)=0$ for $k=1,2$ yield
\begin{equation*}
  \rho_{\eps}^{(j)}{\bf u}^{(j)}\cdot {\bm\nu}^{(j)}=\irho {\bf u}_{\infty}\cdot {\bm\nu}^{(j)}\quad\tx{for $j=1,2$}
\end{equation*}
where ${\bf u}^{(j)}$ and ${\bm\nu}^{(j)}$ are given by
\begin{equation*}
 {\bf u}^{(j)}=u_{\eps}^{(j)}(1,\kappa_w),\quad {\bm\nu}^{(j)}:=
 \frac{(1,-s_{\eps}^{(j)})}{\sqrt{1+(s_{\eps}^{(j)})^2}}.
\end{equation*}
Then it follows from $\rho^{(j)}_{\eps}>\irho$ and ${\bf u}_{\infty}\cdot {\bm\nu}^{(j)}>0$ that
\begin{equation}
\label{entp-cond-check}
  0<{\bf u}^{(j)}\cdot {\bm\nu}^{(j)}<{\bf u}_{\infty}\cdot {\bm\nu}^{(j)} \quad\tx{for $j=1,2$}.
\end{equation}
The equation $R_2(\rho^{(1)}_{\eps}, u^{(1)}_{\eps}, s^{(1)}_{\eps}, \eps)=0$ implies that ${\bf u}_{\infty}-{\bf u}^{(j)}$ is parallel to $(1, -s_{\eps}^{(j)})$. Then it follows from \eqref{entp-cond-check} that ${\bm\nu}^{(j)}=\frac{{\bf u}_{\infty}-{\bf u}^{(j)}}{|{\bf u}_{\infty}-{\bf u}^{(j)}|}$ for $j=1,2$. So we conclude that each $(\rho^{(j)}_{\eps}, u^{(j)}_{\eps}, s^{(j)}_{\eps} )$ for $j=1,2$ satisfies the entropy condition \eqref{condition-admsbl}. This completes the proof.

\end{proof}

\begin{remark}
	\label{remark-weakshock-strongshock}
For fixed $\theta_w\in (0, \frac{\pi}{2})$, let the constant $\eps_0>0 $ be from Lemma \ref{lemma-shock-polar1}. For $(\irho, \iu)\in D_{\infty}(\gam, B_0)$, let us set
\begin{equation*}
  \eps:=\frac{1}{M_{\infty}}.
\end{equation*}
For each $\eps\in(0, \eps_0]$, let $(\rho^{(1)}_{\eps}, u^{(1)}_{\eps}, s^{(1)}_{\eps})$ and $(\rho^{(2)}_{\eps}, u^{(2)}_{\eps}, s^{(2)}_{\eps})$ be given from Step 3 in the proof of Lemma \ref{lemma-shock-polar1}. Since $u^{(1)}<u^{(2)}$ when $\eps=0$, we can choose a constant $\eps_1\in(0,\eps_0]$ so that whenever $\eps\in(0,\eps_1]$, we have
\begin{equation*}
  0<u^{(1)}_{\eps}<u^{(2)}_{\eps}<\ui
\end{equation*}
for $\ui$ given by \eqref{definition-incoming-epsilon}. Therefore, $(\rho^{(1)}_{\eps}, u^{(1)}_{\eps}, s^{(1)}_{\eps})$ corresponds to the strong shock, while $(\rho^{(2)}_{\eps}, u^{(2)}_{\eps}, s^{(2)}_{\eps})$ corresponds to the weak shock. Hereafter, we let $(\rho_{\rm st}^{\eps}, u_{\rm st}^{\eps}, s_{\rm st}^{\eps})$ denote $(\rho^{(1)}_{\eps}, u^{(1)}_{\eps}, s^{(1)}_{\eps})$ for each $\eps\in(0, \eps_1]$.

\end{remark}

\subsection{Detached shock past the blunt body $W_b$}
\label{subsection-intro-to-d-shock}

Fix $\gam >1$ and $B_0>0$. Given constants $\theta_w\in(0, \frac{\pi}{2})$ and $h_0>0$, let a function $b:\R\rightarrow \R$ be given by Definition \ref{definition-bluntbody-ftn}. Let $W_b^+$ be given by \eqref{definition-half-blunt-body}, and set
\begin{equation*}
  \R^2_+:=\{{\rx}=(x_1, x_2)\in \R^2: x_2>0\}.
\end{equation*}

It is our goal to construct an entropy solution of \eqref{E-system} in $\R^2_+\setminus W_b^+$ with a shock for an incoming state $(\irho, \iu)\in D_{\infty}(\gam, B_0)$(Fig. \ref{figure-dshock-unbdd}).

\begin{figure}[htp]
	\centering
	\begin{psfrags}
		\psfrag{sh}[cc][][0.8][0]{$\Gam_{{\rm{sh}}}$}
		\psfrag{O}[cc][][0.8][0]{$0$}
		\psfrag{om}[cc][][0.8][0]{$\Om$}
		\psfrag{Uf}[cc][][0.8][0]{$M_{\infty}=\frac{1}{\eps}$}
		\includegraphics[scale=1.0]{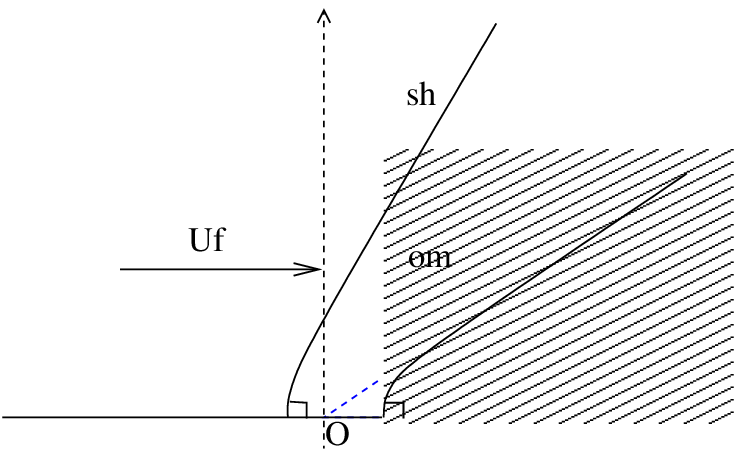}
		\caption{Detached shock past $W_b^+$}\label{figure-dshock-unbdd}
	\end{psfrags}
\end{figure}
\begin{problemL}
  \label{problem-full-system}
  Find a weak solution $(\rho, u_1, u_2)$ to \eqref{E-system} in $\R^2_+\setminus W_b^+$ with a shock
  \begin{equation*}
  \shock=\{x_1=\fshock(x_2):x_2\ge 0\}
  \end{equation*}
  for a $C^1$ function $\fshock: \R_+\rightarrow \R$ so that the solution uniformly converges to a piecewise constant state as $|{\rm x}|\rightarrow \infty$.
\end{problemL}

\begin{lemma}\label{lem:3.3}
For a given incoming state $(\irho, \iu)\in D_{\infty}(\gam, B_0)$, if $(\rho, {\bf u})$ is an entropy solution of \eqref{E-system} in $\R^2_+\setminus W_b^+$ with a shock $\mcl{S}$ given as a $C^1$ curve, then $\mcl{S}$ cannot be attached to the boundary of $W_b^+$.

\begin{proof}
Let $(\rho, {\bf u})$ be a weak solution to \eqref{E-system} in $\R^2_+\setminus W_b^+$ with a shock $\mcl{S}$, and define two domains $\Om^-$ and $\Om^+$ by
\begin{equation*}
  \Om^-:=\{{\rx}\in \R^2_+\setminus W_b^+:
  (\rho, {\bf u})(\rx)=(\irho, \iu, 0)\},\quad
  \Om^+:=(\R^2_+\setminus W_b^+)\setminus \ol{\Om^-}.
\end{equation*}

Suppose that $\mcl{S}$ is attached to the boundary of $W_b^+$ at a point $P_0$.
\smallskip

{\emph{Case 1. $P_0=(b(x_2^*), x_2^*)$ for some $x_2^*>0$}}
\smallskip

Let ${\bm\nu}_0$ be the unit normal vector of $\mcl{S}$ at $P_0$ pointing interior to $\Om^-$ (see Fig.\ref{figure-ashock1}). If ${\bm\nu}_0\not\perp \Gam_b$ at $P_0$, then there exists a point $Q=(b(\hat{x}_2), \hat{x}_2)$ with $\hat{x}_2>0$, and a small constant $r>0$ such that
\begin{equation*}
   (B_r(Q)\setminus W_b^+)\subset \Om^-.
\end{equation*}
\begin{figure}[htp]
	\centering
	\begin{psfrags}
		\psfrag{wb}[cc][][0.8][0]{$W_b^+$}
		\psfrag{us}[cc][][0.8][0]{$\Om^-$}
		\psfrag{b}[cc][][0.8][0]{$B_r(Q)$}
\psfrag{ds}[cc][][0.8][0]{$\Om^+$}
\psfrag{p}[cc][][0.8][0]{$P_0$}
\psfrag{q}[cc][][0.8][0]{$Q$}
		\includegraphics[scale=0.8]{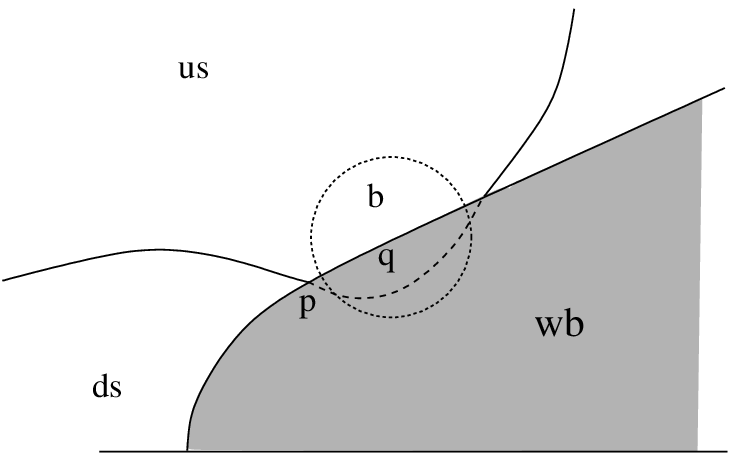}
\caption{}\label{figure-ashock1}
		\end{psfrags}
\end{figure}
Then, by continuation, we have $(\rho, {\bf u})(Q)=(\irho, {\bf u}_{\infty})$.
We define a vector field ${\bf n}(x_2):=(-1, b'(x_2))$ for $x_2>0$.
By Definition \ref{definition-bluntbody-ftn}, ${\bf n}_0:=(-1,b'(\hat{x}_2))$ is a normal vector of $\der W_b$ at $Q$, which points interior to $\R^2_+\setminus W_b$. And, we have ${\bf u}\cdot {\bf n_0}=-\iu\neq 0$ at $Q$ so the slip boundary condition, stated in ($S_5$) of Definition \ref{definition-shock-solution}, does not hold. This is a contradiction.
\smallskip

If ${\bm\nu}_0\perp \Gam_b$ at $P_0$, then we have ${\bm\nu}_0=\frac{{\bf n}_0}{|{\bf n}_0|}$. We compute the value of $(\rho, {\bf u})$ at $P_0$ in the side of $\Om^+$(Fig. \ref{figure-ashock2}) by taking the limit
\begin{equation}
\label{up}
 (\rho, {\bf u})(P_0)= \lim_{\substack{{\rx}\to P_0\\ \rx\in\Om^+}}(\rho, {\bf u})(\rx)=:(\rho^+, {\bf u}^+).
\end{equation}
\begin{figure}[htp]
	\centering
	\begin{psfrags}
		\psfrag{wb}[cc][][0.8][0]{$W_b^+$}
		\psfrag{us}[cc][][0.8][0]{}
		\psfrag{om}[cc][][0.8][0]{$\Om^-$}
\psfrag{ds}[cc][][0.8][0]{}
\psfrag{p}[cc][][0.8][0]{$P_0$}
\psfrag{nu}[cc][][0.8][0]{${\bm \nu}_0$}
\psfrag{op}[cc][][0.8][0]{$\Om^+$}
		\includegraphics[scale=0.8]{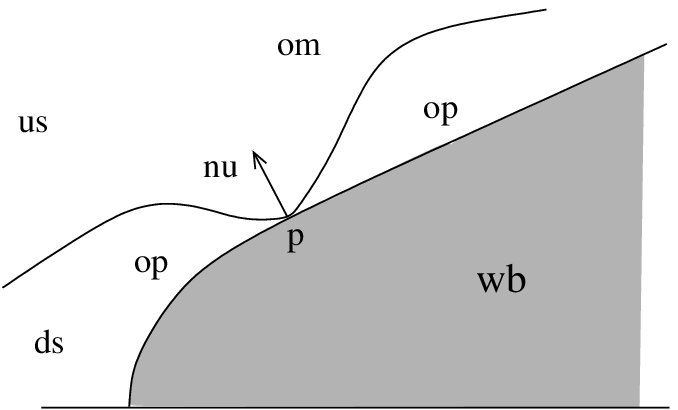}
\caption{}\label{figure-ashock2}
		
	\end{psfrags}
\end{figure}
By the slip boundary condition on $\Gam_b$, and $C^3$ regularity of $b$, we have
\begin{equation*}
  {\bf u}^+\cdot{\bm\nu_0}=\lim_{x_2\to \hat{x}_2}{\bf u}(b(x_2),x_2)\cdot \frac{{\bf n}(x_2)}{|{\bf n}(x_2)|}=0,
\end{equation*}
which implies that $\rho^+{\bf u}^+\cdot {\bm\nu}_0=0$. Then the Rankine-Hugoniot condition \eqref{RH-potential} does not hold at $P_0$ because $\irho {\bf u}_{\infty}\cdot {\bm\nu}_0\neq 0$. This is a contradiction. Therefore, $\mcl{S}$ cannot be attached to $\Gam_b$ away from the point $(b_0,0)$.
\smallskip

{\emph{Case 2. $P_0=(b_0, 0)$}}

Due to \eqref{b-property1} stated in Remark \ref{remark-b-function}, we have
$\displaystyle{\lim_{x_2\to 0+}{\bf n}(x_2)=(-1,0)}$. Let ${\bf u}^+$ be given by \eqref{up}. By continuity of ${\bf u}$ in $\Om^+$(Fig. \ref{figure-ashock3}) up to its boundary, and the slip boundary condition on $\Gam_b$, we obtain that
\begin{equation}
\label{up-hcomp}
  {\bf u}^+\cdot (-1,0)=\lim_{x_2\to 0+} {\bf u}(b(x_2), x_2)\cdot \frac{{\bf  n}(x_2)}{|{\bf n}(x_2)|}=0.
\end{equation}
\begin{figure}[htp]
	\centering
	\begin{psfrags}
		\psfrag{wb}[cc][][0.8][0]{$W_b$}
		\psfrag{us}[cc][][0.8][0]{$(\rho, {\bf u})=(\irho, {\bf u}_{\infty})$}
		\psfrag{om}[cc][][0.8][0]{$\Om^-$}
\psfrag{ds}[cc][][0.8][0]{$\phantom{aaa}(\rho, {\bf u})\neq(\irho, {\bf u}_{\infty})$}
\psfrag{p}[cc][][0.8][0]{$P_0$}
\psfrag{nu}[cc][][0.8][0]{${\bm \nu}_0$}
\psfrag{op}[cc][][0.8][0]{$\Om^+$}
		\includegraphics[scale=0.8]{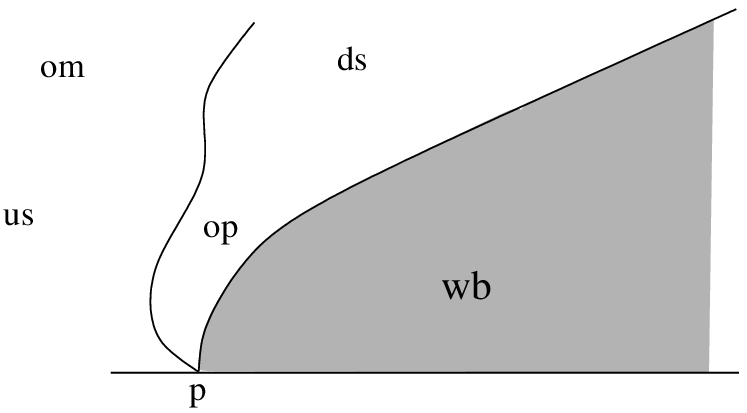}
\caption{}\label{figure-ashock3}
		
	\end{psfrags}
\end{figure}
This implies that if the shock $\mcl{S}$ is attached to $W_b$ at $P_0$, then the horizontal component of ${\bf u}^+$ is 0. By Lemma \ref{lemma-shock-polar-general}, ${\bf u}^+$ must lie on either $v=\mathfrak{f}_{\rm polar}(u)$ or $v=-\mathfrak{f}_{\rm polar}(u)$. Therefore, ${\bf u}^+$ must have a strictly positive horizontal component but this contradicts to \eqref{up-hcomp}. So we conclude that $\mcl{S}$ cannot be attached to $\Gam_b$ at the point $(b_0,0)$. This completes the proof.

\end{proof}
\end{lemma}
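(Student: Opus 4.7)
The plan is to proceed by contradiction: suppose the shock $\mcl{S}$ attaches to $\der W_b^+$ at a point $P_0$, and split the analysis by the location of $P_0$. Setting
\[
\Om^- := \{\rx \in \R^2_+ \setminus W_b^+ : (\rho, {\bf u})(\rx) = (\irho, \iu, 0)\}, \quad \Om^+ := (\R^2_+ \setminus W_b^+) \setminus \ol{\Om^-},
\]
any such $P_0$ must lie either on the curved portion $\Gam_b$ with $x_2^* > 0$, or at the symmetric tip $(b_0, 0)$. The two obstructions I would exploit are (i) the slip boundary condition on $\der W_b^+$, and (ii) the Rankine-Hugoniot relations \eqref{RH-potential} combined with the shock polar structure from Lemma \ref{lemma-shock-polar-general}.

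For Case 1, with $P_0 = (b(x_2^*), x_2^*)$ and $x_2^* > 0$, let $\bm\nu_0$ denote the unit shock normal at $P_0$ pointing into $\Om^-$, and set ${\bf n}(x_2) := (-1, b'(x_2))$. I would split into sub-cases according to whether $\bm\nu_0$ is parallel to ${\bf n}(x_2^*)$. If $\bm\nu_0 \not\perp \Gam_b$, the transversal crossing of $\mcl{S}$ and $\Gam_b$ at $P_0$ allows one to pick a nearby boundary point $Q = (b(\hat x_2), \hat x_2)$ and a small radius $r>0$ with $B_r(Q) \setminus W_b^+ \subset \Om^-$; then ${\bf u} = {\bf u}_\infty$ on $\Gam_b \cap B_r(Q)$ gives ${\bf u} \cdot {\bf n}(\hat x_2) = -\iu \neq 0$, violating the slip condition. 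In the opposite sub-case $\bm\nu_0 \parallel {\bf n}(x_2^*)$, the slip condition on $\Gam_b$ combined with the $C^3$ regularity of $b$ yields ${\bf u}^+ \cdot \bm\nu_0 = 0$ for the downstream trace ${\bf u}^+$, so the mass-flux part of \eqref{RH-potential} would require $\irho \iu (1,0) \cdot \bm\nu_0 = 0$; this fails because the horizontal component of $\bm\nu_0$ equals $\mp 1/\sqrt{1+(b'(x_2^*))^2} \neq 0$.

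For Case 2, at the tip $P_0 = (b_0, 0)$, the key input is \eqref{b-property1}, which gives $b'(0) = 0$ and hence $\lim_{x_2 \to 0+} {\bf n}(x_2) = (-1,0)$. Passing to the limit in the slip condition along $\Gam_b$ as $x_2 \to 0+$ forces the horizontal component of the downstream trace ${\bf u}^+$ at $P_0$ to vanish. However, Lemma \ref{lemma-shock-polar-general} guarantees that ${\bf u}^+$ lies on $v = \pm \mathfrak{f}_{\rm polar}(u)$ with $u \in [u_0, \iu]$ and $u_0 > 0$, so the horizontal component of ${\bf u}^+$ is strictly positive, yielding the contradiction. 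The main obstacle, and the reason for isolating the tip case, is that at $(b_0, 0)$ the outward normal of $\Gam_b$ aligns with the incoming flow direction, so the Rankine-Hugoniot argument of Case 1(b) degenerates (a normal shock is actually admissible at such a configuration); one must instead appeal to the fact that on the shock polar the downstream horizontal velocity is bounded below by $u_0 > 0$, which rules out the forced vanishing coming from the slip condition.
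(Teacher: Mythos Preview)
Your proposal is correct and follows essentially the same approach as the paper: the same case split on the location of $P_0$, the same sub-case split in Case 1 according to whether $\bm\nu_0 \perp \Gam_b$, and the same contradictions via the slip condition, the Rankine--Hugoniot mass flux, and the shock polar lower bound $u_0 > 0$ at the tip. One minor remark: your explanation for why Case 2 needs a separate treatment is slightly off---the Case 1(b) argument would in fact still go through at the tip if one knew $\bm\nu_0 \parallel (-1,0)$, since $\irho {\bf u}_\infty \cdot (-1,0) = -\irho\iu \neq 0$; the paper's reason for handling Case 2 via the shock polar is simply that it avoids any sub-case analysis on the unknown direction of $\bm\nu_0$ at the corner point.
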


According to Lemma \ref{lem:3.3}, any shock solution past the blunt body $W_b^+$ in $\R^2_+$ contains a {\emph{detached shock}} in the sense that $\shock$ is completely separated from the blunt body $W_b^+$. And, this yields the following result on a solution to Problem \ref{problem-full-system}.

\begin{corollary}
  \label{corollary-f-b}
  If $(\rho, u_1, u_2)$ is a solution to Problem \ref{problem-full-system} with a shock
  $\shock=\{x_1=\fshock(x_2):x_2\ge 0\}$, then we have
  \begin{equation}
  \label{location-of-shock}
    f(x_2)<b(x_2)\quad\tx{for all $x_2\ge 0$}.
  \end{equation}
\end{corollary}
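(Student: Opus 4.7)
The proof is essentially a direct corollary of Lemma \ref{lem:3.3}, so my plan is to reduce the strict inequality \eqref{location-of-shock} to the non-attachment statement already established there.

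First, I would establish the weak inequality $\fshock(x_2)\le b(x_2)$ for every $x_2\ge 0$. By the setup of Problem \ref{problem-full-system}, the solution $(\rho, u_1, u_2)$ is defined in the flow domain $\R^2_+\setminus W_b^+$, and the shock $\shock=\{x_1=\fshock(x_2):x_2\ge 0\}$ is a $C^1$ curve contained in the closure of this domain. Combined with the explicit description $W_b^+=\{(x_1,x_2)\in\R^2: x_1\ge b(x_2),\ x_2\ge 0\}$ from \eqref{definition-Wb} and \eqref{definition-half-blunt-body}, this forces $\fshock(x_2)\le b(x_2)$ at every $x_2\ge 0$; otherwise a point of $\shock$ would lie in the interior of $W_b^+$.

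Next, I would upgrade this to the strict inequality by contradiction. Suppose there exists $x_2^*\ge 0$ with $\fshock(x_2^*)=b(x_2^*)$. Then the point $P_0:=(\fshock(x_2^*), x_2^*)$ lies simultaneously on $\shock$ and on $\der W_b^+$: it belongs to $\Gam_b$ if $x_2^*>0$, or coincides with the tip $(b_0,0)$ if $x_2^*=0$. Either way, the shock $\shock$ is attached to $\der W_b^+$ at $P_0$, contradicting Lemma \ref{lem:3.3}. Hence $\fshock(x_2)<b(x_2)$ for all $x_2\ge 0$, which is precisely \eqref{location-of-shock}.

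The main obstacle has already been dispatched inside Lemma \ref{lem:3.3}: the case $x_2^*>0$ is excluded there by combining the slip boundary condition on $\Gam_b$ with the Rankine--Hugoniot relations \eqref{RH-potential} (splitting into the subcases where the shock normal is or is not perpendicular to $\Gam_b$), and the tip case $x_2^*=0$ is excluded by comparing the forced vanishing of the horizontal velocity behind the shock with the positivity of the horizontal component along the shock polar curve from Lemma \ref{lemma-shock-polar-general}. Consequently, after these inputs are cited, nothing beyond the two-line contradiction argument above remains to be carried out.
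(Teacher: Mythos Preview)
Your proposal is correct and matches the paper's approach exactly: the paper simply remarks that Lemma \ref{lem:3.3} forces the shock to be completely separated from $W_b^+$, and states the corollary without further proof. Your write-up actually supplies more detail than the paper does, spelling out the weak inequality from the domain constraint and the contradiction with Lemma \ref{lem:3.3} at any point of contact.
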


In Problem \ref{problem-full-system}, we seek a shock solution past the blunt body $W_b^+$ with piecewise constant asymptotic state at far field. By \eqref{slip-BC-original}, Lemmas \ref{lemma-shock-polar-general} and \ref{lemma-shock-polar1}, and Remark \ref{remark-weakshock-strongshock}, if $(\irho, \ui)\in D_{\infty}(\gam, B_0)$ satisfies $M_{\infty}\ge \frac{1}{\eps_1}$, then we expect either the strong shock solution or the weak shock solution to be asymptotic states of a solution to Problem \ref{problem-full-system}. In this paper, we seek a shock solution to Problem \ref{problem-full-system} with the asymptotic state given by the strong shock solution.

\begin{definition}
\label{notation-strongshock}
Hereafter, the half-wedge angle $\theta_w\in (0, \frac{\pi}{2})$ is fixed in \eqref{definition-Wb} and \eqref{definition-half-blunt-body}. Let $\eps_1\in(0,1)$ be from Remark \ref{remark-weakshock-strongshock}. For each $\eps\in(0,\eps_1]$, let $(\rho_{\rm st}^{\eps}, u_{\rm st}^{\eps}, s_{\rm st}^{\eps})$ be as in Remark \ref{remark-weakshock-strongshock}. We define
\begin{equation*}
  {\bf u}_{\rm st}^{\eps}:=u_{\rm st}(1,\kappa_w)\quad\tx{for $\kappa_w=\tan \theta_w$}.
\end{equation*}
The vector ${\bf u}_{\rm st}^{\eps}$ represents the downstream velocity of the strong shock solution past the wedge $W_0^+$ given by \eqref{definition-upper-wedge} for the incoming supersonic state $(\irho, \ui)\in D_{\infty}(\gam, B_0)$ with $M_{\infty}=\frac{1}{\eps}$. The constant $s_{\rm st}^{\eps}$ is the slope($=\frac{dx_1}{dx_2}$) of the strong shock, and $\displaystyle{\rho_{\rm st}^{\eps}=\mathfrak{h}^{-1}(B_0-\frac 12|{\bf u}_{\rm st}^{\eps}|^2)}$ represents the downstream density of the strong shock solution.

\end{definition}

\begin{lemma}
	\label{lemma-shock-polar2}
	Let $\eps_1>0$ be from Remark \ref{remark-weakshock-strongshock}.  There exists a small constant $\eps_2\in(0, \eps_1]$ depending only on $(\gam, B_0, \theta_w)$ and a continuous function $\mathfrak{q}_{\gam}:(0, \eps_2]\rightarrow (0, \infty)$ with satisfying
\begin{equation}
\label{modulus-of-rhs}
  \lim_{\eps\to 0+}\mathfrak{q}_{\gam}(\eps)=0
\end{equation}
so that whenever $M_{\infty}\ge \frac{1}{\eps_2}$, we have
	\begin{equation}
\label{estimate-bgd}
	|(\rho^{\eps}_{\rm{st}}, u^{\eps}_{\rm{st}}, s^{\eps}_{\rm{st}})-(\rho^0_{\rm{st}}, 0, 0)|\le \mathfrak{q}_{\gam}(\eps),
	\end{equation}
and
\begin{equation}\label{subsonicity}
\left(\rho_{\rm st}^{\eps}\right)^{\gam-1}-(u_{\rm st}^{\eps})^2(1+\kappa_w^2)\ge \frac 12 \left(\rho^0_{\rm{st}}\right)^{\gam-1}>0.
\end{equation}

\end{lemma}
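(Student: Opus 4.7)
The entire statement is a continuous-dependence result for the implicit-function-theorem branch constructed in Step~3 of the proof of Lemma~\ref{lemma-shock-polar1}. Recall that the strong-shock triple $(\rho^{\eps}_{\rm st}, u^{\eps}_{\rm st}, s^{\eps}_{\rm st})$ was defined there as the unique solution of $\mathbf{R}(\rho, u, s, \eps)=0$ close to the base point $(\rho^{(1)}, 0, 0)$ with $\rho^{(1)}=\mathfrak{h}^{-1}(B_0)$ (Case~2 of Step~2). Setting $\rho^0_{\rm st}:=\rho^{(1)}$, my starting point is that $\mathbf{R}$ is $C^{\infty}$ jointly in $(\rho, u, s, \eps)$ near $(\rho^0_{\rm st}, 0, 0, 0)$ and that $\det D_{(\rho, u, s)}\mathbf{R}\neq 0$ there by \eqref{spolar-nonzero-jacobian}; the implicit function theorem therefore yields a $C^{\infty}$ (hence continuous) branch $\eps\mapsto(\rho^{\eps}_{\rm st}, u^{\eps}_{\rm st}, s^{\eps}_{\rm st})$ on $[0, \eps_1]$ (after shrinking $\eps_1$ if necessary) that agrees with $(\rho^0_{\rm st}, 0, 0)$ at $\eps=0$.

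I would handle \eqref{subsonicity} first. Evaluating Bernoulli's law at the base point, equivalently the third line of \eqref{RH-spolar-epsilon-zero} with $u=0$, gives $\mathfrak{h}(\rho^0_{\rm st})=B_0$, and hence $(\rho^0_{\rm st})^{\gam-1}=(\gam-1)B_0>0$. Consequently the continuous function
\[
	g(\eps) \,:=\, (\rho^{\eps}_{\rm st})^{\gam-1}-(u^{\eps}_{\rm st})^2(1+\kappa_w^2)
\]
satisfies $g(0)=(\rho^0_{\rm st})^{\gam-1}>0$, so there exists $\eps_2\in(0, \eps_1]$, depending only on $(\gam, B_0, \tw)$, such that $g(\eps)\ge \tfrac{1}{2}(\rho^0_{\rm st})^{\gam-1}$ for every $\eps\in[0, \eps_2]$. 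This is exactly \eqref{subsonicity}.

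For \eqref{estimate-bgd} I would simply set
\[
	\mathfrak{q}_\gam(\eps) \,:=\, \eps \,+\, \bigl|(\rho^{\eps}_{\rm st}, u^{\eps}_{\rm st}, s^{\eps}_{\rm st})-(\rho^0_{\rm st}, 0, 0)\bigr|, \qquad \eps\in(0, \eps_2].
\]
Continuity of the IFT branch makes $\mathfrak{q}_\gam$ continuous and strictly positive on $(0, \eps_2]$, and $\mathfrak{q}_\gam(\eps)\to 0$ as $\eps\to 0+$; the inequality \eqref{estimate-bgd} holds by construction. The $+\eps$ safeguard is only there to keep $\mathfrak{q}_\gam$ in $(0,\infty)$ in case the branch happens to pass through the base point at some positive value of $\eps$. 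I do not foresee any substantive obstacle here: the whole argument is pure continuity layered on top of the $C^{\infty}$ IFT branch already produced in Lemma~\ref{lemma-shock-polar1}, and the only care required is to coordinate the choice of $\eps_2$ so that \eqref{estimate-bgd} and \eqref{subsonicity} hold simultaneously on $(0, \eps_2]$.
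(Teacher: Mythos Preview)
Your proof is correct for the lemma as stated, but it takes a softer route than the paper's. The paper does not define $\mathfrak{q}_\gam$ tautologically; instead it writes the identity
\[
\mathbf{R}(\rho_{\rm st}^{\eps},u_{\rm st}^{\eps},s_{\rm st}^{\eps};\eps)-\mathbf{R}(\rho_{\rm st}^{0},0,0;\eps)=\mathbf{R}(\rho_{\rm st}^{0},0,0;0)-\mathbf{R}(\rho_{\rm st}^{0},0,0;\eps),
\]
bounds $\|(D_{(\rho,u,s)}\mathbf{R})^{-1}\|$ uniformly by a constant $\mu_0$ near the base point, and computes the right-hand side explicitly (only $R_1$ contributes). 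This produces the concrete formula
\[
\mathfrak{q}_\gam(\eps)=\mu_0\,\irho\,\iu
=\mu_0\Bigl(\tfrac{2(\gam-1)B_0}{(\gam-1)+2\eps^2}\eps^2\Bigr)^{\!1/(\gam-1)}\sqrt{\tfrac{(\gam-1)B_0}{\tfrac{\gam-1}{2}+\eps^2}}\ \approx\ \eps^{2/(\gam-1)},
\]
and then derives \eqref{subsonicity} exactly as you do, by continuity. What the paper's approach buys is a quantitative modulus: the explicit decay rate and, more importantly, the built-in inequality $\irho\iu\le C\mathfrak{q}_\gam(\eps)$. That relation is used later (the paper cites \eqref{definition-q-gamma} in the proofs of Lemmas~\ref{lemma-L-infty-estimate} and~\ref{lemma-estimate-nr-sym} to absorb $\ipsi=\irho\iu x_2$ into $\mathfrak{q}_\gam(\eps)$-sized bounds). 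Your purely soft $\mathfrak{q}_\gam$ satisfies every hypothesis of the lemma, but would need to be augmented by $\irho\iu$ if you want the downstream estimates to go through unchanged.
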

	
	\begin{proof}
		For ${\bf R}(\rho, u, s;\eps)=(R_1, R_2, R_3)^T(\rho, u, s;\eps)$ defined by \eqref{RH-spolar}, we have
		\begin{equation}
		\label{R-difference-expression}
		{\bf R}(\rho_{\rm{st}}^{\eps}, u_{\rm{st}}^{\eps}, s_{\rm{st}}^{\eps};\eps)
		-{\bf R}(\rho_{\rm{st}}^0, 0, 0;\eps)
		={\bf R}(\rho_{\rm{st}}^0, 0, 0;0)-
		{\bf R}(\rho_{\rm{st}}^0, 0, 0;\eps).
		\end{equation}
For ${\mathbb M}\in \R^{3\times 3}$, define a norm $\|{\mathbb M}\|$ by
\begin{equation*}
  \|{\mathbb M}\|:=\sup_{{\,\bm\xi\in \R^3,}\atop{\!|\bm\xi|=1}} |{\mathbb M}{\bm\xi}|.
\end{equation*}		
Since ${\bf R}$ is smooth with respect to $(\rho, u, s, \eps)$, one can find a small constant $\sigma_1>0$ depending on $(\gam, B_0, \theta_w)$ so that if
		\begin{equation}\label{condition-R-invertibility}
		|(\rho, u, s, \eps)-(\rho_{\rm{st}}^0, 0, 0, 0)|\le \sigma_1,
		\end{equation}
		then it follows from \eqref{spolar-nonzero-jacobian} that $D_{(\rho, u, s)}{\bf R}(\rho, u, s;\eps)$ is invertible and satisfies
		\begin{equation}\label{norm-of-inverse}
		\|\left(D_{(\rho, u, s)}{\bf R}(\rho, u, s;\eps)\right)^{-1}\|\le 2
		\|\left(D_{(\rho, u, s)}{\bf R}(\rho_{\rm{st}}^0, 0, 0;0)\right)^{-1}\|=:\mu_0.
		\end{equation}
	Step 3 in the proof of Lemma \ref{lemma-shock-polar1} implies that $(\rho_{\rm{st}}^{\eps}, u_{\rm{st}}^{\eps}, s_{\rm{st}}^{\eps})$ is $C^1$ with respect to $\eps\in(0, \eps_1)$, so there exists a small constant $\eps'\in(0,\eps_1)$ depending only on $(\gam, B_0, \theta_w)$ so that $(\rho, u, s, \eps)=(\rho_{\rm{st}}^{\eps}, u_{\rm{st}}^{\eps}, s_{\rm{st}}^{\eps}, \eps)$ for $\eps\in(0,\eps']$ satisfies \eqref{condition-R-invertibility}.

		Note that we have
		\begin{equation}\label{estimate-R-rhs}
\begin{split}
		|{\bf R}(\rho_{\rm{st}}^0, 0, 0;0)-
		{\bf R}(\rho_{\rm{st}}^0, 0, 0;\eps)|&=
|R_1(\rho_{\rm{st}}^0, 0, 0;0)-
		R_1(\rho_{\rm{st}}^0, 0, 0;\eps)|\\
&=\mathfrak{h}^{-1}
\left(\frac{2B_0}{(\gam-1)+2\eps^2}\eps^2\right)
\sqrt{\frac{(\gam-1)B_0}{\frac{\gam-1}{2}+\eps^2}}\\
&=\left(\frac{2(\gam-1)B_0}{(\gam-1)+2\eps^2}\eps^2\right)^{\frac{1}{\gam-1}}
\sqrt{\frac{(\gam-1)B_0}{\frac{\gam-1}{2}+\eps^2}}
\end{split}
		\end{equation}
	for all $\eps\in(0,\eps_1]$.	
So we finally conclude from \eqref{R-difference-expression}, \eqref{norm-of-inverse} and \eqref{estimate-R-rhs} that if we choose $\eps_2$ as $\eps_2=\eps'$, and if $\eps\in[0, \eps_2]$, then we have
		\begin{equation}
\label{definition-q-gamma}
\begin{split}
		&|(\rho^{\eps}_{\rm{st}}, u^{\eps}_{\rm{st}}, s^{\eps}_{\rm{st}})-(\rho^0_{\rm{st}}, 0, 0)|\\
&\le
		\mu_0 \left(\frac{2(\gam-1)B_0}{(\gam-1)+2\eps^2}\eps^2\right)^{\frac{1}{\gam-1}}
\sqrt{\frac{(\gam-1)B_0}{\frac{\gam-1}{2}+\eps^2}}
=:\mathfrak{q}_{\gam}(\eps)(\approx \eps^{\frac{2}{\gam-1}}).
\end{split}
		\end{equation}
The function $\mathfrak{q}_{\gam}(\eps)$ is continuous with respect to both $\eps>0$ and $\gam > 1$.
And, \eqref{definition-q-gamma} implies that the function $\mathfrak{q}_{\gam}$ satisfies \eqref{modulus-of-rhs}. Finally, one can further reduce $\eps_2$ depending only on $(\gam, B_0, \tw)$ so that \eqref{subsonicity} follows from \eqref{modulus-of-rhs} and \eqref{estimate-bgd} because $u_{\rm st}^0=0$.
	\end{proof}

\subsection{The main theorem}
\label{subsection-main-theorem1}

To state our main theorem, we first define H\"{o}lder norms with weight at infinity.
\begin{definition}
	\label{definition-norms-w-decay}
	Fix constants $m\in \mathbb{Z}^+$, $\mu\in \R$, and $\alp\in(0,1)$.
	\smallskip
	
(i) Let $I\subset (0, \infty)$ be an open interval. For a function $f:\ol{I}\rightarrow \R$, define
	\begin{equation*}
	\begin{split}
	&\|f\|_{m, I}^{(\mu)} :=\sum_{j=0}^m \sup_{x_2\in I} (1+x_2)^{j+\mu} \left|\frac{d^j}{dx_2^j} f(x_2)\right|\\
	&[f]_{m,\alp, I}^{(\mu)} :=
	\sup_{x_2\neq x'_2\in I} (1+\min\{x_2,x'_2\})^{m+\alp+\mu}
	\frac{|\frac{d^m}{dx_2^m}f(x_2)-\frac{d^m}{dx_2^m}f(x'_2)|}{|x_2-x'_2|^{\alp}}\\
	&\|f\|_{m,\alp, I}^{(\mu)}:=
	\|f\|_{m, I}^{(\mu)} + [f]_{m,\alp, I}^{(\mu)} .
	\end{split}
	\end{equation*}
\smallskip

(ii)	Let $D\subset \R^2_+$ be an open and connected domain. For points $\rx, \rx'\in D$, let $x_2$, $x_2'$ denote the $x_2$-coordinates of $\rx,\rx'$, respectively. For a function $\phi: \ol{D}\rightarrow \R$, define
	\begin{equation*}
	\begin{split}
	&\|\phi\|_{m, D}^{(\mu)}  :=
	\sum_{j=0}^m \sup_{{\bf x}\in D} (1+x_2)^{j+\mu}
	\sum_{0\le l\le j}|\der_{x_1}^l\der_{x_2}^{j-l} \phi({\bf x})|\\
	&[\phi]_{m,\alp, D}^{(\mu)} :=
	\sup_{{\bf x}\neq {\bf x}'\in D} (1+\min\{x_2,x_2'\})^{m+\alp+\mu}
	\sum_{0\le l\le m}\frac{|\der_{x_1}^l\der_{x_2}^{m-l}\phi({\bf x})-\der_{x_1}^l\der_{x_2}^{m-l}\phi({\bf x}')|}{|{\bf x}-{\bf x}'|^{\alp}}\\
	&\|\phi\|_{m,\alp, D}^{(\mu)}:=\|\phi\|_{m, D}^{(\mu)}  +[\phi]_{m,\alp, D}^{(\mu)} .
	\end{split}
	\end{equation*}
\end{definition}

\begin{note}
For the rest of the paper, the blunt body $W_{b}^+$ is fixed thus the constants $(\tw, h_0, b_0, m_b)$ are fixed. Here, the constant $m_b$ is from  \eqref{b-property2}. So we will not specify a dependence of any estimate constant on $(\tw, h_0, b_0, m_b)$ hereafter except for the H\"{o}lder exponent $\alp$ for the regularity of solutions to Problem \ref{problem-full-system} because the choice of $\alp$ heavily depends on the half-wedge angle $\tw$. So we will specify the dependence of $\alp$ on $\tw$.
\end{note}

\begin{theorem}
	\label{main-theorem-physical-var}
	Fix $\gam > 1$ and $B_0>0$. And, fix $\beta\in(0,1)$.
\begin{itemize}
\item[(a)]{\emph{(The existence of detached shock solutions)}}
	For a fixed constant $d_0>0$, there exists a small constant $\bar{\eps}>0$ depending on $(\gam, B_0, d_0)$ so that whenever the incoming supersonic state $(\irho, \ui)\in D_{\infty}(\gam, B_0)$ satisfies $M_{\infty}=\frac{1}{\eps}$ for $\eps\in(0, \bar{\eps}]$,
	the system \eqref{E-system} has an entropy solution $(\rho, {\bf u})$ in $\R^2_+\setminus W_b^+$ with a shock $\shock=\{(\fshock(x_2), x_2):x_2\ge 0\}$ in the sense of Definitions \ref{definition-shock-admsble} for the incoming state $(\irho, \iu,0)$. And, the solution satisfies the following properties:
	\begin{itemize}
		\item[(i)]
		$\displaystyle{\fshock(0)=b_0-d_0;}$

\item[(ii)] There exists a constant $\delta>0$ depending only on $(\gam, B_0, d_0)$ such that
		\begin{equation*}
		b(x_2)-\fshock(x_2)\ge \delta\quad\tx{for all $x_2\ge 0$};
		\end{equation*}

		\item[(iii)] Setting as $\displaystyle{
\Om_{\fshock}:=\{{\bf x}=(x_1,x_2)\in \R^2_+\setminus W_b^+: x_1 >\fshock(x_2),\,\,x_2>0\}}$, we have
		\begin{equation*}
		\lim_{{|\bf x|\rightarrow \infty}\atop {{\bf x}\in \Om_{\fshock}}}
		|(\rho, {\bf u})({\bf x})-(\rho^{\eps}_{\rm{st}}, {\bf u}^{\eps}_{\rm{st}})|=0,\quad
		\tx{and}\quad
		\lim_{x_2\to \infty} |\fshock'(x_2)-s^{\eps}_{\rm{st}}|=0;
		\end{equation*}

		\item[(iv)] There exists a constant $\hat{\alp}\in(0,1)$ depending only on $\tw$, and  a constant $C>0$ depending only on $(\gam, B_0, d_0)$ such that
		\begin{equation}
		\begin{split}
		\label{apriori-estimate-physical-var}
		&\|\fshock-f_0\|_{2,\hat{\alp}, \R^+}^{(-\beta)}+
		\|{\bf u}-{\bf u}_{\rm st}^{\eps}\|_{1,\hat{\alp},\Om_{\fshock}}^{(1-\beta)}\le C\mathfrak{q}_{\gam}(\eps)
		\end{split}
		\end{equation}
for the functions $f_0$ defined by
\begin{equation}
\label{definition-background-f}
f_0(x_2):=s_{\rm{st}}^{\eps}x_2+b_0-d_0.
\end{equation}
Here, $\mathfrak{q}_{\gam}(\eps)$ is from Lemma \ref{lemma-shock-polar2};

\item[(v)] There exists a constant $\sigma\in(0,1)$ depending only on $(\gam, B_0, d_0)$ so that the Mach number $M(\rho, {\bf u})$ defined by \eqref{definition-soundsp-mach} satisfies the inequality
    \begin{equation*}
      M(\rho, {\bf u})\le 1-\sigma\quad\tx{in $\ol{\Om_{\fshock}}$}.
    \end{equation*}
    In other words, the flow in $\Om_{\fshock}$ is subsonic, thus $\shock$ is a transonic shock.
	
\end{itemize}

\item[(b)]{\emph{(Convexity of detached shocks)}} For a fixed constant $d_0>0$, let $\bar{\eps}$ be from Theorem \ref{main-theorem-physical-var}(a). Then, there exists a constant
    $\hat{\eps}\in(0, \bar{\eps}]$ depending on $(\gam, B_0, d_0)$ so that if the incoming supersonic state $(\irho, \ui)\in D_{\infty}(\gam, B_0)$ satisfies $M_{\infty}=\frac{1}{\eps}$ for $\eps\in(0, \hat{\eps}]$, then the system \eqref{E-system} has an entropy solution $(\rho, {\bf u})$ in $\R^2_+\setminus W_b^+$ with a shock $\shock=\{(\fshock(x_2), x_2):x_2\ge 0\}$ that satisfies
    \begin{equation*}
      \fshock''(x_2)\ge 0\quad\tx{for $x_2> 0$}
    \end{equation*}
    as well as all the properties (i)--(v) stated in Theorem \ref{main-theorem-physical-var}(a).

\end{itemize}
\end{theorem}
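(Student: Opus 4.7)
The plan is to reformulate Problem II as a free boundary problem for a stream function. Writing $\nabla^\perp \psi = \rho(u_1, u_2)$ in $\Om_{\fshock}$ turns conservation of mass into the exact differential $d\psi = -\rho u_2\, dx_1 + \rho u_1\, dx_2$, while the irrotationality equation becomes a quasilinear second order equation for $\psi$ with coefficients determined by $|\nabla \psi|^2$ via the Bernoulli law. Behind the strong shock the uniform subsonicity estimate \eqref{subsonicity} of Lemma \ref{lemma-shock-polar2} gives ellipticity with a constant of ellipticity bounded below by $\frac12(\rho^0_{\rm st})^{\gam-1}$. The slip condition on $\Gam_b\cup \Gam_{\rm{sym}}$ becomes $\psi\equiv 0$; on $\shock$ the Rankine-Hugoniot conditions translate into a Dirichlet condition fixing $\psi$ in terms of the mass flux of the incoming uniform flow, together with a nonlinear oblique condition that relates $\nabla\psi$ and $\fshock'$. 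The normalisation (i), $\fshock(0) = b_0 - d_0$, removes the translation invariance and selects a unique shock within the family.

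\textbf{Cut-off and iteration.} To cope with the unbounded domain I would first truncate at height $x_2 = L$ and solve the free boundary problem in $\Om^L := \Om_{\fshock} \cap \{0 < x_2 < L\}$, prescribing on the artificial top boundary a condition compatible with the asymptotic strong-shock state $(\rho_{\rm st}^\eps, {\bf u}_{\rm st}^\eps)$. Using the background linear shock $f_0(x_2) = s_{\rm st}^\eps x_2 + b_0 - d_0$ of \eqref{definition-background-f} and the corresponding background stream function as references, I would set up a Schauder-type iteration in a closed convex subset of the weighted H\"older space of Definition \ref{definition-norms-w-decay}: given an admissible $f$ close to $f_0$ in $\|\cdot\|^{(-\beta)}_{2,\hat\alp,(0,L)}$, solve the elliptic BVP for $\psi$ in $\Om^L_f$ and then update $f$ by viewing the oblique Rankine-Hugoniot relation as a first order ODE with initial value $f(0) = b_0 - d_0$. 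Smallness of $\mathfrak{q}_{\gam}(\eps)$ from Lemma \ref{lemma-shock-polar2} makes the perturbation from the background small enough that the map is a compact self-map, yielding a fixed point by Schauder's theorem; the H\"older exponent $\hat\alp \in (0,1)$ is restricted by the corner at $P_0 = (b_0,0)$ where $\shock$ meets $\Gam_{\rm{sym}}$, whose opening angle is controlled by $s_{\rm st}^\eps$ and hence by $\tw$, explaining the dependence $\hat\alp = \hat\alp(\tw)$ in (iv). A symmetric even reflection across $\Gam_{\rm{sym}}$ eliminates one boundary and permits the application of standard oblique derivative theory in Lipschitz domains.

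\textbf{Passage to the limit.} For a sequence $L_n \to \infty$ the cut-off solutions $(f_n, \psi_n)$ enjoy uniform estimates in the weighted norms on every bounded $x_2$-interval. A diagonal subsequence converges to a pair $(f_*, \psi_*)$, and the bound \eqref{apriori-estimate-physical-var} is inherited by lower semicontinuity of the weighted H\"older seminorms. The decay weights $(-\beta)$ on $\fshock - f_0$ and $(1-\beta)$ on ${\bf u} - {\bf u}_{\rm st}^\eps$ give the pointwise convergence at infinity asserted in (iii). Item (ii) follows from $f_*(0) = b_0-d_0$, from the fact that $b(x_2) - f_0(x_2) \ge d_0 > 0$ for the background, and from the smallness of $\fshock - f_0$; item (v) follows from \eqref{subsonicity} combined with the smallness of the velocity perturbation once $\bar\eps$ is reduced accordingly.

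\textbf{Convexity.} For Part (b) I would follow the framework of \cite{CFX} as the paper suggests, but since condition (A6) of \cite[Theorem 2.1]{CFX} fails, that argument must be supplemented by a direct analysis of three maximum principles: differentiating the quasilinear equation for $\psi$ yields linear homogeneous elliptic equations satisfied by $u_1$, $u_2$, and $|{\bf u}|$. The symmetry $u_2 \equiv 0$ on $\Gam_{\rm{sym}}$ allows an odd reflection of $u_2$, while differentiating the slip condition along $\Gam_b$ gives, by virtue of the convexity hypothesis $b''\ge 0$ in Definition \ref{definition-bluntbody-ftn}$(b_4)$, a one-sign boundary relation for the normal derivative of $u_2$ on $\Gam_b$. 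Combining these three maximum principles with the Rankine-Hugoniot relations differentiated along $\shock$ produces an elliptic inequality for $\fshock''$ along the shock whose boundary signs are favourable at $x_2 = 0$ (by the symmetry $\fshock'(0)=0$) and at infinity (by the weighted decay estimates of Part (a)), forcing $\fshock'' \ge 0$ provided $\hat\eps$ is taken small enough to absorb higher order terms. The main obstacle throughout is precisely compensating for the missing condition (A6): the three maximum principles together with the convexity of $W_b$ must jointly take its place, and propagating convexity uniformly in $x_2$ relies essentially on the weighted decay established in Part (a).
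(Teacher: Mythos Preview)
Your outline for Part~(a) matches the paper's approach: stream-function reformulation, cut-off domains with a Schauder iteration over approximate shocks, and passage to the limit $L\to\infty$ via uniform weighted estimates. One correction: the H\"older exponent $\hat\alp$ is \emph{not} governed by a corner on $\Gam_{\rm sym}$ (and $P_0=(b_0,0)$ is where $\Gam_b$, not $\shock$, meets $\Gam_{\rm sym}$); both corners on the symmetry axis are removed by even reflection and cause no loss of regularity. The restriction on $\hat\alp$ arises instead at the artificial corner $P_{2,L}^f$ where $\shock$ meets the cut-off boundary $\Gam_{{\rm cutoff},L}^f$; the opening angle there is essentially $\tw$, and a barrier of the form $r^{1+\hat\alp}\cos(\mu_1\theta-\mu_0)$ with $\mu_1>1$ determined by $\tw$ fixes $\hat\alp$. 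This corner estimate must be made uniform in $L$ so that it survives the limit.

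For Part~(b) your sketch misses the actual mechanism; the paper does not derive an ``elliptic inequality for $\fshock''$ along the shock.'' The argument has two genuinely separate stages. First, one proves that the speed $q=|{\bf u}|$ increases monotonically along $\shock$ from $P_1$ to the cut-off corner. This uses the maximum principle for $Q=\ln q$ (which satisfies a divergence-form elliptic equation in $\Om$), oblique Hopf-type boundary conditions on $\shock$ and $\Gam_{\rm cutoff}$, and---crucially---the convexity $b''\ge 0$, which enters as the sign condition $\nabla Q\cdot{\bf n}_s<0$ on the curved part of $\Gam_b$ and, via a chain-of-balls contradiction argument, rules out any failure of monotonicity of $q$ along $\shock$. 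Second, the Rankine--Hugoniot relations give the explicit shock-polar formula
\[
\sin^2\beta=\frac{q^2-\iu^2}{\iu^2\bigl(\tfrac{\irho^2}{\rho^2}-1\bigr)}
\]
along $\shock$, where $\beta$ is the angle of the shock tangent; a direct algebraic computation using only the Bernoulli law shows $\frac{d}{dq}\sin^2\beta<0$, and then $\fshock'=\cot\beta$ together with the monotonicity of $q$ from the first stage yields $\fshock''\ge 0$ by the chain rule. So the role of $b''\ge 0$ is to force monotonicity of $q$ along the shock, not to produce a sign for $\der_{\bf n} u_2$ on $\Gam_b$ as you suggest; and the convexity conclusion is an algebraic consequence of that monotonicity plus the shock polar, not an elliptic boundary-value estimate for $\fshock''$.
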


\begin{remark}
\label{remark-main-theorem} Prior to proving Theorem \ref{main-theorem-physical-var}, we first discuss two issues related to the uniqueness of detached shock solutions past $W_b^+$.
\medskip

(1)
Note that Theorem \ref{main-theorem-physical-var} does not guarantee the uniqueness of a detached shock solution for a fixed detached distance $d_0>0$. This is mainly because of the property (iv) stated in Theorem \ref{main-theorem-physical-var} (a). More precisely, the estimate \eqref{apriori-estimate-physical-var} given in the property (iv) of Theorem \ref{main-theorem-physical-var}(a) only provides the asymptotic limit of the tangential slope of $\shock$ at far-fields but the limit of $|\fshock-f_0|$ at $x_2=\infty$ may not exist.
\medskip

(2)
According to Theorem \ref{main-theorem-physical-var} (a), the Mach number $M_{\infty}$ of the incoming supersonic state is required to be sufficiently large depending on {\emph{the detached distance}} $d_0>0$ to have an entropy solution of \eqref{E-system} in $\R^2_+\setminus W_b^+$ with a shock $\shock$ of the detached distance $d_0$ from the tip $(b_0,0)$ of $W_b^+$. Actually, it is possible to  adjust the proof of Theorem \ref{main-theorem-physical-var}, given throughout this paper,  to show that, for a fixed constant $\underline{d}>0$, there exists a small constant $\bar{\eps}^*>0$ depending only on $(\gam, B_0, \underline{d})$ so that whenever the incoming supersonic state $(\irho, \ui)\in D_{\infty}(\gam, B_0)$ satisfies $M_{\infty}=\frac{1}{\eps}$ for $\eps\in(0, \bar{\eps}^*]$ and a detached distance $d_0$ is given on the interval $[\underline{d}, \infty)$, then the system \eqref{E-system} has at least one entropy solution $(\rho, {\bf u})$ in $\R^2_+\setminus W_b^+$ with a shock $\shock$ of the detached distance $d_0$ from the tip $(b_0,0)$ of $W_b^+$. This observation yields  {\emph{a family of detached shock solutions}} of \eqref{E-system} in $\R^2_+\setminus W_b^+$ for a fixed incoming supersonic state. In other words, if we fix a constant $\underline{d}>0$, and if $d_0\ge \underline{d}$ holds, then the system \eqref{E-system} has at least one entropy solution $(\rho, {\bf u})$ in $\R^2_+\setminus W_b^+$ with a shock $\shock$ of the detached distance $d_0$ from the tip $(b_0,0)$ of $W_b^+$ provided that $M_{\infty}$ is sufficiently large depending on $\underline{d}$. So the following questions are naturally raised:
\begin{itemize}
\item[(i)]
Does this family of detached shock solution contains a physically valid detached shock solution? In other words, does there exist a detached shock solution that satisfies all the properties (i)--(v) stated in Theorem \ref{main-theorem-physical-var}, and that becomes a long-time asymptotic limit of a weak solution to unsteady Euler system of irrotational flow?
\item[(ii)]
If so, what is the value of the detached distance $d_0$ for a physically valid detached shock solution?
\end{itemize}
We leave these two questions as open problems to be investigated in the future.
\end{remark}

\section{Stream function formulation}
\label{section-stream-function}

If $(\rho, u_1, u_2)$ is a $C^1$ solution to \eqref{E-system} in a domain, then the first equation in \eqref{E-system} implies that there exists a $C^2$ function $\psi$ to satisfy
\begin{equation}
\label{definition-stream-function}
\nabla^{\perp}\psi=(\rho u_1,\rho u_2)\quad\tx{for $\nabla^{\perp}\psi=(\psi_{x_2}, -\psi_{x_1})$.}
\end{equation}
 Such a function $\psi$ is called a {\emph{stream function}} in the sense that $\psi$ is a constant along each integral curve of the momentum density vector field $\rho{\bf u}=\rho(u_1,u_2)$. With using the function $\psi$, we rewrite the rest of \eqref{E-system} as
\begin{equation}
\label{E-system-str-form}
\begin{split}
&\Div\left(\frac{\nabla\psi}{\rho}\right)=0,\\
&G(\rho, |\nabla\psi|^2):=\frac 12 \frac{|\nabla\psi|^2}{\rho^2}+\mathfrak{h}(\rho)=B_{0}.
\end{split}
\end{equation}
Next, we solve the equation $G(\rho, |\nabla\psi|^2)=B_0$ for $\rho$ to further reduce \eqref{E-system-str-form} into a quasi-linear second order equation for $\psi$.
For $\rho>0$, the equation $G(\rho, |\nabla\psi|^2)=B_0$ is equivalent to
\begin{equation*}
  \rho^2(B_0-\mathfrak{h}(\rho))=\frac 12|\nabla\psi|^2.
\end{equation*}
Set
\begin{equation}\label{definition-H-function}
  H(\rho):=\rho^2(B_0-\mathfrak{h}(\rho)),
\end{equation}
and consider the equation of
\begin{equation}
\label{equation-rho-q}
  H(\rho)=\frac 12|{\bf q}|^2\quad\tx{for ${\bf q}\in \R^2$.}
\end{equation}
A direct computation shows that, for $\gam>1$, the function $H$ satisfies the following properties:
\begin{itemize}
\item[$(H_1)$] $H(0)=0$, $\displaystyle{\lim_{\rho\to \infty} H(\rho)=-\infty}$;

\item [$(H_2)$] We have
\begin{equation}
\label{definition-rho-sonic}
H'(\rho)\begin{cases}
>0\quad &\mbox{if $\rho<\rhosonic$}\\
=0\quad &\mbox{if $\rho=\rhosonic$}\\
<0\quad &\mbox{if $\rho>\rhosonic$}
\end{cases}\quad\tx{for}\quad \rhosonic:=
\left(\frac{2(\gam-1)B_0}{\gam+1}\right)^{\frac{1}{\gam-1}},
\end{equation}
thus the maximum of $H$ is attained at $\rho=\rhosonic$ with
	\begin{equation*}
	H(\rhosonic)=\rhosonic^2\frac{(\gam-1)B_0}{\gam+1};
	\end{equation*}

\item[$(H_3)$] For $\rho_{\rm{max}}=
    \left((\gam-1)B_0\right)^{\frac{1}{\gam-1}}\in (\rhosonic, \infty)$, we have
	\begin{equation}
	\label{rho-max}
	H(\rho)\begin{cases}
	\ge 0 & \mbox{if } \rho\le \rho_{\rm{max}}, \\
	<0 & \mbox{if } \rho>\rho_{\rm{max}}.
	\end{cases}
	\end{equation}
\end{itemize}
The properties $(H_1)$--$(H_3)$ imply that, for each constant $\zeta\in [0,H(\rhosonic))$, the equation $H(\rho)=\zeta$ has exactly two distinct solutions $\rho_{-}(\zeta)$ and $\rho_+(\zeta)$ with
\begin{equation*}
0\le \rho_{-}(\zeta)<\rhosonic<\rho_{+}(\zeta)\le \rho_{\max}.
\end{equation*}
Furthermore, as functions of $\zeta\in [0,H(\rhosonic))$, $\rho_{\pm}(\zeta)$ are smooth. This can be checked by the implicit function theorem.

If we have $H(\rho)=\frac 12|\nabla\psi|^2$ for a stream function $\psi$ given by \eqref{definition-stream-function}, then the corresponding Mach number $M$ defined by \eqref{definition-soundsp-mach} is represented as a function of $\rho$ as follows:
\begin{equation}
\label{definition-mach-rho}
  M(\rho)=\sqrt{\frac{2H(\rho)}{\rho^{\gam+1}}}.
\end{equation}
A direct computation yields that $M(\rhosonic)=1$ and $M'(\rho)=-\frac{2}{\rho^{\gam}}(\gam-1)B_0<0$ for $\rho\in(0,\rho_{\max}]$, and this implies that
\begin{equation*}
  M(\rho)\begin{cases}
  >1\quad &\mbox{for $\rho<\rhosonic$},\\
  <1\quad &\mbox{for $\rho>\rhosonic$}.
  \end{cases}
\end{equation*}
 According to statement (v) of Theorem \ref{main-theorem-physical-var}(a), we seek a shock solution past $W_b^+$ with the Mach number $M<1$ behind the shock. So we use the equation ${\rm div}\left(\frac{\nabla\psi}{\rho_+\left(\frac 12|\nabla\psi|^2\right)}\right)=0$ to determine the downstream state behind a detached shock.

\begin{lemma}
  \label{lemma-rho-expression}
Given constants $\gam >1$ and $B_0>0$, let $(H, \rhosonic, \rho_{\max})$ be given by \eqref{definition-H-function}, \eqref{definition-rho-sonic} and \eqref{rho-max}, respectively. Then, there exists a smooth function $\hat{\rho}: [0, 2H(\rhosonic))\rightarrow (\rhosonic, \rho_{\max}]$ so that if $\psi$ satisfies
\begin{align}
&|\nabla\psi|^2<2H(\rhosonic),\notag\\
\label{equation-psi}
&
  {\rm div}\left(\frac{\nabla\psi}{\hat{\rho}(|\nabla\psi|^2)}\right)=0,
\end{align}
then $(\rho, {\bf u})=\left(\hat{\rho}(|\nabla\psi|^2), \frac{\nabla^{\perp}\psi}{\hat{\rho}(|\nabla\psi|^2)}\right)$ is a solution to the system \eqref{E-system}. Furthermore, the state corresponding to $(\rho, {\bf u})$ is subsonic, that is, we have
\begin{equation}\label{subsonicity-str-form}
  |{\bf u}|^2<\rho^{\gam-1}.
\end{equation}

\begin{proof}
The proof is directly given if we define $\hat{\rho}$ by
\begin{equation}
\label{definition-rho-hat}
  \hat{\rho}(\zeta)=\rho_+\left(\frac{\zeta}{2}\right)
  \quad\tx{for $\zeta<2H(\rhosonic)$.}
\end{equation}

\end{proof}
\end{lemma}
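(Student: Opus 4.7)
The plan is to \emph{define} $\hat{\rho}$ directly from the ``subsonic branch'' of the equation $H(\rho)=\frac{1}{2}|\mathbf{q}|^2$ identified through properties $(H_1)$--$(H_3)$, and then verify that the claimed triple $(\rho,\mathbf{u})$ automatically satisfies each of the three equations of \eqref{E-system}. Concretely, I would set
\[
\hat{\rho}(\zeta):=\rho_{+}\!\left(\tfrac{\zeta}{2}\right),\qquad \zeta\in[0,\,2H(\rhosonic)),
\]
where $\rho_{+}(\cdot)$ is the larger root of $H(\rho)=\tfrac{\zeta}{2}$ guaranteed by $(H_1)$--$(H_3)$. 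Smoothness of $\hat{\rho}$ follows from the implicit function theorem applied to $H(\rho)-\tfrac{\zeta}{2}=0$ on $\rho\in(\rhosonic,\rho_{\max}]$, using $H'(\rho)<0$ there by $(H_2)$, together with $H(\rhosonic)>0$ and $H(\rho_{\max})=0$, which also gives $\hat{\rho}:[0,2H(\rhosonic))\to(\rhosonic,\rho_{\max}]$.

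Next I would plug $(\rho,\mathbf{u})=\bigl(\hat{\rho}(|\nabla\psi|^2),\,\nabla^{\perp}\psi/\hat{\rho}(|\nabla\psi|^2)\bigr)$ into \eqref{E-system} one equation at a time. The continuity equation is immediate: $\rho\mathbf{u}=\nabla^{\perp}\psi=(\psi_{x_2},-\psi_{x_1})$, so
\[
\der_{x_1}(\rho u_1)+\der_{x_2}(\rho u_2)=\psi_{x_2 x_1}-\psi_{x_1 x_2}=0.
\]
The irrotationality equation is exactly the PDE assumed on $\psi$: since $\mathbf{u}=\nabla^{\perp}\psi/\rho$,
\[
\der_{x_1}u_2-\der_{x_2}u_1=-\der_{x_1}\!\left(\tfrac{\psi_{x_1}}{\rho}\right)-\der_{x_2}\!\left(\tfrac{\psi_{x_2}}{\rho}\right)=-\Div\!\left(\tfrac{\nabla\psi}{\hat{\rho}(|\nabla\psi|^2)}\right)=0.
\]
For Bernoulli's law, the defining identity $H(\rho)=\tfrac{1}{2}|\nabla\psi|^2$ rearranges as $\rho^2(B_0-\mathfrak{h}(\rho))=\tfrac{1}{2}|\nabla\psi|^2$, and dividing by $\rho^2>0$ gives $B_0-\mathfrak{h}(\rho)=\tfrac{1}{2}|\nabla\psi|^2/\rho^2=\tfrac{1}{2}|\mathbf{u}|^2$, i.e.\ the third equation of \eqref{E-system}.

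Finally, subsonicity is a consequence of how the branch was chosen. Since $\rho=\hat{\rho}(|\nabla\psi|^2)=\rho_{+}(\tfrac{1}{2}|\nabla\psi|^2)>\rhosonic$, and since \eqref{definition-mach-rho} together with the computation $M'(\rho)<0$ on $(0,\rho_{\max}]$ shows that $M(\rho)<1$ for $\rho>\rhosonic$, we have
\[
|\mathbf{u}|^2 \;=\; \frac{|\nabla\psi|^2}{\rho^2}\;=\;\frac{2H(\rho)}{\rho^2}\;=\;M(\rho)^2\,\rho^{\gam-1}\;<\;\rho^{\gam-1},
\]
which is \eqref{subsonicity-str-form}. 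There is no real obstacle here; the only subtle point is remembering to pick the correct (supersonic or subsonic) branch of $H(\rho)=\tfrac{\zeta}{2}$, and the problem dictates the subsonic one $\rho_{+}$ because Theorem \ref{main-theorem-physical-var}(a)(v) requires $M<1$ behind the detached shock.
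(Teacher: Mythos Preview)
Your proposal is correct and takes essentially the same approach as the paper: you define $\hat{\rho}(\zeta)=\rho_{+}(\zeta/2)$ exactly as the paper does, and then simply spell out the verifications (smoothness via the implicit function theorem, the three equations of \eqref{E-system}, and subsonicity from $\rho>\rhosonic$) that the paper leaves implicit in its one-line proof.
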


The non-divergence form of the equation \eqref{equation-psi} is
\begin{equation}\label{equation-psi-nondiv}
  (c^2-\frac{\psi_{x_2}^2}{\rho^2})\psi_{x_{1}x_1}
  +2\frac{\psi_{x_1}\psi_{x_2}}{\rho^2}\psi_{x_1x_2}
  +(c^2-\frac{\psi_{x_1}^2}{\rho^2})\psi_{x_2x_2}=0
\end{equation}
for $(c^2,\rho)=(\hat{\rho}^{\gam-1}, \hat{\rho})(|\nabla\psi|^2)$. It follows from \eqref{subsonicity-str-form} that the equation \eqref{equation-psi-nondiv} is elliptic.

\medskip

For the rest of the paper, we fix constants $\gam> 1$ and $B_0>0$. And, we assume that $(\irho, \ui)\in D_{\infty}(\gam, B_0)$ satisfies $M_{\infty}=\frac{1}{\eps}$ for $\eps\le \eps_2$, where $\eps_2$ is from Lemma \ref{lemma-shock-polar2}, unless otherwise specified.

Let us set
\begin{equation}\label{definition-ipsi}
\ipsi(x_1,x_2):=\irho \iu x_2.
\end{equation}
For a given function $f:\R_+\rightarrow \R$ satisfying \eqref{location-of-shock}, define
\begin{equation}
\label{definition-Om-f}
\Om_f:=\{{\bf x}=(x_1,x_2)\in \R^2: f(x_2)<x_1<b(x_2),\,\,x_2>0\}.
\end{equation}
We will prove Theorem \ref{main-theorem-physical-var}(a) by solving the following free boundary problem for a stream function $\psi$ in $\R^2_+\setminus W_b^+$.
\begin{problemL}
  \label{problem-fbp-psi}
  Fix a constant $d_0>0$.
  Find a function $\fshock\in C^1_{\rm{loc}}(\R_+)$ with satisfying \eqref{location-of-shock} and a function $\psi\in C^1_{\rm{loc}}(\ol{\Om_{\fshock}})\cap C^2_{\rm{loc}}(\Om_{\fshock})$ so that the following properties hold:
  \begin{itemize}
    \item [(i)]
    \begin{equation*}
      |\nabla\psi|^2<2H(\rhosonic) \quad\tx{in}\quad \ol{\Om_{\fshock}}
    \end{equation*}
    \item [(ii)] (Equation for $\psi$) For $\hat{\rho}$ from Lemma \ref{lemma-rho-expression},
    \begin{equation*}
     {\rm{div}}\left(\frac{\nabla\psi}{\hat{\rho}(|\nabla\psi|^2)}\right)=0\quad\tx{in}\quad {\Om_{\fshock}}
    \end{equation*}
    \item [(iii)] (Boundary conditions for $\psi$) Define
    \begin{equation*}
      \begin{split}
          & \shock:=\{(\fshock(x_2),x_2):x_2\ge 0\}, \quad \Gam_{\rm{sym}}:=\{(x_1,0):\fshock(0)<x_1<b(0)\}, \\
           & \Gam_b:=\{(b(x_2),x_2):x_2\ge 0\}.
      \end{split}
    \end{equation*}
 Then   $\psi$ satisfies the following boundary conditions:
    \begin{equation}
    \label{BCs-psi}
      \begin{split}
         \psi=\ipsi & \quad\tx{on $\shock$}, \\
          \psi=0 & \quad\tx{on $\Gam_{\rm{sym}}\cup \Gam_b$}.
      \end{split}
    \end{equation}
    (Asymptotic boundary condition) In addition, $\psi$ satisfies
    \begin{equation}
    \label{Asymp-BCs-psi}
      \lim_{{|\bf x|\rightarrow \infty}\atop {{\bf x}\in \Om_{\fshock}}} |\nabla^{\perp}\psi(\rx)-\rho_{\rm st}^{\eps}{\bf u}_{\rm st}^{\eps}|=0.
    \end{equation}

    \item [(iv)] (Free boundary condition) 
    \begin{equation}
    \label{equation-free-bdry}
      \begin{split}
         &\fshock'(x_2)
         =\frac{\left(\psi_{x_1}/\hat{\rho}(|\nabla\psi|^2)\right)(\fshock(x_2),x_2)}
         {\left(\psi_{x_2}/\hat{\rho}(|\nabla\psi|^2)\right)(\fshock(x_2),x_2)-\iu} \quad\tx{for all $x_2>0$}, \\
          &\fshock(0)=b_0-d_0.
      \end{split}
    \end{equation}

  \end{itemize}
\end{problemL}

\begin{definition}[Detached distance of a shock $\shock$ from the blunt body $W_b^+$]
In Problem \ref{problem-fbp-psi}, set
\begin{equation}
\label{definition-P0}
\rb:=(b_0,0),\quad \tx{and}\quad \lb:=(b_0-d_0, 0).
\end{equation}
We call $d_0=|\rb-\lb|$ \emph{the detached distance} of the shock $\shock$ from the blunt body $W_b^+$.
\end{definition}

\begin{theorem}
	\label{main-theorem-psi}
	
	Fix $\gam >1$ and $B_0>0$. Let $\eps_2>0$ be from Lemma \ref{lemma-shock-polar2}.	
	For a fixed constant $d_0>0$, called {\emph{the detached distance of a shock}} from $W_b^+$, and a fixed constant $\beta\in(0,1)$, there exists a small constant $\bar{\eps}\in(0, \eps_2]$ depending on $(\gam, B_0, d_0)$ so that whenever $(\irho, \iu)\in D_{\infty}(\gam, B_0)$ satisfies $\frac{1}{M_{\infty}}=\eps$ for $\eps\in(0, \bar{\eps}]$, Problem \ref{problem-fbp-psi} has a solution $(\fshock, \psi)\in C^1_{\rm{loc}}(\R_+)\times [C^1_{\rm{loc}}(\ol{\Om_{\fshock}})\cap C^2_{\rm{loc}}({\Om}_{\fshock})]$ satisfying  the following properties:
	\begin{itemize}
		\item[(i)](Detached distance)
		\begin{equation}\label{shock-IC}
		\fshock(0)=b_0-d_0;
		\end{equation}

		\item[(ii)](Asymptotic states)
		\begin{equation}
		\label{Asymp-BCs2-psi}
		\lim_{{|\bf x|\rightarrow \infty}\atop {{\bf x}\in \Om_{\fshock}}}
		|\nabla\psi^{\perp}({\bf x})-\rho_{\rm st}^{\eps}{\bf u}_{\rm st}^{\eps}|=0,\quad
		\tx{and}\quad
		\lim_{x_2\to \infty} |\fshock'(x_2)-s_{\rm{st}}^{\eps}|=0;
		\end{equation}
		
		\item[(iii)] There exists a constant $\delta>0$ depending only on $(\gam, B_0, d_0)$ such that
		\begin{equation}\label{uniform-lwrbd-detached-distance}
		b(x_2)-\fshock(x_2)\ge \delta\quad\tx{for all $x_2\ge 0$};
		\end{equation}
		
		\item[(iv)] There exists a constant $\hat{\alp}\in(0,1)$ depending only on $\tw$, and a constant $C$ depending only on $(\gam, B_0, d_0)$ such that
		\begin{equation}\label{apriori-estimate-fbp-solution}
		\|\fshock-f_0\|_{2,\hat{\alp}, \R^+}^{(-\beta)}+
		\|\psi-\psi_0\|_{2,\hat{\alp},\Om_{\fshock}}^{(-\beta)}\le C\mathfrak{q}_{\gam}(\eps)
		\end{equation}
		for the functions $f_0$ and $\psi_0$ given by
\begin{equation}
\label{definition-background}
f_0(x_2):=s_{\rm{st}}^{\eps}x_2+b_0-d_0,\quad \tx{and}\quad
\psi_0({\bf x}):=
\rho_{\rm{st}}^{\eps}u_{\rm{st}}^{\eps}\left(x_2-\kappa_w(x_1-b_0+d_0)\right).
\end{equation}
Here, the function $\mathfrak{q}_{\gam}(\eps)$ is from Lemma \ref{lemma-shock-polar2};

\item[(v)] Set $\rho({\bf x}):=\hat{\rho}(|\nabla\psi|^2({\bf x}))$ for $\hat{\rho}$ from Lemma \ref{lemma-rho-expression}. Then there exists a constant $\sigma\in(0,1)$ depending only on $(\gam, B_0, d_0)$ so that the Mach number $M(\rho)$ defined by \eqref{definition-mach-rho} satisfies the inequality
    \begin{equation*}
      M(\rho({\bf x}))\le 1-\sigma\quad\tx{in $\ol{\Om_{\fshock}}$}.
    \end{equation*}
	\end{itemize}
\end{theorem}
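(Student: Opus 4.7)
The plan is to build a solution by (1) setting up the problem as a small perturbation of the background strong-shock state $(f_0,\psi_0)$, (2) solving a truncated version of the free boundary problem on the finite strip $\Omega_{f}\cap\{x_2<L\}$ via a Schauder fixed-point argument, and (3) passing to the limit $L\to\infty$ using uniform weighted H\"older estimates. The small parameter driving everything is $\mathfrak{q}_{\gamma}(\eps)$ from Lemma \ref{lemma-shock-polar2}; by choosing $\bar\eps$ small we keep $\hat\rho(|\nabla\psi|^2)$ bounded away from $\rho_{\rm sonic}$, so that the equation in Lemma \ref{lemma-rho-expression} stays uniformly elliptic in the perturbed regime. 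The output of (2) will satisfy item (v) automatically, since $\psi_0$ yields the subsonic state $(\rho_{\rm st}^\eps,{\bf u}_{\rm st}^\eps)$ whose Mach number is bounded below $1$ by \eqref{subsonicity}.

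First I would fix a height $L\gg 1$, and for each candidate shock graph $x_1=f(x_2)$ with $\|f-f_0\|_{2,\hat\alpha,[0,L]}^{(-\beta)}\le K\mathfrak{q}_{\gamma}(\eps)$, flatten $\Omega_f^L:=\Omega_f\cap\{x_2<L\}$ by a change of variables to a fixed reference domain, and solve the quasilinear elliptic equation \eqref{equation-psi-nondiv} for $\psi$ with the Dirichlet data \eqref{BCs-psi} (that is, $\psi=\psi_\infty$ on $\{x_1=f(x_2)\}$ and $\psi=0$ on $\Gamma_{\rm sym}\cup\Gamma_b$) and a carefully chosen artificial boundary condition at $\{x_2=L\}$ (for example $\partial_{x_2}\psi=\rho_{\rm st}^\eps u_{\rm st}^\eps$, consistent with $\psi_0$). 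Uniform ellipticity, the smallness of the data, and Schauder theory for mixed Dirichlet problems in domains with corners at $P_0=(b_0,0)$ and $P_1=(b_0-d_0,0)$ give $\psi$ with a weighted H\"older estimate $\|\psi-\psi_0\|_{2,\hat\alpha,\Omega_f^L}^{(-\beta)}\le C\mathfrak{q}_{\gamma}(\eps)$, where $\hat\alpha\in(0,1)$ depends on the corner angles and hence on $\theta_w$.

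Next I would update the shock by defining a map $\mathcal{T}f$ as the solution $\tilde f$ of the ODE obtained from \eqref{equation-free-bdry}: $\tilde f'(x_2)=\Phi[\psi](\tilde f(x_2),x_2)$ with $\tilde f(0)=b_0-d_0$, where $\Phi[\psi]$ is the right-hand side of \eqref{equation-free-bdry} evaluated at $\psi$. Linearizing $\Phi[\psi]$ around $\psi_0$ and using the $\psi$-estimate, one checks that $\mathcal{T}$ maps the closed ball of radius $K\mathfrak{q}_{\gamma}(\eps)$ in the weighted norm into itself (by choosing $K$ large and $\bar\eps$ small), and is continuous in the $C^{1,\hat\alpha/2}$ topology. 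Schauder's fixed-point theorem then produces $(f^L,\psi^L)$ solving the cut-off problem, and the a priori bound $\|f^L-f_0\|_{2,\hat\alpha}^{(-\beta)}+\|\psi^L-\psi_0\|_{2,\hat\alpha}^{(-\beta)}\le C\mathfrak{q}_{\gamma}(\eps)$ is independent of $L$.

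Finally, taking $L_n\to\infty$ and using Arzel\`a--Ascoli on compact subintervals for $f^{L_n}$ and on compact subsets of $\Omega_{f^{L_n}}$ for $\psi^{L_n}$, I extract a diagonal subsequence converging to $(f_{\rm sh},\psi)$ which solves Problem \ref{problem-fbp-psi} on the unbounded domain and inherits the weighted estimate \eqref{apriori-estimate-fbp-solution}; the decay weight $(1+x_2)^{\beta}$ forces the asymptotic limits \eqref{Asymp-BCs2-psi}, and the lower bound \eqref{uniform-lwrbd-detached-distance} follows from $b(x_2)-f_0(x_2)\ge d_0$ together with the smallness of $f_{\rm sh}-f_0$. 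The main obstacle I anticipate is controlling the corner behavior at $P_0$ and $P_1$, where the standard $C^{2,\alpha}$ Schauder theory breaks down: one must choose $\hat\alpha$ small enough (depending on $\theta_w$ and the angle between the shock and $\Gamma_{\rm sym}$) so that the weighted H\"older norm is compatible with the corner singularities, and verify that the oblique/Dirichlet compatibility at $P_1$ holds in the limit so that the free boundary condition \eqref{equation-free-bdry} is meaningful up to the symmetry axis. A secondary difficulty is choosing an artificial top boundary condition at $x_2=L$ that is both solvable and passes to the limit without creating spurious boundary layers; a Neumann-type condition compatible with $\psi_0$ should work and is preserved in the limit by the weighted decay.
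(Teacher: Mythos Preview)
Your overall strategy---solve a truncated free boundary problem by a Schauder fixed-point iteration with uniform weighted estimates, then send the cut-off height to infinity via Arzel\`a--Ascoli and a diagonal argument---is exactly what the paper does. However, you have misidentified where the corner difficulty lives, and this matters for the choice of cut-off geometry.

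The corners at $P_0=(b_0,0)$ and $P_1=(b_0-d_0,0)$ are \emph{not} the obstruction. By Definition~\ref{definition-bluntbody-ftn} we have $b'(0)=0$, and the paper builds into the iteration set the constraint $f'(0)=0$ (see \eqref{definition-iterset-shocks}); hence both $\Gam_b$ and the approximate shock meet $\Gam_{\rm sym}$ orthogonally, and a straightforward even reflection across $x_2=0$ (Lemma~\ref{lemma-estimate-nr-sym}) gives full $C^{2,\alpha}$ regularity there with no restriction on $\alpha$. The genuine corner is where the artificial top boundary meets the shock. The paper does \emph{not} cut off horizontally at $\{x_2=L\}$: instead it takes the cut-off segment $\Gam_{{\rm cutoff},L}^f$ perpendicular to the straight part of $\Gam_b$ (Definition~\ref{definition-bounded-domain}), so that (a) the corner at $P_{3,L}^f$ is a right angle and the homogeneous Neumann condition $\nabla\psi\cdot{\bf n}_c=0$ with ${\bf n}_c=(\cos\theta_w,\sin\theta_w)$ is exactly satisfied by $\psi_0$, allowing reflection there too, and (b) the only corner requiring a barrier argument is $P_{2,L}^f$, where the shock meets the cut-off at an angle close to $\theta_w$. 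It is this angle that fixes $\hat\alpha$ (see \eqref{choice-of-mus}--\eqref{choice-of-alpha}), and the paper carries a weight $(\delta_{\bf x}^{(2)})^{-(1+\hat\alpha)}$ at $P_{2,L}^f$ in the truncated estimates (Definitions~\ref{definition-norms-f-decay-corner}--\ref{definition-norms-phi-decay-corner}), which disappears in the limit $L\to\infty$ because $P_{2,L}^f$ escapes to infinity. Your horizontal cut-off would instead produce \emph{two} non-right-angle corners (one with $\Gam_b$, one with the shock) and your proposed Neumann data $\partial_{x_2}\psi=\rho_{\rm st}^\eps u_{\rm st}^\eps$ is not compatible with the Dirichlet data $\psi=0$ on $\Gam_b$ at their intersection, so you would need an additional compatibility correction and barrier there.
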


Once Theorem \ref{main-theorem-psi} is proved, then Theorem \ref{main-theorem-physical-var}(a) easily follows from Theorem \ref{main-theorem-psi}. So we first assume that Theorem \ref{main-theorem-psi} holds, then give a short proof of Theorem \ref{main-theorem-physical-var}(a).
\begin{proof}
[Proof of Theorem \ref{main-theorem-physical-var}(a)]
For a fixed constant $d_0>0$, let the constant $\bar{\eps}>0$ be from Theorem \ref{main-theorem-psi}. Suppose that $(\irho, \ui)\in D_{\infty}(\gam, B_0)$ satisfies $M_{\infty}=\frac{1}{\eps}$ for $\eps\in(0,\bar{\eps}]$. By Theorem \ref{main-theorem-psi}, there exists a solution $(\fshock,\psi)$ to Problem \ref{problem-fbp-psi} so that it satisfies \eqref{shock-IC}--\eqref{apriori-estimate-fbp-solution}. Let us set $(\rho, {\bf u}):=\left(\hat{\rho}(|\nabla\psi|^2), \frac{\nabla^{\perp}\psi}{\hat{\rho}(|\nabla\psi|^2)}\right)$. Then it follows from Lemma \ref{lemma-rho-expression} and \eqref{Asymp-BCs2-psi} that $(\rho, {\bf u})$ solves \eqref{E-system} in $\Om_{\fshock}$, and $\rho$ satisfies $\displaystyle{\lim_{{|\bf x|\rightarrow \infty}\atop {{\bf x}\in \Om_{\fshock}}}
		\rho=\rho_{\rm st}^{\eps}}$. By the condition $\psi=\ipsi$ on $\shock$ given in \eqref{BCs-psi}, we have $\der_{\bm\tau}(\psi-\ipsi)=0$ on $\shock$ for a unit tangential vector field ${\bm{\tau}}$ on $\shock$. And, this implies that $(\rho{\bf u}-\irho{\bf u}_{\infty})\cdot{\bm\nu}=0$ on $\shock$ for a unit normal vector field ${\bm{\nu}}$ on $\shock$. Here, ${\bf u}_{\infty}$ represents the vector $(\ui,0)$. The condition \eqref{equation-free-bdry} can be rewritten as $({\bf u}-{\bf u}_{\infty})\cdot{\bm\tau}=0$ on $\shock$. Therefore, $(\rho, {\bf u})$ satisfies the Rankine-Hugoniot condition stated in ($S_3$) of Definition \ref{definition-shock-solution} for the incoming state $(\irho, {\bf u}_{\infty})$. This proves that $(\rho, {\bf u})$ yields a shock solution with the shock $x_1=\fshock(x_2)$ for the incoming supersonic state $(\irho, {\bf u}_{\infty})$ in the sense of Definition \ref{definition-shock-solution}.

Since $(\irho, \iu)\in D_{\infty}(\gam, B_0)$, we have $\irho<\rhosonic$ for $\rhosonic$ given by \eqref{definition-rho-sonic}. On the other hand, \eqref{definition-rho-hat} implies that $\rhosonic<\rho=\hat{\rho}(|\nabla\psi|^2)\le \rho_{\max}$ in $\ol{\Om_{\fshock}}$. Therefore, we have $0<\irho<\rho<\infty$ on $\shock$. Since $\shock$ is the graph of $x_1=\fshock(x_2)$, the unit normal vector field ${\bm\nu}$ pointing interior to $\Om_{\fshock}$ is given by ${\bm\nu}
=\frac{\overrightarrow{(1,-\fshock'(x_2))}}{\sqrt{1+(\fshock'(x_2))^2}}$ so we have ${\bf u}_{\infty}\cdot {\bm\nu}=\frac{\ui}{\sqrt{1+(\fshock'(x_2))^2}}>0$ on $\shock$. Then we obtain from $(\rho{\bf u}-\irho{\bf u}_{\infty})\cdot{\bm\nu}=0$ and $\irho<\rho$ on $\shock$ that $0< {\bf u}\cdot {\bm\nu}< {\bf u}_{\infty}\cdot{\bm\nu}<\infty$ on $\shock$. This proves that $(\rho, {\bf u})$ with a shock $\shock$ satisfies the entropy condition.

Finally, the statements (i)--(v) in Theorem \ref{main-theorem-physical-var}(a) directly follow from the statements (i)--(v) in Theorem \ref{main-theorem-psi}. This completes the proof.

\end{proof}

The proof of Theorem \ref{main-theorem-psi} is given in the next two sections.

\section{Free boundary problems in cut-off domains}
\label{section-fbp-cf}
\numberwithin{equation}{subsection}

In this section, we introduce free boundary problems in cut-off domains.
We will prove Theorem \ref{main-theorem-psi} by finding a sequence of solutions to free boundary problems in cut-off domains then passing to the  limit of the sequence in an appropriately chosen Banach space.


\subsection{Formulation of a free boundary problem in a cut-off domain}
\label{subsection-approximate-shock-set}


Fix a sufficiently large constant $L>1$. For a function $f\in C^1([0,L])$ with satisfying
\begin{equation}
\label{condition-for-f}
 f'(0)=0,\quad\tx{and}\quad f(x_2)<b(x_2)\quad\tx{for $0\le x_2\le L$},
\end{equation}
we introduce a cut-off domain $\Om_{f,L}$ in the following definition.

\begin{figure}[htp]
	\centering
	\begin{psfrags}
		\psfrag{lt}[cc][][0.8][0]{$P_{2,L}^f$}
		\psfrag{rt}[cc][][0.8][0]{$P_{3,L}^f$}
		\psfrag{rb}[cc][][0.8][0]{$\rb$}
		\psfrag{lb}[cc][][0.8][0]{$\lb$}
		\psfrag{sm}[cc][][0.8][0]{$\Gam_{\rm{sym}}$}
		\psfrag{gw}[cc][][0.8][0]{$\Gam_{b,L}^f$}
		\psfrag{cf}[cc][][0.8][0]{$\phantom{aa}\Gam_{{\rm{cutoff}},L}^f$}
		\psfrag{sh}[cc][][0.8][0]{$\Gam_{{\rm{sh}},L}^f$}
		\psfrag{O}[cc][][0.8][0]{$0$}
		\psfrag{om}[cc][][0.8][0]{$\Om_{f,L}$}
		\psfrag{Uf}[cc][][0.8][0]{$M_{\infty}=\frac{1}{\eps}$}
\psfrag{t2}[cc][][0.8][0]{$\phantom{aa}\theta_{P_{2,L}^f}$}
		\includegraphics[scale=1.2]{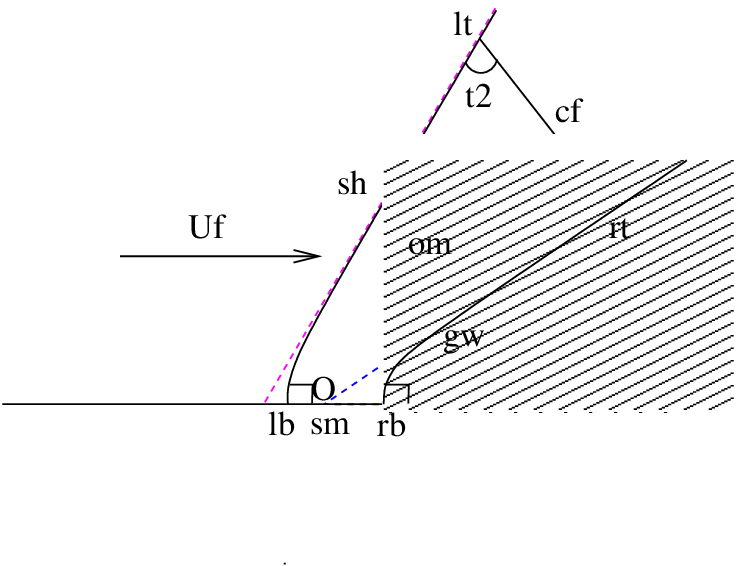}
		\caption{A cut-off domain $\Om_{f,L}$}\label{figure3}
	\end{psfrags}
\end{figure}

\begin{definition}\label{definition-bounded-domain}
For a function $f\in C^1([0,L])$ with satisfying \eqref{condition-for-f}, we set
  \begin{align}
  \label{definition-P2L}
    &P_{2,L}^f:=(f(L), L),\\
\label{definition-approximate-shock}
  \tx{and}\quad
  &\Gam_{\rm{sh},L}^f :=\{(f(x_2),x_2):0 < x_2< L\}.
\end{align}
Let $P_{3,L}^f$ be the point on the line $x_2=x_1\tan\theta_w$ so that the line segment $\ol{P_{2,L}^f P_{3,L}^f}$ is orthogonal to the line $x_1=x_2\cot\theta_w$. A direct computation yields
\begin{equation}\label{definition-P3L}
    P_{3,L}^f=\frac{f(L)+L\kappa_w}{1+\kappa_w^2}(1,\kappa_w),
  \end{equation}
for $\kappa_w=\tan\theta_w$. If $L$ and $f$ are chosen to satisfy
\begin{equation}
\label{condition-for-P3}
  \frac{(f(L)+L\kappa_w)\kappa_w}{1+\kappa_w^2}> h_0,
\end{equation}
then the point $P_{3,L}^f$ lies on $\Gam_b$, and the line segment $\ol{P_{2,L}^f P_{3,L}^f}$ is perpendicular to $\Gam_b$ at $P_{3,L}^f$. See Fig. \ref{figure3}.
We define a cut-off boundary $\Gam_{\rm{cutoff},L}^f$ by
  \begin{equation}\label{definition-cutoff-bdry}
    \Gam_{\rm{cutoff},L}^f:=\ol{P_{2,L}^fP_{3,L}^f},
  \end{equation}
and set
\begin{equation}\label{definition-fbdries}
\Gam^f_{b,L}:=\{(b(x_2), x_2): 0<x_2<x_2^{P^f_{3,L}}\},
\end{equation}
where $x_j^{P}$ represents $x_j$-coordinate of $P\in \R^2$ for $j=1,2$.

Finally, we define $\Om_{f, L}$ by the open region enclosed by $\ol{\Gam_{\rm{sym}}\cup \Gam_{b,L}^f\cup \Gam_{\rm{cutoff},L}^f\cup \Gam_{\rm{sh},L}^f}$.

\end{definition}

\begin{problemL}[Detached shock in a cut-off domain]
\label{fbp-bdd-dmns}
Fix a constant $d_0>0$.
For a fixed constant $L>0$ sufficiently large, find a function $f\in C^1([0,L])$ and a function $\psi\in C^1(\ol{\Om_{f,L}})\cap C^2(\Om_{f,L})$ so that the following properties hold:
\begin{itemize}
\item[(i)] \begin{equation*}
      |\nabla\psi|^2<2H(\rhosonic) \quad\tx{in}\quad \ol{\Om_{f,L}};
    \end{equation*}

 \item[(ii)] The function $\psi$ solves the nonlinear boundary value problem:
     \begin{equation}
     \label{nlbvp-in-fbp-bdd}
       \begin{cases}
        {\rm{div}}\left(\frac{\nabla\psi}{\hat{\rho}(|\nabla\psi|^2)}\right)=0
        \quad\tx{in $\Om_{f,L}$}\\
       \psi=\ipsi\quad\tx{on $\Gam_{\rm{sh},L}^f$}\\
       \psi=0\quad\tx{on $\Gam_{\rm{sym}} \cup \Gam_{b,L}^f$}\\
       \nabla\psi\cdot{\bf n}_c=0\quad\tx{on $\Gam_{\rm{cutoff}, L}^f$ for ${\bf n}_c=\langle \cos \tw, \sin \tw \rangle$}
       \end{cases}
     \end{equation}
     where $\Gam_{\rm{sh},L}^f$, $\Gam_{b,L}^f$ and $\Gam_{\rm{cutoff}, L}^f$ are defined by Definition \ref{definition-bounded-domain}. Here ${\bf n}_c$ is the outward unit normal on $\Gam_{\rm{cutoff}, L}^f$;

\item[(iii)](Free boundary condition)
\begin{equation}
\label{free-bc}
      \begin{split}
         &f'(x_2)
         =\frac{(\psi_{x_1}/\hat{\rho}(|\nabla\psi|^2))(f(x_2),x_2)}
         {(\psi_{x_2}/\hat{\rho}(|\nabla\psi|^2))(f(x_2),x_2)-\iu} \quad\tx{for $0<x_2<L$}\\
         &f(0)=b_0-d_0;
      \end{split}
    \end{equation}

\item[(iv)] The function $f$ satisfies the condition \eqref{condition-for-f}.

\end{itemize}
\end{problemL}

Before stating the solvability of Problem \ref{fbp-bdd-dmns}, we need to define weighted H\"{o}lder norms for functions defined in bounded domains.
\begin{definition}
\label{definition-norms-f-decay-corner}
Fix a constant $L>1$. Let $m, k\in \{0\}\cup \mathbb{N}$ with $k\le m$, $\mu\in \R$, and $\alp\in(0,1)$. For $x_2,x_2'\in (0, L)$, set
\begin{equation*}
  \delta^{(1)}_{x_2}:=\min\{|x_2-L|, 1+x_2\},\quad \delta^{(1)}_{x_2,x_2'}:=\min\{\delta^{(1)}_{x_2}, \delta^{(1)}_{x_2'} \}.
\end{equation*}
For a function $f:[0,L]\rightarrow \R$, define
\begin{equation*}
\begin{split}
&\|f\|_{m, (0,L)}^{(\mu;-(k+\alp),\{L\})} :=
\|f\|_{k,\alp, (0,L)}^{(\mu)}+
\sum_{j>k}^m \sup_{x_2\in (0,L)} (1+x_2)^{j+\mu}
\left(\frac{\delta_{x_2}^{(1)}}{1+x_2}\right)^{ j-(k+\alp)} \left|\frac{d^j}{dx_2^j}f(x_2)\right|\\
&[f]_{m,\alp, (0,L)}^{(\mu;-(k+\alp),\{L\})} :=
\sup_{x_2\neq x'_2\in (0,L)} (1+\min\{x_2,x'_2\})^{m+\alp+\mu}
\left(\frac{\delta_{x_2,x_2'}^{(1)}}{1+\max\{x_2,x_2'\}}\right)
^{ m-k}\\
&\phantom{[f]_{m,\alp, (0,L)}^{(\kappa;\om,\{L\})} :=
\sup_{x_2\neq x'_2\in (0,L)} (1+\min\{x_2,x'_2\})^{m+\alp+\kappa}aaaaaaaa}
\times \frac{|\frac{d^m}{dx_2^m}f(x_2)-\frac{d^m}{dx_2^m}f(x'_2)|}{|x_2-x'_2|^{\alp}}\\
&\|f\|_{m,\alp, (0,L)}^{(\mu;-(k+\alp),\{L\})}:=
\|f\|_{m, (0,L)}^{(\mu;-(k+\alp),\{L\})} + [f]_{m,\alp, (0,L)}^{(\mu;-(k+\alp),\{L\})} .
\end{split}
\end{equation*}
In this definition, the norm $\|\cdot\|_{k,\alp, (0,L)}^{(\mu)}$ is given by Definition \ref{definition-norms-w-decay}.
Let $C^{m,\alp}_{(\mu;-(k+\alp),\{L\})}(0,L)$ denote the set of completion in the norm $\|\cdot\|_{m,\alp, (0,L)}^{(\mu;-(k+\alp), \{L\})}$ for the set of all smooth functions whose $\|\cdot\|_{m,\alp, (0,L)}^{(\mu;-(k+\alp), \{L\})}$-norms are finite.
\end{definition}

\begin{definition}
\label{definition-norms-phi-decay-corner}
Let $\Om$ be an open, bounded and connected domain in $\R^2_+$, and let $P$ be a fixed point on  $\der \Om$.
Let $m, k\in \{0\}\cup \mathbb{N}$ with $k\le m$, $\mu\in \R$, and $\alp\in(0,1)$. For ${\bf x}=(x_1,x_2),{\bf x}'=(x_1', x_2')\in \Om$, set
\begin{equation*}
 \delta^{(2)}_{{\bf x}}:=\min\{ |{\bf x}-P|, 1+x_2\},\quad \delta^{(2)}_{{\bf x},{\bf x}'}:=\min\{\delta^{(2)}_{{\bf x}}, \delta^{(2)}_{{\bf x}'} \}.
\end{equation*}
For a function $\vphi:\Om\rightarrow \R$, define
\begin{equation*}
\begin{split}
&\|\vphi\|_{m, \Om}^{(\mu;-(k+\alp),\{P\})} :=\|\vphi\|_{k,\alp, \Om}^{(\mu)}+
\sum_{j>k}^m \sum_{l=0}^j\sup_{{\bf x}\in \Om} (1+x_2)^{j+\kappa}
\left(\frac{\delta_{x_2}^{(2)}}{(1+x_2)}\right)^{ j-(k+\alp)} \left|\der^l_{x_1}\der^{j-l}_{x_2}\vphi({\bf x})\right|\\
&[\vphi]_{m,\alp, \Om}^{(\mu;-(k+\alp),\{P\})} :=\sum_{l=0}^m
\sup_{{\bf x}\neq {\bf x}'\in \Om} (1+\min\{x_2,x'_2\})^{m+\alp+\mu}
\left(\frac{\delta_{{\bf x},{\bf x}}^{(2)}}{(1+\max\{x_2,x_2'\})}\right)^{m-k}\\
&\phantom{(1+\min\{x_2,x'_2\})^{m+\alp+\kappa}(1+\min\{x_2,x'_2\})^{m+\alp+\kappa}}\times\frac{|\der_{x_1}^l\der_{x_2}^{m-l}\vphi({\bf x})-\der_{x_1}^l\der_{x_2}^{m-l}\vphi({\bf x}')|}{|{\bf x}-{\bf x}'|^{\alp}}\\
&\|\vphi\|_{m,\alp, \Om}^{(\mu;-(k+\alp),\{P\})}:=
\|\vphi\|_{m, \Om}^{(\mu;-(k+\alp),\{P\})} + [\vphi]_{m,\alp, \Om}^{(\mu;-(k+\alp),\{P\})} .
\end{split}
\end{equation*}
In this definition, the norm $\|\cdot\|_{k,\alp, \Om}^{(\mu)}$ is given by Definition \ref{definition-norms-w-decay}.
Let $C^{m,\alp}_{(\mu;-(k+\alp),\{P\})}(\Om)$ denote the set of completion in the norm  $\|\cdot\|_{m,\alp, \Om}^{(\mu;-(k+\alp),\{P\})}$ for the set of all smooth functions whose $\|\cdot\|_{m,\alp, \Om}^{(\mu;-(k+\alp), \{P\})}$-norms are finite.
\end{definition}

\begin{remark}[Compact embedding property]
\label{remark-compact-embedding}
By applying Arzel\`{a}-Ascoli theorem, one can directly check that $C^{m,\alp_1}_{(\mu;-(k+\alp_1),\{P\})}(\Om)$ is compactly embedded into $C^{m,\alp_2}_{(\mu;-(k+\alp_2),\{P\})}(\Om)$ for $m,k\in \{0\}\cup \mathbb{N}$ with $k\le m$ and $0<\alp_2<\alp_1<1$.

\end{remark}

The following proposition states the solvability of Problem \ref{fbp-bdd-dmns} for any $L$ sufficiently large depending on $(\gam, B_0, d_0)$.

\begin{proposition}
  \label{proposition-wp-fbpbd}
Fix $\gam >1$ and $B_0>0$. Let $\eps_2>0$ be from Lemma \ref{lemma-shock-polar2}, respectively.
\smallskip

For fixed constants $d_0>0$ and $\beta\in(0,1)$, there exist a small constant $\eps_3\in (0, \eps_2]$ and a large constant $L_*$ depending only on $(\gam, B_0, d_0)$ so that if
$(\irho, \ui)\in D_{\infty}(\gam, B_0)$ satisfies $M_{\infty}=\frac{1}{\eps}$ for $\eps\in(0, \eps_3]$, and if
$L\ge L_*$ holds, then Problem \ref{fbp-bdd-dmns} has a solution $(\fshock, \psi)$ satisfying the following properties:
\begin{itemize}
		\item[(i)] There exists a constant $\delta>0$ depending only on $(\gam, B_0, d_0)$ such that
		\begin{equation}\label{uniform-lwrbd-detached-distance-bdd}
		b(x_2)-\fshock(x_2)\ge \delta\quad\tx{for all $x_2\in[0,L]$};
		\end{equation}
		
		\item[(ii)] There exist a constant $\hat{\alp}\in(0,1)$ depending only on $\tw$, and a constant $C>0$ depending only on $(\gam, B_0, d_0)$ such that
		\begin{equation}\label{apriori-estimate-fbp-solution-bdd}
		\|\fshock-f_0\|_{2,\hat{\alp}, (0,L)}^{(-\beta;-(1+\hat{\alp}),\{L\})}+
		\|\psi-\psi_0\|_{2,\hat{\alp},\Om_{\fshock,L}}^{(-\beta;-(1+\hat{\alp}),\{P_{2,L}^{\fshock}\})}\le C\mathfrak{q}_{\gam}(\eps)
		\end{equation}
for the functions $f_0$ and $\psi_0$ given by \eqref{definition-background}.
The function $\mathfrak{q}_{\gam}(\eps)$ in \eqref{apriori-estimate-fbp-solution-bdd} is from Lemma \ref{lemma-shock-polar2}. And, the domain $\Om_{\fshock,L}$ is given by Definition \ref{definition-bounded-domain}.

\item[(iii)] Set $\rho({\bf x}):=\hat{\rho}(|\nabla\psi|^2({\bf x}))$ for $\hat{\rho}$ from Lemma \ref{lemma-rho-expression}. Then there exists a constant $\sigma\in(0,1)$ depending only on $(\gam, B_0, d_0)$ so that the Mach number $M(\rho)$ defined by \eqref{definition-mach-rho} satisfy the inequality
    \begin{equation}
    \label{subsonicity-fbp-bdd}
      M(\rho({\bf x}))\le 1-\sigma\quad\tx{in $\ol{\Om_{\fshock,L}}$}.
    \end{equation}
	\end{itemize}
\end{proposition}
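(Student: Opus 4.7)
The plan is to solve Problem \ref{fbp-bdd-dmns} by a Schauder fixed point argument after reducing it to a non-free problem on a fixed reference domain. Introduce the admissible class
\begin{equation*}
\mcl{K}_{M,\delta}:=\bigl\{f\in C^{2,\hat\alp}([0,L]):\ f(0)=b_0-d_0,\ f'(0)=0,\ b-f\ge\delta,\ \|f-f_0\|_{2,\hat\alp,(0,L)}^{(-\beta;-(1+\hat\alp),\{L\})}\le M\epsmod\bigr\}
\end{equation*}
for constants $M,\delta>0$ to be chosen. The condition $f'(0)=0$ is essential: together with the symmetry of the incoming state it forces the Rankine--Hugoniot jump to be purely horizontal at $\lb$, which makes the Dirichlet data $\psi=\ipsi$ on $\shock$ and $\psi=0$ on $\Gam_{\rm{sym}}$ compatible at the corner $\lb=(f(0),0)$.

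For each $f\in\mcl{K}_{M,\delta}$, I would fix a diffeomorphism $\Phi_f:\Om_{f_0,L}\to\Om_{f,L}$ that pulls back $\{x_1=f_0(x_2)\}$ onto $\Gam_{\rm{sh},L}^f$ and preserves $\Gam_{\rm{sym}}$, $\Gam_{b,L}^f$ and $\Gam_{\rm{cutoff},L}^f$. The smallness of $\|f-f_0\|$ makes $\Phi_f$ a small perturbation of the identity, so the push-forward of the equation in \eqref{nlbvp-in-fbp-bdd} is a uniformly elliptic quasilinear equation on the fixed domain $\Om_{f_0,L}$; uniform ellipticity is ensured by Lemma \ref{lemma-shock-polar2}, in particular \eqref{subsonicity}, once $M\epsmod$ is small. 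Solvability in $C^{2,\hat\alp}_{(-\beta;-(1+\hat\alp),\{P^{f_0}_{2,L}\})}(\Om_{f_0,L})$ follows from first establishing linear Schauder theory for the frozen-coefficient problem (Dirichlet on $\Gam_{\rm{sh},L}^f\cup\Gam_{\rm{sym}}\cup\Gam_{b,L}^f$, oblique derivative on $\Gam_{\rm{cutoff},L}^f$) in the relevant weighted H\"older space, then running a contraction in the nonlinear dependence of the coefficients on $\nabla\psi$.

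With $\psi_f$ in hand I define the update map $\mcl{J}[f](x_2):=b_0-d_0+\int_0^{x_2}\mcl{F}(\psi_f,f)(t)\,dt$, where $\mcl{F}$ is the right hand side of \eqref{free-bc}. The key step is $\mcl{J}(\mcl{K}_{M,\delta})\subset\mcl{K}_{M,\delta}$: using the estimate for $\psi_f-\psi_0$ one obtains $\|\mcl{J}[f]-f_0\|\le C_0(1+M\epsmod)\epsmod$, so fixing $M=2C_0$ and further reducing $\eps_3$ closes the bound. Continuity of $\mcl{J}$ in a weaker weighted H\"older topology, which is compact inside $\mcl{K}_{M,\delta}$ by Remark \ref{remark-compact-embedding}, then yields a fixed point via Schauder's theorem. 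The estimate \eqref{uniform-lwrbd-detached-distance-bdd} follows from the a priori bound on $\|\fshock-f_0\|$ combined with $b(0)-f_0(0)=d_0>0$, and \eqref{subsonicity-fbp-bdd} follows from \eqref{subsonicity} together with the bound on $\psi-\psi_0$.

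The hardest part, I expect, is the corner and weighted analysis at $P_{2,L}^f$ and $P_{3,L}^f$, where the shock (Dirichlet) meets the cut-off boundary (oblique) at a right angle, and to a lesser degree at $\lb$ and $\rb$. Choosing the H\"older exponent $\hat\alp$ small enough relative to the corner-singularity exponents, whose values are determined by $\tw$, is what forces the dependence of $\hat\alp$ on $\tw$ in the statement, and these corner estimates must be proved uniformly in $L$ so that the sequence produced for $L=L_n\to\infty$ in the next section admits an interior-uniform $C^{2,\hat\alp}$-limit. A secondary obstruction is that $\mcl{J}$ cannot be expected to be a contraction in the $C^{2,\hat\alp}$-norm (one only gains a prefactor $\epsmod$, not a genuine contraction constant), which is why the fixed point must be obtained via Schauder rather than Banach and makes the compact embedding of Remark \ref{remark-compact-embedding} indispensable.
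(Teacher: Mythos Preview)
Your overall strategy matches the paper's: iterate over shock curves in a weighted $C^{2,\hat\alp}$ class, solve the nonlinear boundary value problem for each fixed curve, update via the free boundary condition \eqref{free-bc}, and close with Schauder's fixed point theorem using the compact embedding of Remark \ref{remark-compact-embedding}. Two implementation differences are worth noting. First, the paper does \emph{not} transform to a fixed reference domain at the outset; it works directly on $\Om_{f,L}$ for each $f$, and introduces domain transformations (via polar coordinates centered at a carefully chosen point on the $x_1$-axis) only when proving continuity of the outer update map $f\mapsto\hat f$. Second, the paper solves the inner nonlinear problem by a \emph{second} Schauder iteration over an auxiliary set $\mcl{I}_{M_2,L}^f$ of approximate $\phi$'s, not by contraction; your contraction argument should also work (since $D^2\psi_0=0$, the right-hand side in the difference equation carries an extra factor $\epsmod$), but the paper's two-level Schauder scheme avoids tracking Lipschitz constants through the weighted corner estimates, and recovers uniqueness for the inner problem separately via the maximum principle.

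One geometric slip: the shock and the cut-off boundary do \emph{not} meet at a right angle at $P_{2,L}^f$; the angle there is approximately $\tw$ (Lemma \ref{lemma-distance-iterset}(c)). The right angle is at $P_{3,L}^f$, where the cut-off meets the wedge wall and is handled by reflection. This matters because the barrier construction near $P_{2,L}^f$ (Lemma \ref{lemma-estiamte-uR-3}) is explicitly tuned to the opening angle $\approx\tw$, and this is precisely what fixes $\hat\alp$ in terms of $\tw$---as you correctly anticipated, but for the wrong corner. The corners $\lb,\rb$ on $\Gam_{\rm sym}$ are handled by odd reflection of $\psi$ across the $x_1$-axis (Lemma \ref{lemma-estimate-nr-sym}), which is why the compatibility $f'(0)=0$ that you singled out is indeed essential.
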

The rest of \S \ref{section-fbp-cf} is devoted to proving Proposition \ref{proposition-wp-fbpbd}.

\subsection{Iteration set of approximate shocks on bounded intervals}
\label{subsection-iterset1}
We will prove Proposition \ref{proposition-wp-fbpbd} by the method of iteration. For that reason, we introduce a set of approximate shocks.

\begin{definition}
\label{definition-of-shock-iterset}
Let $f_0$ be given by \eqref{definition-background}.
Fix a constant $L\ge 10(1+h_0)$.
For $\alp\in(0,1)$, $\beta\in(0,1)$, $M_1>0$ and $\eps>0$, define a set:
\begin{equation}
\label{definition-iterset-shocks}
\begin{split}
\mcl{J}_{M_1,L}:=\{f\in \,C^{2,\alp}_{(-\beta;-(1+\alp),\{L\})}(0,L): &\, f(0)=b_0-d_0,\,\,f'(0)=0,\,\, \\
&f(x_2)<b(x_2)\,\,\tx{for all $x_2\in [0,L]$},\\
&\|f-f_0\|_{2,\alp,(0,L)}^{(-\beta;-(1+\alp),\{L\})}\le M_1\mathfrak{q}_{\gam}(\eps)\,\,
\}
\end{split}
\end{equation}
for the function $\mathfrak{q}_{\gam}(\eps)$ from Lemma \ref{lemma-shock-polar2}.
The constants $(\alp, M_1, \eps)$ will be specified later.
\end{definition}
Note that $f_0\not\in \mcl{J}_{M_1,L}$ because $f_0'(0)\neq 0$. So we first check that $\mcl{J}_{M_1,L}$ is nonempty in the following lemma.

\begin{lemma}
\label{lemma-f-iterset-nonempty}
There exists a constant $M_1^*>0$ depending only on $(\gam, B_0)$ so that whenever $M_1\ge M_1^*$ and $\eps\in(0,\eps_2]$ for $\eps_2>0$ be from Lemma \ref{lemma-shock-polar2}, the set $\mcl{J}_{M_1,L}$ is nonempty. Furthermore, the choice of $M_1^*$ is independent of $L\in [ 10(1+h_0),\infty)$.

\begin{proof}
Let $\chi:\R\rightarrow \R$ be a smooth function satisfying the following properties:
\begin{itemize}
\item[(i)] $\chi(x_2)=\begin{cases}1\quad&\mbox{for $x_2\le 5$}\\
    0\quad&\mbox{for $x_2\ge 10$}
\end{cases}$, \quad and $0\le \chi(x_2)\le 1$ for all $x_2\in \R$;

\item[(ii)] $\|\chi\|_{C^3(\R^+)}\le m$ for some constant $m>0$.
\end{itemize}
Set
\begin{equation}
\label{definition-cutoff-function}
  \chi_{h_0}(x_2):=\chi\left(\frac{x_2}{h_0}\right).
\end{equation}
Then we have $\displaystyle{\|\chi_{h_0}\|_{C^3(\R)}\le \|\chi\|_{C^3(\R)} \sum_{k=0}^3\left(\frac{1}{h_0}\right)^k}$.

We define a function $f_*:\R\rightarrow \R$ by
\begin{equation*}
  f_*(x_2)=(b_0-d_0)\chi_{h_0}(x_2)+(1-\chi_{h_0}(x_2))f_0(x_2).
\end{equation*}
Since $b_0-d_0<b(x_2)$ and $f_0(x_2)<b(x_2)$ hold for all $x_2\ge 0$, we have
\begin{equation*}
  f_*(x_2)<b(x_2)\quad\tx{for all $x_2\ge 0$.}
\end{equation*}
By \eqref{definition-background} and the definition of $\chi_{h_0}$, $f_*$ satisfies
\begin{equation*}
\begin{split}
  &f_*(0)=b_0-d_0,\\
  &f_*'(0)=0,\\
  &(f_*-f_0)(x_2)=(b_0-d_0-f_0(x_2))\chi_{h_0}(x_2)
=-s_{\rm{st}}^{\eps}x_2\chi_{h_0}(x_2).
  \end{split}
\end{equation*}
Then, we apply Lemma \ref{lemma-shock-polar2} to conclude that if $\eps\in (0,\eps_2]$, then there exists a constant $C_*>0$ depending only on $(\gam, B_0)$ such that
\begin{equation*}
  \|f_*-f_0\|_{2,\alp,(0,L)}^{(-\beta;-(1+\alp),\{L\})}
  \le C_*\epsmod.
\end{equation*}
Note that the constant $C_*$ is independent of $L\in[10(1+h_0), \infty)$ and $\beta\in(0,1)$.
We choose $M_1^*$ as
\begin{equation*}
  M_1^*=2C_*.
\end{equation*}
Then we have
\begin{equation*}
  f_*\in\mcl{J}_{M_1,L},
\end{equation*}
for $M_1\ge M_1^*$, $\eps\in (0,\eps_2]$, $L\in[10(1+h_0), \infty)$ and $\beta\in(0,1)$. This completes the proof.
\end{proof}
\end{lemma}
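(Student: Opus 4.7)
The plan is to construct an explicit element of $\mcl{J}_{M_1,L}$ by interpolating between the constant value $b_0-d_0$ near $x_2=0$ (which forces $f'(0)=0$) and the background profile $f_0$ away from the origin (which keeps the global shape). Since $f_0$ itself fails $f_0'(0)=0$ whenever $s_{\rm st}^{\eps}\neq 0$, the key is that the modification is localized on a set of size $O(h_0)$ independent of $L$, and the amplitude of the modification is $O(\epsmod)$ by Lemma \ref{lemma-shock-polar2}.

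First I would pick a smooth cutoff $\chi:\R\to [0,1]$ with $\chi\equiv 1$ on $(-\infty,5]$, $\chi\equiv 0$ on $[10,\infty)$, and fixed universal $C^3$ bound, and then set $\chi_{h_0}(x_2):=\chi(x_2/h_0)$, which has support in $[-10h_0,10h_0]$ contained strictly inside $[0,L]$ since $L\ge 10(1+h_0)$. Then I would define the candidate
$$f_*(x_2) := (b_0-d_0)\chi_{h_0}(x_2) + (1-\chi_{h_0}(x_2))f_0(x_2).$$
The identities $f_*(0)=b_0-d_0$ and $f_*'(0)=0$ are immediate because $\chi_{h_0}\equiv 1$ on $[0,5h_0]$, so $f_*$ is constant there. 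The strict inequality $f_*<b$ follows by convex combination from the two bounds $b_0-d_0<b_0\le b(x_2)$ (since $b$ is nondecreasing on $[0,\infty)$ by $(b_3)$ and Remark \ref{remark-b-function}) and $f_0(x_2)<b(x_2)$, the latter coming from \eqref{b-property3} together with the smallness of $s_{\rm st}^{\eps}$ guaranteed by Lemma \ref{lemma-shock-polar2} (so that $s_{\rm st}^{\eps}<\cot\tw$ and hence $f_0(x_2)\le s_{\rm st}^{\eps}x_2+b_0-d_0 < x_2\cot\tw+b_0\le b(x_2)$ for $x_2\ge h_0$, while on $[0,h_0]$ one uses continuity of $b$ and of $f_0$).

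The final step is the weighted norm bound. A direct computation gives
$$f_*(x_2)-f_0(x_2) = (b_0-d_0-f_0(x_2))\chi_{h_0}(x_2) = -s_{\rm st}^{\eps} x_2\,\chi_{h_0}(x_2),$$
which is supported in $[0,10h_0]$. Because the support sits uniformly away from the right endpoint $x_2=L$, the corner weight $\delta^{(1)}_{x_2}/(1+x_2)$ from Definition \ref{definition-norms-f-decay-corner} is bounded below by a positive constant depending only on $h_0$, and the polynomial factor $(1+x_2)^{j-\beta}$ is bounded above by $(1+10h_0)^{2+\alp}$. Thus the full weighted norm reduces to an ordinary $C^{2,\alp}([0,10h_0])$ norm of $s_{\rm st}^{\eps}x_2\chi_{h_0}$, which is controlled by $|s_{\rm st}^{\eps}|$ times a constant depending only on $(h_0,\|\chi\|_{C^3})$. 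Applying Lemma \ref{lemma-shock-polar2} one last time gives $|s_{\rm st}^{\eps}|\le \epsmod$, and choosing $M_1^*:=2C_*$ for the resulting constant $C_*$ places $f_*$ in $\mcl{J}_{M_1,L}$ for every $M_1\ge M_1^*$ and every $L\ge 10(1+h_0)$.

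The only real obstacle is the uniformity of the constant $C_*$ in $L$ and in $\beta\in(0,1)$. This is what justifies the choice to localize the modification in $[0,10h_0]$: as long as $L$ is large enough that $10h_0$ is bounded away from $L$, the corner weight at $\{L\}$ produces only an $h_0$-dependent constant and the $\beta$-dependence drops out since the support is a bounded set on which $(1+x_2)^{-\beta}$ is uniformly comparable to $1$.
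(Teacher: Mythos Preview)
Your proposal is correct and follows essentially the same approach as the paper: the same cutoff $\chi_{h_0}$, the same candidate $f_*=(b_0-d_0)\chi_{h_0}+(1-\chi_{h_0})f_0$, the same key identity $f_*-f_0=-s_{\rm st}^{\eps}x_2\chi_{h_0}$, and the same choice $M_1^*=2C_*$. You supply more detail than the paper on why $f_*<b$ and why $C_*$ is uniform in $L$ and $\beta$, but the argument is otherwise identical.
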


Next, we find a sufficient condition for $(M_1, \eps, L)$ so that each $f\in \mcl{J}_{M_1,L}$ satisfies the condition \eqref{condition-for-f}, and that the point $P_{3,L}^f$ given by \eqref{definition-P3L} lies on $\Gam_{b}$, that is, $(L, f)$ satisfies \eqref{condition-for-P3}.

By \eqref{definition-background}, \eqref{definition-iterset-shocks} and Lemma \ref{lemma-shock-polar2}, if $\eps\in (0,\eps_2]$, then each $f\in \mcl{J}_{M_1,L}$ satisfies
\begin{equation}\label{estimate-f-iterset}
  |f'(x_2)|\le s_{\rm st}^{\eps}+M_1\epsmod\le (M_1+1)\epsmod\quad\tx{for $x_2\in [0,L]$.}
\end{equation}
In the several lemmas stated below, we assume that $(\eps, M_1)$ satisfy the following inequalities:
\begin{equation}
	\label{shock-iterset-condtion1}
	\eps\le \eps_2,\quad\tx{and}\quad
	(M_1+1)\epsmod\le \frac 14 \min\{ \cot\tw, \tan\tw\}.
	\end{equation}

First, we find a sufficient condition for $L$ so that, for each $f\in \mcl{J}_{M_1,L}$, $(L, f)$ satisfies \eqref{condition-for-P3}.

\begin{lemma}
\label{lemma-P3L}
Assume that \eqref{shock-iterset-condtion1} holds. Then there exists a constant $\underline{L}>0$ depending only on $d_0$ so that if $L\ge \underline{L}$ , then, for each $f\in \mcl{J}_{M_1,L}$, $(L, f)$ satisfies \eqref{condition-for-P3} so that the point $P_{3,L}^f$ lies on $\Gam_b$.
\end{lemma}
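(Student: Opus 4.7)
The plan is to extract a linear-in-$L$ lower bound on $f(L)+L\kappa_w$ from the slope control built into the iteration set, and then simply pick $\underline{L}$ large enough that this lower bound exceeds the threshold $h_0(1+\kappa_w^2)/\kappa_w$ appearing in \eqref{condition-for-P3}.

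\textbf{Step 1: Slope bound.} For $f\in\mcl{J}_{M_1,L}$, the estimate \eqref{estimate-f-iterset} combined with the standing assumption \eqref{shock-iterset-condtion1} gives
\begin{equation*}
|f'(x_2)|\le (M_1+1)\mathfrak{q}_{\gam}(\eps)\le \tfrac14\min\{\cot\tw,\tan\tw\}\le \tfrac14\tan\tw\quad\tx{on $[0,L]$}.
\end{equation*}

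\textbf{Step 2: Integration.} Using $f(0)=b_0-d_0$ and the above bound,
\begin{equation*}
f(L)=b_0-d_0+\int_0^L f'(x_2)\,dx_2\ge b_0-d_0-\tfrac{L}{4}\tan\tw.
\end{equation*}
Therefore
\begin{equation*}
f(L)+L\kappa_w\ge b_0-d_0+\tfrac{3L}{4}\tan\tw.
\end{equation*}

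\textbf{Step 3: Choice of $\underline{L}$.} Multiplying by $\kappa_w/(1+\kappa_w^2)$ and requiring the left-hand side of \eqref{condition-for-P3} to exceed $h_0$ reduces to
\begin{equation*}
\tfrac{3L\tan^2\tw}{4(1+\tan^2\tw)}>h_0-\tfrac{(b_0-d_0)\tan\tw}{1+\tan^2\tw},
\end{equation*}
which holds provided
\begin{equation*}
L\ge \underline{L}:=\max\left\{10(1+h_0),\;\tfrac{4(1+\tan^2\tw)}{3\tan^2\tw}\Bigl(h_0+\tfrac{(d_0-b_0)\tan\tw}{1+\tan^2\tw}\Bigr)_+\right\}.
\end{equation*}
Since $\tw$, $h_0$, $b_0$ are fixed (by the note preceding Theorem \ref{main-theorem-physical-var}), the threshold $\underline{L}$ depends only on $d_0$, as required. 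Because the bound in Step 1 is uniform in $L$ and $f\in\mcl{J}_{M_1,L}$, the conclusion holds for every $f$ in the iteration set simultaneously, so $P_{3,L}^f\in\Gam_b$ for all such $f$.

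There is no genuine obstacle here: the argument is essentially a one-line integration estimate. The only point worth double-checking is that \eqref{shock-iterset-condtion1} bounds $|f'|$ by $\tfrac14\tan\tw$ rather than only by $\tfrac14\cot\tw$; this is precisely why the condition \eqref{shock-iterset-condtion1} is stated with the minimum of the two quantities, and it is crucial because the growth coefficient of $f(L)+L\kappa_w$ is $\kappa_w-\tfrac14\tan\tw=\tfrac34\tan\tw>0$ only under this sharper bound.
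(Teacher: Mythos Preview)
Your proof is correct and follows essentially the same route as the paper: bound $|f'|$ by $\tfrac14\tan\tw$ via \eqref{estimate-f-iterset} and \eqref{shock-iterset-condtion1}, integrate from $f(0)=b_0-d_0$ to get a linear-in-$L$ lower bound on $f(L)+L\kappa_w$, and then choose $\underline{L}$ so that \eqref{condition-for-P3} is forced. The paper uses the slightly weaker coefficient $\tfrac12\kappa_w$ instead of your $\tfrac34\kappa_w$ and writes $\underline{L}=\tfrac{4}{\kappa_w}\bigl(\tfrac{(1+\kappa_w^2)h_0}{\kappa_w}+d_0-b_0\bigr)$, but this is only a cosmetic difference in constants.
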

\begin{proof}
Fix $f\in \mcl{J}_{M_1,L}$.
Since $f(0)=b_0-d_0$, we get
\begin{equation*}
  \frac{\kappa_w}{1+\kappa_w^2}(f(L)+L\kappa_w)=
  \frac{\kappa_w}{1+\kappa_w^2}
  \left((b_0-d_0)+\int_0^L f'(t)+\kappa_w\,dt\right).
\end{equation*}
Then we apply \eqref{estimate-f-iterset} and \eqref{shock-iterset-condtion1} to obtain that
\begin{equation*}
\begin{split}
\frac{\kappa_w}{1+\kappa_w^2}(f(L)+L\kappa_w)\ge
\frac{\kappa_w}{1+\kappa_w^2}(b_0-d_0+\frac L2\kappa_w).
\end{split}
\end{equation*}
Therefore, if we choose $\underline{L}$ as
\begin{equation*}
\underline{L}=\frac{4}{\kappa_w}
\left(\frac{(1+\kappa_w^2)h_0}{\kappa_w}+d_0-b_0\right),
\end{equation*}
and if $L\ge \underline{L}$, then $(L, f)$ satisfies \eqref{condition-for-P3}.
\end{proof}

Fix a constant $L\in[\underline{L}, \infty)$ for $\underline{L}$ from Lemma \ref{lemma-P3L}, and assume that \eqref{shock-iterset-condtion1} holds.  For each $f\in \mcl{J}_{M_1,L}$, let $P_{2,L}^f$, $P_{3,L}^f$, $\Gam_{\rm sh,L}^f$, $\Gam_{{\rm cutoff},L}^f$ and $\Gam_{b,L}^f$ be defined by Definition \ref{definition-bounded-domain}. Next, we find a sufficient condition for $(M_1, \eps)$ so that any function $f\in \mcl{J}_{M_1,L}$ satisfies \eqref{condition-for-f}.

For each $f\in \mcl{J}_{M_1,L}$, let us set
\begin{equation*}
d_f^L:={\rm dist}(\Gam^f_{{\rm{sh}},L}, \Gam^f_{b,L})=\inf_{{{\bf x}\in \Gam^f_{{\rm sh},L}}\atop {{\bf x}'\in \Gam_{b,L}^f}}|{\bf x}-{\bf x}'|.
\end{equation*}
And, let $\theta_{P_{2,L}^f}$ be the angle between $\Gam_{{\rm{sh}},L}^f$ and $\Gam_{\rm{cutoff},L}^f$ at $P_{2,L}^f$. See Fig. \ref{figure3}.


\begin{lemma}
\label{lemma-distance-iterset}
For $\underline{L}$ from Lemma \ref{lemma-P3L},
let $L$ satisfy $L\ge \underline{L}+10h_0$. And, assume that the condition \eqref{shock-iterset-condtion1} holds. Then, there exists a small constant
$\sigma_2$ depending only on $d_0$ so that if $(M_1, \eps)$ satisfy
\begin{equation}
\label{shock-iterset-condition4}
(M_1+1)\mathfrak{q}_{\gam}(\eps)\le \sigma_2,
\end{equation}
then any $f\in \mcl{J}_{M_1,L}$ satisfies the following properties:
\begin{itemize}
\item[(a)] $f\in \mcl{J}_{M_1,L}$ satisfies \eqref{condition-for-f}. Furthermore, we have
\begin{equation}
\label{nonzero-detached-dist}
(b-f)(x_2)\ge \frac{3}{4}d_0 \quad\tx{for $x_2\in [0,L]$};
\end{equation}

\item[(b)] There exists $d_*>0$ depending only on $d_0$ to satisfy
\begin{equation*}
d_f^L\ge d_*.
\end{equation*}
Furthermore, the constant $d_*$ is chosen independent of $L$.
\item[(c)] Set $a_w:=\frac{\pi}{2}-\tw (>0)$. Then,
\begin{equation}
\label{estimate-t2}
|\theta_{P_{2,L}^f}-\tw|\le \frac 18\min\{a_w, \tw\}\Rightarrow
\frac 78 \tw \le \theta_{P_{2,L}^f}\le \frac{\pi}{2}-\frac 78 a_w.
\end{equation}
\end{itemize}

\begin{proof}
{\textbf{1.}} Proof of (a): Note that any $f\in \mcl{J}_{M_1,L}$ satisfies $(b-f)(0)=d_0$.

For $0\le x_2\le h_0$, we obtain from the property ($b_3$) in  Definition \ref{definition-bluntbody-ftn},  \eqref{estimate-f-iterset} and \eqref{shock-iterset-condition4} that
\begin{equation*}
(b-f)(x_2)=d_0+\int_0^{x_2}(b-f)'(t)dt
\ge
d_0-(M_1+1)\mathfrak{q}_{\gam}(\eps)h_0\ge d_0-\sigma_2 h_0.
\end{equation*}	
Therefore, if $\sigma_2$ satisfies
\begin{equation}\label{sigma2-cond1}
  \sigma_2h_0\leq d_0/4,
\end{equation}
then we get
\begin{equation}\label{bf-estiamte1}
  (b-f)(x_2)\ge \frac{3}{4}d_0\quad\tx{for $0\le x_2\le h_0$}.
\end{equation}
For $x_2\geq h_0$, it follows from the property ($b_5$) in Definition \ref{definition-bluntbody-ftn} and \eqref{shock-iterset-condtion1} that $(b-f)'(x_2)\ge \frac 34 \cot \tw$. Therefore, if $(M_1, \eps)$ satisfy \eqref{shock-iterset-condition4}, and if the inequality \eqref{sigma2-cond1} holds, then we get
\begin{equation}\label{bf-estiamte2}
(b-f)(x_2)=(b-f)(h_0)+\int_{h_0}^{x_2}(b-f)'(t)dt \geq\frac{3}{4}d_0.
\end{equation}
So we conclude that any $f\in \mcl{J}_{M_1,L}$ satisfies \eqref{condition-for-f} provided that \eqref{sigma2-cond1} holds.

{\textbf{2.}} Proof of (b):
We define a function
\begin{equation*}
  \til{b}(x_2):=b(x_2)-\frac{d_0}{2}.
\end{equation*}
Under the conditions \eqref{shock-iterset-condition4} and \eqref{sigma2-cond1}, it follows from the estimates \eqref{bf-estiamte1} and \eqref{bf-estiamte2} that each $f\in \mcl{J}_{M_1,L}$ satisfies
\begin{equation}
\label{bf-estiamte3}
  (\til{b}-f)(x_2)\ge \frac{d_0}{4}\quad\tx{for $0\le x_2\le L$}.
\end{equation}
Therefore, the graph of $x_1=\til{b}(x_2)$ intersects ${\rm int}\,\Gam_{{\rm{cutoff}},L}^f$ at a unique point $Q_b$. Let $x_2^{Q_b}$ be the $x_2$-coordinate of the point $Q_b$. Let us define
$
\Gam_{\til b,L}:=\{(\til b(x_2),x_2):0\le x_2\le x_2^{Q_b}\}.
$
See Fig. \ref{figure_distance}.
\begin{figure}[htp]
	\centering
	\begin{psfrags}
		\psfrag{p2}[cc][][0.8][0]{$P_{2,L}^f$}
		\psfrag{p3}[cc][][0.8][0]{$P_{3,L}^f$}
		\psfrag{p0}[cc][][0.8][0]{$\rb$}
		\psfrag{p1}[cc][][0.8][0]{$\lb$}
		\psfrag{q1}[cc][][0.8][0]{$Q_b$}
		\psfrag{w}[cc][][0.8][0]{$\Gam_{b,L}^f$}
		\psfrag{it}[cc][][0.8][0]{$\Gam_{\til b, L}$}
		\psfrag{sh}[cc][][0.8][0]{$\Gam_{{\rm{sh}},L}^f$}
		\psfrag{O}[cc][][0.8][0]{$0$}
		\psfrag{om}[cc][][0.8][0]{$\Om_{f,L}$}
		\psfrag{Uf}[cc][][0.8][0]{$M_{\infty}=\frac{1}{\eps}$}
\psfrag{t2}[cc][][0.8][0]{$\phantom{aa}\theta_{P_{2,L}^f}$}
		\includegraphics[scale=0.6]{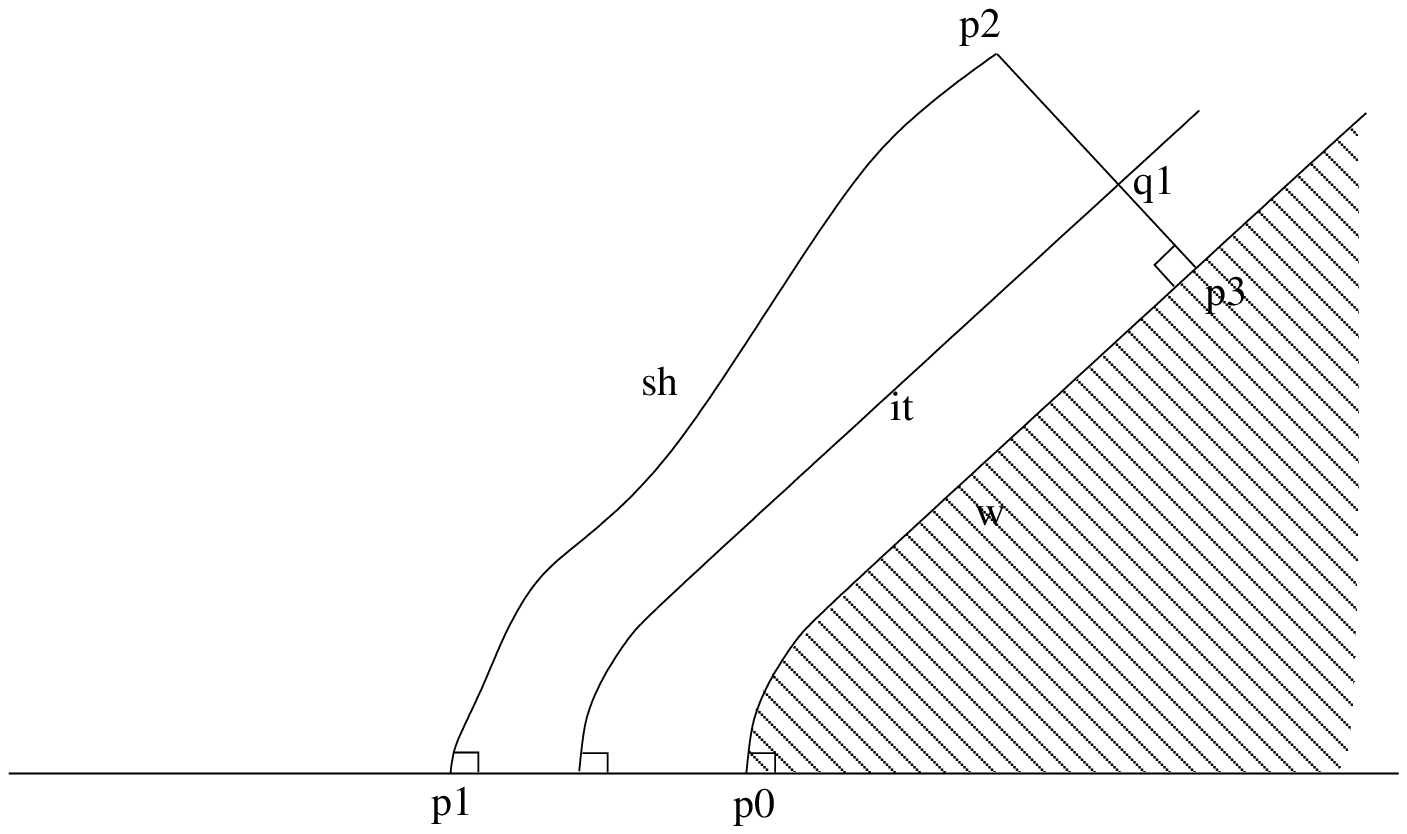}
		\caption{}
\label{figure_distance}
	\end{psfrags}
\end{figure}
By \eqref{bf-estiamte3}, for each $f\in \mcl{J}_{M_1,L}$, we have
\begin{equation}
\label{d-estimate1}
  d_f^L\ge {\rm dist}(\Gam_{\til b,L}, \Gam^f_{b,L}).
\end{equation}
Due to the property ($b_5$) in Definition \ref{definition-bluntbody-ftn}, we have $\til{b}(x_2)=x_2\cot \tw-\frac{d_0}{2}$ for $x_2\ge h_0$, from which it is obtained that
\begin{equation*}
  {\rm dist}\left(\Gam_{\til b,L}\cap\{(x_1,x_2):x_2\ge h_0+\frac{d_0}{2}\cos \tw\}, \Gam_{b,L}^f\right)=\frac{d_0}{2}\sin \tw.
\end{equation*}
It easily follows from \eqref{bf-estiamte3} that there exists a constant $d_1>0$ depending on $d_0$ but independent of $L$ so that we have
\begin{equation*}
  {\rm dist}\left(\Gam_{\til b,L}\cap\{(x_1,x_2):0\le x_2\le h_0+\frac{d_0}{2}\cos \tw\}, \Gam_{b,L}^f\right)\ge d_1.
\end{equation*}
Then \eqref{d-estimate1} yields that
\begin{equation*}
  d_f^L\ge \min\{\frac{d_0}{2}\sin \tw, d_1\}=:d_*.
\end{equation*}
This proves the statement (b).

{\textbf{3.}} Proof of (c):
If $f'(L)=0$, then we have $\theta_{P_{2,L}^f}=\tw$. Therefore, one can choose a small constant $\sigma_{\tw}>0$ depending only on $\tw$ so that if $(M_1+1)\mathfrak{q}_{\gam}(\eps)\le \sigma_{\tw}$,
then \eqref{estimate-f-iterset} implies that
\begin{equation}
\label{estimate-t2}
|\theta_{P_{2,L}^f}-\tw|\le \frac 18\min\{a_w, \tw\}\Rightarrow
\frac 78 \tw \le \theta_{P_{2,L}^f}\le \frac{\pi}{2}-\frac 78 a_w
\end{equation}
for any $f\in \mcl{J}_{M_1,L}$.

{\textbf{4.}} Finally, the proof of Lemma \ref{lemma-distance-iterset} is completed by choosing $\sigma_2$ as
\begin{equation*}
  \sigma_2=\min\{\frac{d_0}{4h_0}, \sigma_{\tw}\}.
\end{equation*}
\end{proof}
\end{lemma}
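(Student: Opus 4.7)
The plan is to attack the three parts sequentially, using only the uniform derivative bound \eqref{estimate-f-iterset}, the structural properties $(b_1)$--$(b_5)$ of the blunt body, and a convenient intermediate curve in part (b). The smallness parameter $\sigma_2$ will be chosen at the very end as the minimum of two constants produced in the course of the argument.

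For part (a), I would argue on the intervals $[0,h_0]$ and $[h_0,L]$ separately, exactly as suggested by the definition of $b$. Since $f(0)=b_0-d_0=b(0)-d_0$ we have $(b-f)(0)=d_0$, and $(b-f)'(t)=b'(t)-f'(t)$. On $[0,h_0]$ the property $(b_3)$ gives $b'\ge 0$, and \eqref{estimate-f-iterset} together with \eqref{shock-iterset-condition4} gives $|f'(t)|\le (M_1+1)\mathfrak{q}_\gamma(\eps)\le \sigma_2$, so integration yields $(b-f)(x_2)\ge d_0-\sigma_2 h_0$; requiring $\sigma_2 h_0\le d_0/4$ then gives $(b-f)(x_2)\ge \tfrac{3}{4}d_0$ on $[0,h_0]$. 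On $[h_0,L]$ property $(b_5)$ gives $b'(t)\equiv \cot\tw$, and \eqref{shock-iterset-condtion1} forces $|f'(t)|\le \tfrac14\cot\tw$, so $(b-f)'(t)\ge \tfrac34\cot\tw>0$, and therefore $(b-f)$ is increasing on $[h_0,L]$; the lower bound from the first interval then propagates. In particular \eqref{condition-for-f} holds.

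For part (b), the key trick is to interpose the translated curve $\til b(x_2):=b(x_2)-\tfrac{d_0}{2}$, so that by part (a) we have $(\til b-f)(x_2)\ge \tfrac{d_0}{4}$ on $[0,L]$, and hence $\Gam_{\til b,L}$ lies strictly between $\Gam_{\rm sh,L}^f$ and $\Gam_{b,L}^f$ in the $x_1$-direction. This gives $d_f^L\ge {\rm dist}(\Gam_{\til b,L},\Gam_{b,L}^f)$, and the problem reduces to estimating the distance between two curves that depend only on $b$ (not on $f$ or $L$). Split $\Gam_{\til b,L}$ at the level $x_2=h_0+\tfrac{d_0}{2}\cos\tw$: above this level both $\til b$ and the corresponding portion of $\Gam_{b,L}^f$ are parallel line segments with slope $\cot\tw$ and horizontal offset $d_0/2$, so the perpendicular distance equals $\tfrac{d_0}{2}\sin\tw$. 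Below this level the two curves sit inside a compact window that is independent of $L$, and the uniform horizontal separation $\ge d_0/4$ together with the $C^1$-regularity of $b$ produces a positive lower bound $d_1$ that again depends only on $d_0$. Setting $d_*:=\min\{\tfrac{d_0}{2}\sin\tw,d_1\}$ gives the desired uniform-in-$L$ bound.

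For part (c), I observe that $\Gam_{{\rm cutoff},L}^f$ is by construction perpendicular to $\Gam_b$ at $P_{3,L}^f$, hence its unit direction vector depends only on $\tw$ (it is $\tfrac{1}{\sqrt{1+\kappa_w^2}}(1,-\kappa_w)$ up to sign), while the unit tangent to $\Gam_{\rm sh,L}^f$ at $P_{2,L}^f$ is $\tfrac{1}{\sqrt{1+(f'(L))^2}}(f'(L),1)$. When $f'(L)=0$ the enclosed angle is exactly $\tw$, so the claim reduces to a Lipschitz continuity statement for the angle as a function of $f'(L)$ near $0$; bounding $|f'(L)|\le (M_1+1)\mathfrak{q}_\gamma(\eps)$ via \eqref{estimate-f-iterset} and shrinking the threshold $\sigma_\tw$ depending only on $\tw$ yields $|\theta_{P_{2,L}^f}-\tw|\le \tfrac18\min\{a_w,\tw\}$.

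The only step that requires genuine care is part (b): the naive estimate controls only the \emph{horizontal} gap between the shock and the blunt body, whereas we need the \emph{Euclidean} distance, uniform in $L$. The intermediate curve $\til b$ converts the horizontal gap into a true geometric separation and isolates the $L$-dependence into a region where both curves are straight and parallel, which is what makes the uniform bound possible. Finally $\sigma_2$ is taken as $\min\{d_0/(4h_0),\sigma_\tw\}$ to serve all three parts.
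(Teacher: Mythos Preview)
Your proposal is correct and follows essentially the same approach as the paper's proof: the same two-interval split in (a), the same intermediate curve $\til b=b-\tfrac{d_0}{2}$ with the same splitting level $h_0+\tfrac{d_0}{2}\cos\tw$ in (b), the same Lipschitz-in-$f'(L)$ argument in (c), and the same final choice $\sigma_2=\min\{d_0/(4h_0),\sigma_{\tw}\}$. The only minor slip is that in the compact part of (b) the horizontal separation between $\Gam_{\til b,L}$ and $\Gam_{b,L}^f$ is exactly $d_0/2$ rather than $\ge d_0/4$, but this does not affect the argument.
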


\begin{condition}
\label{condition-for-parameters}
Hereafter, we assume that
\begin{itemize}
\item[(i)] $L\ge \underline{L}+10(1+h_0)$ for $\underline{L}$ from Lemma \ref{lemma-P3L};

\item[(ii)] $\eps\le \eps_2$ for $\eps_2$ from Lemma \ref{lemma-shock-polar2};

\item[(iii)] $M_1\ge M_1^*$ for $M_1^*$ from Lemma \ref{lemma-f-iterset-nonempty};

\item[(iv)] $(M_1,\eps)$ satisfy
\begin{equation*}
  (M_1+1)\mathfrak{q}_{\gam}(\eps)\le \min\{\sigma_2, \frac{\cot \tw}{4}, \frac{\tan \tw}{4}\}
\end{equation*}
for $\sigma_2$ from Lemma \ref{lemma-distance-iterset}.
\end{itemize}
\end{condition}
Under Condition \ref{condition-for-parameters}, we can conclude from Lemmas \ref{lemma-f-iterset-nonempty}--\ref{lemma-distance-iterset} that the iteration set $\mcl{J}_{M_1,L}$ is nonempty, and that any function $f\in \mcl{J}_{M_1,L}$ yields a simply connected domain $\Om_{f,L}$ given by Definition \ref{definition-bounded-domain}.

\subsection{Linearized boundary value problems in fixed domains $\Om_{f,L}$}
\label{subsection-fixedbd-nlbvp-psi}

We fix $f\in \mcl{J}_{M_1,L}$, and let $\Gam_{\rm sh,L}^f$, $\Gam_{\rm{cutoff},L}^f$, $\Gam_{b,L}^f$ and $\Om_{f,L}$ be given by Definition \ref{definition-bounded-domain}. In this fixed domain $\Om_{f,L}$, we consider the nonlinear boundary value problem \eqref{nlbvp-in-fbp-bdd}. For convenience of computation, we use the equation \eqref{equation-psi-nondiv}, which is the non-divergence form of the equation $ {\rm{div}}\left(\frac{\nabla\psi}{\hat{\rho}(|\nabla\psi|^2)}\right)=0$ from \eqref{nlbvp-in-fbp-bdd}. So we rewrite \eqref{nlbvp-in-fbp-bdd} as
\begin{equation}
\label{fixed-nlbvp-bounded-psi}
\begin{split}
\mcl{N}(\psi)=0 &\qquad\tx{in} \quad \Om_{f,L}\\
\psi=\ipsi &\qquad\tx{on}\quad\Gam^{f}_{{\rm sh},L}\\
\psi=0 &\qquad\tx{on} \quad \Gam_{\rm{sym}}\cup \Gam_{b,L}^f\\
\der_{{\bf n}_c}\psi=0 &\qquad\tx{on}\quad \Gam_{\rm{cutoff},L}^f,
\end{split}
\end{equation}
where ${\bf n}_c$ is the outward unit normal on $\Gam_{\rm{cutoff},L}^f$, that is, ${\bf n}_c=(\cos \tw, \sin \tw)$. Here, the nonlinear differential operator $\mcl{N}(\psi)$ is given by
\begin{equation}
\label{definition-N}
\begin{split}
\mcl{N}(\psi)&:=\sum_{i,j=1}^2
\left(c^2(|\nabla\psi|^2)\delta_{ij}
-\frac{(-1)^{i+j}\psi_{x_{i'}}\psi_{x_{j'}}}{\hat{\rho}^{2}(|\nabla\psi|^2)}
\right)\psi_{x_ix_j},
\end{split}
\end{equation}
with $c^2(|\nabla\psi|^2)=\hat{\rho}^{\gam-1}(|\nabla\psi|^2)$ for $\hat{\rho}$ from Lemma \ref{lemma-rho-expression}, and  $(1',2'):= (2,1)$. And, $\ipsi$ is given by \eqref{definition-ipsi}.



For $\psi_0$ given by \eqref{definition-background}, let us set
\begin{equation*}
  \phi:=\psi-\psi_0.
\end{equation*}
Then function $\psi:\ol{\Om_{f,L}}\rightarrow \R$ solves \eqref{fixed-nlbvp-bounded-psi} if and only if $\phi$ solves
\begin{equation}
\label{fixed-nlbvp-bounded-phi}
\begin{split}
\left(c^2(|\nabla\phi+\nabla\psi_0|^2)\delta_{ij}
-\frac{(-1)^{i+j}(\phi+\psi_0)_{x_{i'}}(\phi+\psi_0)_{x_{j'}}}
{\hat{\rho}^{2}(|\nabla\phi+\nabla\psi_0|^2)}
\right)\phi_{x_ix_j}=0&\qquad\tx{in} \quad \Om_{f,L}\\
\phi=\ipsi-\psi_0&\qquad\tx{on}\quad\Gam_{\rm{sh},L}^f\\
\phi=-\psi_0&\qquad\tx{on} \quad\Gam_{\rm{sym}}\cup \Gam_{b,L}^f \\
\nabla\phi\cdot{\bf n}_c=0&\qquad\tx{on}\quad \Gam_{\rm{cutoff},L}^f.
\end{split}
\end{equation}

To solve the nonlinear boundary value problem \eqref{fixed-nlbvp-bounded-phi} by applying a fixed point theorem, we introduce an iteration set of approximate solutions.
For $\alp\in(0,1)$, $\beta\in(0,1)$, $M_2>0$ and $\eps>0$ to be determined later, we define a set ${\mcl{I}}_{M_2,L}^f$ by
\begin{equation}
\label{definition-iterset-psi}
\begin{split}
{\mcl{I}}_{M_2,L}^f:=\Bigl\{\vphi\in C^{2,\alp}_{(-\beta;-(1+\alp),\{ P_{2,L}^f\})}(\Om_{f,L}):\,\,&
\|\vphi\|_{2,\alp,\Om_{f,L}}^{(-\beta;-(1+\alp),\{P_{2,L}^f\})}\le M_2\mathfrak{q}_{\gam}(\eps),\\
&\phantom{aaaaaaa}\vphi=-\psi_0\,\,\tx{on}\,\,\ol{\Gam_{\rm{sym}}\cup \Gam_{b,L}^f}\,\,
\Bigr\}.
\end{split}
\end{equation}
We remind that the point $P_{2,L}^f$ is given by $P_{2,L}^f=(f(L),L)$ in Definition \ref{definition-bounded-domain}.

In \eqref{definition-iterset-psi}, we intend to fix $(M_2, \eps)$ so that $M_2\mathfrak{q}_{\gam}(\eps)$ is sufficiently small uniformly for all $L\ge \underline{L}+10(1+h_0)$. From this perspective, $-\psi_0$ is not contained in the set ${\mcl{I}}_{M_2,L}^f$ because $\|\psi_0\|
_{2,\alp,\Om_{f,L}}^{(-\beta;-(1+\alp),\{P_{2,L}^f\})}$ diverges to $\infty$ as $L$ tends to $\infty$.
So we need to check that the iteration set ${\mcl{I}}_{M_2,L}^f$ is nonempty.

\begin{lemma}
\label{lemma-vphi-iterset-nonempty}
Under Condition \ref{condition-for-parameters}, there exists a constant $M_2^*>0$ depending only on $(\gam, B_0, d_0)$ so that, for any $f\in \mcl{J}_{M_1,L}$, the set $\mcl{I}_{M_2,L}^f$ given by \eqref{definition-iterset-psi} is nonempty. Note that the constant $M_2^*$ is chosen independently of $L$.

\begin{proof}
By property ($b_5$) in Definition \ref{definition-bluntbody-ftn}, $x_2-\kappa_wx_1=0$ holds on $\Gam_{b,L}^f\cap\{x_2\ge h_0\}$, so it directly follows from \eqref{definition-background} that
\begin{equation}
\label{psi0-on-bdry2}
-\psi_0({\bf x})=-\rho_{\rm{st}}^{\eps}u_{\rm{st}}^{\eps}\kappa_w(b_0-d_0)
\quad\tx{on $\Gam_{b,L}^f\cap\{x_2\ge h_0\}$}.
\end{equation}

For the cut-off function $\chi_{h_0}$ given by \eqref{definition-cutoff-function}, we define $\vphi_*({\bf x}):\ol{\Om_{f,L}}\rightarrow \R$ by
\begin{equation*}
  \vphi_*({\bf x})=-\psi_0({\bf x})\chi_{h_0}(x_2)-\rho_{\rm{st}}^{\eps}u_{\rm{st}}^{\eps}
  \kappa_w(b_0-d_0)(1-\chi_{h_0}(x_2)).
\end{equation*}
By using \eqref{psi0-on-bdry2}, it can be directly checked that $\vphi_*=-\psi_0$ on $\ol{\Gam_{\rm{sym}}\cup \Gam_{b,L}^f}$. By \eqref{definition-background} and Lemma \ref{lemma-shock-polar2}, one can find a constant $\hat{C}>0$ depending only on $(\gam, B_0,d_0)$ such that
\begin{equation*}
  \|\vphi_*\|_{2,\alp, \Om_{f,L}}^{(-\beta;-(1+\alp),\{P_{2,L}^f\})}\le \hat{C}q_{\gam}(\eps).
\end{equation*}
Note that the constant $\hat{C}$ is independent of $(\beta, L, f)$.

If $M_2$ in the definition of $\mcl{I}_{M_2,L}^f$ satisfies $M_2\ge 2\hat{C}$, then we have $\vphi_*\in\mcl{I}_{M_2,L}^f$. Therefore, the proof is completed by choosing $M_2^*$ as $M_2^*=2\hat{C}$.
\end{proof}

\end{lemma}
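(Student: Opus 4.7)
The essential obstruction is that the obvious candidate $\vphi=-\psi_0$ fails the $L$-independent bound: $\psi_0$ is linear in $\rx$ with coefficients of size $\rho_{\rm st}^{\eps}u_{\rm st}^{\eps}$, so its weighted zeroth-order seminorm $\sup_{\Om_{f,L}}(1+x_2)^{-\beta}|\psi_0|$ grows like $L^{1-\beta}\mathfrak{q}_\gam(\eps)$ as $L\to\infty$. My plan is to cure this by exploiting the fact that the wedge portion of $\der W_b$ is a straight ray along which $\psi_0$ is constant, so the boundary data $-\psi_0$ can be extended inward by a bounded function without affecting the boundary trace.

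Concretely, by property $(b_5)$ of Definition \ref{definition-bluntbody-ftn}, on $\Gam_{b,L}^f\cap\{x_2\ge h_0\}$ one has $x_2=\kappa_w x_1$, so formula \eqref{definition-background} reduces to
\[
\psi_0(\rx)=\rho_{\rm st}^{\eps}u_{\rm st}^{\eps}\kappa_w(b_0-d_0)=:C_0\quad\text{on }\Gam_{b,L}^f\cap\{x_2\ge h_0\},
\]
a constant of size $O(\mathfrak{q}_\gam(\eps))$ by Lemma \ref{lemma-shock-polar2}. With $\chi_{h_0}$ from \eqref{definition-cutoff-function}, I would set
\[
\vphi_*(\rx):=-\psi_0(\rx)\chi_{h_0}(x_2)-C_0\bigl(1-\chi_{h_0}(x_2)\bigr).
\]
The boundary condition $\vphi_*=-\psi_0$ on $\Gam_{\rm{sym}}\cup\Gam_{b,L}^f$ follows from three elementary checks: $\chi_{h_0}(0)=1$ handles $\Gam_{\rm sym}$; $\chi_{h_0}\equiv 1$ on $\{x_2\le 5h_0\}$ handles the curved portion of $\Gam_{b,L}^f$; and on $\Gam_{b,L}^f\cap\{x_2\ge h_0\}$ the identity $\psi_0\equiv C_0$ makes the convex combination collapse to $-\psi_0$ regardless of the value of $\chi_{h_0}$.

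For the weighted norm, the key point is that $\vphi_*$ reduces to the constant $-C_0$ on $\{x_2\ge 10h_0\}$, so all its derivatives are supported in the fixed strip $\{x_2\le 10h_0\}$ of width independent of $L$. On this strip, $|\psi_0|+|\nabla\psi_0|\le C_{h_0,b_0,d_0,\kappa_w}\,\rho_{\rm st}^{\eps}u_{\rm st}^{\eps}$, while $\chi_{h_0}$ contributes $C^3$-bounds depending only on $h_0$; for the corner weight, note that points in the derivative support satisfy $|\rx-P_{2,L}^{f}|\ge L-10h_0$, so once $L$ is large one has $\delta^{(2)}_{\rx}=1+x_2$ and the factor $\delta^{(2)}_{\rx}/(1+x_2)$ equals $1$ wherever it is applied to a nonzero derivative. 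Substituting the bound $\rho_{\rm st}^{\eps}u_{\rm st}^{\eps}\le C\mathfrak{q}_\gam(\eps)$ from Lemma \ref{lemma-shock-polar2} then yields
\[
\|\vphi_*\|_{2,\alp,\Om_{f,L}}^{(-\beta;-(1+\alp),\{P_{2,L}^{f}\})}\le \hat C\,\mathfrak{q}_\gam(\eps)
\]
with $\hat C$ depending only on $(\gam,B_0,d_0)$. Taking $M_2^*=2\hat C$ would give $\vphi_*\in\mcl{I}_{M_2,L}^f$ for every $M_2\ge M_2^*$.

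The only genuine obstacle is verifying that $\hat C$ is $L$-independent, and as indicated this reduces to the twin observations that (i) $\vphi_*$ is literally constant on $\{x_2\ge 10h_0\}$ and (ii) on the straight part of $\der W_b$ the boundary datum $-\psi_0$ is also constant, so the cutoff identity $\vphi_*=-\psi_0$ is an algebraic equality rather than a genuine interpolation. The remaining work is mechanical application of the Leibniz rule to $\psi_0\chi_{h_0}$ inside the fixed strip $\{x_2\le 10h_0\}$ and a book-keeping verification of the two-point Hölder seminorms, which split naturally according to whether both points lie in the strip or one lies in the constant region.
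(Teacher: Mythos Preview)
Your proposal is correct and essentially identical to the paper's proof: you construct the same function $\vphi_*(\rx)=-\psi_0(\rx)\chi_{h_0}(x_2)-C_0(1-\chi_{h_0}(x_2))$ with $C_0=\rho_{\rm st}^{\eps}u_{\rm st}^{\eps}\kappa_w(b_0-d_0)$, verify the boundary condition, and bound the weighted norm by $\hat C\mathfrak{q}_\gam(\eps)$ before setting $M_2^*=2\hat C$. Your writeup is in fact more explicit than the paper's on the boundary-condition check and on why $\hat C$ is $L$-independent.
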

For each $\vphi\in {\mcl{I}}_{M_2,L}^f$, we define
\begin{equation}
\label{definition-coefficients}
\begin{split}
&a_{11}^{\vphi}:=c^2(|\nabla\vphi+\nabla\psi_0|^2)-
\frac{(\vphi+\psi_0)_{x_2}^2}{\hat{\rho}^2(|\nabla\vphi+\nabla\psi_0|^2)}\\ &a_{22}^{\vphi}:=c^2(|\nabla\vphi+\nabla\psi_0|^2)-
\frac{(\vphi+\psi_0)_{x_1}^2}{\hat{\rho}^2(|\nabla\vphi+\nabla\psi_0|^2)}\\
&a_{12}^{\vphi}=a_{21}^{\vphi}
:=\frac{(\vphi+\psi_0)_{x_1}(\vphi+\psi_0)_{x_2}}
{\hat{\rho}^2(|\nabla\vphi+\nabla\psi_0|^2)}.\\
\end{split}
\end{equation}
And, we define a linear differential operator $\mcl{L}^{\vphi}$ associated with $\vphi\in \mcl{I}_{M_2,L}^f$ by
\begin{equation*}
\mcl{L}^{\vphi}(\phi):=\sum_{i,j=1}^2a_{ij}^{\vphi} \der_{ij}\phi\quad \tx{in $\Om_{f,L}$.}
\end{equation*}


\begin{lemma}
\label{lemma-coefficients}
Under Condition \ref{condition-for-parameters}, there exists a constant $\sigma_{\rm{el}}>0$ depending only on $(\gam, B_0)$ so that if
\begin{equation}\label{condition3-for-epsilon}
 M_2\mathfrak{q}_{\gam}(\eps)\le \sigma_{\rm{el}},
\end{equation}
then, for each $\vphi\in \mcl{I}_{M_2,L}^f$, the coefficient matrix $[a_{ij}^{\vphi}]_{i,j=1}^2$ satisfies the following properties:
\begin{itemize}

\item[(a)] There exists a constant $C$ depending only on $(\gam, B_0)$ such that
\begin{equation*}
\begin{split}
 &\sum_{i,j=1}^2\|a^{\vphi}_{ij}-a^{0}_{ij}\|_{1,\alp, \Om_{f,L}}^{(1-\beta;-\alp,\{P_{2,f}^L\})}\le CM_2\mathfrak{q}_{\gam}(\eps),
\end{split}
\end{equation*}
where $a_{ij}^0=c^2(|\nabla\psi_0|^2)\delta_{ij}
-\frac{(-1)^{i+j}(\psi_0)_{x_{i'}}(\psi_0)_{x_{j'}}}
{\hat{\rho}^2(|\nabla\psi_0|^2)}$ with $(1',2'):=(2,1)$;

\item[(b)] Let us set $c_0^2:=(\gam-1)B_0(=(\rho_{\rm st}^0)^{\gam-1})$. Then, there exists a constant $\lambda\in(0,1)$ depending only on $(\gam, B_0)$ such that, for any $\bm\xi=(\xi_1,\xi_2)\in \R^2$, and ${\bf x}=(x_1, x_2)\in \Om_{f,L}$, we have
\begin{align}
\label{estimate-ellipticity}
&\lambda{c_0^2}|{\bm\xi}|^2 \le \sum_{i,j=1}^2 a^{\vphi}_{ij}({\bf x})\xi_i\xi_j \le \frac{ {c_0}^2}{\lambda}|{\bm\xi}|^2,\\
\label{estimate-ellipticity2}
\tx{and}\quad&\lambda c_0^2|{\bm\xi}|^2 \le \sum_{i,j=1}^2 (-1)^{i+j}a^{\vphi}_{ij}({\bf x})\xi_i\xi_j \le \frac{ {c_0}^2}{\lambda}|{\bm\xi}|^2.
\end{align}

\end{itemize}

\begin{proof}
For ${\bf q}\in \R^2$, let us define
\begin{equation*}
  \varrho({\bf q}):=\hat{\rho}(|{\bf q}|^2)
\end{equation*}
for $\hat{\rho}$ from Lemma \ref{lemma-rho-expression}.
For any small $\sigma\in(0, \frac 12 \sqrt{-2H(\rhosonic)})$, if we set $r_{\sigma}:= \sqrt{-2H(\rhosonic)}-\sigma$, then Lemma \ref{lemma-rho-expression} implies that there exists a constant $\mu_{\sigma}>0$ depending only on $(\gam, B_0, \sigma)$ so that $\varrho$ satisfies
\begin{equation}
\label{estimate-varrho}
  \varrho({\bf q})- \rhosonic\ge \frac{1}{\mu_{\sigma}}\,\,\tx{in $\ol{B_{r_{\sigma}}({\bf 0})}$},\quad\tx{and}\quad
  \|\varrho\|_{C^3(\ol{B_{r_{\sigma}}({\bf 0})})}\le \mu_{\sigma}
\end{equation}
for $H$ and $\rhosonic$ defined by \eqref{definition-H-function} and \eqref{definition-rho-sonic}, respectively.

By Lemmas \ref{lemma-shock-polar2} and \ref{lemma-rho-expression}, there exists $\sigma_0\in (0, \frac 12 \sqrt{-2H(\rhosonic)})$ such that
\begin{equation*}
|\nabla\psi_0|\le \sqrt{-2H(\rhosonic)}-4\sigma_0.
\end{equation*}
Therefore, if $M_2\mathfrak{q}_{\gam}(\eps)\le 2\sigma_0$, then, for any $\vphi\in \mcl{I}^f_{M_2,L}$,  we have
\begin{equation}
\label{uniform-subsonicity}
\sup_{{\rm x}\in \Om_{f,L}}|\nabla\vphi({\rm x})+\nabla\psi_0|\le  \sqrt{-2H(\rhosonic)}-2\sigma_0.
\end{equation}
So one can directly check from \eqref{definition-coefficients}, \eqref{estimate-varrho} and \eqref{uniform-subsonicity} that if $M_2\mathfrak{q}_{\gam}(\eps)\le 2\sigma_0$, then for any $\vphi\in \mcl{I}^f_{M_2,L}$, we have
\begin{equation}
\label{coefficient-variation}
  \|a^{\vphi}_{ij}-a^{0}_{ij}\|_{1,\alp, \Om_{f,L}}^{(1-\beta;-\alp,\{P_{2,f}^L\})}\le C\|\vphi\|_{2,\alp, \Om_{f,L}}^{(-\beta;-(1+\alp),\{P_{2,f}^L\})}\le CM_2\mathfrak{q}_{\gam}(\eps).
\end{equation}
This proves the statement (a) if we choose $\sigma_{\rm el}$ to satisfy $\sigma_{\rm el}\le 2\sigma_0$.

\smallskip

By \eqref{definition-coefficients}, we get
\begin{equation*}
  \sum_{i,j=1}^2  a_{ij}^0\xi_i\xi_j
= (\rho_{\rm st}^{\eps})^{\gam-1}|\bm\xi|^2\quad\tx{for any $\bm\xi=(\xi_1,\xi_2)\in \R^2$}.
\end{equation*}

Note that $(\rho_{\rm st}^0)^{\gam-1}=(\gam-1)B_0$ and that $\rho_{\rm st}^{\eps}$ continuously varies depending on $\eps\in [0,\eps_2]$ with $\rho_{\rm st}^{\eps}<\rho_{\rm st}^0$ for $\eps>0$. So there exists a constant $\lambda_0\in(0,1)$ depending only on $(\gam, B_0)$ such that
$\lambda_0 \rho_{\rm st}^0 \le \rho_{\rm st}^{\eps} \le \rho_{\rm st}^0$ for all $\eps\in[0,\eps_2]$.
By combining this estimate with \eqref{coefficient-variation}, one can find a small constant $\delta_1\in(0, 2\sigma_0]$ depending only on $(\gam, B_0)$ so that if $M_2\mathfrak{q}_{\gam}(\eps)\le \delta_1$ holds, then for any $\vphi\in \mcl{I}_{M_2,L}^f$, the coefficient matrix $[a_{ij}^{\vphi}]_{i,j=1}^2$ satisfies \eqref{estimate-ellipticity} for some constant $\lambda\in(0,1)$ depending only on $(\gam, B_0)$ provided that $\sigma_{\rm el}\le \delta_1$.

Since
$\displaystyle{
\sum_{i,j=1}^2 (-1)^{i+j}a^{\vphi}_{ij}({\bf x})\xi_i\xi_j =
\sum_{i,j=1}^2 a^{\vphi}_{ij}({\bf x})\xi_i\xi_j -4a^{\vphi}_{12}({\bf x})\xi_1\xi_2,
}$ and $a_{12}^0=0$, we can apply \eqref{coefficient-variation} to reduce $\sigma_{\rm el}>0$ and adjust $\lambda\in(0,1)$ depending only on $(\gam, B_0)$ so that the estimate \eqref{estimate-ellipticity2} holds. This proves the statement (b).

\end{proof}

\end{lemma}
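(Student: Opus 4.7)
The plan is to treat both parts as essentially perturbative arguments around the background stream function $\psi_0$, using the smallness $\|\vphi\|^{(-\beta;-(1+\alp),\{P_{2,L}^f\})}_{2,\alp,\Om_{f,L}}\le M_2\mathfrak{q}_\gam(\eps)$. First I would check that $\nabla(\psi_0+\vphi)$ stays in a fixed compact set contained strictly inside the subsonic ball $\{|{\bf q}|^2<2H(\rhosonic)\}$ on which $\hat\rho$ is smooth. By Lemma \ref{lemma-shock-polar2}, $\nabla\psi_0=\rho^\eps_{\rm st}u^\eps_{\rm st}(-\kappa_w,1)$ has $|\nabla\psi_0|^2=(\rho^\eps_{\rm st})^2(u^\eps_{\rm st})^2(1+\kappa_w^2)$ bounded uniformly away from $2H(\rhosonic)$ (by the subsonicity \eqref{subsonicity}), so once $M_2\mathfrak{q}_\gam(\eps)\le \sigma_{\rm el}$ is sufficiently small, $|\nabla(\psi_0+\vphi)|$ stays in $\ol{B_{r_\sigma}({\bf 0})}$ for some $\sigma>0$, making $\hat\rho$ (and hence $c^2=\hat\rho^{\gam-1}$) uniformly $C^3$ there. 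For part (a), each $a_{ij}^\vphi$ is a smooth function $F_{ij}(\nabla(\psi_0+\vphi))$, so a Taylor expansion gives $a_{ij}^\vphi-a_{ij}^0=\int_0^1 DF_{ij}(\nabla\psi_0+t\nabla\vphi)\,dt\cdot \nabla\vphi$. Converting this to the weighted H\"older norm $\|\cdot\|^{(1-\beta;-\alp,\{P_{2,L}^f\})}_{1,\alp,\Om_{f,L}}$ is a standard product-rule computation (the weight at $\{P_{2,L}^f\}$ matches because $D(\psi_0+\vphi)$ loses one derivative order), yielding the desired bound by $CM_2\mathfrak{q}_\gam(\eps)$.

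For part (b), I would first verify uniform ellipticity of the constant matrix $[a_{ij}^0]$, then use part (a) as a perturbation. A direct substitution of $\nabla\psi_0=\rho^\eps_{\rm st}u^\eps_{\rm st}(-\kappa_w,1)$ into \eqref{definition-coefficients} gives
\begin{equation*}
[a_{ij}^0]=\begin{pmatrix} c_\eps^2-(u^\eps_{\rm st})^2 & -\kappa_w(u^\eps_{\rm st})^2\\ -\kappa_w(u^\eps_{\rm st})^2 & c_\eps^2-\kappa_w^2(u^\eps_{\rm st})^2\end{pmatrix},\qquad c_\eps^2=(\rho^\eps_{\rm st})^{\gam-1}.
\end{equation*}
Its eigenvalues are $c_\eps^2$ and $c_\eps^2-|{\bf u}^\eps_{\rm st}|^2=(\rho^\eps_{\rm st})^{\gam-1}-(u^\eps_{\rm st})^2(1+\kappa_w^2)$, so by \eqref{subsonicity} the smaller eigenvalue is at least $\frac12(\rho^0_{\rm st})^{\gam-1}$, giving \eqref{estimate-ellipticity} for $a^0_{ij}$ with $\lambda$ depending only on $(\gam,B_0)$. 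For \eqref{estimate-ellipticity2}, a direct algebraic identity yields $\sum_{i,j}(-1)^{i+j}a^0_{ij}\xi_i\xi_j=c_\eps^2|\bm\xi|^2-(u^\eps_{\rm st})^2(\xi_1-\kappa_w\xi_2)^2\ge (c_\eps^2-|{\bf u}^\eps_{\rm st}|^2)|\bm\xi|^2$, invoking \eqref{subsonicity} once more. The upper bounds in \eqref{estimate-ellipticity}--\eqref{estimate-ellipticity2} are immediate from $c_\eps^2\le c_0^2$. Finally, by part (a) the $C^0$ difference $|a_{ij}^\vphi-a_{ij}^0|$ is controlled by $CM_2\mathfrak{q}_\gam(\eps)$, so shrinking $\sigma_{\rm el}$ if necessary absorbs this perturbation into the ellipticity constant, giving both bounds for $a_{ij}^\vphi$ with a (possibly halved) $\lambda$ depending only on $(\gam,B_0)$.

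The main obstacle is ensuring that the ellipticity constant $\lambda$ and the perturbation threshold $\sigma_{\rm el}$ depend only on $(\gam,B_0)$ and not on $\eps$, $L$, or $f$. This hinges on the $\eps$-continuity of $(\rho^\eps_{\rm st},u^\eps_{\rm st})$ provided by Lemma \ref{lemma-shock-polar2}: as $\eps\to 0$, $\rho^\eps_{\rm st}\to \rho^0_{\rm st}>0$ and $u^\eps_{\rm st}\to 0$, which is exactly what makes $c_\eps^2-|{\bf u}^\eps_{\rm st}|^2$ bounded below by $\frac12(\rho^0_{\rm st})^{\gam-1}$ uniformly in $\eps\in(0,\eps_2]$. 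The rest is careful but routine bookkeeping of weighted H\"older norms, noting that neither the weights at infinity nor the corner weight at $P_{2,L}^f$ interfere because the chain-rule computation is pointwise and the weights are compatible across the two sides of the estimates.
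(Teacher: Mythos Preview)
Your proposal is correct and follows essentially the same perturbative strategy as the paper: confine $\nabla(\psi_0+\vphi)$ to a compact subset of the subsonic ball so that $\hat\rho$ is uniformly smooth, deduce (a) by the chain rule, and then obtain (b) by first checking ellipticity of the constant background matrix $[a_{ij}^0]$ and treating $a_{ij}^\vphi-a_{ij}^0$ as a small perturbation. Your explicit eigenvalue computation for $[a_{ij}^0]$ (yielding $c_\eps^2$ and $c_\eps^2-|{\bf u}^\eps_{\rm st}|^2$) and your direct identity for $\sum(-1)^{i+j}a_{ij}^0\xi_i\xi_j$ are in fact slightly more precise than the paper, which writes the quadratic form as exactly $(\rho_{\rm st}^\eps)^{\gam-1}|\bm\xi|^2$ and asserts $a_{12}^0=0$; those equalities hold only at $\eps=0$, and the $O((u^\eps_{\rm st})^2)$ discrepancy is silently absorbed into the perturbation via Lemma~\ref{lemma-shock-polar2}. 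Your use of \eqref{subsonicity} makes this step transparent.
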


Under Condition \ref{condition-for-parameters}, let us fix $\vphi\in \mcl{I}_{M_2, L}^f$, and consider the following linear boundary value problem:
\begin{equation}
\label{lbvp-phi-cutoff}
\begin{split}
\mcl{L}^{\vphi}(\phi)=0&\qquad\tx{in} \quad \Om_{f,L}\\
\phi=\ipsi-\psi_0&\qquad\tx{on}\quad\Gam_{\rm{sh},L}^f\\
\phi=-\psi_0&\qquad\tx{on} \quad\Gam_{\rm{sym}}\cup \Gam_{b,L}^f \\
\nabla\phi\cdot{\bf n}_c=0&\qquad\tx{on}\quad \Gam_{\rm{cutoff},L}^f.
\end{split}
\end{equation}


\begin{proposition}
\label{wellposedness-lbvp-phi-cutoff}
Fix $\beta\in (0,1)$ in \eqref{definition-iterset-shocks} and \eqref{definition-iterset-psi}.
Assume that Condition \ref{condition-for-parameters} holds.
And, assume that constants $(M_2, \eps)$ satisfy
\begin{equation}
\label{wp-conditions}
M_2\ge M_2^*,\quad\tx{and}\quad M_2\mathfrak{q}_{\gam}(\eps)\le \sigma_{\rm el}
\end{equation}
so that Lemmas \ref{lemma-vphi-iterset-nonempty} and  \ref{lemma-coefficients} hold.

Then,
there exist positive constants $L_{\rm wp}>1$ depending on $d_0$, $(\hat{\eps}, \hat{\sigma})$ with $\hat{\sigma}\le \sigma_{\rm el}$ depending only on $(\gam, B_0)$, and $\hat{\alp}\in(0,1)$ depending only on $\tw$ so that
\begin{itemize}
\item[(i)] if
\begin{equation}
\label{condition-for-lbvp-wp}
\begin{split}
L\ge L_{\rm{wp}},\quad 0<\eps<\hat{\eps},\quad M_2\mathfrak{q}_{\gam}(\eps)\le \hat{\sigma};
\end{split}
\end{equation}
\item[(ii)] and if $\mcl{I}_{M_2,L}^f$ is given by \eqref{definition-iterset-psi} with $\alp=\hat{\alp}$,
\end{itemize}
then, for any $\vphi\in \mcl{I}^f_{M_2,L}$, the linear boundary value problem \eqref{lbvp-phi-cutoff} associated with $\vphi$ has a unique solution $\phi\in C^{2,\hat{\alp}}_{(-\beta;-(1+\hat{\alp}),\{ P_{2,L}^f\})}(\Om_{f,L})$.
Furthermore, there exists a constant $C>0$ depending only on $(\gam, B_0, d_0)$ to satisfy
\begin{equation}
\label{apriori-phi-lbvp-cutoff}
\|\phi\|_{2,\hat{\alp}, \Om_{f,L}}^{(-\beta;-(1+\hat{\alp}),\{P_{2,L}^f\})}
\le C\left(1+M_1\mathfrak{q}_{\gam}(\eps)\right)\mathfrak{q}_{\gam}(\eps).
\end{equation}
Most importantly, the constants $(C, \hat{\alp})$ are independent of $(L,\beta)$.
\end{proposition}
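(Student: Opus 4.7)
The plan is to solve the linear mixed boundary value problem \eqref{lbvp-phi-cutoff} in three stages: reduction to homogeneous Dirichlet data, existence of a weak solution via Lax--Milgram, and upgrade to $C^{2,\hat{\alp}}$ with the weighted bound \eqref{apriori-phi-lbvp-cutoff} by Schauder theory combined with a weighted barrier argument. The crucial geometric input is that three of the four corners of $\Om_{f,L}$ are reflectable: at $\rb$ and $\lb$ the boundary is Dirichlet--Dirichlet at a right angle (since $b'(0)=f'(0)=0$), and at $P_{3,L}^f$ the boundary is Dirichlet--conormal at a right angle with the conormal direction orthogonal to $\Gam_{b,L}^f$ by the construction of $\Gam_{\rm{cutoff},L}^f$ in Definition \ref{definition-bounded-domain}. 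Only at $P_{2,L}^f$ does a Dirichlet side meet the oblique conormal direction ${\bf n}_c$ at a genuinely nonperpendicular angle $\theta_{P_{2,L}^f}\in[\tfrac{7}{8}\tw,\tfrac{\pi}{2}-\tfrac{7}{8}a_w]$, controlled by Lemma \ref{lemma-distance-iterset}(c).

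In the first stage I would construct an explicit extension $\Phi_D\in C^{2,\hat{\alp}}(\ol{\Om_{f,L}})$ of the Dirichlet data $\ipsi-\psi_0$ on $\Gam_{\rm{sh},L}^f$ and $-\psi_0$ on $\Gam_{\rm{sym}}\cup\Gam_{b,L}^f$ such that $\nabla\Phi_D\cdot{\bf n}_c\equiv 0$ on $\Gam_{\rm{cutoff},L}^f$ and $\|\Phi_D\|_{2,\hat{\alp},\Om_{f,L}}^{(-\beta;-(1+\hat{\alp}),\{P_{2,L}^f\})}\le C\mathfrak{q}_{\gam}(\eps)$ with $C$ independent of $L$. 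This uses that $\ipsi-\psi_0$ and $-\psi_0$ are both linear functions with slope $O(\mathfrak{q}_{\gam}(\eps))$ by Lemma \ref{lemma-shock-polar2}, and that the two Dirichlet pieces vanish at $\rb$ and $\lb$, automatically providing compatibility at those corners. Writing $\phi=\Phi_D+w$ reduces \eqref{lbvp-phi-cutoff} to a problem for $w$ with homogeneous Dirichlet data on three sides, homogeneous conormal on $\Gam_{\rm{cutoff},L}^f$, and interior source $-\mcl{L}^{\vphi}(\Phi_D)$. Uniform ellipticity from Lemma \ref{lemma-coefficients}(b), plus a Poincar\'e inequality (available because the Dirichlet boundary has positive one-dimensional measure), yields existence and uniqueness of $w\in H^1(\Om_{f,L})$ via Lax--Milgram applied to the divergence form of the linearization.

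In the second stage, interior and smooth-boundary Schauder estimates give $C^{2,\alp}$ regularity of $w$ away from the corners; reflection handles the three right-angle corners $\rb,\lb,P_{3,L}^f$ to yield $C^{2,\alp}$ regularity there for any $\alp\in(0,1)$. At $P_{2,L}^f$ I would invoke Lieberman's corner regularity for mixed Dirichlet--oblique problems, which yields $C^{1,\hat{\alp}}$ continuity up to the corner with $\hat{\alp}\in(0,1)$ depending only on the angle range $[\tfrac{7}{8}\tw,\tfrac{\pi}{2}-\tfrac{7}{8}a_w]$, and hence on $\tw$ alone. The weight $(-\beta;-(1+\hat{\alp}),\{P_{2,L}^f\})$ in Definition \ref{definition-norms-phi-decay-corner} is precisely calibrated to absorb the permitted $|\rx-P_{2,L}^f|^{\hat{\alp}-1}$ blow-up of $D^2\phi$. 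The $L$-uniform weighted estimate \eqref{apriori-phi-lbvp-cutoff} then follows by combining these Schauder bounds with a weighted maximum-principle argument, using a barrier of the form $K\mathfrak{q}_{\gam}(\eps)(1+x_2)^{-\beta}$ (with small localized corrections near each side); Lemma \ref{lemma-coefficients}(a) absorbs the coefficient perturbation $\|a_{ij}^{\vphi}-a_{ij}^0\|\le CM_2\mathfrak{q}_{\gam}(\eps)\le C\hat\sigma$ into the principal part once $\hat\sigma$ is chosen small, while Lemma \ref{lemma-distance-iterset}(b) supplies the $L$-independent lower bound on $\mathrm{dist}(\Gam_{\rm{sh},L}^f,\Gam_{b,L}^f)$ needed to make all constants independent of $L$.

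The main obstacle is the corner $P_{2,L}^f$: one must produce a single exponent $\hat{\alp}\in(0,1)$ depending only on $\tw$ that governs the corner regularity uniformly as $f'(L)$ varies over $\mcl{J}_{M_1,L}$, and then carry the corresponding singular Schauder estimate through the corner with a constant uniform in $L$ and in the iterate $\vphi$. This uniformity is made possible precisely by Lemma \ref{lemma-distance-iterset}(c), which confines $\theta_{P_{2,L}^f}$ to a fixed compact subinterval of $(0,\pi/2)$, so that the Lieberman regularity exponent and all associated constants can be chosen uniformly over $\mcl{J}_{M_1,L}$ and all $L\ge L_{\rm{wp}}$.
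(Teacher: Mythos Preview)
Your overall architecture—existence via a variational or functional-analytic argument, then Schauder regularity with reflections at the three right-angle corners and a separate corner analysis at $P_{2,L}^f$—matches the paper's, and your identification of $P_{2,L}^f$ as the only genuinely non-reflectable corner is correct. The paper proceeds slightly differently in the existence step: it first establishes all a~priori estimates (Lemmas \ref{lemma-L-infty-estimate}--\ref{lemma-estimate-phi-corner}), solves an auxiliary Laplace problem with the same mixed boundary conditions via Lax--Milgram, and then reaches $\mcl{L}^{\vphi}$ by the method of continuity rather than applying Lax--Milgram directly to a divergence form of $\mcl{L}^{\vphi}$. At the corner $P_{2,L}^f$ the paper does not cite Lieberman but builds an explicit barrier $V(r,\theta)=Kr^{1+\hat\alp}\cos(\mu_1\theta-\mu_0)$ with $\mu_1,\mu_0$ chosen from $\tw$ alone, which yields $\hat\alp=\tfrac12\min\{1,\mu_1-1\}$ explicitly (Lemma \ref{lemma-estiamte-uR-3}); your appeal to Lieberman would give the same qualitative conclusion but with less explicit control of $\hat\alp$.

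There is, however, a genuine gap in your treatment of the $L$-independence of \eqref{apriori-phi-lbvp-cutoff}. The lower bound $d_f^L\ge d_*$ from Lemma \ref{lemma-distance-iterset}(b) is necessary but not sufficient: a naive Schauder estimate on the growing domain $\Om_{f,L}$ produces constants depending on $\mathrm{diam}(\Om_{f,L})\sim L$. The paper handles this by a systematic scaling argument: after rotating to $(\eta_1,\eta_2)$-coordinates aligned with the wedge direction, it introduces for each scale $R\in[4h_2^*,\tfrac45\til L]\cup\{\til L\}$ the rescaled function $u^R(\bm\zeta)=R^{-\beta}\til\phi(R\bm\zeta)$ on a domain $Q_R$ of unit size (Lemmas \ref{lemma-estimate-Fr}--\ref{lemma-estiamte-uR-3}), proves uniform $C^{2,\hat\alp}$ bounds on each $Q_R$, and then reassembles these into the weighted norm in Lemma \ref{lemma-estimate-phi-corner}. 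Your ``weighted maximum-principle argument'' covers only the $C^0$ level (and the barrier should be $a_0\mathfrak q_\gam(\eps)(1+x_2)^{\beta}$ with \emph{positive} exponent $\beta$, not $(1+x_2)^{-\beta}$, since the Dirichlet data $\ipsi-\psi_0$ on $\Gam_{\rm sh,L}^f$ grows like $(1+x_2)^\beta$; cf.\ Lemma \ref{lemma-L-infty-estimate}). To close the argument you need to supply this scaling step explicitly; without it the constant $C$ in \eqref{apriori-phi-lbvp-cutoff} cannot be made independent of $L$.
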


The rest of \S \ref{subsection-fixedbd-nlbvp-psi} is devoted to proving Proposition \ref{wellposedness-lbvp-phi-cutoff}. In order to prove this proposition, we first establish various estimates of solutions $\phi$ to \eqref{lbvp-phi-cutoff} uniformly with respect to $\vphi\in \mcl{I}_{M_2,L}^f$ and $f\in \mcl{J}_{M_1, L}$ through Lemmas \ref{lemma-L-infty-estimate}--\ref{lemma-estimate-phi-corner}. Furthermore, all the estimates established below are independent of $L$ provided that $L$ is sufficiently large. Finally, we prove Proposition \ref{wellposedness-lbvp-phi-cutoff} at the end of \S \ref{subsection-fixedbd-nlbvp-psi} by the method of continuity.

\begin{lemma}
\label{lemma-L-infty-estimate}
Under the same assumptions as in Proposition \ref{wellposedness-lbvp-phi-cutoff},
if $\phi\in C^1(\ol{\Om_{f,L}})\cap C^2(\Om_{f,L})$ is a solution to \eqref{lbvp-phi-cutoff}, then it satisfies that
\begin{equation}\label{barrier-function}
  |\phi({\bf x})|\le a_0\mathfrak{q}_{\gam}(\eps) (1+x_2)^{\beta}\quad\tx{in $\Om_{f,L}$}
\end{equation}
for a constant $a_0>0$ depending only on $(\gam, B_0, d_0)$.
Moreover, the constant $a_0$ is chosen independently of $L\in [\underline{L}+10(1+h_0),\infty)$ and $\beta\in(0,1)$.

\begin{proof}
It can be directly checked from \eqref{definition-ipsi} and the first equation in \eqref{RH-spolar-Prelim} that
\begin{equation*}
\frac{d}{dx_2}(\ipsi-\psi_0)(f_0(x_2),x_2)=0 \quad\tx{for all $x_2>0$}.
\end{equation*}
Furthermore, we obtain from \eqref{definition-ipsi} and \eqref{definition-background} that $(\ipsi-\psi_0)(f_0(0),0)=0$ so we get
\begin{equation}\label{background-fbc}
(\ipsi-\psi_0)(f_0(x_2),x_2)=0\quad\tx{for all $x_2\ge 0$}.
\end{equation}
We obtain from \eqref{definition-ipsi}, \eqref{definition-background} and \eqref{background-fbc} that
\begin{equation}
\label{bc-shock-expression}
\begin{split}
   (\ipsi-\psi_0)(f(x_2),x_2)
   &=(\ipsi-\psi_0)(f(x_2),x_2)-(\ipsi-\psi_0)(f_0(x_2),x_2)\\
   &=\rho^{\eps}_{\rm{st}}u^{\eps}_{\rm{st}}\kappa_w (f-f_0)(x_2).
\end{split}
\end{equation}
By Lemma \ref{lemma-shock-polar2} and \eqref{definition-iterset-shocks}, one can find a constant $C_1>0$ depending only on $(\gam, B_0)$ such that
\begin{equation}
\label{lbvp-bdry-estimate-sh}
  |(\ipsi-\psi_0)(f(x_2),x_2)|\le C_1M_1\mathfrak{q}_{\gam}^2(\eps)(1+x_2)^{\beta}\quad\tx{for all $x_2\in [0,L]$}.
\end{equation}
By using Definition \ref{definition-bluntbody-ftn}, Lemma \ref{lemma-shock-polar2}, \eqref{definition-background} and \eqref{psi0-on-bdry2}, one can also find a constant $C_2>0$ depending only on $(\gam, B_0)$ such that
\begin{equation}
\label{lbvp-bdry-estimate-wallsym}
  |\psi_0({\bf x})|\le C_2(b_0+d_0+h_0)\mathfrak{q}_{\gam}(\eps) \quad\tx{for all ${\bf x}\in \Gam_{\rm{sym}}\cup \Gam_{b,L}^f $}.
\end{equation}

For a constant $a_0>0$ to be determined later, let us set
\begin{equation*}
  w({\bf x}):=a_0\mathfrak{q}_{\gam}(\eps)(1+x_2)^{\beta}.
\end{equation*}
Since $0<\beta<1$, Lemma \ref{lemma-coefficients}(b) implies that if \eqref{condition3-for-epsilon} holds, then we have
\begin{equation}
\label{cp-1}
  \mcl{L}^{\vphi}(w)=
  a_{22}^{\vphi}\beta(\beta-1)a_0\mathfrak{q}_{\gam}(\eps)(1+x_2)^{\beta-1}
  \le \lambda {c_0^2}\beta(\beta-1)a_0\mathfrak{q}_{\gam}(\eps)(1+x_2)^{\beta-1}<0
  \quad\tx{in $\Om_{f,L}$}.
\end{equation}
For the rest of the proof, we continue to assume that \eqref{condition3-for-epsilon} hold.

We choose the constant $a_0$ in the definition of $w$ as
\begin{equation}
\label{choice-A}
  a_0=\max\{C_1\sigma_2, C_2(b_0+d_0+h_0)\},
\end{equation}
for the constant $\sigma_2$ from \eqref{shock-iterset-condition4}. Note that the constant $a_0$ has been chosen depending only on $(\gam, B_0, d_0)$. But the choice of $a_0$ is independent of $L\in [\underline{L}+10(1+h_0),\infty)$ and $\beta\in(0,1)$.
 It follows from Condition \ref{condition-for-parameters}(iv), \eqref{lbvp-bdry-estimate-sh} and \eqref{lbvp-bdry-estimate-wallsym} that
\begin{equation}
\label{cp-2}
  |\phi({\bf x})|\le w({\bf x})\quad\tx{for all ${\bf x}\in \der \Om_{f,L}\setminus \Gam_{\rm{cutoff}, L}^f$}.
\end{equation}

On $\Gam_{\rm{cutoff}, L}^f$, we have
\begin{equation}
\label{cp-3}
  \der_{{\bf n}_c}w=a_0\mathfrak{q}_{\gam}(\eps)\beta (1+x_2)^{\beta-1}\sin\tw>0.
\end{equation}

From \eqref{cp-1}--\eqref{cp-3}, the comparison principle and the linearity of the boundary value problem \eqref{lbvp-phi-cutoff}, we finally obtain the estimate \eqref{barrier-function} for the constant $a_0$ given by \eqref{choice-A}.

\end{proof}
\end{lemma}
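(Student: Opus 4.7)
The plan is to prove the estimate by constructing an explicit barrier function of the form $w(\mathbf x) = a_0\mathfrak{q}_\gamma(\eps)(1+x_2)^\beta$ and applying a comparison principle to $\pm\phi - w$ on $\Om_{f,L}$. The shape of $w$ is dictated by the weighted space in which $f-f_0$ is controlled: the boundary value of $\phi$ on the shock inherits the $(1+x_2)^\beta$ growth weight from $\mcl J_{M_1,L}$, while the bounded quantity $(1+x_2)^\beta\ge 1$ also has room to dominate $O(\mathfrak{q}_\gamma(\eps))$ data on the wall and on the symmetry axis.

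First I would reduce the boundary data on $\Gam_{\rm sh,L}^f$ to something manageable. The crucial observation is that the background shock condition is exactly built in: the identity $\nabla^\perp\psi_0=\rho_{\rm st}^\eps{\bf u}_{\rm st}^\eps$ combined with the first equation in \eqref{RH-spolar-Prelim} gives
\begin{equation*}
(\ipsi-\psi_0)(f_0(x_2),x_2)=0\quad\text{for every $x_2\ge 0$},
\end{equation*}
so on $\Gam_{\rm sh,L}^f$ one writes $(\ipsi-\psi_0)(f(x_2),x_2)=\rho_{\rm st}^\eps u_{\rm st}^\eps\kappa_w\,(f-f_0)(x_2)$. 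Using the iteration bound $\|f-f_0\|_{2,\hat\alp,(0,L)}^{(-\beta;-(1+\hat\alp),\{L\})}\le M_1\mathfrak{q}_\gamma(\eps)$ and Lemma~\ref{lemma-shock-polar2}, this yields a bound by $C_1 M_1\mathfrak{q}_\gamma^2(\eps)(1+x_2)^\beta$, which is much smaller than $\mathfrak{q}_\gamma(\eps)(1+x_2)^\beta$ under Condition~\ref{condition-for-parameters}. On $\Gam_{\rm sym}\cup\Gam_{b,L}^f$ the Dirichlet datum is $-\psi_0$; using $(b_5)$, on $\Gam_{b,L}^f\cap\{x_2\ge h_0\}$ this equals the constant $-\rho_{\rm st}^\eps u_{\rm st}^\eps\kappa_w(b_0-d_0)$, and on the remaining compact portion of $\Gam_{\rm sym}\cup\Gam_{b,L}^f$ the explicit form of $\psi_0$ plus Lemma~\ref{lemma-shock-polar2} gives a bound $C_2(b_0+d_0+h_0)\mathfrak{q}_\gamma(\eps)$. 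Crucially these bounds are independent of $L$ and $\beta$.

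Next I would verify the three comparison-principle ingredients for $w(\mathbf x)=a_0\mathfrak{q}_\gamma(\eps)(1+x_2)^\beta$. Since $w$ depends only on $x_2$, $\mcl L^\vphi(w)=a_{22}^\vphi\,w''$, and $w''=\beta(\beta-1)a_0\mathfrak{q}_\gamma(\eps)(1+x_2)^{\beta-2}<0$ for $\beta\in(0,1)$; combining this with the uniform ellipticity bound \eqref{estimate-ellipticity} (hence a positive lower bound for $a_{22}^\vphi$) from Lemma~\ref{lemma-coefficients} yields $\mcl L^\vphi(w)<0$ in $\Om_{f,L}$. On $\Gam_{\rm cutoff,L}^f$, the outward normal is $\mathbf n_c=(\cos\tw,\sin\tw)$ with $\sin\tw>0$, so $\partial_{\mathbf n_c}w=a_0\mathfrak{q}_\gamma(\eps)\beta\sin\tw(1+x_2)^{\beta-1}>0$; this is exactly the correct sign for Hopf/oblique-derivative comparison against $\partial_{\mathbf n_c}\phi=0$. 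Choosing $a_0=\max\{C_1\sigma_2,C_2(b_0+d_0+h_0)\}$ (with $\sigma_2$ from Lemma~\ref{lemma-distance-iterset}) and using Condition~\ref{condition-for-parameters}(iv) to absorb the $M_1\mathfrak{q}_\gamma(\eps)$ factor from the shock datum, one gets $|\phi|\le w$ on $\partial\Om_{f,L}\setminus\Gam_{\rm cutoff,L}^f$.

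The conclusion then follows from the standard mixed Dirichlet–oblique-derivative comparison principle applied to $\pm\phi - w$: a negative interior maximum of $\phi-w$ cannot occur by ellipticity and $\mcl L^\vphi(w)<0$, cannot occur on the Dirichlet portion by the boundary inequality, and cannot occur on $\Gam_{\rm cutoff,L}^f$ by Hopf's lemma because $\partial_{\mathbf n_c}(\phi-w)<0$ would be needed at such a maximum. The same argument applied to $-\phi - w$ closes the two-sided bound. The main subtlety to watch for is that $a_0$ must be chosen independently of $L$ and $\beta$: the right-hand side estimates on the shock and on $\Gam_{\rm sym}\cup\Gam_{b,L}^f$ are both $L$- and $\beta$-independent by construction, and the sign conditions $\mcl L^\vphi(w)<0$, $\partial_{\mathbf n_c}w>0$ hold uniformly because they use only the uniform ellipticity constant $\lambda$ and $\sin\tw$. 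Thus the constant $a_0$ depends only on $(\gamma,B_0,d_0)$, as required.
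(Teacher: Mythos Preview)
Your proposal is correct and follows essentially the same approach as the paper: the same barrier $w(\mathbf x)=a_0\mathfrak q_\gamma(\eps)(1+x_2)^\beta$, the same reduction of the shock datum via $(\ipsi-\psi_0)(f_0(x_2),x_2)\equiv 0$, the same boundary estimates on $\Gam_{\rm sym}\cup\Gam_{b,L}^f$, the same choice $a_0=\max\{C_1\sigma_2,\,C_2(b_0+d_0+h_0)\}$, and the same mixed Dirichlet/oblique comparison argument. One small slip: in your comparison step you wrote ``a negative interior maximum of $\phi-w$'', but you mean a \emph{positive} interior maximum (ruled out since $\mcl L^\vphi(\phi-w)=-\mcl L^\vphi(w)>0$).
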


\begin{lemma}
\label{lemma-estimate-nr-sym}
For each $r>0$, define
\begin{equation*}
\mcl{D}_r:=(\Om_{f,L}\cup \Gam_{\rm{sym}})
\setminus\{{\bf x}\in \Om_{f,L}:x_2\ge r\}.
\end{equation*}
Under the same assumptions as in Proposition \ref{wellposedness-lbvp-phi-cutoff},
if $\phi\in C^1(\ol{\Om_{f,L}})\cap C^2(\Om_{f,L})$ is a solution to \eqref{lbvp-phi-cutoff}, then there exists a constant $C_r>0$ depending only on $(\gam, B_0, d_0,\alp, r)$ such that
\begin{equation}
\label{estimate-phi-nr-sym}
  \|\phi\|_{2,\alp, \mcl{D}_{\frac{3r}{4}}}
  \le C_r\mathfrak{q}_{\gam}(\eps).
\end{equation}

\begin{proof}
Fix a constant $r\in(0,L]$.
We define an extension of the fixed function $f\in \mcl{J}_{M_1,L}$ for $x_2<0$ by
\begin{equation*}
f_{\rm ext}(x_2)=\begin{cases}
f(x_2)\quad\mbox{for $x_2\ge 0$}\\
f(-x_2)\quad\mbox{for $x_2<0$}
\end{cases}.
\end{equation*}
By \eqref{definition-iterset-shocks}, we have $f'(0)=0$. Therefore, the extended function $f_{\rm{ext}}$ is $C^{2,\alp}$ across $x_2=0$, and satisfies
\begin{equation}
\label{estimate-bdries-nr-sym}
\|f_{\rm ext}\|_{2,\alp,[-r,r]}\le 2\|f\|_{2,\alp,[0, r]}.
\end{equation}
Since the blunt body function $b$ is defined on $\R$, and satisfies the properties ($b_1$)--($b_5$) stated in Definition \ref{definition-bluntbody-ftn}, we can define
\begin{equation*}
\begin{split}
\Gam_{{\rm{sh}}, L}^{f,\rm{ext}}:=\{x_1=f_{\rm ext}(x_2):-L\le x_2\le L\},\quad \Gam_{b}^{\rm{ext}}=\{x_1=b(x_2):x_2\in \R\}.
\end{split}
\end{equation*}
And, let $\mcl{D}_r^{\rm{ext}}$ be the domain obtained from extending $\mcl{D}_r$ into $\R\times (-\infty,0)$ by even reflection about $x_1$-axis.
In $\mcl{D}_{r}^{\rm{ext}}$, we define extensions $a_{ij}^{\vphi,\rm ext}$ of the coefficients $a_{ij}^{\vphi}$ by
\begin{equation}
\label{definition-coefficients-extension}
\begin{split}
&a_{ii}^{\vphi,\rm ext}(x_1,x_2)=\begin{cases}
a_{ii}^{\vphi}(x_1,x_2), & \mbox{if } x_2\ge 0\\
a_{ii}^{\vphi}(x_1,-x_2), & \mbox{if } x_2<0
\end{cases} \quad \tx{for $i=1,2$},\\
&a_{12}^{\vphi,\rm ext}(x_1,x_2)=\begin{cases}
a_{12}^{\vphi}(x_1,x_2), & \mbox{if } x_2\ge 0\\
-a_{12}^{\vphi}(x_1,-x_2), & \mbox{if } x_2<0
\end{cases}.
\end{split}
\end{equation}

If $\phi\in C^1(\ol{\Om_{f,L}})\cap C^2(\Om_{f,L})$ solves \eqref{lbvp-phi-cutoff}, then the function $\psi:=\phi+\psi_0$ solves
\begin{equation}
\label{lbvp-phi-cutoff-equiv}
\begin{split}
\mcl{L}^{\vphi}(\psi)=0&\qquad\tx{in} \quad \Om_{f,L}\\
\psi=\ipsi&\qquad\tx{on}\quad\Gam_{\rm{sh},L}^f\\
\psi=0&\qquad\tx{on} \quad\Gam_{\rm{sym}}\cup \Gam_{b,L}^f \\
\nabla\psi\cdot{\bf n}_c=0&\qquad\tx{on}\quad \Gam_{\rm{cutoff},L}^f.
\end{split}
\end{equation}
The following linear boundary value problem in $\mcl{D}_{r}^{\rm{ext}}$ is given as an extension of \eqref{lbvp-phi-cutoff-equiv}.
\begin{equation}
\label{lbvp-reflected-nr-sym}
\begin{split}
&\sum_{i,j=1}^2 a_{ij}^{\vphi, {\rm{ext}}}\der_{ij}\til{\psi}=0\qquad\tx{in $\mcl{D}_{r}^{\rm{ext}}$},\\
&\til{\psi}=0\qquad\tx{on $\der \mcl{D}_{r}^{\rm{ext}}\cap \Gam_b^{\rm{ext}}$},\\
&\til{\psi}=\ipsi \qquad\tx{on\quad $\der \mcl{D}_{r}^{\rm{ext}}\cap \Gam_{\rm{sh},L}^{f,\rm{ext}}$},\\
&\til{\psi}(x_1,x_2)=\begin{cases}\psi(x_1,x_2)&\quad\tx{for $(x_1,x_2)\in\der\mcl{D}_{r}^{\rm{ext}}\cap \Om_{f,L}$}\\
-\psi(x_1,-x_2)&\quad\tx{for $(x_1,-x_2)\in\der\mcl{D}_{r}^{\rm{ext}}\cap \Om_{f,L}$}
\end{cases}.
\end{split}
\end{equation}
Since $\psi$ satisfies $\psi=0$ on $\Gam_{\rm sym}$, the Dirichlet boundary condition of \eqref{lbvp-reflected-nr-sym} is continuous on $\der\mcl{D}_{r}^{\rm{ext}}$. Then the unique existence of a solution $\til{\psi}\in C^0(\ol{\mcl{D}_{r}^{\rm{ext}}})\cap C^{2,\alp}(\mcl{D}_{r}^{\rm{ext}})$ is achieved by Lemma \ref{lemma-coefficients}(b) and the standard elliptic theory(cf. Gilbarg-Trudinger \cite{GT}). Furthermore, if $\til{\psi}(x_1,x_2)$ solves \eqref{lbvp-reflected-nr-sym}, then the uniqueness of a solution implies that $-\til{\psi}(x_1, -x_2)$ solves \eqref{lbvp-reflected-nr-sym} as well, from which it follows that $\til{\psi}$ is an odd function with respect to $x_2$. So, it becomes the solution to \eqref{lbvp-phi-cutoff-equiv}. Therefore, it suffices to estimate $C^{2,\alp}$ norm of $\til{\psi}$ near $\Gam_{\rm sym}$ in order to get $C^{2,\alp}$ estimate of $\psi$ near $\Gam_{\rm sym}$.

Due to Lemma \ref{lemma-coefficients}(b), we have
\begin{equation*}
\lambda c_0^2|\bm\xi|^2\le \sum_{i,j=1}^2 a_{ij}^{\vphi, {\rm ext}}(\rx)\xi_i\xi_j\le \frac{c_0^2}{\lambda}|\bm\xi|^2
\end{equation*}
for any $\rx\in \mcl{D}_{r}^{\rm ext}$, ${\bm\xi}=(\xi_1,\xi_2)\in \R^2$ so that the comparison principle applies to \eqref{lbvp-reflected-nr-sym}. Then we adjust the proof of Lemma \ref{lemma-L-infty-estimate} to obtain that
\begin{equation*}
  |\til{\psi}(\rx)|\le a_0\mathfrak{q}_{\gam}(\eps)(1+x_2)^{\beta}+|\ipsi(\rx)|\quad\tx{in $\mcl{D}_{r}^{\rm ext}$}
\end{equation*}
for $a_0$ given by \eqref{choice-A}.
By \eqref{definition-q-gamma} and \eqref{definition-ipsi}, this estimate yields a constant $C>0$ depending only on $(\gam, B_0, d_0)$ to satisfy
\begin{equation}
\label{Linfty-estimate-psi-nr-sym}
 \sup_{\rx\in \mcl{D}_{r}^{\rm ext}}|\til{\psi}(\rx)|\le C(1+r)\mathfrak{q}_{\gam}(\eps).
\end{equation}



By the standard Schauder estimate with using Lemma \ref{lemma-distance-iterset}, \eqref{estimate-bdries-nr-sym} and \eqref{Linfty-estimate-psi-nr-sym}, one can find a constant $C^*_r>0$ depending only on $(\gam, B_0, d_0, \alp, r)$ such that
\begin{equation}
\label{estimate-tpsi-nr-sym}
\|\til{\psi}\|_{2,\alp, \mcl{D}_{\frac{3r}{4}}^{\rm{ext}}}\le C^*_r\mathfrak{q}_{\gam}(\eps).
\end{equation}
Since $\til{\psi}=\psi$ in $\mcl{D}_{\frac{3r}{4}}$, it follows from \eqref{definition-q-gamma}, \eqref{definition-ipsi} and \eqref{estimate-tpsi-nr-sym} that there exists a constant $C_r>0$ depending only on $(\gam, B_0, d_0, \alp, r)$ to satisfy the estimate \eqref{estimate-phi-nr-sym}.

\end{proof}
\end{lemma}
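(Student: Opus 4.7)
The plan is to convert the boundary regularity near $\Gam_{\rm{sym}}$ into an interior-type Schauder estimate on a reflected domain, exploiting two structural features of the problem: $\psi:=\phi+\psi_0$ vanishes on $\Gam_{\rm{sym}}$, and both the free shock profile $f$ (via $\mcl{J}_{M_1,L}$) and the blunt body function $b$ (via property $(b_1)$) satisfy $f'(0)=b'(0)=0$. Extending $f$ and $b$ evenly across $x_2=0$ therefore produces $C^{2,\alp}$ boundary curves for a reflected domain $\mcl{D}_r^{\rm{ext}}$, and I would extend the coefficients $a_{11}^{\vphi}, a_{22}^{\vphi}$ evenly in $x_2$ and $a_{12}^{\vphi}$ oddly in $x_2$ --- parities chosen so that if $\psi(x_1,x_2)$ satisfies $\mcl{L}^{\vphi}\psi=0$ on the upper half, then $-\psi(x_1,-x_2)$ satisfies the extended equation on the lower half.

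I would then formulate an extended Dirichlet problem on $\mcl{D}_r^{\rm{ext}}$ with boundary data oddly reflected from $\psi$ on $\der\mcl{D}_r^{\rm{ext}}\cap\Om_{f,L}$ and the natural data $\ipsi$ and $0$ on the reflected shock and wall, respectively. Continuity of the Dirichlet data across $x_2=0$ follows from $\psi=0$ on $\Gam_{\rm{sym}}$. Well-posedness of the extended problem is supplied by the ellipticity bound \eqref{estimate-ellipticity2} from Lemma \ref{lemma-coefficients}(b), which is precisely the ellipticity of the reflected coefficient matrix $[(-1)^{i+j}a_{ij}^{\vphi}]$ arising on the lower half. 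Uniqueness combined with the symmetry $\til\psi\mapsto -\til\psi(\cdot,-\cdot)$ of the extended problem shows the unique solution is odd in $x_2$, hence restricts to $\psi$ on the upper half.

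For an $L^{\infty}$ bound on the extended solution $\til\psi$, I would adapt the barrier argument of Lemma \ref{lemma-L-infty-estimate} using a barrier symmetric in $x_2$, together with \eqref{definition-ipsi}, Lemma \ref{lemma-shock-polar2}, and \eqref{lbvp-bdry-estimate-sh}; this gives $\sup_{\mcl{D}_r^{\rm{ext}}}|\til\psi|\le C(1+r)\mathfrak{q}_{\gam}(\eps)$. Standard boundary Schauder estimates on the slightly smaller subdomain $\mcl{D}_{3r/4}^{\rm{ext}}$ --- shrunk so as to stay away from the corners at $P_{2,L}^f$ and its reflection --- then deliver a $C^{2,\alp}$ bound on $\til\psi$ with constant depending on $(\gam, B_0, d_0, \alp, r)$. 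Restricting to the upper half and subtracting $\psi_0$ produces \eqref{estimate-phi-nr-sym}.

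The main delicate step is ensuring the extended off-diagonal coefficient $a_{12}^{\vphi,\rm{ext}}$ is genuinely $C^{\alp}$ across $\Gam_{\rm{sym}}$, since the odd reflection flips its sign and any jump there would invalidate boundary Schauder theory. The saving observation is that $\vphi=-\psi_0$ on $\Gam_{\rm{sym}}$ by \eqref{definition-iterset-psi}, so the tangential derivative $(\vphi+\psi_0)_{x_1}$ vanishes identically on $\Gam_{\rm{sym}}$ and hence $a_{12}^{\vphi}|_{\Gam_{\rm{sym}}}=0$. Combined with the interior $C^{1,\alp}$ regularity of $\vphi$ built into $\mcl{I}_{M_2,L}^f$, this yields continuity and $C^{\alp}$ regularity of $a_{12}^{\vphi,\rm{ext}}$ across the axis --- the one place where the boundary condition encoded in the iteration set plays a structural role beyond supplying Dirichlet data.
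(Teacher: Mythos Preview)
Your proposal is correct and follows essentially the same reflection argument as the paper: extend $f$ and $b$ evenly, the diagonal coefficients evenly and $a_{12}^{\vphi}$ oddly, solve the extended Dirichlet problem, use uniqueness plus symmetry to identify the odd solution with $\psi$, obtain an $L^\infty$ bound by adapting Lemma~\ref{lemma-L-infty-estimate}, and finish with boundary Schauder on $\mcl{D}_{3r/4}^{\rm ext}$. Your explicit observation that $a_{12}^{\vphi}|_{\Gam_{\rm sym}}=0$ (because $\vphi=-\psi_0$ there forces $(\vphi+\psi_0)_{x_1}=0$) and your identification of \eqref{estimate-ellipticity2} as the ellipticity needed on the reflected half are details the paper leaves implicit; one small slip is that the shrinking to $3r/4$ is to stay away from the top and bottom cuts $\{x_2=\pm r\}$, not from $P_{2,L}^f$, which lies at height $L$.
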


In order to obtain a priori weighted $C^{2,\alp}$-estimate of $\phi$ away from the symmetric boundary $\Gam_{\rm sym}$ up to the cut-off boundary $\Gam^f_{{\rm cutoff}, L}$ independently of $L$, we introduce a new coordinate system.
Let us define ${\bm\eta}=(\eta_1,\eta_2)$ by
\begin{equation}
\label{definition-cov}
  \begin{pmatrix}
    \eta_1 \\
    \eta_2
  \end{pmatrix}
  =\begin{pmatrix}
\sin \tw & -\cos \tw \\
     \cos \tw & \phantom{-}\sin \tw
   \end{pmatrix}\begin{pmatrix}
                  x_1 \\
                  x_2
                \end{pmatrix}=:\mcl{R}\begin{pmatrix}
                  x_1 \\
                  x_2
                \end{pmatrix}.
\end{equation}
In ${\bm\eta}$-coordinates, the $\eta_2$-axis is along the line $x_2=x_1\tan\theta_w$(Figure \ref{figure_coordinate}).
\begin{figure}[htp]
	\centering
	\begin{psfrags}
		\psfrag{lt}[cc][][0.8][0]{$P_{2,L}^f$}
		\psfrag{rt}[cc][][0.8][0]{${\bf{P_{3,L}^f}}$}
		\psfrag{rb}[cc][][0.8][0]{$\rb$}
		\psfrag{lb}[cc][][0.8][0]{$\lb$}
		\psfrag{sm}[cc][][0.8][0]{$\Gam_{\rm{sym}}$}
		\psfrag{gw}[cc][][0.8][0]{$\Gam_{b,L}^f$}
		\psfrag{cf}[cc][][0.8][0]{$\phantom{aa}\Gam_{{\rm{cutoff}},L}^f$}
		\psfrag{sh}[cc][][0.8][0]{$\Gam_{{\rm{sh}},L}^f$}
		\psfrag{O}[cc][][0.8][0]{$0$}
		\psfrag{om}[cc][][0.8][0]{$\Om_{f,L}$}
		\psfrag{Uf}[cc][][0.8][0]{$M_{\infty}=\frac{1}{\eps}$}
\psfrag{t2}[cc][][0.8][0]{$\phantom{aa}\theta_{P_{2,L}^f}$}
\psfrag{a1}[cc][][0.8][0]{$\eta_1$}
\psfrag{a2}[cc][][0.8][0]{$\eta_2$}
\psfrag{h}[cc][][0.8][0]{\phantom{aaaa}${\bf{\eta_2=\frac{h_1}{\sin \tw}}}$}
		\includegraphics[scale=1.2]{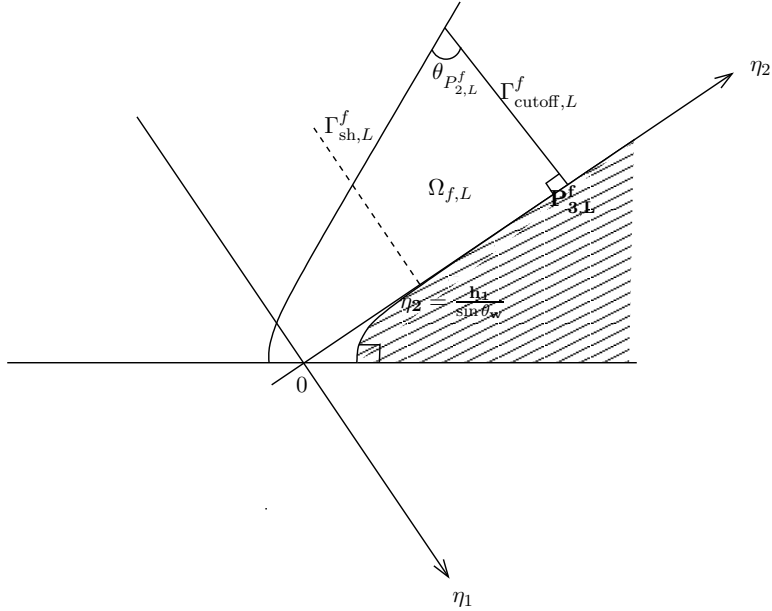}
		\caption{$(\eta_1,\eta_2)$-coordinates}\label{figure_coordinate}
	\end{psfrags}
\end{figure}

By \eqref{definition-iterset-shocks}, \eqref{estimate-f-iterset} and \eqref{shock-iterset-condtion1},  we have
\begin{equation}
\label{estimate-x2}
\begin{split}
&f(x_2)=f(0)+\int_0^{x_2}f'(s)ds>b_0-d_0-\frac{x_2}{4}\tan \tw.
\end{split}
\end{equation}
Therefore, for $h_1$ defined by
\begin{equation}
\label{choice-h1}
  h_1=\begin{cases}
  -\frac{4(b_0-d_0)}{\tan \tw}\quad&\mbox{if $b_0-d_0<0$}\\
  h_0\quad&\mbox{if $b_0-d_0\ge 0$}
  \end{cases},
\end{equation}
we have
\begin{equation}
\label{condition-for-h1}
-\frac{x_2}{2}\tan \tw  < f(x_2) \le x_1 \le b(x_2)=x_2\cot \tw\quad\tx{for ${\bf x}\in \ol{\Om_{f,L}}\cap\{x_2\ge h_1\}$},
\end{equation}
where we apply the property ($b_5$) stated in Definition \ref{definition-bluntbody-ftn} to get the last equality in \eqref{condition-for-h1}.
By a direct computation with using \eqref{definition-cov} and \eqref{condition-for-h1}, we get
\begin{equation}
\label{equiv-x-eta}
\frac{\sin \tw}{2}x_2   \le  \eta_2\le \frac{2}{\sin \tw}x_2\quad\tx{for ${\bf x}\in \ol{\Om_{f,L}}\cap\{x_2\ge h_1\}$}.
\end{equation}

Later, we will choose the constant $L_*\in [\underline{L}+10(1+h_0+h_1), \infty)$
so that the set $\Om_{f,L}\cap\{x_2\ge h_1\}$ is nonempty whenever $L\ge L_*$.
\smallskip

Let us set
\begin{equation*}
h_1^*:=\frac{h_1}{\sin \tw},
\end{equation*}
and define
\begin{equation}
\Lambda'_{f,L}:= \{(\eta_1,\eta_2)^T=\mcl{R}\begin{pmatrix}
                  x_1 \\
                  x_2
                \end{pmatrix}: (x_1, x_2)\in \Om_{f,L}\}\cap \{\eta_2\ge h_1^*\}. \quad {\tx{(see Fig. \ref{figure_coordinate})}}
\end{equation}
In $\Lambda'_{f,L}$, we define
\begin{equation}
\label{definition-tildes}
  \til{\phi}({\bm\eta}):=\phi({\bf x}),\quad
  \til{\phi}_{\infty}({\bm\eta}):=(\psi_{\infty}-\psi_0)({\bf x})
\end{equation}
for the functions $\ipsi$ and $\psi_0$ given by \eqref{definition-ipsi} and \eqref{definition-background}, respectively.
Fix a point $P\in \Lambda'_{f,L}$. For each $j=1,2$, let $\eta_j^P$ represent the $\eta_j$-coordinate of the point $P$. Similarly, let $x_j^P$ represent the $x_j$-coordinate of the point $P$. By \eqref{definition-cov}, we have
\begin{equation}
\label{x2p}
  x_2^P=-\eta_1^P\cos\theta_w+\eta_2^P\sin\theta_w.
\end{equation}
Since we have
\begin{equation}
\label{negativity-eta1}
 \eta_1^P\leq 0\quad\tx{and}\quad \eta_2^P\ge h_1^*\quad\tx{for $P\in \Lambda'_{f,L}$},
\end{equation}
\eqref{x2p} implies that $x_2^P\ge h_1$ (see Fig. \ref{figure_coordinate}). Then we obtain from \eqref{equiv-x-eta} that
\begin{equation*}
\frac{1}{m_0} x_2^P \le  \eta_2^P\le m_0 x_2^P
\quad\tx{with $m_0=\frac{2}{\sin \tw}$.}
\end{equation*}
By combining this inequality with \eqref{barrier-function}, we get
\begin{equation}
\label{Linfty-estimate-tphi}
  |\til{\phi}({\bm\eta})|\le a_0m_0^{\beta}\mathfrak{q}_{\gam}(\eps)(1+\eta_2)^{\beta}\quad \tx{in $\Lambda'_{f,L}$}.
\end{equation}

\smallskip

Let us define
\begin{equation}
\label{definition-Lambda-fL}
\Lambda_{f,L}:= \Lambda'_{f,L}\cap \{\eta_2\ge 2h_1^*\}.
\end{equation}
By using \eqref{definition-cov}, we rewrite the linear boundary value problem \eqref{lbvp-phi-cutoff} restricted in $\Lambda_{f,L}$ in $(\eta_1, \eta_2)$-coordinates as follows:
\begin{equation}
\label{lbvp-cov}
  \begin{split}
      A^{\vphi}_{11}\til{\phi}_{\eta_1\eta_1}+
      2A^{\vphi}_{12}\til{\phi}_{\eta_1\eta_2}
     +A^{\vphi}_{22}\til{\phi}_{\eta_2\eta_2}=0 \quad & \tx{in $\Lambda_{f,L}$} \\
     \til{\phi}=\til{\phi}_{\infty} \quad & \tx{on $\Gam_{{\rm{sh}},L}^f\cap\der \Lambda_{f,L}$}\\
      \til{\phi}=-\rho_{\rm{st}}^{\eps}u_{\rm{st}}^{\eps}\kappa_w(b_0-d_0) \quad & \tx{on $\Gam_{b,L}^f\cap\der \Lambda_{f,L}$} \\
      \til{\phi}_{\eta_2}=0 \quad & \tx{on $\Gam_{\rm{cutoff},L}^f$}
  \end{split}
\end{equation}
where we use the fact that $x_2-\kappa_wx_1=0$ on $\Gam_{b,L}^f\cap\{\eta_2>2h_1^*\}$.
In \eqref{lbvp-cov}, the coefficients $A^{\vphi}_{ij}$ for $i,j=1,2$ are given by
\begin{equation}
\label{coeff-eta-coord}
A^{\vphi}_{ij}({\bm\eta})
=c^{2}(|\nabla\vphi+\nabla\psi_0|^2)\delta_{ij}
-(-1)^{i+j}\frac{(\vphi+\psi_0)_{\eta_{i'}}(\vphi+\psi_0)_{\eta_{j'}}}
{\hat{\rho}^2(|\nabla\vphi+\nabla\psi_0|^2)}
\end{equation}
for $\eta_{1'}=\eta_2$ and $\eta_{2'}=\eta_1$.

The following lemma is directly obtained from Lemma \ref{lemma-coefficients}.

\begin{lemma}
\label{lemma-A-vphi}
For each $\vphi\in \mcl{I}_{M_2,L}^f$, the coefficients $\{A_{ij}^{\vphi}\}_{i,j=1}^2$ given by \eqref{coeff-eta-coord} satisfy the following properties:
\begin{itemize}
\item[(a)] There exists a constant $C>0$ depending only on $(\gam, B_0,  d_0)$ so that for each $\vphi\in \mcl{I}_{M_2,L}^f$, we have
    \begin{equation}\label{estimate-A-ij}
      \|A_{ij}^{\vphi}-A_{ij}^0\|_{1,\alp, \Lambda_{f,L}}^{(1-\beta;-\alp,\{P_{2,L}^f\})}\le CM_2\mathfrak{q}_{\gam}(\eps)
    \end{equation}
    where the norm $\|\cdot\|_{1,\alp, \Lambda_{f,L}}^{(1-\beta;-\alp,\{P_{2,L}^f\})}$ is defined by Definition \ref{definition-norms-phi-decay-corner} with $(x_1,x_2)$ being replaced by $(\eta_1, \eta_2)$, respectively;

\item[(b)] For any $\bm\xi=(\xi_1,\xi_2)\in \R^2$, and ${\bm\eta}\in \Lambda_{f,L}$, we have
\begin{equation}\label{ellipticity-A-ij}
\lambda c_0^2|{\bm\xi}|^2 \le \sum_{i,j=1}^2 A^{\vphi}_{ij}({\bm\eta})\xi_i\xi_j \le \frac{ {c_0}^2}{\lambda}|{\bm\xi}|^2\quad\tx{with $ c_0^2=(\gam-1)B_0$}
\end{equation}
for some $\lambda\in(0,1)$ depending only on $(\gam, B_0)$.
\end{itemize}
\end{lemma}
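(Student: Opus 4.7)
The plan is to observe that $[A_{ij}^{\vphi}]$ in \eqref{coeff-eta-coord} is precisely the conjugation of $[a_{ij}^{\vphi}]$ by the rigid rotation $\mcl{R}$ from \eqref{definition-cov}, after which both conclusions drop out of Lemma \ref{lemma-coefficients} by a purely algebraic coordinate-change argument. Since $\mcl{R}$ is orthogonal, $\bm\eta=\mcl{R}\rx$ gives $\nabla_\rx=\mcl{R}^T\nabla_{\bm\eta}$, so $|\nabla\psi|^2$, $c^2(|\nabla\psi|^2)$ and $\hat\rho(|\nabla\psi|^2)$ are coordinate-invariant, while $D^2_\rx\phi=\mcl{R}^T D^2_{\bm\eta}\til\phi\,\mcl{R}$. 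A short direct computation then identifies $[A_{ij}^{\vphi}]=\mcl{R}[a_{ij}^{\vphi}]\mcl{R}^T$: writing $[a_{ij}^{\vphi}]=c^2 I-\hat\rho^{-2}(\nabla^\perp\psi)(\nabla^\perp\psi)^T$ and using the elementary 2D identity $\mcl{R}^T J=J\mcl{R}^T$ (where $J$ is the $90^\circ$ rotation implementing $(\cdot)^\perp$), one sees that $\nabla^\perp_\rx\psi=\mcl{R}^T\nabla^\perp_{\bm\eta}\til\psi$, so the rank-one block transforms covariantly and the nondivergence form of \eqref{equation-psi-nondiv} is form-invariant under rigid rotations.

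Given this identification, part (b) is immediate: orthogonal conjugation preserves the spectrum, so the two-sided bound \eqref{estimate-ellipticity} of Lemma \ref{lemma-coefficients} transfers verbatim to \eqref{ellipticity-A-ij} with the same constant $\lambda$.

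For part (a), each entry of $A_{ij}^{\vphi}-A_{ij}^{0}$ is a fixed linear combination (with coefficients depending only on $\tw$) of entries of $a_{kl}^{\vphi}-a_{kl}^{0}$. Because $\mcl{R}$ is constant, derivatives in $\bm\eta$ are linear combinations of derivatives in $\rx$ with no Jacobian correction, and $|\bm\eta-\bm\eta'|=|\rx-\rx'|$, so Hölder seminorms transfer directly. It remains to compare weights: on $\Lambda_{f,L}$ we have $\eta_2\ge 2h_1^*$, and \eqref{equiv-x-eta} yields $\tfrac{\sin\tw}{2}x_2\le\eta_2\le\tfrac{2}{\sin\tw}x_2$, so $(1+\eta_2)$ is equivalent to $(1+x_2)$ with constants depending only on $\tw$; since $|\bm\eta-\mcl{R}P_{2,L}^f|=|\rx-P_{2,L}^f|$, the corner-weight $\delta^{(2)}$ in Definition \ref{definition-norms-phi-decay-corner} is preserved as well. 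Feeding these equivalences into the estimate \eqref{coefficient-variation} produces \eqref{estimate-A-ij}. The only mildly delicate point---and essentially the sole potential obstacle---is carefully matching the two weighted Hölder norms of Definition \ref{definition-norms-phi-decay-corner} across the coordinate change, but this reduces to the above equivalences together with the elementary fact that rotations are isometries.
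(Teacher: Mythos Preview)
Your proposal is correct and is essentially a fleshed-out version of what the paper does: the paper simply states that the lemma is ``directly obtained from Lemma \ref{lemma-coefficients},'' and your argument via the orthogonal conjugation $[A_{ij}^{\vphi}]=\mcl{R}[a_{ij}^{\vphi}]\mcl{R}^T$ together with the weight equivalence \eqref{equiv-x-eta} on $\Lambda_{f,L}$ is precisely the computation that justifies this. The only minor remark is that the equivalence $(1+\eta_2)\sim(1+x_2)$ relies on both $\eta_2\ge 2h_1^*$ and $x_2\ge h_1$ being bounded away from zero on $\Lambda_{f,L}$, which brings in the $d_0$-dependence of the constant $C$ through $h_1$---consistent with the statement.
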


In ${\bm\eta}$-coordinates, we have
\begin{equation*}
\Gam_{\rm{sh}, L}^f \cap \der \Lambda'_{f,L}
=\{(\eta_1,\eta_2):F(\eta_1,\eta_2)=0\,\,\tx{for $2h_1^* \le \eta_2\le |P_{3,L}^f|$}\},
\end{equation*}
for
\begin{equation*}
 F(\eta_1,\eta_2)
 :=(\eta_1\sin\tw+\eta_2\cos\tw)-f(-\eta_1\cos\tw+\eta_2\sin\tw).
\end{equation*}
Since $F_{\eta_1}=(\tan \tw+f')\cos \tw\ge \frac{3}{4}\sin \tw>0$ by \eqref{estimate-f-iterset} and Condition \ref{condition-for-parameters}(iv), the implicit function theorem implies that there exists a unique function $\til{f}:[h_1^*, |P_{3,L}^f|]\rightarrow \R$ satisfying
\begin{equation}
\label{definition-shock-etacoord}
\Gam_{\rm{sh}, L}^f \cap \der \Lambda'_{f,L}=\{(\til{f}(\eta_2), \eta_2): h_1^*\le \eta_2\le |P_{3,L}^f|\}.
\end{equation}

\begin{lemma}
There exists a constant $m_1>1$ depending only on $d_0$ such that
\begin{equation}
\label{bounds-ftilde}
-m_1\eta_2\le \til{f}(\eta_2) \le -\frac{1}{m_1}\eta_2 \quad \tx{for $ 2h_1^* \le \eta_2 \le |P_{3,L}^f|$}.
\end{equation}

\begin{proof}
By differentiating $F(\til{f}(\eta_2), \eta_2)\equiv0$ with respect to $\eta_2$, and using \eqref{definition-cov} and \eqref{definition-shock-etacoord} , we get
\begin{equation}
\label{estimate1-gf}
\til{f}'(\eta_2)= -\frac{\cot \tw-f'(x_2)}{1+f'(x_2)\cot \tw}\,\,\tx{for $x_2=-\til f(\eta_2)\cos \tw+\eta_2\sin \tw$}.
\end{equation}
And, we use \eqref{estimate-f-iterset} and Condition \ref{condition-for-parameters}(iv) to obtain from
\eqref{estimate1-gf} that
\begin{equation}
\label{estimate-ftilde-derivative}
-\frac 53 \cot \tw\le \til{f}'(\eta_2) \le -\frac 35 \cot \tw\quad\tx{for $h_1^* \le \eta_2\le |P_{3,L}^f|$}.
\end{equation}
This estimate implies that
\begin{equation}
\label{estimate-ftilde-zerorder}
  -\frac 53 \cot \tw(\eta_2-h_1^*) \le \til{f}(\eta_2)-\til{f}(h_1^*)\le -\frac 35\cot \tw(\eta_2-h_1^*)\quad
  \tx{for $h_1^* \le \eta_2\le |P_{3,L}^f|$.}
\end{equation}
By \eqref{negativity-eta1}, we have $\til{f}(h_1^*)\le 0$, thus \eqref{estimate-ftilde-zerorder} implies that
\begin{equation}
\label{estimate-ftilder-lwrbd}
  \til{f}(\eta_2)<-\frac 35\cot \tw(\eta_2-h_1^*) \le -\frac{3}{10} \eta_2\cot \tw \quad\tx{for $2h_1^*\le \eta_2 \le |P_{3,L}^f|$}.
\end{equation}

A direct computation with using \eqref{condition-for-h1} shows that
\begin{equation*}
  \til{f}(h_1^*)\ge -(\tan \tw+\frac{2}{\tan \tw})h_1^*=:-\om.
\end{equation*}
Since $h_1$ is given by \eqref{choice-h1}, the constant $\om>0$ depends only on $d_0$. By combining this estimate with the first inequality in \eqref{estimate-ftilde-zerorder}, it is obtained that
\begin{equation}
\label{estimate-ftilder-uprbd}
\til{f}(\eta_2)\ge -\left(\frac 53 \cot \tw+\frac{\om}{2h_1^*}\right)\eta_2=:-\om_1 \eta_2 \quad \tx{for $ 2h_1^*\le \eta_2 \le |P_{3,L}^f|$}.
\end{equation}

From \eqref{estimate-ftilder-lwrbd} and \eqref{estimate-ftilder-uprbd}, one can fix a constant $m_1>1$ depending only on $d_0$ so that \eqref{bounds-ftilde} holds. This proves the lemma.

\end{proof}
\end{lemma}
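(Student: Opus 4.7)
The plan is to derive pointwise bounds on $\til{f}'(\eta_2)$ by implicit differentiation of the defining identity $F(\til{f}(\eta_2), \eta_2) \equiv 0$, then integrate these bounds from $\eta_2 = h_1^*$ upward, controlling the boundary value $\til{f}(h_1^*)$ separately from geometric considerations.

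First, implicit differentiation of $F(\til{f}, \eta_2) = 0$ with respect to $\eta_2$ yields
\begin{equation*}
  \til{f}'(\eta_2) = -\frac{\cot\tw - f'(x_2)}{1 + f'(x_2)\cot\tw}, \qquad x_2 = -\til{f}(\eta_2)\cos\tw + \eta_2\sin\tw.
\end{equation*}
Since $f \in \mcl{J}_{M_1,L}$ and Condition \ref{condition-for-parameters}(iv) hold, \eqref{estimate-f-iterset} gives $|f'| \le \tfrac14 \min\{\cot\tw, \tan\tw\}$, which sandwiches $\til{f}'$ between explicit negative multiples of $\cot\tw$, uniformly in $\eta_2 \in [h_1^*, |P_{3,L}^f|]$. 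Integrating from $h_1^*$ to $\eta_2$ then bounds $\til{f}(\eta_2) - \til{f}(h_1^*)$ from above and below by constants times $\cot\tw \,(\eta_2 - h_1^*)$.

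For the upper bound in \eqref{bounds-ftilde}, I would use \eqref{negativity-eta1} (which forces $\til{f}(h_1^*) \le 0$) to drop the boundary term on the right, yielding $\til{f}(\eta_2) \le -c_2\cot\tw\,(\eta_2 - h_1^*)$; restricting to $\eta_2 \ge 2h_1^*$ gives $\eta_2 - h_1^* \ge \tfrac12 \eta_2$ and hence $\til{f}(\eta_2) \le -\tfrac{1}{m_1}\eta_2$ after choosing $m_1$ suitably.

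The lower bound is the harder step, since it requires an a priori lower bound on $\til{f}(h_1^*)$ itself. Here I would return to physical coordinates and invoke \eqref{condition-for-h1} on $\Om_{f,L} \cap \{x_2 \ge h_1\}$, which gives $-\tfrac{x_2}{2}\tan\tw < f(x_2)$ at the $x_2$-level corresponding to $\eta_2 = h_1^*$; translating this back through \eqref{definition-cov} produces a bound $\til{f}(h_1^*) \ge -\om$ with $\om$ depending only on $h_1^*$ and $\tw$. Since $h_1$ is defined in \eqref{choice-h1} purely in terms of $d_0$ (with $\tw$ fixed throughout the paper per the Note after Definition \ref{definition-norms-phi-decay-corner}), the constant $\om$ depends only on $d_0$, and combining with the integrated lower bound from the previous step yields $\til{f}(\eta_2) \ge -m_1 \eta_2$ for $m_1$ sufficiently large depending only on $d_0$. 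The main obstacle is the book-keeping required to guarantee that $m_1$ depends only on $d_0$ and not on $L$, $f$, or any of the iteration parameters $M_1, M_2, \eps$; this reduces to carefully tracing through the sizes of $h_1^*$ and $\om$ in the formulas above.
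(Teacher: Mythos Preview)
Your proposal is correct and follows essentially the same route as the paper: implicit differentiation of $F(\til f(\eta_2),\eta_2)=0$ gives the formula for $\til f'$, the bound $|f'|\le\tfrac14\min\{\cot\tw,\tan\tw\}$ pins $\til f'$ between $-\tfrac53\cot\tw$ and $-\tfrac35\cot\tw$, and integration from $h_1^*$ together with $\til f(h_1^*)\le 0$ (for the upper bound) and $\til f(h_1^*)\ge -(\tan\tw+\tfrac{2}{\tan\tw})h_1^*$ via \eqref{condition-for-h1} (for the lower bound) finishes the argument. The paper's proof is identical in structure, with the only addition being the explicit constants.
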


To simplify a notation, set
\begin{equation}
\label{definition-Ltilde}
\til{L}:=|P_{3,L}^f|.
\end{equation}
Then $\Lambda_{f,L}$ given by \eqref{definition-Lambda-fL} is represented as
\begin{equation*}
\Lambda_{f,L}:=\Om_{f,L}\cap\{{\bm{\eta}}=(\eta_1,\eta_2): \til{f}(\eta_2)<\eta_1<0,\,\,\eta_2\in(2h_1^*, \til{L})\}.
\end{equation*}

\smallskip

For each $R\in[4h_1^*, \til{L}]$, define a domain $Q_R$ by
\begin{equation*}
Q_R=\{{\bm \zeta}=(\zeta_1,\zeta_2)\in \R^2:\frac 12<\zeta_2<1,\,\, R{\bm \zeta}\in \Lambda_{f, L}\}.
\end{equation*}
Then, $Q_R$ can be represented as
\begin{equation}
Q_R=\{(\zeta_1,\zeta_2):\in \R^2: \frac 12 <\zeta_2<1,\,\, {F}^R(\zeta_2)<\zeta_1<0\}.
\end{equation}
for
\begin{equation}\label{definition-scaled-f}
  F^R(\zeta_2):=\frac 1R \til{f}(R\zeta_2).
\end{equation}
By \eqref{bounds-ftilde}, for any $R\in[4h_1^*, \til{L}]$, $F_R$ satisfies
\begin{equation}
\label{estimate-gfr-lwrbd}
-m_1<{F}^R(\zeta_2)<-\frac{1}{2m_1}\quad\tx{for all $\frac 12<\zeta_2<1$.}
\end{equation}

For a priori estimates of solutions to \eqref{lbvp-cov} up to $\Gam_{{\rm{cutoff}},L}^f$ uniform with respect to $L\ge L_*$, we define another weighted H\"{o}lder norms of functions in $(\zeta_1,\zeta_2)$-variables.

\begin{definition}
\label{Definition-weighted-Holder-zeta}
Fix $m,k\in \{0\}\cup \mathbb{N}$ with $k\le m$, and $\alp\in (0,1)$.
\begin{itemize}
\item[(i)] Let $(s,t)$ be an interval with $-\infty<s<t<\infty$. For $\zeta_2, \zeta_2'\in (s,t)$, set
    \begin{equation*}
      d_{\zeta_2}^{(1)}:=t-\zeta_2,\quad
      d_{\zeta_2, \zeta_2'}^{(1)}:=\min\{d_{\zeta_2}^{(1)}, d_{\zeta_2'}^{(1)}\}.
    \end{equation*}
    For a function $F:(s,t)\rightarrow \R$, define
    \begin{equation*}
      \begin{split}
      &\|F\|_{m,(s,t)}^{(-(k+\alp), \{t\})}
      :=\|F\|_{k,\alp, (s,t)}
      +\sum_{j>k}\sup_{\zeta_2\in(s,t)}
      (d_{\zeta_2}^{(1)})^{j-(k+\alp)}
      |\frac{d^j}{d\zeta_2^j}F(\zeta_2)|,\\
      &[F]_{m,\alp,(s,t)}^{(-(k+\alp), \{t\})}
      :=\sup_{\zeta_2\neq \zeta_2'\in (s,t)}
      (d_{\zeta_2,\zeta_2'}^{(1)})^{m-k}
      \frac{|\frac{d^m}{d\zeta_2^m}F(\zeta_2)-\frac{d^m}{d\zeta_2^m}F(\zeta_2')|}
      {|\zeta_2-\zeta_2'|^{\alp}},\\
      &\|F\|_{m,\alp, (s,t)}^{(-(k+\alp),\{t\})}
      :=\|F\|_{m,(s,t)}^{(-(k+\alp), \{t\})}+[F]_{m,\alp,(s,t)}^{(-(k+\alp), \{t\})}.
      \end{split}
    \end{equation*}

    \item[(ii)] Let $Q$ be an open, bounded and connected domain in $\R^2$, and let ${\bf q}$ be a fixed point on  $\der Q$. For ${\bm \zeta}=(\zeta_1,\zeta_2),{\bm \zeta}'=(\zeta_1', \zeta_2')\in Q$, set
\begin{equation*}
 d^{(2)}_{{\bm \zeta}}:= |{\bm\zeta}-{\bf q}|,\quad d^{(2)}_{{\bm \zeta},{\bm\zeta'}}:=\min\{d^{(2)}_{{\bm \zeta}}, d^{(2)}_{{\bm \zeta}'} \}.
\end{equation*}
For a function $u:Q\rightarrow \R$, define
\begin{equation*}
\begin{split}
&\|u\|_{m, Q}^{(-(k+\alp),\{{\bf q}\})} :=\|u\|_{k,\alp, Q}+
\sum_{j>k}^m \sup_{{\bm \zeta}\in Q}(d^{(2)}_{{\bm \zeta}})^{ j-(k+\alp)}\sum_{l=0}^j
|\der^l_{\zeta_1}\der^{j-l}_{\zeta_2}u({\bm \zeta})|,\\
&[u]_{m,\alp, Q}^{(-(k+\alp),\{{\bf q}\})} :=
\sup_{{\bm \zeta}\neq {\bm \zeta}'\in Q}
(d^{(2)}_{{\bm \zeta},{\bm\zeta'}})^{m-k}\sum_{l=0}^m
\frac{|\der_{\zeta_1}^l\der_{\zeta_2}^{m-l}u({\bm \zeta})-\der_{\zeta_1}^l\der_{\zeta_2}^{m-l}u({\bm \zeta}')|}{|{\bm \zeta}-{\bm \zeta}'|^{\alp}},\\
&\|u\|_{m,\alp,Q}^{(-(k+\alp),\{{\bf q}\})}:=
\|u\|_{m, Q}^{(-(k+\alp),\{{\bf q}\})} + [u]_{m,\alp, Q}^{(-(k+\alp),\{{\bf q}\})} .
\end{split}
\end{equation*}
\end{itemize}
\end{definition}

\begin{lemma}
\label{lemma-estimate-Fr}
There exists a constant $h_2^*\in[h_1^*, \infty)$ and $C>0$ depending only on $d_0$ so that if $\til L\ge 10(1+h_2^*)$, then $F^R$ satisfies the following estimates:
\begin{itemize}
\item[(a)] For $4h_2^* \le  R \le \frac 45 \til{L}$,
\begin{equation}
\label{estimate-Fr-scaling-1}
\begin{split}
  \|{F}^R\|_{C^{2,\alp}([\frac 12, 1])}&\le C(1+M_1q_{\gam}(\eps));
  \end{split}
\end{equation}
\item[(b)] For $R=\til{L}$,
\begin{equation}
\label{estimate-Fr-scaling-2}
  \|F^{\til L}\|_{2,\alp, (\frac 12 , 1)}^{(-(1+\alp),\{1\})}\le C(1+M_1q_{\gam}(\eps)).
\end{equation}
\end{itemize}

\begin{proof}
{\textbf{1.}}
It directly follows from \eqref{estimate-f-iterset} and \eqref{definition-scaled-f} that if $4h_1^*\le R\le \til{L}$, then we have
\begin{equation}
\label{estimate-FR-1st-deriv-new}
\|(F^R)'\|_{C^0([\frac 12, 1])}\le 2\cot \tw.
\end{equation}

By \eqref{equiv-x-eta}, there exists a constant $h_2^*\in[h_1^*, \infty)$  depending only on $d_0$ so that
\begin{equation}\label{assumption-equiv-weight}
  \tx{if $\til{L}\ge 10(1+h_2^*)$, $\eta_2\in [h_2^*, \til{L}]$, and if $x_2$ is given by \eqref{estimate1-gf},}
\end{equation}
then we have
\begin{equation}
\label{equiv-weight1}
  \om_0 \le \frac{1+\eta_2}{1+x_2} \le \frac{1}{\om_0}
\end{equation}
for some constant $\om_0\in(0,1)$ depending only on $\tw$. By \eqref{definition-cov}, we have $|\til L-\eta_2|=|L-x_2|\sin\tw$ for $x_2$ given by \eqref{estimate1-gf}. For $\eta_2, \eta_2'\in [h_2^*, \til{L}]$, let $x_2, x_2'$ be given by  \eqref{estimate1-gf} corresponding to $\eta_2, \eta_2'$, respectively. Let us define
\begin{equation*}
\begin{split}
  &\delta_{\eta_2}^{(1)}:=\min\{1+\eta_2, \til{L}-\eta_2\},\quad \delta_{\eta_2, \eta_2'}^{(1)}:=\min\{ \delta_{\eta_2}^{(1)}, \delta_{\eta_2'}^{(1)}\},\\
  &\hat{\delta}_{x_2}^{(1)}:=\min\{1+x_2, L-x_2\},\quad \hat{\delta}_{x_2, x_2'}^{(1)}:=\min\{ \delta_{x_2}^{(1)}, \delta_{x_2'}^{(1)}\}.
  \end{split}
\end{equation*}
Then, we use \eqref{equiv-weight1} to obtain
\begin{equation}
\label{equiv-weight2}
\begin{split}
&\om_1  \le \frac{\delta_{\eta_2}^{(1)}}{1+\eta_2}\cdot \frac {1+x_2}{\hat{\delta}_{x_2}^{(1)}}\le \frac{1}{\om_1},\\
&\om_1  \le \frac{\delta_{\eta_2,\eta_2'}^{(1)}}{1+\max\{\eta_2, \eta_2'\}}\cdot \frac {1+\max\{x_2, x_2'\}}{\hat{\delta}_{x_2, x_2'}^{(1)}}\le \frac{1}{\om_1}
\end{split}
\end{equation}
for some constant $\om_1\in(0,1)$ depending only on $\tw$ under the condition of \eqref{assumption-equiv-weight}.

Since $(F^R)''(\zeta_2)=R\til f''(R\zeta_2)$ due to \eqref{definition-scaled-f},
we compute $\til f''$ in terms of $f$ by differentiating \eqref{estimate1-gf} with respect to $\eta_2$. A direct computation yields that
\begin{equation}
\label{f-tilde-2nd-deriv}
\til f''(\eta_2)=f''(x_2)(\sin \tw-f'(x_2)\cos \tw)=(f''(x_2)-f_0''(x_2))(\sin \tw-f'(x_2)\cos \tw)
\end{equation}
for $x_2=-\til f(\eta_2)\cos \tw+\eta_2\sin \tw$, because $f''_0\equiv 0$.
\smallskip

{\textbf{2.}}
Suppose that $4h_2^* \le  R \le \frac 45 \til{L}$. Since $f\in \mcl{J}_{M_1, L}$ for $\mcl{J}_{M_1, L}$ given by \eqref{definition-iterset-shocks}, a direct computation with using \eqref{equiv-weight2} and \eqref{f-tilde-2nd-deriv} yields that
\begin{equation}
\label{estimate-FR-away-cutoff}
\|(F^R)''\|_{C^0([\frac 12, 1])}\le CM_1q_{\gam}(\eps)R^{-1+\beta},\quad\tx{and}\quad
[(F^{R})'']_{\alp, (\frac 12, 1)}\le CM_1q_{\gam}(\eps)R^{-(1+\alp)+\beta}
\end{equation}
for $C>0$ depending only on $d_0>0$. So the estimate \eqref{estimate-Fr-scaling-1} is obtained by combining \eqref{estimate-FR-away-cutoff} with \eqref{estimate-gfr-lwrbd} and \eqref{estimate-FR-1st-deriv-new}.
\smallskip

{\textbf{3.}}
Suppose that $R=\til {L}$.
By the definition of $f_0$ given in \eqref{definition-background-f}, $f_0'$ is a constant. Then, we use \eqref{definition-iterset-shocks}, \eqref{estimate1-gf}, \eqref{definition-scaled-f} and \eqref{equiv-weight1} to obtain that, for any $\zeta_2, \zeta_2'\in(\frac 12, 1)$ with $\zeta_2\neq \zeta_2'$,
\begin{equation}
\label{estimate-FL-1new}
\begin{split}
\frac{|(F^{\til{L}})'(\zeta_2)-(F^{\til{L}})'(\zeta_2')|}{|\zeta_2-\zeta_2'|^{\alp}}
&=\frac{|\til f'(\til L\zeta_2)-\til f'(\til{L}\zeta_2')|}{|\til{L}(\zeta_2-\zeta_2')|^{\alp}}\til L^{\alp}\\
&\le  C\|f-f_0\|_{1,\alp, (0,L)}^{(-\beta)}\til L^{-1+\beta}\\
&\le CM_1q_{\gam}(\eps)\til L^{-1+\beta}.
\end{split}
\end{equation}
 By using \eqref{equiv-weight1}--\eqref{f-tilde-2nd-deriv}, it can be directly checked that for any $\zeta_2, \zeta_2'\in(\frac 12, 1)$ with $\zeta_2\neq \zeta_2'$,
\begin{equation}
\label{estimate-FL-2new}
(1-\zeta_2)^{1-\alp}|(F^{\til L})''(\zeta_2)|
+\min\{1-\zeta_2, 1-\zeta_2'\}\frac{|(F^{\til L})''(\zeta_2)-(F^{\til L})''(\zeta_2')|}{|\zeta_2-\zeta_2'|^{\alp}}
\le CM_1q_{\gam}(\eps)\til L^{-1+\beta}.
\end{equation}
In the estimates \eqref{estimate-FL-1new} and \eqref{estimate-FL-2new}, the constant $C>0$ is given depending only on $d_0>0$. The estimate \eqref{estimate-Fr-scaling-2} is obtained by combining \eqref{estimate-FL-1new} and \eqref{estimate-FL-2new} with \eqref{estimate-gfr-lwrbd} and \eqref{estimate-FR-1st-deriv-new}.
\end{proof}
\end{lemma}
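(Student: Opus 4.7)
The plan is to transfer the weighted $C^{2,\alp}$ estimate of $f - f_0$ in the $x_2$-variable (available by the definition of $\mcl{J}_{M_1,L}$) to a weighted estimate of $\til f$ in the $\eta_2$-variable via the rotation $\mcl{R}$, and then to a weighted estimate of $F^R$ in $\zeta_2$ via the scaling $F^R(\zeta_2) = R^{-1}\til f(R\zeta_2)$. The additive $1$ inside $C(1+M_1\mathfrak{q}_\gam(\eps))$ reflects that the zeroth- and first-order parts are controlled purely by geometry and not by the smallness parameter.

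First I would dispose of the zeroth- and first-derivative pieces. The bound $|F^R(\zeta_2)| \le m_1$ on $[\tfrac 12, 1]$ is immediate from \eqref{bounds-ftilde}, and the uniform bound $\|(F^R)'\|_{C^0([\frac 12, 1])} \le 2\cot\tw$ follows from $(F^R)'(\zeta_2) = \til f'(R\zeta_2)$ combined with \eqref{estimate-ftilde-derivative}. These bounds are independent of $M_1$ and $\mathfrak{q}_\gam(\eps)$ and absorb into the additive constant in both \eqref{estimate-Fr-scaling-1} and \eqref{estimate-Fr-scaling-2}.

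For the second-derivative parts I would differentiate the implicit identity \eqref{estimate1-gf} in $\eta_2$ to obtain
\[
\til f''(\eta_2) = f''(x_2)\bigl(\sin\tw - f'(x_2)\cos\tw\bigr), \qquad x_2 = -\til f(\eta_2)\cos\tw + \eta_2\sin\tw,
\]
which is exactly \eqref{f-tilde-2nd-deriv}. Since $f_0'' \equiv 0$, this identity rewrites $\til f''$ in terms of $(f-f_0)''$, and the factor $\sin\tw - f'(x_2)\cos\tw$ is bounded above and below by positive constants thanks to Condition \ref{condition-for-parameters}(iv). The norm in $\mcl{J}_{M_1,L}$ gives the pointwise decay $(1+x_2)^{2-\beta}|f''(x_2)| \le C M_1\mathfrak{q}_\gam(\eps)$, with an additional corner-degeneracy factor $\bigl((L-x_2)/(1+x_2)\bigr)^{1-\alp}$ near $x_2 = L$ and analogously for the H\"older seminorm. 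The key geometric input is the weight equivalence: for $h_2^*\ge h_1^*$ chosen large enough depending on $d_0$, any $\eta_2\ge h_2^*$ lies in the regime \eqref{assumption-equiv-weight} where \eqref{equiv-weight1}--\eqref{equiv-weight2} give $1+\eta_2 \asymp 1+x_2$ and $\til L-\eta_2 = (L-x_2)\sin\tw$, and moreover $x_2\ge h_1$ holds, so the identifications of $\Gam_{b,L}^f$ used earlier remain valid.

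Finally I would apply the scaling $(F^R)''(\zeta_2) = R\,\til f''(R\zeta_2)$. For part (a), the range $4h_2^* \le R \le \tfrac 45 \til L$ with $\zeta_2 \in [\tfrac 12, 1]$ forces $R\zeta_2 \asymp R$ and keeps $R\zeta_2$ bounded away from $\til L$, so the corner-degeneracy factor is $\asymp 1$ and we obtain $\|(F^R)''\|_{C^0([\frac 12, 1])} \le C M_1\mathfrak{q}_\gam(\eps)\,R\cdot R^{-(2-\beta)} = CM_1\mathfrak{q}_\gam(\eps)R^{-1+\beta}$; since $R \ge 4h_2^*$ and $\beta \in (0,1)$ this is bounded by $CM_1\mathfrak{q}_\gam(\eps)$, and the H\"older seminorm is handled analogously with an extra gain of $R^{-\alp}$. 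For part (b) with $R = \til L$, the rescaling converts the corner-degeneracy of $\|f-f_0\|^{(\cdots;-(1+\alp),\{L\})}$ at $x_2 = L$ into precisely the factor $(1-\zeta_2)^{1-\alp}$ at $\zeta_2 = 1$ demanded by $\|\cdot\|_{2,\alp,(\frac 12, 1)}^{(-(1+\alp),\{1\})}$ in Definition \ref{Definition-weighted-Holder-zeta}. The main obstacle throughout is the bookkeeping of three nested weights---far-field decay in $(1+\eta_2)$, corner degeneracy at $\eta_2 = \til L$, and the rescaling by $R$---and checking that all powers balance to yield a constant $C$ that is independent of $L$.
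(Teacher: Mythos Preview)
Your proposal is correct and follows essentially the same approach as the paper's proof: both dispose of the $C^0$ and $C^1$ parts via the geometric bounds \eqref{bounds-ftilde} and \eqref{estimate-ftilde-derivative}, derive the formula \eqref{f-tilde-2nd-deriv} for $\til f''$ in terms of $(f-f_0)''$, establish the weight equivalences \eqref{equiv-weight1}--\eqref{equiv-weight2} between the $x_2$- and $\eta_2$-variables (this is where $h_2^*$ enters), and then read off the scaled estimates for $(F^R)''$ in the two ranges of $R$. Your identification of the main difficulty as the bookkeeping of the three nested weights is exactly right.
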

\smallskip

For $R\in [4h_2^*, \til{L}]$, let us define
\begin{equation}
\label{definition-scaled-functions}
u^R({\bm \zeta})=\frac{\til{\phi}(R{\bm \zeta})}{R^{\beta}}\quad\tx{for $\bm\zeta=(\zeta_1, \zeta_2)\in Q_R$.}
\end{equation}
It directly follows from \eqref{Linfty-estimate-tphi} and \eqref{lbvp-cov} that, for each $R\in [4h_2^*, \til{L}]$, $u^R$ satisfies
\begin{equation}\label{estimate-uR-Linfty}
  \sup_{{\bm\zeta}\in Q_R}|u^R(\bm\zeta)|\le a_0(2m_0)^{\beta}\mathfrak{q}_{\gam}(\eps)
\end{equation}
for $m_0=\frac{2}{\sin \tw}$ and $a_0$ from Lemma \ref{lemma-L-infty-estimate}, and
\begin{equation}
\label{lbvp-scaled-problem1}
  \begin{split}
\mcl{L}^{\vphi}_{R} (u^R):=\sum_{i,j=1}^2 A_{ij}^{\vphi, R}u^R_{\zeta_i\zeta_j}=0&\quad\mbox{in $Q_R$}\\
u^R=-\frac{\rho_{\rm{st}}^{\eps}u_{\rm{st}}^{\eps}
\kappa_w(b_0-d_0)}{R^{\beta}}&\quad\mbox{on $\der Q_R\cap\{\zeta_1=0\}=:\Sigma_{b}^R$}\\
u^R=\frac{1}{R^{\beta}}\til{\phi}_{\infty}(\til{f}(R\zeta_2), R\zeta_2)=:g_{R}(\zeta_2)&\quad\mbox{on $\der Q_R\cap\{\zeta_1={F}^R(\zeta_2)\}=:\Sigma^R_{\rm{sh}}$}
  \end{split}
\end{equation}
for $A_{ij}^{\vphi, R}({\bm\zeta}):=A_{ij}^{\vphi}(R{\bm\zeta})$.

We will estimate (weighted) $C^{2,\alp}$ norms of $u^R$ for two cases:  (i) $4h_2^*\le R\le \frac 45 \til{L}$, (ii) $R=\til{L}$.

By Lemma \ref{lemma-A-vphi}, we have
\begin{equation}
\label{estimate-coeff-scaling-2}
\begin{split}
&\lambda c_0^2|{\bm\xi}|^2\le \sum_{i,j=1}^2A_{ij}^{\vphi,R}({\bm\zeta})\xi_i\xi_j\le \frac{c_0^2}{\lambda} |\bm{\xi}|^2\quad\mbox{for all ${\bm\zeta}=(\zeta_1,\zeta_2)\in \ol{Q_R}$, ${\bm\xi}=(\xi_1,\xi_2)\in \R^2$},\\
&\|A_{ij}^{\vphi, R}-A_{ij}^{0, R}\|_{\alp, Q_R}\le \frac{\lambda_0}{ R^{1-\beta}}M_2\mathfrak{q}_{\gam}
(\eps) \quad\tx{\quad\tx{for all $i,j=1,2$}}
\end{split}
\end{equation}
for the constants $(\lambda, c_0, C)$ from Lemma \ref{lemma-A-vphi}.
This implies that, for each $R\in[4h_1^*, \til{L}]$, the equation $\mcl{L}_R^{\vphi}(u^R)=0$ is uniformly elliptic, and its coefficients $\{A_{ij}^{\vphi, R}\}_{i,j=1}^2$ are in $C^{\alp}(\ol{Q_R})$.

\begin{lemma}
\label{lemma-estiamte-uR-1}
Fix $\beta\in(0,1)$ and $\alp\in(0,1)$ in \eqref{definition-iterset-shocks} and \eqref{definition-iterset-psi}. Assume the same conditions as Proposition \ref{wellposedness-lbvp-phi-cutoff}. Suppose that $\til{L}\ge 10(1+h_2^*)$ for $h_2^*$ from Lemma \ref{lemma-estimate-Fr}. If $\phi\in C^1(\ol{\Om_{f,L}})\cap C^2(\Om_{f,L})$ is a solution to \eqref{lbvp-phi-cutoff}, then there exists a constant $C>0$ depending only on $(d_0,\alp)$ so that, for each $R\in [4h_2^*, \frac 45 \til{L}]\cup \{\til{L}\}$, $u_R$ satisfies
\begin{equation}
\label{estimate-ur-away-cutoff-final}
  \|u^R\|_{2,\alp, Q_R\cap\{\frac{11}{20}<\zeta_2<\frac{19}{20}\}}
  \le C(1+M_1\mathfrak{q}_{\gam}(\eps))\mathfrak{q}_{\gam}(\eps)
  .
\end{equation}

\begin{proof}
By \eqref{bc-shock-expression},
we can rewrite the function $g_R(\zeta_2)$ in \eqref{lbvp-scaled-problem1} as
\begin{equation}
\label{gR_representation}
g_R(\zeta_2)=\frac{1}{R^{\beta}}\rho_{\rm st}^{\eps}u_{\rm st}^{\eps}\kappa_w(f-f_0)(x_2)\quad\tx{for $x_2=-\til{f}(R\zeta_2)\cos \tw+R\zeta_2 \sin \tw$}.
\end{equation}
By directly differentiating this representation with using \eqref{estimate1-gf} and \eqref{f-tilde-2nd-deriv}, we get
\begin{equation}
\label{gR_derivatives}
\begin{split}
&g_R'(\zeta_2)=R^{1-\beta}\rho^{\eps}_{\rm st}u_{\rm st}^{\eps}\kappa_w\frac{(f-f_0)'(x_2)}{\sin \tw+f'(x_2)\cos \tw},\\
&g_R''(\zeta_2)=R^{2-\beta}\rho^{\eps}_{\rm st}u_{\rm st}^{\eps}\kappa_w
\frac{1-(\sin \tw+f'(x_2)\cos \tw)^3(f-f_0)'(x_2)}{(\sin \tw+f'(x_2)\cos \tw)^2}
(f-f_0)''(x_2)
\end{split}
\end{equation}
 for $x_2$ given by \eqref{gR_representation}. By applying Lemma \ref{lemma-shock-polar2}, \eqref{definition-iterset-shocks} and \eqref{equiv-weight1}--\eqref{equiv-weight2}, it can be directly derived from \eqref{gR_derivatives} that
 \begin{itemize}
\item[(i)] For $4h_2^* \le  R \le \frac 45 \til{L}$,
\begin{equation}
\label{estimate-gr-away-cf}
  \|g_R\|_{C^{2,\alp}([\frac 12, 1])}\le CM_1\mathfrak{q}_{\gam}^2(\eps);
\end{equation}
\item[(ii)] For $R=\til{L}$,
\begin{equation}
 \label{estimate-gr-nr-cf}
 \|g_R\|_{2,\alp, (\frac 12, 1)}^{(-(1+\alp),\{1\})}
  \le CM_1\mathfrak{q}_{\gam}^2(\eps)
\end{equation}
\end{itemize}
for a constant $C>0$ depending only on $d_0>0$.

For each $R\in [4h_2^*, \frac 45\til{L}]\cup\{\til{L}\}$, $u^R$ satisfies \eqref{lbvp-scaled-problem1}, and $\mcl{L}^{\vphi}_R$ is uniformly elliptic in $Q_R$ due to \eqref{estimate-coeff-scaling-2}. Then, by the standard Schauder estimate theory with using \eqref{estimate-uR-Linfty}, \eqref{estimate-coeff-scaling-2}, Lemma \ref{lemma-estimate-Fr}, and the estimates \eqref{estimate-gr-away-cf}--\eqref{estimate-gr-nr-cf}, the estimate \eqref{estimate-ur-away-cutoff-final} is obtained.

\end{proof}
\end{lemma}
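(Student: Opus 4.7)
The plan is to reduce \eqref{estimate-ur-away-cutoff-final} to the standard interior--boundary Schauder estimate for a uniformly elliptic linear equation on the rescaled domain $Q_R$. By \eqref{estimate-gfr-lwrbd}, each $Q_R$ lies in the fixed rectangle $\{(\zeta_1,\zeta_2):\tfrac12<\zeta_2<1,\ -m_1<\zeta_1<0\}$, so the geometry is uniformly non-degenerate in $R$. Restricting to $\{\tfrac{11}{20}<\zeta_2<\tfrac{19}{20}\}$ keeps us at positive distance from both endpoints of the $\zeta_2$-range, so only Dirichlet boundary Schauder estimates at the two smooth pieces $\Sigma_b^R$ and $\Sigma_{\rm sh}^R$ are needed, and no corner analysis at $\zeta_2=\tfrac12$ or $\zeta_2=1$ is required.

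The first step is to control the shock-side data $g_R$. Using \eqref{bc-shock-expression} one rewrites $g_R(\zeta_2)=R^{-\beta}\rho_{\rm st}^{\eps}u_{\rm st}^{\eps}\kappa_w(f-f_0)(x_2)$ with $x_2=-\til f(R\zeta_2)\cos\tw+R\zeta_2\sin\tw$, then differentiates twice using \eqref{estimate1-gf} and \eqref{f-tilde-2nd-deriv}. The factor $u_{\rm st}^{\eps}=O(\mathfrak{q}_{\gam}(\eps))$ comes from Lemma \ref{lemma-shock-polar2}, while $f-f_0$ contributes an additional $M_1\mathfrak{q}_{\gam}(\eps)$ via the defining estimate of $\mcl{J}_{M_1,L}$ in \eqref{definition-iterset-shocks}; the remaining $R^{j-\beta}$ factors cancel against the $(1+x_2)^{-(j-\beta)}$ decay of $(f-f_0)^{(j)}$ using the weight equivalences \eqref{equiv-weight1}--\eqref{equiv-weight2} between $x_2$-, $\eta_2$-, and $\zeta_2$-scales. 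This yields $\|g_R\|_{C^{2,\alp}([\tfrac12,1])}\le CM_1\mathfrak{q}_{\gam}^2(\eps)$ for $4h_2^*\le R\le\tfrac45\til L$, and the weighted variant $\|g_R\|_{2,\alp,(\tfrac12,1)}^{(-(1+\alp),\{1\})}\le CM_1\mathfrak{q}_{\gam}^2(\eps)$ when $R=\til L$, where a weight is needed near $\zeta_2=1$.

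Next I feed this into standard Schauder boundary estimates (cf.\ Gilbarg--Trudinger). The inputs are the $L^{\infty}$ bound \eqref{estimate-uR-Linfty}, the uniform ellipticity and $C^{\alp}$ coefficient bound \eqref{estimate-coeff-scaling-2}, the $C^{2,\alp}$ bound on $F^R$ from Lemma \ref{lemma-estimate-Fr}, and the $g_R$ estimate above together with the constant Dirichlet value $-R^{-\beta}\rho_{\rm st}^{\eps}u_{\rm st}^{\eps}\kappa_w(b_0-d_0)$ on $\Sigma_b^R$. On the restricted strip $\{\tfrac{11}{20}<\zeta_2<\tfrac{19}{20}\}$ the weight $(1-\zeta_2)^{1-\alp}$ appearing in Lemma \ref{lemma-estimate-Fr}(b) and in the weighted $g_R$ bound is bounded above and below by constants depending only on $\alp$, so the weighted $C^{2,\alp}$ norms collapse to the standard ones. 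The Schauder bound then delivers
\begin{equation*}
\|u^R\|_{2,\alp,\,Q_R\cap\{\frac{11}{20}<\zeta_2<\frac{19}{20}\}} \le C\bigl(\|u^R\|_{L^{\infty}(Q_R)} + \|g_R\|_{C^{2,\alp}} + \text{const.}\bigr) \le C\bigl(\mathfrak{q}_{\gam}(\eps)+M_1\mathfrak{q}_{\gam}^2(\eps)\bigr),
\end{equation*}
which is exactly \eqref{estimate-ur-away-cutoff-final}.

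The main obstacle is uniformity of the constant $C$ in $R\in[4h_2^*,\tfrac45\til L]\cup\{\til L\}$. This succeeds precisely because the scaling $\bm\zeta=\bm\eta/R$ flattens every ingredient onto the fixed reference rectangle: the domain shape via \eqref{estimate-gfr-lwrbd}, the coefficients via \eqref{estimate-coeff-scaling-2}, the shock curve via Lemma \ref{lemma-estimate-Fr}, and the data via the $g_R$ estimate. The only genuinely singular input is the weight at $\zeta_2=1$ in the case $R=\til L$, which is neutralized by the cut-off $\zeta_2<\tfrac{19}{20}$. Thus a single Schauder argument handles both $R$-regimes simultaneously and uniformly.
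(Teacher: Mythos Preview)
Your proposal is correct and follows essentially the same approach as the paper: rewrite $g_R$ via \eqref{bc-shock-expression}, estimate it in $C^{2,\alp}$ (or the weighted norm when $R=\til L$) using Lemma \ref{lemma-shock-polar2}, the definition of $\mcl{J}_{M_1,L}$, and the weight equivalences \eqref{equiv-weight1}--\eqref{equiv-weight2}, then apply standard Schauder estimates with the inputs \eqref{estimate-uR-Linfty}, \eqref{estimate-coeff-scaling-2}, and Lemma \ref{lemma-estimate-Fr}. Your additional remarks on why the constant is uniform in $R$ and why the weights collapse on the restricted strip are helpful elaborations but do not change the argument.
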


Set
\begin{equation*}
 {\bf{q}}_{*}:= (F^{\til{L}}(1),1).
\end{equation*}
In addition to \eqref{lbvp-scaled-problem1}, $u^{\til{L}}$ satisfies the boundary condition
\begin{equation}
\label{lbvp-u-tilde-L}
\der_{\zeta_2}u^{\til{L}}=0\quad\tx{on $\der Q_{\til{L}}\cap \{\zeta_2=1\}:=\Sigma_{\rm{cutoff}}^{\til{L}}$}
\end{equation}
due to the slip boundary condition $\nabla \phi\cdot{\bf n}_c=0$ on $\Gam_{\rm{cutoff}, L}^f$ in \eqref{lbvp-phi-cutoff}. Since the boundary $\Sigma_b^{\til{L}}$ is perpendicular to $\Sigma_{\rm{cutoff}}^{\til{L}}$ at ${\bm\zeta}=(0,1)$, and $u^{\til{L}}$ is a constant on $\Sigma_b^{\til{L}}$, we can apply the method of reflection and apply standard Schauder estimate theory to $u^{\til{L}}$ up to $\Sigma_{\rm{cutoff}}^{\til{L}}$ away from ${\bf q}_*$. So the following lemma is obtained.
\begin{lemma}
\label{lemma-estiamte-uR-2}
Assume the same conditions as in Lemma \ref{lemma-estiamte-uR-1}. If $\phi$ is in $C^{2,\alp}$ up to $\Gam^f_{{\rm cutoff}, L}$ away from $P_{2, L}^f$, then, for any constant $r_0\in(0, \frac{1}{20})$, there exists a constant $C_{r_0}>0$ depending only on $(d_0,\alp, r_0)$ so that $u^{\til{L}}$ satisfies the estimate
\begin{equation}\label{estimate-uL-reflection}
\|u^{\til{L}}\|_{2,\alp, (Q_{\til{L}}\cap\{\frac{11}{20}<\zeta_2<1\})\setminus B_{r_0/4}( {\bf{q}}_{*})}\le
C_{r_0}(1+M_1\mathfrak{q}_{\gam}(\eps))
\mathfrak{q}_{\gam}(\eps).
\end{equation}

\end{lemma}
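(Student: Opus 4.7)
The plan is to mimic the rescaling-and-Schauder strategy of Lemma~\ref{lemma-estiamte-uR-1}, but upgraded to reach the Neumann boundary $\Sigma_{{\rm cutoff}}^{\tilde L}=\{\zeta_2=1\}$; the excision of $B_{r_0/4}({\bf q}_*)$ removes the only genuinely singular corner, so the remaining portion of $\partial Q_{\tilde L}$ consists of pieces where standard boundary regularity theory is applicable. First I would invoke Lemma~\ref{lemma-estiamte-uR-1} at $R=\tilde L$ to obtain the bound \eqref{estimate-uL-reflection} on the sub-region $Q_{\tilde L}\cap\{\tfrac{11}{20}<\zeta_2<\tfrac{19}{20}\}$, where the cutoff boundary plays no role. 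What remains is to control $u^{\tilde L}$ on $\bigl(Q_{\tilde L}\cap\{\tfrac{19}{20}\le \zeta_2<1\}\bigr)\setminus B_{r_0/4}({\bf q}_*)$, up to and including $\Sigma_{{\rm cutoff}}^{\tilde L}$.

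The main device for the upper strip is the even reflection $\hat{u}(\zeta_1,\zeta_2):=u^{\tilde L}(\zeta_1,2-\zeta_2)$ across $\{\zeta_2=1\}$, which by the Neumann condition \eqref{lbvp-u-tilde-L} glues $C^1$-smoothly to $u^{\tilde L}$ on an extended domain $\hat Q$. Simultaneously I would extend the coefficients by even reflection of $A_{11}^{\vphi,\tilde L}, A_{22}^{\vphi,\tilde L}$ and odd reflection of $A_{12}^{\vphi,\tilde L}$, as in \eqref{definition-coefficients-extension}, so that $\hat u$ formally satisfies $\sum_{i,j}\hat A_{ij}\hat u_{\zeta_i\zeta_j}=0$ on $\hat Q$. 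Uniform ellipticity and the $C^{\alpha}$ bound of the reflected coefficients follow from Lemma~\ref{lemma-A-vphi} together with the observation that on $\Sigma_{{\rm cutoff}}^{\tilde L}$ the combination $\psi_{\eta_2}=(\vphi+\psi_0)_{\eta_2}$ vanishes (from $\nabla\psi_0\cdot\mathbf{n}_c=0$ together with the Neumann condition transported to the iterate), so $A_{12}^{\vphi,\tilde L}=0$ there and the odd extension is $C^{\alpha}$ across $\{\zeta_2=1\}$. The reflected corner at $(0,1)$ ceases to be a corner in $\hat Q$: the Dirichlet segment $\Sigma_b^{\tilde L}=\{\zeta_1=0\}$ extends straight through it, and since the original Dirichlet datum on $\Sigma_b^{\tilde L}$ is a constant, the extended Dirichlet datum is automatically $C^{2,\alpha}$.

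With the extension in place, the strategy is to cover $\bigl(Q_{\tilde L}\cap\{\tfrac{19}{20}\le \zeta_2<1\}\bigr)\setminus B_{r_0/4}({\bf q}_*)$ by finitely many balls: interior balls inside $\hat Q$ (treated by interior Schauder), balls centered on the extended Dirichlet boundary $\{\zeta_1=0\}$ (treated by boundary Schauder for Dirichlet problems, using the constant Dirichlet data), and balls centered on $\Sigma_{{\rm sh}}^{\tilde L}$ bounded away from ${\bf q}_*$ (treated by boundary Schauder using the $C^{2,\alpha}$ bound \eqref{estimate-gr-nr-cf} on $g_{\tilde L}$ together with the $C^{2,\alpha}$ bound \eqref{estimate-Fr-scaling-2} on $F^{\tilde L}$ localized away from $\zeta_2=1$ via $r_0$). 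The $L^{\infty}$ control on $u^{\tilde L}$ from \eqref{estimate-uR-Linfty} absorbs the zeroth-order term in each Schauder estimate, and each local estimate contributes an $O((1+M_1\mathfrak{q}_{\gam}(\eps))\mathfrak{q}_{\gam}(\eps))$ bound whose constant depends on $r_0$ through the fixed distance to ${\bf q}_*$. Summing over the finite cover yields \eqref{estimate-uL-reflection}.

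The main obstacle I anticipate is the verification that the odd reflection of $A_{12}^{\vphi,\tilde L}$ is genuinely $C^{\alpha}$ across $\{\zeta_2=1\}$: this forces $A_{12}^{\vphi,\tilde L}=0$ along the cutoff, which is not a condition built into the iteration set $\mcl{I}_{M_2,L}^f$ in \eqref{definition-iterset-psi}. One natural fix is to enlarge $\mcl{I}_{M_2,L}^f$ by imposing the Neumann compatibility $(\vphi+\psi_0)_{\eta_2}=0$ on $\Sigma_{{\rm cutoff}}^{\tilde L}$, which is automatically satisfied by the reference function $\vphi_*$ from Lemma~\ref{lemma-vphi-iterset-nonempty} (constant near the top) and is preserved under the solution map $\vphi\mapsto\phi$ by construction of the linearized problem. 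Alternatively, one can split $u^{\tilde L}$ as a suitable lift plus a correction whose associated off-diagonal coefficient does vanish on the cutoff, and reflect only the correction; this second route avoids modifying $\mcl{I}_{M_2,L}^f$ at the cost of some additional bookkeeping.
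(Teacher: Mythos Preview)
Your overall strategy—reflection plus standard Schauder near the cutoff boundary, away from the corner ${\bf q}_*$—is exactly what the paper's one-sentence sketch has in mind.  The place where your write-up goes astray is the direction of the reflection and the justification you give for $A_{12}^{\vphi,\tilde L}=0$ on $\Sigma_{\rm cutoff}^{\tilde L}$.  The coefficients are built from the \emph{iterate} $\vphi\in\mcl{I}_{M_2,L}^f$, not from the solution $\phi$; nothing in the definition \eqref{definition-iterset-psi} of $\mcl{I}_{M_2,L}^f$ imposes $\nabla\vphi\cdot{\bf n}_c=0$ on $\Gam_{{\rm cutoff},L}^f$, so the parenthetical ``Neumann condition transported to the iterate'' is not available, and your odd extension of $A_{12}^{\vphi,\tilde L}$ across $\{\zeta_2=1\}$ need not be $C^{\alp}$.

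The clean fix—already implicit in the paper's remark that $u^{\tilde L}$ is \emph{constant} on $\Sigma_b^{\tilde L}$ and that $\Sigma_b^{\tilde L}\perp\Sigma_{\rm cutoff}^{\tilde L}$—is to reflect across the Dirichlet side instead.  Subtract the constant Dirichlet value and reflect $u^{\tilde L}$ oddly in $\zeta_1$ across $\{\zeta_1=0\}$; then the corner $(0,1)$ becomes an interior point of the flat Neumann boundary $\{\zeta_2=1\}$ of the extended domain, and one applies the standard Neumann Schauder estimate there.  For this reflection the requirement is $A_{12}^{\vphi}=0$ on $\{\eta_1=0\}=\Gam_{b,L}^f\cap\{x_2\ge h_0\}$, and this \emph{does} hold for every $\vphi\in\mcl{I}_{M_2,L}^f$: the iteration set forces $\vphi+\psi_0\equiv 0$ on $\Gam_{b,L}^f$, so its tangential derivative $(\vphi+\psi_0)_{\eta_2}$ vanishes there, and hence $A_{12}^{\vphi}=(\vphi+\psi_0)_{\eta_1}(\vphi+\psi_0)_{\eta_2}/\hat\rho^2=0$ on $\{\eta_1=0\}$ by \eqref{coeff-eta-coord}.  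With the reflection taken in this direction the coefficient extension is $C^{\alp}$, no modification of $\mcl{I}_{M_2,L}^f$ or splitting of $u^{\tilde L}$ is needed, and the rest of your covering argument goes through unchanged.
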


To estimate a weighted $C^{2,\alp}$ norm of $u^{\til{L}}$ near  ${\bf{q}}_{*}$, we first find $\hat{\alp}\in(0,1)$ so that a priori $C^{1,\hat{\alp}}$ estimate of $u^{\til{L}}$ can be achieved up to ${\bf q}_*$, then we get a priori weighted $C^{2,\hat{\alp}}$ estimate of $u^{\til{L}}$ near ${\bf q}_*$ by using a scaling argument. As we shall see below, the choice of $\hat{\alp}$ essentially depends on $\tw$.

\begin{lemma}
\label{lemma-estiamte-uR-3}
Under the same conditions as in Lemma \ref{lemma-estiamte-uR-1}, there exist constants $h_3^*\in[h_2^*, \infty)$, $r_1\in(0, \frac 12)$,  $\hat{\eps}>0$, $\hat{\sigma}>0$ and $\hat{\alp}\in(0,1)$ so that if
\begin{itemize}
\item[(i)] $\til{L}\ge 10(1+h_3^*)$;
\item[(ii)] $\alp=\hat{\alp}$ in \eqref{definition-of-shock-iterset} and \eqref{definition-iterset-psi};
\item[(iii)] $
0<  \eps\le \hat{\eps}$;
\item[(iv)] $M_2\mathfrak{q}_{\gam}(\eps)\le \hat{\sigma}$,
\end{itemize}
then $u^{\til{L}}$ satisfies the estimate
\begin{equation}
\label{uL-corner-estimate}
\|u^{\til{L}}\|_{2,\hat{\alp}, Q_{\til{L}} \cap B_{r_1/2}({\bf q}_*)}^{(-1-\hat{\alp}, \{{\bf q}_*\})}\le
C(1+M_1\mathfrak{q}_{\gam}(\eps))\mathfrak{q}_{\gam}(\eps)
\end{equation}
for some constant $C>0$. Here, $(h_3^*, r_1, C)$ depend only on $(\gam, B_0, d_0)$. The choice of $(\hat{\eps}, \hat{\sigma})$ depends only on $(\gam, B_0)$, and the choice of $\hat{\alp}$ depends only on $\tw$. But, the choice of $(\hat{\eps}, \hat{\sigma},\hat{\alp})$ is independent of $d_0$ and $L$.

\begin{proof} The proof is divided into 5 steps.

{\textbf{1.}}
Back to $(x_1,x_2)$-coordinates, the unit tangential $\bm\tau_{f}=(\tau_1, \tau_2)$ to $\Gam_{{\rm sh},L}^f$ at $P_{2,L}^f$ with $\bm\tau_{f}\cdot {\bf e}_2>0$ is expressed as
\begin{equation*}
  \bm\tau_{f}:=\frac{(f'(L),1)}{\sqrt{(f'(L))^2+1}}.
\end{equation*}

Set
\begin{equation}
\label{definition-a-vector}
{\bf a}:=(\der_{x_1},\der_{x_2}) \phi(P_{2,L}^f).
\end{equation}
By differentiating the boundary condition $\phi=\ipsi-\psi_0$ on $\Gam_{{\rm sh}, L}^f$  along the tangential direction at $P_{2,L}^f$, we get $\bm\tau_{f} \cdot {\bf a}= \der_{ \bm\tau_{f}}(\ipsi-\psi_0)(P_{2,L}^f)$. And, the slip boundary condition $\nabla\phi\cdot{{\bf n}_c}=0$ on $\Gam_{{\rm{cutoff}}, L}^f$ can be rewritten as $(\cos \tw, \sin \tw)\cdot {\bf a}=0$.
We rewrite these two equations for ${\bf a}$ in the form of
\begin{equation}
\label{system-for-a}
  \begin{pmatrix}
  \tau_1 & \tau_2\\
  \cos \tw &\sin \tw
  \end{pmatrix}
 {\bf a}^T=
  \begin{pmatrix}
\der_{{\bm \tau}_f}(\ipsi-\psi_0)(P_{2,L}^f)\\ 0
  \end{pmatrix}.
\end{equation}
By \eqref{estimate-f-iterset} and \eqref{shock-iterset-condtion1}, the  matrix $\begin{pmatrix}
  \tau_1 & \tau_2\\
  \cos \tw &\sin \tw
  \end{pmatrix}$ is invertible, so we get
  \begin{equation}
  \label{expression-afvec}
    {\bf a}^{T}=\begin{pmatrix}
  \tau_1 & \tau_2\\
  \cos \tw &\sin \tw
  \end{pmatrix}^{-1}
  \begin{pmatrix}
  \der_{{\bm \tau}_f}(\ipsi-\psi_0)(P_{2,L}^f)\\ 0
  \end{pmatrix}.
  \end{equation}
It directly follows from Lemma \ref{lemma-shock-polar2}, \eqref{definition-iterset-shocks} and \eqref{bc-shock-expression} that
\begin{equation}
\label{estimate-bc-deriv}
  |\der_{{\bm \tau}_f}(\ipsi-\psi_0)(P_{2,L}^f)|\le CM_1\mathfrak{q}_{\gam}^2(\eps)(1+L)^{-1+\beta}
\end{equation}
for some constant $C>0$ depending only on $(\gam, B_0, )$. Hereafter, we regard any estimate constant $C$ to be depending only on $(\gam, B_0)$ unless otherwise specified.

We combine \eqref{estimate-bc-deriv} with \eqref{expression-afvec} to get
\begin{equation}\label{estimate-phi-grad-cutoffpt}
  |  {\bf a}|\le CM_1\mathfrak{q}_{\gam}^2(\eps)(1+L)^{-1+\beta}.
\end{equation}

Let us define
\begin{equation}
\label{definition-Phi}
  \Phi({\bf x})=\phi({\bf x})-\phi(P_{2,f}^L)-{\bf a}\cdot ({\bf x}-P_{2,f}^L).
\end{equation}
It directly follows from \eqref{lbvp-phi-cutoff} that $\Phi$ satisfies
\begin{equation*}
  \begin{split}
&\mcl{L}^{\vphi}(\Phi)=0 \quad\tx{in} \quad \Om_{f,L}\\
&\Phi=(\ipsi-\psi_0)(f(x_2),x_2)
-(\ipsi-\psi_0)(P_{2,L}^f)-{\bf a}\cdot ((f(x_2),x_2)-P_{2,L}^f)=:\til{g}(x_2)
\quad\tx{on} \quad\Gam_{\rm{sh},L}^f\\
&\nabla\Phi\cdot{\bf n}_c=0 \quad\tx{on}\quad \Gam_{\rm{cutoff},L}^f.
  \end{split}
\end{equation*}
\smallskip

{\textbf{2.}}
For $(\eta_1, \eta_2)$-coordinates given by \eqref{definition-cov}, set
\begin{equation*}
\begin{split}
&\Psi(\eta_1, \eta_2):=\Phi(x_1,x_2),\\
&g(\eta_2)=\til{g}(x_2)\quad\tx{for $x_2=-\til{f}(\eta_2)\cos \tw+\eta_2 \sin \tw$},\\
&U^{\til{L}}(\zeta_1,\zeta_2):=\frac{1}{\til{L}^{\beta}}\Psi(\til{L}\zeta_1, \til{L}\zeta_2)\quad \tx{for ${\bm\zeta}=(\zeta_1,\zeta_2)\in Q_{\til{L}}$}.
\end{split}
\end{equation*}
By \eqref{Linfty-estimate-tphi}, \eqref{equiv-weight1}, and \eqref{estimate-phi-grad-cutoffpt}, we have
\begin{equation}
\label{estimate-Linfty-utilde}
|U^{\til{L}}(\bm\zeta)|\le C(1+M_1\mathfrak{q}_{\gam}(\eps))\mathfrak{q}_{\gam}(\eps)\quad\tx{for all $\bm\zeta\in Q_{\til{L}}$}.
\end{equation}
 And, $U^{\til{L}}$ satisfies
\begin{equation}
\label{lbvp-U-L}
  \begin{split}
&\mcl{L}^{\vphi}_{\til{L}} (U^{\til{L}})=\sum_{i,j=1}^2 A_{ij}^{\vphi, \til{L}}U^{\til{L}}_{\zeta_i\zeta_j}=0 \quad\mbox{in $Q_{\til{L}}$},\\
&\der_{\zeta_2}U^{\til{L}}=0 \quad\mbox{on $\Sigma_{\rm{cutoff}}^{\til{L}}$},\\
&U^{\til{L}}(F^{\til{L}}(\zeta_2), \zeta_2)=\frac{g(\til{L}\zeta_2)}{\til{L}^{\beta}}
=:G^{\til{L}}(\zeta_2)
\quad\mbox{on $\Sigma^{\til{L}}_{\rm{sh}}$}
  \end{split}
\end{equation}
for $F^{\til{L}}$ given by \eqref{definition-scaled-f}.
The boundary condition of $U^{\til{L}}$ on $\Sigma_{\rm{cutoff}}^{\til{L}}$ stated in \eqref{lbvp-U-L} holds because ${\bf a}\cdot {\bf n}_c=0$ for ${\bf n}_c=(\cos \tw, \sin \tw)$ according to \eqref{system-for-a}.

By \eqref{bc-shock-expression}, we have
\begin{equation}
\label{dG-dzeta}
\frac{d G^{\til{L}}}{d\zeta_2}=
\til{L}^{1-\beta}\left(\rho_{\rm st}^{\eps}u_{\rm st}^{\eps}\kappa_w(f-f_0)'(x_2)-{\bf a}\cdot (f'(x_2),1)\right)
\end{equation}
for $x_2=-\til{f}(\til{L} \zeta_2)\cos \tw+\til{L}\zeta_2 \sin \tw$.
Note that $G^{\til{L}}(1)=(G^{\til{L}})'(1)=0$.
And, a direct computation with using Lemma \ref{lemma-shock-polar2}, \eqref{definition-iterset-shocks}, \eqref{estimate-ftilde-derivative},
\eqref{equiv-weight1} and \eqref{estimate-phi-grad-cutoffpt} yields that
\begin{equation}
\label{estimate-Gl-1}
 \left \|\frac{d G^{\til{L}}}{d\zeta_2} \right\|_{C^{\alp}([\frac 12, 1])}
  \le  CM_1\mathfrak{q}_{\gam}^2(\eps).
\end{equation}
So we obtain that
\begin{equation}
\label{estimate-Gl-2}
  |G^{\til{L}}(\zeta_2)|\le CM_1\mathfrak{q}_{\gam}^2(\eps)|\zeta_2-1|^{1+\alp}\quad\tx{for all $\zeta_2\in[\frac 12, 1]$.}
\end{equation}

Let $(r,\theta)$ be the polar coordinates centered at ${\bf q}_*$ with $\Sigma_{\rm{cutoff}}^{\til{L}}\subset \{\theta=0\}$. By \eqref{estimate-t2}, \eqref{estimate-FL-1new}, \eqref{estimate-ftilder-lwrbd} and \eqref{estimate-Gl-2}, one can find a sufficiently large constant $h_3^*\in[h_2^*, \infty)$, and a small constant $r_1\in(0, \frac 12)$ depending on $d_0>0$ so that if
\begin{equation}
\label{condition-for-Ltilde-h3}
  \til{L}\ge 10(1+h_3^*),
\end{equation}
then the following properties hold:
\begin{itemize}
\item[(i)] For $a_w=\frac{\pi}{2}-\tw$,
\begin{equation}
\label{condition3-for-L-in-wp}
\Sigma_{{\rm sh}}^{\til{L}}\subset \{-\frac{\pi}{2}+\frac 34 a_w<\theta<-\frac 34 \theta_w\};
\end{equation}
\item[(ii)]
 \begin{equation}
 \label{condition4-for-L-in-wp}
 \{(r,\theta)\in[0, r_1)\times (-\frac 34 \theta_w,0) \}
\subset \left(Q_{\til{L}}\cap B_{r_1}({\bf q}_*)\right)
\subset \{(r,\theta)\in [0, r_1)\times (-\frac{\pi}{2}+\frac 34 a_w, 0)\};
 \end{equation}

\item[(iii)]
 \begin{equation}\label{UL-on-shock}
   |U^{\til{L}}(r,\theta)|\le CM_1\mathfrak{q}^2_{\gam}(\eps) r^{1+\alp} \quad \tx{on $\Sigma^{\til{L}}_{\rm{sh}}\cap B_{r_1}({\bf q}_*)$}.
 \end{equation}
\end{itemize}
\smallskip

{\textbf{3.}}
Set
\begin{equation}
\label{choice-of-mus}
  \mu_1:=\frac{\frac 12(\pi-a_w)}{\frac{\pi}{2}-\frac 34 a_w},\quad\tx{and}\quad
  \mu_0:=\frac{a_w}{4}.
\end{equation}
Then, we have
\begin{equation}
  \label{estimate-lwrbd-V-cosonly}
  \cos(\mu_1\theta-\mu_0)\ge \sin \frac{a_w}{4}\quad \tx{for all $\theta\in(-\frac{\pi}{2}+\frac 34 a_w, 0)$}.
\end{equation}
Next, we define a function $V(r,\theta)$ by
\begin{equation}
\label{definition-V}
  V(r,\theta)=Kr^{1+\alp}\cos(\mu_1\theta-\mu_0)
\end{equation}
for positive constants $K>0$ and $\alp\in(0,1)$ to be determined later.
By \eqref{condition4-for-L-in-wp} and \eqref{estimate-lwrbd-V-cosonly}, $V$ satisfies
\begin{equation}\label{estimate-lwrbd-V}
  V(r,\theta)\ge Kr^{1+\alp}\sin \frac{a_w}{4}\quad\tx{in $Q_{\til{L}}\cap B_{r_1}({\bf q}_*)$}.
\end{equation}
Since $\Sigma_{\rm{cutoff}}^{\til{L}}\subset \{\theta=0\}$, we have
\begin{equation}
  \label{V-on-cutoff-bdry}
  \der_{{\bf n}_c}V=\frac 1r\der_{\theta} V=Kr^{\alp}\mu_1\sin \frac{a_w}{4}>0\quad \tx{on $\Sigma_{\rm{cutoff}}^{\til{L}}\cap B_{r_1}({\bf q}_*)$.}
\end{equation}

By Lemma \ref{lemma-shock-polar2}, \eqref{definition-background}, \eqref{definition-cov}, \eqref{coeff-eta-coord}, \eqref{estimate-coeff-scaling-2} and \eqref{estimate-lwrbd-V-cosonly}, we have
\begin{equation}
\label{expression-Lv}
\begin{split}
  \mcl{L}^{\vphi}_{\til{L}}(V)&=(\rho_{\rm{st}}^{\eps})^{\gam-1}\Delta V
  +\sum_{i,j=1}^2(A_{ij}^{\vphi,\til{L}}-A_{ij}^{0,\til{L}})
  \der_{\zeta_i\zeta_j}V
  -(u_{\rm{st}}^{\eps}\sec \tw)^2\der_{\zeta_2\zeta_2}V\\
  &\le K r^{\alp-1}\left( (\rho_{\rm{st}}^{\eps})^{\gam-1}((\alp+1)^2-\mu_1^2)\cos (\mu_1\theta-\mu_0)+C(M_2+1)\mathfrak{q}_{\gam}(\eps)\right)
  \end{split}
\end{equation}
in $Q_{\til{L}}\cap B_{r_1}({\bf q}_*)$.

Since $\mu_1>1$ by \eqref{choice-of-mus}, we can express $\mu_1$ as
\begin{equation*}
  \mu_1=1+\delta_1\quad\tx{for $\delta_1>0$.}
\end{equation*}
In \eqref{definition-V}, we choose $\alp$ as
\begin{equation}
\label{choice-of-alpha}
\alpha=\frac 12 \min\{1,\delta_1\}=:\hat{\alp}.
\end{equation}
Then we obtain from \eqref{estimate-lwrbd-V-cosonly} and \eqref{expression-Lv} that
\begin{equation*}
   \mcl{L}^{\vphi}_{\til{L}}(V)\le Kr^{\hat{\alp}-1}
   \left(-(\rho_{\rm{st}}^{\eps})^{\gam-1}\delta_1(1+\hat{\alp})\sin \frac{a_w}{4}+C(M_2+1)\mathfrak{q}_{\gam}(\eps)\right)\quad
   \tx{in $Q_{\til{L}}\cap B_{r_1}({\bf q}_*)$. }
\end{equation*}
Since $(\delta_1, \hat{\alp}, a_w, C)$ are fixed independently of $\eps(=\frac{1}{M_{\infty}})>0$, and since $\rho_{\rm st}^{\eps}$ is a continuous function with respect to $\eps$, it follows from \eqref{downstream-case2} and Lemma \ref{lemma-shock-polar2} that
\begin{equation*}
  \lim_{\eps\to 0+} \left(-(\rho_{\rm{st}}^{\eps})^{\gam-1}\delta_1(1+\hat{\alp})\sin \frac{a_w}{4}+C\mathfrak{q}_{\gam}(\eps)\right)
  =-(\mathfrak{h}^{-1}(B_0))^{\gam-1}
  \delta_1(1+\hat{\alp})\sin \frac{a_w}{4}<0
\end{equation*}
Therefore, there exist small constants $\hat{\eps}>0$ and $\hat{\sigma}>0$ depending only on $(\gam, B_0)$ so that if
\begin{equation}
\label{condition7-for-epsilon}
0<  \eps\le \hat{\eps}, \quad\tx{and}\quad
 M_2\mathfrak{q}_{\gam}(\eps)\le \hat{\sigma},
\end{equation}
then we have
\begin{equation}
\label{V-supersol}
  \mcl{L}^{\vphi}_{\til{L}}(V)<0\quad\tx{in $Q_{\til{L}}\cap B_{r_1}({\bf q}_*)$.}
\end{equation}

 By \eqref{estimate-Linfty-utilde}, \eqref{UL-on-shock} and \eqref{estimate-lwrbd-V-cosonly}, we can fix a constant $C_*>0$ so that if $K$ in \eqref{definition-V} is given by
\begin{equation}
\label{choice-K}
K=\frac{C^*(1+M_1\mathfrak{q}_{\gam}(\eps))\mathfrak{q}_{\gam}(\eps)}
{r_1^{1+\hat{\alp}}\sin \frac{a_w}{4}},
\end{equation}
then $U^{\til{L}}$ satisfies
\begin{equation}\label{V-comparion-int-bdry}
  U^{\til{L}}\le V\quad \tx{on $(Q_{\til{L}}\cap \der B_{r_1}({\bf q}_*))\cup (\Sigma_{\rm{sh}}^{\til{L}}\cap B_{r_1}({\bf q}_*))$}.
\end{equation}

For $(\mu_1, \mu_0)$, $\hat{\alp}$ and $K$ given by \eqref{choice-of-mus}, \eqref{choice-of-alpha} and \eqref{choice-K}, respectively, we obtain from \eqref{V-on-cutoff-bdry}, \eqref{V-supersol} and \eqref{V-comparion-int-bdry} that
\begin{equation*}
  \begin{split}
  \mcl{L}_{\til{L}}^{\vphi}(U^{\til{L}}-V)>0\quad &\tx{in $Q_{\til{L}}\cap B_{r_1}({\bf q}_*)$},\\
  \der_{{\bf n}_c}(U^{\til{L}}-V)<0\quad &\tx{on $\Sigma_{\rm{cutoff}}^{\til{L}}\cap B_{r_1}({\bf q}_*)$},\\
  U^{\til{L}}-V\le 0\quad &\tx{on $(Q_{\til{L}}\cap \der B_{r_1}({\bf q}_*))\cup (\Sigma_{\rm{sh}}^{\til{L}}\cap B_{r_1}({\bf q}_*))$}
  \end{split}
\end{equation*}
under the same conditions as in Proposition \ref{wellposedness-lbvp-phi-cutoff}, provided that
\eqref{condition-for-Ltilde-h3} and \eqref{condition7-for-epsilon} hold.
Then, it follows from the maximum principle and the Hopf's lemma that
\begin{equation*}
  U^{\til{L}}\le V\quad\tx{in $\ol{Q_{\til{L}}\cap B_{r_1}({\bf q}_*)}$}.
\end{equation*}
By using \eqref{estimate-Linfty-utilde} and the linearity \eqref{lbvp-U-L}, one can similarly check that  $U^{\til{L}}\ge -V$ in $\ol{Q_{\til{L}}\cap B_{r_1}({\bf q}_*)}$.
Therefore we obtain that
\begin{equation}
\label{estimate-UL-Linfty-growth}
  |U^{\til{L}}(r,\theta)|\le Kr^{1+\hat{\alp}}\quad \tx{in $\ol{Q_{\til{L}}\cap B_{r_1}({\bf q}_*)}$}
\end{equation}
for $K$ given by \eqref{choice-K} and $\hat{\alp}$ given by \eqref{choice-of-alpha}.
\smallskip

{\textbf{4.}}
For a fixed ${\bm\zeta}=(\zeta_1, \zeta_2)\in \ol{Q_{\til{L}}\cap B_{r_1/2}({\bf q}_*)}\setminus \{{\bf q}_*\}$, set
\begin{equation}
\label{definition-d-zeta}
d_{\bm\zeta}:=|{\bm\zeta}-{\bf q}_*|.
\end{equation}
For $\hat{\alp}$ given by \eqref{choice-of-alpha}, we define
\begin{equation*}
\begin{split}
  &w^{({\bm\zeta})}({\bf z}):=\frac{1}{d_{\bm\zeta}^{1+\hat{\alp}}}U^{\til{L}}({\bm\zeta}+\frac {d_{\bm\zeta}}{2} {\bf z})
  \quad\tx{in $\{{\bf z}=(z_1, z_2)\in B_1({\bf 0}): {\bm\zeta}+\frac{d_{\bm\zeta}}{2} {\bf z}\in Q_{\til{L}}\cap B_{r_1}({\bf q}_*)\}$}=:\mcl{B}_{1}^{\bm\zeta},\\
  &\mathfrak{f}^{(\bm\zeta)}(z_2):=\frac{2}{d_{\bm\zeta}}
(F^{\til{L}}(\zeta_2+\frac{d_{\bm\zeta}}{2}z_2)-\zeta_1).
  \end{split}
\end{equation*}
Then we have
\begin{equation}
\label{wz-estimate-1}
 \sup_{{\bm\zeta}\in \ol{Q_{\til{L}}\cap B_{r_1/2}({\bf q}_*)}\setminus \{{\bf q}_*\}} \|w^{({\bm\zeta})}\|_{C^0(\ol{B_1^{\bm\zeta}})}\le 4K
\end{equation}
for $K$ given by \eqref{choice-K}. And, by \eqref{lbvp-U-L}, $w^{({\bm\zeta})}$ satisfies
\begin{equation*}
  \begin{split}
  &\sum_{i,j=1}^2 A_{ij}^{\vphi, \til{L}}({\bm\zeta}+\frac{d_{\bm\zeta}}{2}{\bf z})
  \der_{z_iz_j}w^{{\bm\zeta}}({\bf z})=0\quad\tx{in $B_1^{\bm\zeta}$},\\
  &\der_{z_2} w^{({\bm\zeta})}=0\quad\tx{on $\der B_1^{\bm\zeta}\cap\{{\bf z}: {\bm\zeta}+\frac{d_{\bm\zeta}}{2}{\bf z}\in \Sigma_{\rm{cutoff}}^{\til{L}}\}$}=:\der B_{1,{\rm cutoff}}^{\bm\zeta},\\
  &w^{(\bm\zeta)}(\mathfrak{f}^{(\bm\zeta)}(z_2), z_2)=\frac{1}{d_{\bm\zeta}^{1+\hat{\alp}}}
  G^{\til{L}}(\zeta_2+\frac{d_{\bm\zeta}}{2}z_2)
  =:\mathfrak{g}^{(\bm\zeta)}(z_2)
  \quad\tx{on $\der B_1^{\bm\zeta}\cap\{{\bf z}: {\bm\zeta}+\frac{d_{\bm\zeta}}{2}{\bf z}\in \Sigma_{\rm{sh}}^{\til{L}}\}=:\der B_{1,{\rm sh}}^{\bm\zeta}$}.
  \end{split}
\end{equation*}

For each ${\bm\zeta}=(\zeta_1, \zeta_2)\in \ol{Q_{\til{L}}\cap B_{r_1/2}({\bf q}_*)}\setminus \{{\bf q}_*\}$, set $\mcl{I}_{\rm sh}^{\bm\zeta}:=\{z_2\in \R: (\mathfrak{f}^{(\bm\zeta)}(z_2), z_2)\in\der B_{1,{\rm sh}}^{\bm\zeta} \}$. Then, a direct computation with using Lemma \ref{lemma-shock-polar2}, \eqref{estimate-phi-grad-cutoffpt}, \eqref{dG-dzeta} and \eqref{estimate-Gl-2} yields that
\begin{equation}
\label{wz-estimate-2}
  \|\mathfrak{g}^{(\bm\zeta)}\|
  _{C^{2,\hat{\alp}}(\ol{\mcl{I}_{\rm sh}^{\bm\zeta}})}\le CM_1\mathfrak{q}_{\gam}^2(\eps)
\end{equation}
provided that $\mcl{I}_{\rm sh}^{\bm\zeta}$ is nonempty. Then, by Lemma \ref{lemma-estimate-Fr}(b) and \eqref{estimate-coeff-scaling-2}, we can apply a standard Schauder estimate theory to obtain from \eqref{choice-K}, \eqref{wz-estimate-1} and \eqref{wz-estimate-2} that
\begin{equation*}
  \|  w^{({\bm\zeta})}\|_{2,\hat{\alp}, \mcl{B}_{1/2}^{\bm\zeta}}\le C(1+M_1\mathfrak{q}_{\gam}(\eps))\mathfrak{q}_{\gam}(\eps)
  \quad\tx{for all ${\bm\zeta}\in \ol{Q_{\til{L}}\cap B_{r_1/2}({\bf q}_*)}\setminus \{{\bf q}_*\}$},
\end{equation*}
and this yields the estimate
\begin{equation}\label{estimate-UL-C2}
  \|U^{\til{L}}\|_{2,\hat{\alp}, Q_{\til{L}}\cap B_{r_1/2}({\bf q}_*)}
  ^{(-1-\hat{\alp},\{{\bf q}_*\})}
  \le C(1+M_1\mathfrak{q}_{\gam}(\eps))\mathfrak{q}_{\gam}(\eps).
\end{equation}
In the estimates of $w^{({\bm\zeta})}$ and $U^{\til{L}}$, the estimate constants $C$ may be different from each other, but both constants can be chosen depending only on $d_0$.
\smallskip

{\textbf{5.}}
By \eqref{definition-scaled-functions}, \eqref{lbvp-u-tilde-L}, \eqref{definition-a-vector} and \eqref{definition-Phi}, we have
\begin{equation*}
\|u^{\til{L}}\|_{2,\hat{\alp}, Q_{\til{L}}\cap B_{r_1/2}({\bf q}_*)}^{(-1-\hat{\alp}, \{{\bf q}_*\})}
\le  \|U^{\til{L}}\|_{2,\hat{\alp}, Q_{\til{L}}\cap B_{r_1/2}({\bf q}_*)}
  ^{(-1-\hat{\alp},\{{\bf q}_*\})}
  +\frac{1}{\til{L}^{\beta}}|\til{\phi}_{\infty}
  (\til{f}(\zeta_2^{{\bf q}_*}),\zeta_2^{{\bf q}_*})|
  +\til{L}^{1-\beta}|{\mathbf{a}}|
\end{equation*}
where $\zeta_2^{{\bf q}_*}$ denotes $\zeta_2$-coordinate of ${\bf q}_*$.
Finally, we apply \eqref{estimate-gr-nr-cf}, \eqref{estimate-phi-grad-cutoffpt} and \eqref{estimate-UL-C2} to obtain that
\begin{equation}
\label{uL-corner-estimate}
\|u^{\til{L}}\|_{2,\hat{\alp}, Q_{\til{L}} \cap B_{r_1/2}({\bf q}_*)}^{(-1-\hat{\alp}, \{{\bf q}_*\})}\le
C(1+M_1\mathfrak{q}_{\gam}(\eps))\mathfrak{q}_{\gam}(\eps)
\end{equation}
for some constant $C>0$.

We fix $(\alp, r_0)$ to be $(\alp, r_0)=(\hat{\alp}, r_1)$ in the estimate \eqref{estimate-uL-reflection}, then combine the resultant estimate with \eqref{uL-corner-estimate} to conclude that
\begin{equation}
\label{uL-corner-estimate-final}
\|u^{\til{L}}\|_{2,\hat{\alp}, Q_{\til{L}} \cap \{\zeta_2>\frac{11}{20}\}}^{(-1-\hat{\alp}, \{{\bf q}_*\})}\le C(1+M_1\mathfrak{q}_{\gam}(\eps))\mathfrak{q}_{\gam}(\eps).
\end{equation}
The proof is completed.
\end{proof}
\end{lemma}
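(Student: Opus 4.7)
The plan is to handle the corner singularity at ${\bf q}_* = (F^{\til L}(1), 1)$, where $\Sigma^{\til L}_{\rm sh}$ (carrying a Dirichlet condition) meets $\Sigma^{\til L}_{\rm cutoff}$ (carrying a Neumann condition), by first subtracting off the exact linear part of $\phi$ at $P_{2,L}^f$ and then constructing a power-type barrier in polar coordinates around ${\bf q}_*$. The guiding geometric fact is Lemma \ref{lemma-distance-iterset}(c): the opening angle $\theta_{P_{2,L}^f}$ stays strictly between $\frac{7}{8}\tw$ and $\frac{\pi}{2}-\frac{7}{8}a_w$, which after the rotation \eqref{definition-cov} puts the corner domain strictly inside a sector of opening less than $\frac{\pi}{2}-\frac{3}{4}a_w$.

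The first step is to determine the vector ${\bf a} = \nabla\phi(P_{2,L}^f)$ from the two linear conditions obtained by differentiating $\phi = \ipsi - \psi_0$ along $\bm\tau_f$ and using $\nabla\phi\cdot {\bf n}_c = 0$. Since $\bm\tau_f$ and ${\bf n}_c$ are transverse (by \eqref{shock-iterset-condtion1} and \eqref{estimate-t2}), the resulting $2\times 2$ system is uniformly invertible, and \eqref{bc-shock-expression} together with $f\in \mcl{J}_{M_1,L}$ gives $|{\bf a}|\le CM_1\mathfrak{q}_\gam^2(\eps)(1+L)^{-1+\beta}$. Then $\Phi := \phi - \phi(P_{2,L}^f) - {\bf a}\cdot(\cdot - P_{2,L}^f)$ satisfies the same equation, homogeneous Neumann on $\Gam_{{\rm cutoff},L}^f$, and a Dirichlet datum $\til g(x_2)$ on $\Gam_{{\rm sh},L}^f$ that vanishes to order $|x_2-L|^{1+\alp}$ at $P_{2,L}^f$ (after rescaling: its image $G^{\til L}$ satisfies $G^{\til L}(1)=(G^{\til L})'(1)=0$ with $\|(G^{\til L})'\|_{C^{\alp}}\le CM_1\mathfrak{q}_\gam^2(\eps)$). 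The rescaled function $U^{\til L} = \Phi/\til L^{\beta}$ is then globally bounded by $C(1+M_1\mathfrak{q}_\gam(\eps))\mathfrak{q}_\gam(\eps)$ via \eqref{Linfty-estimate-tphi} and \eqref{estimate-phi-grad-cutoffpt}.

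The main step is the barrier construction. In polar coordinates $(r,\theta)$ centered at ${\bf q}_*$ with $\Sigma^{\til L}_{\rm cutoff}\subset\{\theta=0\}$, define
\[
V(r,\theta) = K r^{1+\alp}\cos(\mu_1\theta - \mu_0),
\qquad
\mu_1 := \frac{\pi - a_w/2}{\pi/2 - 3a_w/4},\quad \mu_0 := a_w/4,
\]
so that $\cos(\mu_1\theta-\mu_0) \ge \sin(a_w/4)$ throughout the sector containing the domain. Direct computation gives $\Delta V = r^{\alp-1}[(1+\alp)^2 - \mu_1^2]\cos(\mu_1\theta-\mu_0)$; since $\mu_1 = 1+\delta_1$ with $\delta_1>0$ depending only on $\tw$, choosing $\hat\alp := \frac12\min\{1,\delta_1\}$ makes the principal contribution $\mcl{L}_{\til L}^\vphi(V) \le -cKr^{\hat\alp-1}$ after absorbing $O(M_2\mathfrak{q}_\gam(\eps))$ perturbations of the coefficients from Lemma \ref{lemma-A-vphi} and the fact that $\rho^\eps_{\rm st}\to \rho^0_{\rm st}>0$. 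On $\Sigma^{\til L}_{\rm cutoff}$, $\partial_{{\bf n}_c}V = r^{-1}\partial_\theta V|_{\theta=0} = K\mu_1 r^{\hat\alp}\sin(a_w/4)>0$, and by choosing $K = C^*(1+M_1\mathfrak{q}_\gam(\eps))\mathfrak{q}_\gam(\eps)/[r_1^{1+\hat\alp}\sin(a_w/4)]$, the bounds on $U^{\til L}$ on $\Sigma^{\til L}_{\rm sh}\cap B_{r_1}({\bf q}_*)$ (which vanish like $r^{1+\hat\alp}$ by construction of $\Phi$) and on $Q_{\til L}\cap\partial B_{r_1}({\bf q}_*)$ give $U^{\til L}\le V$ on the boundary. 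Comparison principle plus Hopf (applied to both $U^{\til L}-V$ and $-U^{\til L}-V$) yields $|U^{\til L}(r,\theta)|\le Kr^{1+\hat\alp}$ in $Q_{\til L}\cap B_{r_1}({\bf q}_*)$.

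The final step is a standard rescaling argument: for each ${\bm\zeta}$ in $Q_{\til L}\cap B_{r_1/2}({\bf q}_*)\setminus\{{\bf q}_*\}$ with $d_{\bm\zeta}:=|{\bm\zeta}-{\bf q}_*|$, define $w^{({\bm\zeta})}({\bf z}) = d_{\bm\zeta}^{-(1+\hat\alp)}U^{\til L}({\bm\zeta}+\tfrac{d_{\bm\zeta}}{2}{\bf z})$. The growth bound $|U^{\til L}(r,\theta)|\le Kr^{1+\hat\alp}$ gives $\|w^{(\bm\zeta)}\|_{C^0}\le 4K$, uniform Schauder estimates (Lemmas \ref{lemma-A-vphi}, \ref{lemma-estimate-Fr}, and $\|\mathfrak{g}^{(\bm\zeta)}\|_{C^{2,\hat\alp}}\le CM_1\mathfrak{q}_\gam^2(\eps)$) yield $\|w^{(\bm\zeta)}\|_{C^{2,\hat\alp}(\mcl B_{1/2}^{\bm\zeta})}\le C(1+M_1\mathfrak{q}_\gam(\eps))\mathfrak{q}_\gam(\eps)$, which is exactly the weighted norm bound \eqref{uL-corner-estimate}. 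Adding back the linear correction (controlled by \eqref{estimate-phi-grad-cutoffpt} and \eqref{estimate-gr-nr-cf}) and combining with Lemma \ref{lemma-estiamte-uR-2} for the region away from ${\bf q}_*$ gives the final estimate. The hardest part is the barrier construction: the choice of $\mu_1$ is forced by the opening angle, and the restriction $\mu_1>1$ is precisely why $\hat\alp$ must be chosen small in terms of $\tw$; the rest is bookkeeping and interior Schauder theory.
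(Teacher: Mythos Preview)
Your proposal is correct and follows essentially the same approach as the paper's proof: subtract the linear part of $\phi$ at the corner to obtain $U^{\til L}$, construct the barrier $V = Kr^{1+\alp}\cos(\mu_1\theta-\mu_0)$ in polar coordinates and apply the comparison principle together with Hopf's lemma to get $|U^{\til L}|\le Kr^{1+\hat\alp}$, then rescale at distance $d_{\bm\zeta}$ and apply Schauder estimates. One caveat: the numerator in your formula for $\mu_1$ should be $(\pi-a_w)/2$ (as in the paper) rather than $\pi - a_w/2$, since with the latter value $\mu_1$ is near $2$ and $\cos(\mu_1\theta-\mu_0)$ becomes negative on the sector $\theta\in(-\frac{\pi}{2}+\frac34 a_w,0)$, breaking the barrier; your own remark that $\mu_1=1+\delta_1$ with $\delta_1>0$ small confirms you intend the paper's value.
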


\begin{lemma}
\label{lemma-estimate-phi-corner}
Let $(\hat{\alp}, \hat{\eps}, \hat{\sigma})$ be from Lemma \ref{lemma-estiamte-uR-3}. Fix $\beta\in(0,1)$, and fix $\alp$ as $\alp=\hat{\alp}$
in \eqref{definition-iterset-shocks} and \eqref{definition-iterset-psi}. Let us assume the same conditions as Proposition \ref{wellposedness-lbvp-phi-cutoff}. In addition, assume that
\begin{equation*}
  0<\eps<\hat{\eps},\quad\tx{and}\quad M_2\mathfrak{q}_{\gam}(\eps)\le \hat{\sigma}
\end{equation*}
 so that Lemma \ref{lemma-estiamte-uR-3} holds. Then, there exists a sufficiently large constant $H_0>1$ depending on $d_0$ so that if $L\ge H_0$, and  if $\phi\in C^1(\ol{\Om_{f,L}})\cap C^2(\Om_{f,L})$ is a solution to the linear boundary value problem \eqref{lbvp-phi-cutoff}, then we have
\begin{equation}\label{estimate-phi-finalC2}
  \|\phi\|_{2,\hat{\alp}, \Om_{f,L}}^{(-\beta;-(1+\hat{\alp}),\{P_{2,L}^f\})}
  \le C(1+M_1\mathfrak{q}_{\gam}(\eps))\mathfrak{q}_{\gam}(\eps)
\end{equation}
for a constant $C>0$ depending only on $(\gam, B_0, d_0)$.

\begin{proof} The proof is divided into three steps.

{\textbf{1.}}
By \eqref{equiv-weight1}, we can fix a constant $H_0>0$ sufficiently large depending on $d_0>0$ so that if $L\ge H_0$, then $\til{L}$ given by \eqref{definition-Ltilde} satisfies $\til{L}\ge 10(1+h_3^*)$ for $h_3^*$ from Lemma \ref{lemma-estiamte-uR-3}. If $L\ge H_0$ and if $\phi\in C^1(\ol{\Om_{f,L}})\cap C^2(\Om_{f,L})$ is a solution to \eqref{lbvp-phi-cutoff}, then it follows from Lemmas \ref{lemma-estiamte-uR-1}--\ref{lemma-estiamte-uR-3} that $u^R$ given by \eqref{definition-scaled-functions} satisfies the following estimates:
\begin{equation}
\label{uR-estimate-total}
\begin{split}
&\sup_{4h_2^* \le R \le \frac 45 \til{L}} \|u^R\|_{2,\hat{\alp}, Q_R\cap\{\frac{11}{20}<\zeta_2<\frac{19}{20}\}}\le C(1+M_1\mathfrak{q}_{\gam}(\eps))\mathfrak{q}_{\gam}(\eps),\\
&\|u^{\til{L}}\|_{2,\hat{\alp}, Q_{\til{L}} \cap \{\zeta_2>\frac{11}{20}\}}^{(-1-\hat{\alp}, \{{\bf q}_*\})}
\le C(1+M_1\mathfrak{q}_{\gam}(\eps))\mathfrak{q}_{\gam}(\eps)
\end{split}
\end{equation}
for $h_2^*$ from Lemma \ref{lemma-estimate-Fr}. Here, the estimate constant $C$ depends on $d_0$.

For $R\in[4h_2^*, \frac 45 \til{L}]\cup\{\til{L}\}$,  let us define
 \begin{equation*}
\begin{split}
&\mcl{D}_R:=\left\{{\bm \eta}=(\eta_1, \eta_2):
 {\bm\zeta}=\frac{{\bm\eta}}{R}\in Q_R\cap \{\frac{3}{5}<\zeta_2<\frac{9}{10}\}\right\}\quad\quad\tx{for $R\in [4h_2^*, \frac 45 \til{L}]$},\\
&\mcl{D}_{\til{L}}:=\left\{{\bm \eta}=(\eta_1, \eta_2):
 {\bm\zeta}=\frac{{\bm\eta}}{\til{L}}\in Q_{\til{L}}\cap \{\zeta_2>\frac{3}{5}\}
\right\}\quad\quad\tx{for $R=\til{L}$}.
\end{split}
\end{equation*}
Then, for $\Lambda_{f,L}$ given by \eqref{definition-Lambda-fL}, we have
\begin{equation*}
\left(\Lambda_{f,L}\cap\{\eta_2>3h_2^*\}\right)\subset \bigcup_{R\in[4h_2^*, \frac 45\til{L}]\cup\{\til{L}\}} \mcl{D}_R .
\end{equation*}
So, for any given ${\bm\eta}\in \Lambda_{f,L}\cap\{\eta_2>3h_2^*\}$, there exists a constant $R\in [4h_2^*, \frac 45\til{L}]\cup\{\til{L}\}$ such that ${\bm\eta}=(\eta_1, \eta_2)\in \mcl{D}_{R}$. And, such a constant $R$ satisfies
\begin{equation}
\label{R-eta-relation}
\eta_2<R<2\eta_2.
\end{equation}

For $\til{\phi}({\bm\eta})$ given by \eqref{definition-tildes}, let us set
\begin{equation*}
  A_k({\bm\eta}):=(1+\eta_2)^{k-\beta}\sum_{l=0}^k
 |\der_{\eta_1}^l\der_{\eta_2}^{k-l}\til{\phi}({\bm\eta})|
 \quad\tx{for $k=0,1$}.
\end{equation*}
Then it directly follows from \eqref{definition-scaled-functions} that
\begin{equation}
\label{Ak-expression}
A_k({\bm\eta})=\left(\frac{1+R\zeta_2}{R}\right)^{k-\beta}
\sum_{l=0}^k |\der_{\zeta_1}^l\der_{\zeta_2}^{k-l}u^R({\bm\zeta})|
\quad\tx{for ${\bm\zeta}=\frac{\bm\eta}{R}$}.
\end{equation}
Since
\begin{equation}\label{equiv-weight4}
  \frac 12\le \frac{1+R\zeta_2}{R}\le \frac{1}{4h_2^*}+1
  \quad\tx{for $\zeta\in[\frac 12, 1]$ and $R\in [4h_2^*, \frac 45\til{L}]\cup\{\til{L}\}$},
\end{equation}
we obtain from \eqref{uR-estimate-total} and \eqref{Ak-expression} that
\begin{equation}\label{A1-estimate}
  \sup_{\bm\eta\in \Lambda_{f,L}\cap\{\eta_2\ge 3h_2^*\}}
  A_1({\bm\eta})\le C(1+M_1\mathfrak{q}_{\gam}(\eps))\mathfrak{q}_{\gam}(\eps),
\end{equation}
and this yields that
\begin{equation}
\label{phi-tilde-C1-estimate}
\|\til{\phi}\|_{1,\Lambda_{f,L}\cap\{\eta_2>3h_2^*\}}^{(-\beta)}\le C(1+M_1\mathfrak{q}_{\gam}(\eps))\mathfrak{q}_{\gam}(\eps).
\end{equation}
{\textbf{2.}}
For ${\bm\eta}=(\eta_1, \eta_2)$, ${\bm\eta}'=(\eta'_1, \eta'_2)\in \Lambda_{f,L}\cap \{\eta_2>3h_2^*\}$, let ${\bf x}=(x_1, x_2)$, ${\bf x}'=(x'_1, x'_2)$ be given by \eqref{definition-cov}, corresponding to ${\bm \eta}$, ${\bm\eta}'$, respectively. And, let us define
\begin{equation*}
\begin{split}
  &\delta_{\bm\eta}^{(2)}:=\min\{1+\eta_2, |{\bm\eta}-P_{2,L}^f|\},\quad \delta_{{\bm\eta}, {\bm\eta}'}^{(2)}:=\min\{ \delta_{\bm\eta}^{(2)}, \delta_{{\bm\eta}'}^{(2)}\},\\
  &\hat\delta_{{\bf x}}^{(2)}:=\min\{1+x_2, |{\bf x}-P_{2,L}^f|\},\quad \hat\delta_{{\bf x}, {\bf x}'}^{(2)}:=\min\{ \hat\delta_{\bf x}^{(2)}, \hat\delta_{{\bf x}'}^{(2)}\}.
  \end{split}
\end{equation*}
Similarly to \eqref{equiv-weight2}, one can directly check by using \eqref{equiv-weight1} that
\begin{equation}
\label{equiv-weight3}
\begin{split}
&\om_2\le \frac{1+\min\{\eta_2, \eta_2'\}}{1+\min\{x_2, x_2'\}}
\le \frac{1}{\om_2},\\
&\om_2  \le \frac{\delta_{\bm\eta}^{(2)}}{1+\eta_2}\cdot \frac {1+x_2}{\hat\delta_{{\bf x}}^{(2)}}\le \frac{1}{\om_2},\\
&\om_2  \le \frac{\delta_{{\bm\eta}, {\bm\eta}'}^{(2)}}{1+\max\{\eta_2, \eta_2'\}}\cdot \frac {1+\max\{x_2, x_2'\}}{\hat\delta_{{\bf x}, {\bf x}'}^{(2)}}\le \frac{1}{\om_2}
\end{split}
\end{equation}
for some constant $\om_2\in(0,1)$ depending only on $\tw$.

Set
\begin{equation*}
  A_2({\bm\eta}):=(1+\eta_2)^{k-\beta}\left(
  \frac{\delta_{\bm\eta}^{(2)}}{1+\eta_2}\right)^{1-\hat{\alp}}
  \sum_{l=0}^2
 |\der_{\eta_1}^l\der_{\eta_2}^{2-l}\til{\phi}({\bm\eta})|.
\end{equation*}
By \eqref{definition-scaled-functions}, we can rewrite $A_2({\bm\eta})$ as
\begin{equation*}
A_2({\bm\eta})=\left(\frac{1+R\zeta_2}{R}\right)^{2-\beta}
\left(
  \frac{\delta_{\bm\eta}^{(2)}}{1+\eta_2}\right)^{1-\hat{\alp}}
\sum_{l=0}^2
 |\der_{\zeta_1}^l\der_{\zeta_2}^{2-l}u^R({\bm\zeta})|.
\end{equation*}
By \eqref{definition-d-zeta} and \eqref{R-eta-relation}, we can estimate $\frac{\delta_{\bm\eta}^{(2)}}{1+\eta_2}$ as
\begin{equation}
\label{equiv-weight5}
  \frac{\delta_{\bm\eta}^{(2)}}{1+\eta_2}\le
  \begin{cases}
  1\quad&\mbox{if $R\le \frac 45 \til{L}$},\\
  4d_{\bm\zeta}\quad&\mbox{if $R=\til{L}$}
  \end{cases}
\end{equation}
for $d_{\bm\zeta}$ given by \eqref{definition-d-zeta}.
So we obtain from \eqref{uR-estimate-total} that
\begin{equation}\label{A2-estimate}
  \sup_{\bm\eta\in \Lambda_{f,L}\cap\{\eta_2\ge 3h_2^*\}}
  A_2({\bm\eta})\le C(1+M_1\mathfrak{q}_{\gam}(\eps))\mathfrak{q}_{\gam}(\eps).
\end{equation}

{\textbf{3.}} Let us fix two distinct points ${\bm\eta}=(\eta_1,\eta_2)$, ${\bm\eta'}=(\eta_1', \eta_2')\in \Lambda_{f,L}\cap\{\eta_2>3h_2^*\}$. Without loss of generality, we assume that $\eta_2\le \eta_2'$.  And, set
\begin{equation*}
\begin{split}
&A_{1+\hat{\alp}}({\bm\eta}, {\bm\eta'}):=
(1+\eta_2)^{1+\hat{\alp}-\beta}\sum_{k=1,2}
\frac{|\der_{\eta_k}\til{\phi}({\bm\eta})
-\der_{\eta_k}\til{\phi}({\bm\eta}')|}{|{\bm\eta}-{\bm\eta}'|^{\hat{\alp}}},\\
&A_{2+\hat{\alp}}({\bm\eta}, {\bm\eta'}):=(1+\eta_2)^{2+\hat{\alp}-\beta}
\frac{\delta^{(2)}_{{\bm\eta}, {\bm\eta'}}}{1+\eta_2'}
\sum_{j=0}^2
\frac{|\der_{\eta_1}^j\der_{\eta_2}^{2-j}\til{\phi}({\bm\eta})-\der_{\eta_1}^j\der_{\eta_2}^{2-j}\til{\phi}({\bm\eta}')|}{|{\bm\eta}-{\bm\eta}'|^{\hat{\alp}}}.
\end{split}
\end{equation*}

(Case 1) Suppose that $|{\bm\eta}-{\bm\eta}'|\le \frac{\eta_2'}{40}$, and ${\bm\eta}'\in \mcl{D}_R$ for dome $R\in [4h_2^*, \frac 45 \til{L}]\cup \{\til{L}\}$. Then, we have ${\bm\zeta}, {\bm\zeta}'\in Q_R$ for $\bm\zeta=\frac{{\bm\eta}}{R}$ and ${\bm\zeta}'=\frac{{\bm\eta}'}{R}$. Then, by \eqref{uR-estimate-total}, \eqref{equiv-weight4} and \eqref{equiv-weight5}, we obtain that
\begin{equation*}
A_{1+\hat{\alp}}({\bm\eta}, {\bm\eta'})+A_{2+\hat{\alp}}({\bm\eta}, {\bm\eta'})\le
C(1+M_1\mathfrak{q}_{\gam}(\eps))\mathfrak{q}_{\gam}(\eps).
\end{equation*}

(Case 2) Suppose that $|{\bm\eta}-{\bm\eta}'|\ge \frac{\eta_2'}{40}$. Then, by \eqref{uR-estimate-total}, we have
\begin{equation*}
  A_{1+\hat{\alp}}({\bm\eta}, {\bm\eta}')
  \le \left(\frac{1+\eta_2}{\frac{1}{40}\eta_2'}\right)^{\hat{\alp}}
  (A_1({\bm\eta})+A_1({\bm\eta}')).
\end{equation*}
Since $\frac{\delta^{(2)}_{{\bm\eta}, {\bm\eta'}}}{1+\eta_2'}\le \min\{ \frac{\delta_{\bm\eta}^{(2)}}{1+\eta_2}, \frac{\delta_{\bm\eta'}^{(2)}}{1+\eta_2'}\}$, and $\min\{ \frac{\delta_{\bm\eta}^{(2)}}{1+\eta_2}, \frac{\delta_{\bm\eta'}^{(2)}}{1+\eta_2'}\}\le 1$, we also have
\begin{equation*}
A_{2+\hat{\alp}}({\bm\eta}, {\bm\eta}')
  \le\left(\frac{1+\eta_2}{\frac{1}{40}\eta_2'}\right)^{\hat{\alp}}
  (A_2({\bm\eta})+A_2({\bm\eta}')).
\end{equation*}
Then, we obtain from \eqref{A1-estimate} and \eqref{A2-estimate} that
\begin{equation}
\label{A-holder-estimate-final}
\sup_{{\bm\eta, \bm\eta'\in \Lambda_{f,L}\cap\{\eta_2\ge 3h_2^*\}}\atop{\bm\eta\neq \bm\eta'}} A_{1+\hat{\alp}}({\bm\eta}, {\bm\eta}')+A_{2+\hat{\alp}}({\bm\eta}, {\bm\eta}') \le C(1+M_1\mathfrak{q}_{\gam}(\eps))\mathfrak{q}_{\gam}(\eps).
\end{equation}
In the estimates \eqref{phi-tilde-C1-estimate}, \eqref{A2-estimate} and \eqref{A-holder-estimate-final}, the estimate constants $C$ are different, but they all depend on $d_0>0$, but they are independent of $\beta\in(0,1)$ and $\til{L}$. Therefore, by  combining all the estimates \eqref{phi-tilde-C1-estimate}, \eqref{A2-estimate} and \eqref{A-holder-estimate-final}, we get
\begin{equation}
\label{phi-tilde-final-estimate}
\|\til{\phi}\|_{2,\hat{\alp},\Lambda_{f,L}\cap\{\eta_2>3h_2^*\}}
^{(-\beta;-(1+\hat{\alp}),\{P_{2,L}^f\})}\le C(1+M_1\mathfrak{q}_{\gam}(\eps))\mathfrak{q}_{\gam}(\eps).
\end{equation}

Finally, the estimate \eqref{estimate-phi-finalC2} can be derived by using Lemma \ref{lemma-estimate-nr-sym}, \eqref{equiv-weight3} and \eqref{phi-tilde-final-estimate}. This completes the proof.

\end{proof}
\end{lemma}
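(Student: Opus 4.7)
The plan is to assemble the weighted $C^{2,\hat{\alp}}$-norm of $\phi$ by patching together the three scaled interior/boundary estimates already obtained for $u^R$ (Lemmas \ref{lemma-estiamte-uR-1}--\ref{lemma-estiamte-uR-3}) with the near-symmetry estimate from Lemma \ref{lemma-estimate-nr-sym}. First I would fix $H_0$ using \eqref{equiv-weight1} so that $L \ge H_0$ forces $\til{L} \ge 10(1+h_3^*)$; then all three scaled lemmas apply simultaneously. Work in the rotated coordinates ${\bm\eta}=\mcl{R}{\bf x}$ in the region $\Lambda_{f,L}\cap\{\eta_2>3h_2^*\}$, and observe that this region is covered by $\bigcup_R \mcl{D}_R$, where $R$ ranges over $[4h_2^*, \tfrac{4}{5}\til{L}]\cup\{\til{L}\}$ and $\mcl{D}_R$ is the image under ${\bm\zeta}\mapsto R{\bm\zeta}$ of a fixed annular subregion of $Q_R$. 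Crucially, any ${\bm\eta}\in \mcl{D}_R$ satisfies $\eta_2<R<2\eta_2$, which is exactly what is needed to convert $R$-scaled estimates into weight powers of $(1+\eta_2)$.

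Next, using the scaling $u^R({\bm\zeta})=\til{\phi}(R{\bm\zeta})/R^\beta$ together with the bounds \eqref{equiv-weight4} (so that $R$ and $1+R\zeta_2$ are comparable uniformly), I would translate the sup-norm part of \eqref{uR-estimate-total} into the pointwise estimates
\[
(1+\eta_2)^{k-\beta}|D^k\til{\phi}({\bm\eta})|
\le C(1+M_1\mathfrak{q}_{\gam}(\eps))\mathfrak{q}_{\gam}(\eps),\quad k=0,1,
\]
and similarly for $k=2$ with the extra corner factor $(\delta^{(2)}_{\bm\eta}/(1+\eta_2))^{1-\hat{\alp}}$, where the factor $d_{\bm\zeta}$ arising in the $R=\til{L}$ scaled estimate via \eqref{equiv-weight5} is precisely what supplies the $\{P_{2,L}^f\}$-weight.

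For the Hölder semi-norm pieces $A_{1+\hat{\alp}}$ and $A_{2+\hat{\alp}}$, I would split into two cases depending on whether $|{\bm\eta}-{\bm\eta}'|\le \tfrac{\eta_2'}{40}$ or $|{\bm\eta}-{\bm\eta}'|\ge \tfrac{\eta_2'}{40}$ (taking $\eta_2\le\eta_2'$). In the first case, both ${\bm\eta}$ and ${\bm\eta}'$ lie in a single $\mcl{D}_R$, so the scaled Hölder seminorm bound from \eqref{uR-estimate-total} transfers directly. In the second case, the two points are far apart on the scale $1+\eta_2$, and one simply interpolates trivially between the $k=0$ or $k=1$ pointwise bounds using $|{\bm\eta}-{\bm\eta}'|^{-\hat{\alp}}\le (\tfrac{40}{\eta_2'})^{\hat{\alp}}$. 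This yields the global estimate in $\Lambda_{f,L}\cap\{\eta_2>3h_2^*\}$.

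Finally, the region $\Om_{f,L}\cap\{x_2\le 3h_2^*/\om_0\}$ (i.e.~the complement, which contains $\Gam_{\rm sym}$) is handled directly by Lemma \ref{lemma-estimate-nr-sym} with $r=3h_2^*/\om_0$, giving an unweighted $C^{2,\hat{\alp}}$ estimate bounded by $C_r\mathfrak{q}_{\gam}(\eps)$; on this fixed bounded set the weights are harmless constants depending on $d_0$. Glueing the two estimates together and then using the coordinate equivalences \eqref{equiv-weight3} to pass from $({\bm\eta},\til\phi)$ back to $({\bf x},\phi)$ yields \eqref{estimate-phi-finalC2}. The main obstacle is the Hölder-in-two-scales cross comparison and keeping the corner weight $\delta^{(2)}_{{\bm\eta},{\bm\eta}'}/(1+\max\{\eta_2,\eta_2'\})$ honest when the two scaled cells $\mcl{D}_R$ and $\mcl{D}_{R'}$ are very different; the dyadic comparability $\eta_2<R<2\eta_2$ and the elementary inequality $\min\{\tfrac{\delta_{\bm\eta}^{(2)}}{1+\eta_2},\tfrac{\delta_{{\bm\eta}'}^{(2)}}{1+\eta_2'}\}\le 1$ are what make this work cleanly.
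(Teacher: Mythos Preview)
Your proposal is correct and follows essentially the same route as the paper's proof: fix $H_0$ so that $\til{L}\ge 10(1+h_3^*)$, cover $\Lambda_{f,L}\cap\{\eta_2>3h_2^*\}$ by the dyadic patches $\mcl{D}_R$ with $\eta_2<R<2\eta_2$, convert the scaled $u^R$-estimates \eqref{uR-estimate-total} into weighted pointwise bounds on $\til\phi$ and its derivatives (with the corner factor coming from \eqref{equiv-weight5} when $R=\til L$), handle the H\"older seminorms by the same two-case split at threshold $|\bm\eta-\bm\eta'|=\eta_2'/40$, and finally patch in the near-symmetry region via Lemma~\ref{lemma-estimate-nr-sym} and pass back to $(\mathbf{x},\phi)$ through \eqref{equiv-weight3}. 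The paper's argument is exactly this, with the same case analysis and the same use of $\min\{\delta_{\bm\eta}^{(2)}/(1+\eta_2),\delta_{\bm\eta'}^{(2)}/(1+\eta_2')\}\le 1$ in Case~2.
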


\begin{proof}[Proof of Proposition \ref{wellposedness-lbvp-phi-cutoff}]
Let $(\hat{\alp}, \hat{\eps}, \hat{\sigma})$ be from Lemma \ref{lemma-estiamte-uR-3}, and let $H_0$ be from Lemma \ref{lemma-estimate-phi-corner}. And, assume that
\begin{equation*}
  0<\eps<\hat{\eps},\quad M_2\mathfrak{q}_{\gam}(\eps)\le \hat{\sigma},\quad L\ge H_0
\end{equation*}
so that Lemma \ref{lemma-estimate-phi-corner} holds. Then, the proof of Proposition \ref{wellposedness-lbvp-phi-cutoff} is given by employing the method of continuity if we choose $L_{\rm wp}$ as $L_{\rm wp}=H_0$. An auxiliary boundary value problem for the method of continuity is given in the following.

For fixed functions $\mathfrak{F}\in C^{\hat{\alp}}_{((1-\hat{\alp}), \der \Om_{f,L})}(\Om_{f,L})$ and $\mathfrak{G}\in C^{\hat{\alp}}(\ol{\Gam^f_{{\rm cutoff}, L}})$,
with satisfying the compatibility conditions
\begin{equation}
\label{comp-cond-1}
  \mathfrak{F}=0\quad\tx{on $\der \Om_{f,L}\setminus \Gam^f_{{\rm cutoff},L}$},\quad
  \mathfrak{G}(P_{3,L}^f)=0,
\end{equation}
we consider the following auxiliary boundary value problem:
\begin{equation}
\label{aux-lbvp-1}
  \begin{cases}
  \Delta \Phi=\mathfrak{F}\quad&\mbox{in $\Om_{f,L}$}\\
  \Phi=0\quad&\mbox{on $\der \Om_{f,L}\setminus \Gam^f_{{\rm cutoff},L}$}\\
  \der_{{\bf n}_c}\Phi=\mathfrak{G}\quad&\mbox{on $\Gam^f_{{\rm cutoff}, L}$}
  \end{cases}.
\end{equation}
Since $\Om_{f,L}\subset \R^2$, we can apply Sobolev inequality and H\"{o}lder inequality to conclude from  Lax-Milgram theorem that \eqref{aux-lbvp-1} has a unique weak solution $\Phi\in H^1(\Om_{f,L})$. By local estimates of a weak solution, one can check that $\Phi$ is continuous up to the boundary in $\Om_{f,L}$, and that $\Phi$ is $C^{1,\hat{\alp}}$ up to the boundary away from $P_{2,L}^f$. Furthermore, local scaling arguments of $\Phi$ yield a priori weighted $C^{2,\hat{\alp}}$ estimate away from $P_{2,L}^{f}$. Finally, by adjusting the proof of Lemma \ref{lemma-estiamte-uR-3}, we can achieve a priori weighted $C^{2, \hat{\alp}}$ estimate of $\Phi$ near $P_{2,L}^f$. As a result, we obtain that
\begin{equation*}
  \|\Phi\|_{2,\hat{\alp}, \Om_{f,L}}^{(-(1+\hat{\alp}), \der \Om_{f,L})}\le C_L(\|\mathfrak{F}\|_{\hat{\alp}, \Om_{f,L}}^{(1-\hat{\alp}, \der \Om_{f,L})}+\|\mathfrak{G}\|_{\hat{\alp}, \Gam^f_{{\rm cutoff}, L}})
\end{equation*}
for some constant $C_L>0$ possibly depending on $L$. Therefore, the auxiliary boundary value problem \eqref{aux-lbvp-1} has a unique solution $\Phi\in C^{2,\hat{\alp}}_{-(1+\hat{\alp}), \der \Om_{f,L}}(\Om_{f,L})$.

For a fixed function $\mathfrak{H}\in C^{2,\hat{\alp}}(\ol{\Om_{f,L}})$,
with satisfying the compatibility conditions
\begin{equation*}
  \Delta \mathfrak{H}=0 \quad\tx{on $\der \Om_{f,L}\setminus \Gam^f_{{\rm cutoff},L}$},\quad
  \der_{{\bf n}_c}\mathfrak{H}(P_{3,L}^f)=0,
\end{equation*}
$\Psi$ solves
\begin{equation}
\label{aux-lbvp-2}
  \begin{cases}
  \Delta \Psi=0\quad&\mbox{in $\Om_{f,L}$}\\
  \Phi=\mathfrak{H}\quad&\mbox{on $\der \Om_{f,L}\setminus \Gam^f_{{\rm cutoff},L}$}\\
  \der_{{\bf n}_c}\Phi=0\quad&\mbox{on $\Gam^f_{{\rm cutoff}, L}$}
  \end{cases}
\end{equation}
if and only if $\hat{\Phi}:=\Psi-\mathfrak{H}$ solves
\begin{equation*}
  \begin{cases}
  \Delta \hat{\Phi}=\mathfrak{F}\quad&\mbox{in $\Om_{f,L}$}\\
  \hat{\Phi}=0\quad&\mbox{on $\der \Om_{f,L}\setminus \Gam^f_{{\rm cutoff},L}$}\\
  \der_{{\bf n}_c}\hat{\Phi}=\mathfrak{G}\quad&\mbox{on $\Gam^f_{{\rm cutoff}, L}$}
  \end{cases}
\end{equation*}
for $(\mathfrak{F}, \mathfrak{G})=-(\Delta \mathfrak{H},\der_{{\bf n
  }_c}\mathfrak{H})$ satisfying \eqref{comp-cond-1}.
Therefore, the well-posedness of \eqref{aux-lbvp-2} directly follows from the well-posedness of \eqref{aux-lbvp-1}.

Then, we can apply the method of continuity to conclude that,  for any $\vphi\in \mcl{I}^f_{M_2,L}$, the linear boundary value problem \eqref{lbvp-phi-cutoff} associated with $\vphi$ has a unique solution $\phi\in C^{2}(\Om_{f,L})\cap C^1(\ol{\Om_{f,L}})$. Then, Lemma \ref{lemma-estimate-phi-corner} directly yields the estimate \eqref{apriori-phi-lbvp-cutoff}.


\end{proof}

\subsection{Detached shocks in cut-off domains}
\label{subsection-proposition-pf}

Let $\hat{\eps}>0$ be from Lemma \ref{lemma-estiamte-uR-3}, and let us fix a {\emph{detached distance}} $d_0>0$. Let $H_0>1$ be  from Lemma \ref{lemma-estimate-phi-corner}. For $\ipsi$ given by \eqref{definition-ipsi} with $M_{\infty}\ge \frac{1}{\hat{\eps}}$, and $L\ge H_0$, we prove Proposition \ref{proposition-wp-fbpbd}, thus solve Problem \ref{fbp-bdd-dmns}.

Throughout \S \ref{subsection-proposition-pf}, we assume that $(M_1, M_2, \eps, L)$ satisfy the following conditions.
\begin{condition}
\label{condition2-for-parameters}
\begin{itemize}
\item[(i)] For $M_1^*$, $M_2^*$ from Lemmas \ref{lemma-f-iterset-nonempty}, \ref{lemma-vphi-iterset-nonempty}, respectively, assume that
    \begin{equation*}
      M_1\ge M_1^*,\quad M_2\ge M_2^*;
    \end{equation*}

\item[(ii)] For $\eps_2$, $\hat{\eps}$ from Lemmas \ref{lemma-shock-polar2}, \ref{lemma-estiamte-uR-3}, respectively, assume that
    \begin{equation*}
      0<\eps\le \min\{\eps_2, \hat{\eps}\};
    \end{equation*}

\item[(iii)] For $\sigma_2$ from Lemma \ref{lemma-distance-iterset}, assume that
    \begin{equation*}
      (M_1+1)\mathfrak{q}_{\gam}(\eps)
      \le \min\{\sigma_2, \frac{\cot \tw}{4}, \frac{\tan \tw}{4}\};
    \end{equation*}

\item[(iv)] For $\hat{\sigma}$ from Lemma \ref{lemma-estiamte-uR-3}, assume that
    \begin{equation*}
      M_2\mathfrak{q}_{\gam}(\eps)\le \hat{\sigma};
    \end{equation*}

\item[(v)] For $H_0>1$ from Lemma \ref{lemma-estimate-phi-corner}, assume that
    \begin{equation*}
      L\ge H_0.
    \end{equation*}
\end{itemize}
\end{condition}

Fix $\beta\in(0,1)$. Under Condition \ref{condition2-for-parameters}, let $\mcl{J}_{M_1, L}$ be given by \eqref{definition-iterset-shocks} with $\alp=\hat{\alp}$ for $\hat{\alp}$ from Lemma \ref{lemma-estiamte-uR-3}. And, for each $f\in \mcl{J}_{M_1,L}$, let $\mcl{I}_{M_2, L}^f$ be given by \eqref{definition-iterset-psi} with $\alp=\hat{\alp}$.
Then, all the lemmas and propositions stated up to \S \ref{subsection-fixedbd-nlbvp-psi} hold.

\begin{lemma}
\label{lemma-wp-fixed-nlbvp}
One can fix $M_2\in[M_2^*, \infty)$ depending on $d_0>0$ so that, for any $f\in \mcl{J}_{M_1,L}$, the nonlinear boundary value problem \eqref{fixed-nlbvp-bounded-phi} in $\Om_{f,L}$ has a unique solution $\phi \in \mcl{I}_{M_2, L}^{f}$.

\begin{proof}
By Proposition \ref{wellposedness-lbvp-phi-cutoff}, for each fixed $f\in \mcl{J}_{M_1,L}$ and $\vphi\in \mcl{I}^f_{M_2, L}$, the linear boundary value problem \eqref{lbvp-phi-cutoff} associated with $\vphi$ admits a unique solution $\phi\in C^2(\Om_{f,L})\cap C^1(\ol{\Om_{f,L}})$, and the solution $\phi$ satisfies the estimate \eqref{apriori-phi-lbvp-cutoff}. By Condition \ref{condition2-for-parameters}(iv) and the estimate \eqref{apriori-phi-lbvp-cutoff}, we have
\begin{equation}
\label{estimate-nlbvp-sol}
  \|\phi\|_{2,\hat{\alp}, \Om_{f,L}}^{(-\beta;-(1+\hat{\alp}), \{P_{2,L}^f\})}\le C\left(\sigma_2+\cot \tw+ \tan \tw\right)\mathfrak{q}_{\gam}(\eps)
\end{equation}
for the constant $C$ from \eqref{apriori-phi-lbvp-cutoff}.

Now, we choose the constant $M_2$ as
\begin{equation}
\label{choice-M2}
  M_2=1+M_2^*+C\left(\sigma_2+\cot \tw+ \tan \tw\right).
\end{equation}
Then we apply the Schauder fixed point theorem to show that the nonlinear boundary value problem \eqref{fixed-nlbvp-bounded-phi} has at least one solution in $\mcl{I}^f_{M_2L}$. Furthermore, the uniqueness of the solution to \eqref{fixed-nlbvp-bounded-phi} in $\mcl{I}^f_{M_2, L}$ can be proved by the maximum principle of nonlinear elliptic boundary value problem.
\end{proof}

\end{lemma}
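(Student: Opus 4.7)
The plan is to apply the Schauder fixed point theorem to the iteration map $T\colon\mcl{I}^f_{M_2,L}\to \mcl{I}^f_{M_2,L}$ given by $T(\vphi):=\phi$, where $\phi$ is the unique solution to the linear boundary value problem \eqref{lbvp-phi-cutoff} associated with $\vphi$ provided by Proposition \ref{wellposedness-lbvp-phi-cutoff}. First I would verify that $T$ maps $\mcl{I}^f_{M_2,L}$ into itself. By construction of the linear boundary value problem, $T(\vphi)$ satisfies the identity $\phi=-\psi_0$ on $\ol{\Gam_{\rm sym}\cup \Gam_{b,L}^f}$, so the trace condition in \eqref{definition-iterset-psi} is automatic. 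For the quantitative bound, the estimate \eqref{apriori-phi-lbvp-cutoff} gives
\begin{equation*}
\|\phi\|_{2,\hat{\alp},\Om_{f,L}}^{(-\beta;-(1+\hat{\alp}),\{P_{2,L}^f\})}\le C(1+M_1\mathfrak{q}_{\gam}(\eps))\mathfrak{q}_{\gam}(\eps),
\end{equation*}
and Condition \ref{condition2-for-parameters}(iii) ensures $M_1\mathfrak{q}_{\gam}(\eps)\le \sigma_2+\tfrac14(\cot\tw+\tan\tw)$; choosing
\begin{equation*}
M_2:=1+M_2^*+C(\sigma_2+\cot\tw+\tan\tw)
\end{equation*}
(depending only on $d_0$ via $\sigma_2$ and $\tw$) guarantees that the right-hand side above does not exceed $M_2\mathfrak{q}_{\gam}(\eps)$, and the enlargement by $M_2^*$ keeps Lemma \ref{lemma-vphi-iterset-nonempty} in force.

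For the fixed point argument itself, I would exploit the compact embedding recorded in Remark \ref{remark-compact-embedding}: fix $\alp'\in(0,\hat{\alp})$ and view $\mcl{I}^f_{M_2,L}$ as a closed convex bounded subset of the Banach space $C^{2,\alp'}_{(-\beta;-(1+\alp'),\{P^f_{2,L}\})}(\Om_{f,L})$. Compactness of $T$ then follows directly, since every image $T(\vphi)$ lies in the $\hat{\alp}$-ball which embeds compactly into the $\alp'$-space. Continuity of $T$ in the $\alp'$-topology follows from the smoothness of $\hat{\rho}$ and of the coefficient map \eqref{definition-coefficients}: if $\vphi_n\to\vphi$ in $C^{2,\alp'}$, then $a^{\vphi_n}_{ij}\to a^{\vphi}_{ij}$ in the norm $\|\cdot\|_{1,\alp',\Om_{f,L}}^{(1-\beta;-\alp',\{P^f_{2,L}\})}$, so the differences $T(\vphi_n)-T(\vphi)$ solve linear equations whose coefficient perturbations tend to zero; applying a quantitative version of Proposition \ref{wellposedness-lbvp-phi-cutoff} to $T(\vphi_n)-T(\vphi)$ forces convergence. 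Schauder's theorem then yields a fixed point $\phi\in\mcl{I}^f_{M_2,L}$, which by construction solves \eqref{fixed-nlbvp-bounded-phi}.

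For uniqueness, suppose $\phi_1,\phi_2\in\mcl{I}^f_{M_2,L}$ both solve \eqref{fixed-nlbvp-bounded-phi}. Setting $w:=\phi_1-\phi_2$, a standard linearization along the segment between $\phi_1+\psi_0$ and $\phi_2+\psi_0$ shows that $w$ satisfies a homogeneous linear equation $\sum_{i,j=1}^2 \til{a}_{ij}\der_{x_ix_j}w+\sum_{i=1}^2 \til{b}_i \der_{x_i}w=0$ in $\Om_{f,L}$; by \eqref{uniform-subsonicity} together with the smallness of $M_2\mathfrak{q}_{\gam}(\eps)$, the resulting operator is uniformly elliptic on $\ol{\Om_{f,L}}$. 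The boundary data give $w=0$ on $\Gam_{\rm sh,L}^f\cup \Gam_{\rm sym}\cup \Gam_{b,L}^f$ and $\der_{{\bf n}_c}w=0$ on $\Gam^f_{{\rm cutoff},L}$; the classical maximum principle combined with the Hopf boundary point lemma on $\Gam^f_{{\rm cutoff},L}$ then forces $w\equiv 0$. I expect the main technical obstacle to be the continuity verification near the corner $P^f_{2,L}$ in the weighted norms, but the uniform control of the weight from Proposition \ref{wellposedness-lbvp-phi-cutoff} together with the strict drop of H\"older exponent to $\alp'$ in Remark \ref{remark-compact-embedding} absorbs this difficulty.
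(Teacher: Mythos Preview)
Your proposal is correct and follows essentially the same strategy as the paper: set up the iteration map via Proposition \ref{wellposedness-lbvp-phi-cutoff}, choose $M_2$ so that the map is a self-map, apply the Schauder fixed point theorem, and obtain uniqueness from the maximum principle. You supply more detail than the paper does on the functional-analytic setup (the compact embedding into the $\alp'$-space and the continuity argument) and on the linearization for uniqueness, and you correctly invoke Condition~\ref{condition2-for-parameters}(iii) rather than (iv) to bound $M_1\mathfrak{q}_\gamma(\eps)$, which is what is actually needed.
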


Now, we are ready to prove Proposition \ref{proposition-wp-fbpbd}.	

\begin{proof}[Proof of Proposition \ref{proposition-wp-fbpbd}]
The proof is divided into three steps.

{\textbf{1.}} By Lemma \ref{lemma-wp-fixed-nlbvp}, for each $f\in \mcl{J}_{M_1,L}$, the nonlinear boundary value problem \eqref{fixed-nlbvp-bounded-phi} in $\Om_{f,L}$ admits a unique solution $\phi^{f}\in \mcl{I}_{M_2, L}^f$ for $M_2$ given by Lemma \ref{lemma-wp-fixed-nlbvp}.
For such $\phi^{f}$, let us set
\begin{equation}
\label{definition-psi-update}
  \psi^{f}({\bf x}):=\phi^{f}({\bf x})+\psi_0({\bf x})\quad\tx{in $\Om_{f,L}$}
\end{equation}
for $\psi_0$ given by \eqref{definition-background}.
For $\hat{\rho}$ from Lemma \ref{lemma-rho-expression}, we define $\hat{f}:[0,L]\rightarrow \R$ by
\begin{equation}
\label{definition-f-update}
  \hat{f}(x_2):=b_0-d_0+\int_0^{x_2}
  \frac{\psi^f_{x_1}(f(t),t)}
  {\psi^{f}_{x_2}(f(t),t)-\iu\hat{\rho}(|\nabla\psi^f|^2)(f(t),t)}\,dt.
\end{equation}
Then, we have $\hat{f}(0)=b_0-d_0$.
Due to the boundary condition $\psi=0$ on $\Gam_{\rm sym}$ stated in \eqref{fixed-nlbvp-bounded-psi},
\eqref{definition-f-update} yields that
\begin{equation*}
\hat{f}'(0)=\frac{\psi_{x_1}(f(0),0)}
{\psi^{f}_{x_2}(f(0),0)-\iu\hat{\rho}
(|\nabla\psi^f|^2)(f(0),0)}=0.
\end{equation*}
For $f_0$ defined by \eqref{definition-background}, a direct computation with using \eqref{definition-background} and \eqref{background-fbc} yields that
\begin{equation*}
\begin{split}
  &(\hat{f}-f_0)(x_2)\\
  &=\int_0^{x_2}
  \frac{
  \phi_{x_1}^{f}(\rx)-
  \frac{u^{\eps}_{\rm st}\kappa_w}
  {u^{\eps}_{\rm st}-\iu}
  \left(\left(\hat{\rho}(|\nabla\psi^f(\rx)|^2)
  -\hat{\rho}(|\nabla\psi_0|^2)\right)\iu
  -\phi_{x_2}^f(\rx)\right)}
  {\phi_{x_2}^f(\rx)+\rho_{\rm st}^{\eps}u_{\rm st}^{\eps}
  -\hat{\rho}(|\nabla\psi^f|^2(\rx))\iu}\Bigr|_{\rx=(f(t),t)}\,dt=:\int_0^{x_2} F(t)\,dt.
  \end{split}
  \end{equation*}
For $\eps=0$, we have $|\rho_{\rm st}^{\eps}(u_{\rm st}^{\eps}-\iu)|=((\gam-1)B_0)^{\frac{1}{\gam-1}}\sqrt{2B_0}>0$.
And, we have
\begin{equation*}
\begin{split}
  |\phi_{x_2}^f+\rho_{\rm st}^{\eps}u_{\rm st}^{\eps}
  -\hat{\rho}(|\nabla\psi^f|^2)\iu|
  &\ge |\rho_{\rm st}^{\eps}(u_{\rm st}^{\eps}-\iu)|
  -|\phi_{x_2}
  -\left(\hat{\rho}(|\nabla\psi^f|^2)-\hat{\rho}(|\nabla\psi_0|^2)
  \right)\iu|\\
  &\ge |\rho_{\rm st}^{\eps}(u_{\rm st}^{\eps}-\iu)|-CM_2\mathfrak{q}_{\gam}(\eps)
  \end{split}
\end{equation*}
for a constant $C>0$ depending only on $(\gam, B_0)$. Therefore, by Lemma \ref{lemma-shock-polar2}, one can fix a small constant $\eps^{\sharp}\in(0, \min\{\eps_2, \hat{\eps}\}]$ depending on $d_0$ so that if the parameter $\eps(=\frac{1}{M_{\infty}})$ satisfies
\begin{equation}\label{condition4-for-epsilon0}
  \eps\le \eps^{\sharp},
\end{equation}
then we get
\begin{equation}\label{estimate-denominator-f-diff}
  |(\phi_{x_2}^f(\rx)+\der_{x_2}\psi_0)
  -\hat{\rho}(|\nabla\psi^f(\rx)|^2)\iu|\ge \frac 14((\gam-1)B_0)^{\frac{1}{\gam-1}}\sqrt{2B_0}>0\quad\tx{in $\ol{\Om_{f,L}}$}.
\end{equation}
From \eqref{estimate-denominator-f-diff} and Definition \ref{definition-norms-phi-decay-corner}, we obtain that
\begin{equation*}
  |F(t)|\le C\|\phi^f\|_{2,\hat{\alp}, \Om_{f,L}}^{(-\beta;-(1+\hat{\alp}), \{P_{2,L}^f\})} (1+t)^{\beta-1}\quad\tx{for $0<t<L$},
\end{equation*}
and this yields that
\begin{equation*}
  \sup_{x_2\in(0,L)}(1+x_2)^{-\beta}|(\hat{f}-f_0)(x_2)|\le C\|\phi^f\|_{2,\hat{\alp}, \Om_{f,L}}^{(-\beta;-(1+\hat{\alp}), \{P_{2,L}^f\})}.
\end{equation*}
By \eqref{estimate-f-iterset}, we have
\begin{equation*}
|x_2-L|\le |{\bf x}-P_{2,L}^f|\le |x_2-L|\sqrt{1+(M_1+1)^2\mathfrak{q}_{\gam}^2(\eps)}
\end{equation*}
for any $\rx=(f(x_2),x_2)$, and $f\in \mcl{J}_{M_1,L}$. Therefore, it can be directly checked that
\begin{equation}\label{estimate-f-diff-final}
  \|\hat f-f_0\|_{2,\hat{\alp},(0,L)}^{(-\beta;-(1+\hat{\alp}), \{L\})}\le C\|\phi^f\|_{2,\hat{\alp}, \Om_{f,L}}^{(-\beta;-(1+\hat{\alp}), \{P_{2,L}^f\})}\le CM_2\mathfrak{q}_{\gam}(\eps)=: \mcl{C}_*\mathfrak{q}_{\gam}(\eps).
\end{equation}
for $C>0$ depending only on $(\gam, B_0,d_0)$.

We choose $M_1$ in \eqref{definition-iterset-shocks} as
\begin{equation}
\label{choice-M1}
M_1=\max\{2\mcl{C}_{*}, M_1^*\}
\end{equation}
for $M_1^*$ from Condition \ref{condition2-for-parameters}, then we have
\begin{equation*}
\hat{f}\in \mcl{J}_{M_1,L}.
\end{equation*}

We define a mapping $\mcl{H}: \mcl{J}_{M_1,L}\rightarrow \mcl{J}_{M_1,L}$ by
\begin{equation}
\label{iteration-mapping-H}
  \mcl{H}(f):=\hat{f}
\end{equation}
for $\hat f$ given by \eqref{definition-f-update}.
If $\mcl{H}(f_*)=f_*$ for some $f_*\in \mcl{J}_{M_1, L}$, then $(f_*, \psi^{f_*})$ solves Problem \ref{fbp-bdd-dmns} for $\psi^{f_*}$ given by \eqref{definition-psi-update}. Therefore, Proposition \ref{proposition-wp-fbpbd} is proved once we show that $\mcl{H}$ has a fixed point in $\mcl{J}_{M_1,L}$.
\smallskip

{\textbf{2.}}
By Remark \ref{remark-compact-embedding}, $\mcl{J}_{M_1, L}$ is compact and convex in $C^{2,\frac{\hat{\alp}}{2}}_{(-\beta;-(1+\frac{\hat{\alp}}{2}), \{L\})}((0,L))$.

Suppose that a sequence $\{f_n\}$ in $\mcl{J}_{M_1,L} $ satisfies that
\begin{equation}\label{convergence-shocks}
  \lim_{n\to \infty}\|f_n-f_{\infty}\|
  _{2,\frac{\hat{\alp}}{2},(0,L)}^{(-\beta;-(1+\frac{\hat{\alp}}{2}),\{L\})}
  =0
\end{equation}
for some function $f_{\infty}\in \mcl{J}_{M_1, L}$.

In the following, for each $n\in \mathbb{N}$, we define an invertible transformation $\mcl{T}_n: \ol{\Om_{f_n, L}}\rightarrow \ol{\Om_{f_{\infty}, L}}$ such that
\begin{equation*}
\mcl{T}_n(\Om_{f_n, L})=\Om_{f_{\infty}, L},\quad \mcl{T}_n(\Gam_{{\rm sh}, L}^{f_n})=\Gam_{{\rm sh}, L}^{f_{\infty}}.
\end{equation*}

Fix $n\in\mathbb{N}\cup\{\infty\}$. We draw a line from the point $P_{3, L}^{f_n}$ perpendicular to $\Gam_{b,L}^{f_n}$. Let $Q_n$ be the $x_1$-intercept of this line (Fig. \ref{figure_polar}).
\begin{figure}[htp]
	\centering
	\begin{psfrags}
		\psfrag{p2}[cc][][0.8][0]{$P_{2,L}^{f_n}$}
		\psfrag{p3}[cc][][0.8][0]{$\phantom{aaaa}{\bf{P_{3,L}^{f_n}}}$}
		\psfrag{p0}[cc][][0.8][0]{$\rb$}
		\psfrag{p1}[cc][][0.8][0]{$\lb$}
		\psfrag{sm}[cc][][0.8][0]{$\Gam_{\rm{sym}}$}
		\psfrag{w}[cc][][0.8][0]{$\Gam_{b,L}^{f_n}\phantom{aaaaaaaaaaa}$}
		\psfrag{cf}[cc][][0.8][0]{$\phantom{aa}\Gam_{{\rm{cutoff}},f}^L$}
		\psfrag{sh}[cc][][0.8][0]{$\Gam_{{\rm{sh}},L}^{f_n}$}
		\psfrag{Q}[cc][][0.8][0]{$Q_n$}
		\psfrag{om}[cc][][0.8][0]{$\Om_{f,L}$}
		\psfrag{Uf}[cc][][0.8][0]{$M_{\infty}=\frac{1}{\eps}$}
\psfrag{t2}[cc][][0.8][0]{$\theta=\frac{\pi}{2}+\tw$}
\psfrag{t1}[cc][][0.8][0]{$\theta=\pi$}
\psfrag{a2}[cc][][0.8][0]{$\eta_2$}
\psfrag{h}[cc][][0.8][0]{\phantom{aaaa}${\bf{\eta_2=\frac{h_1}{\sin \tw}}}$}
		\includegraphics[scale=0.6]{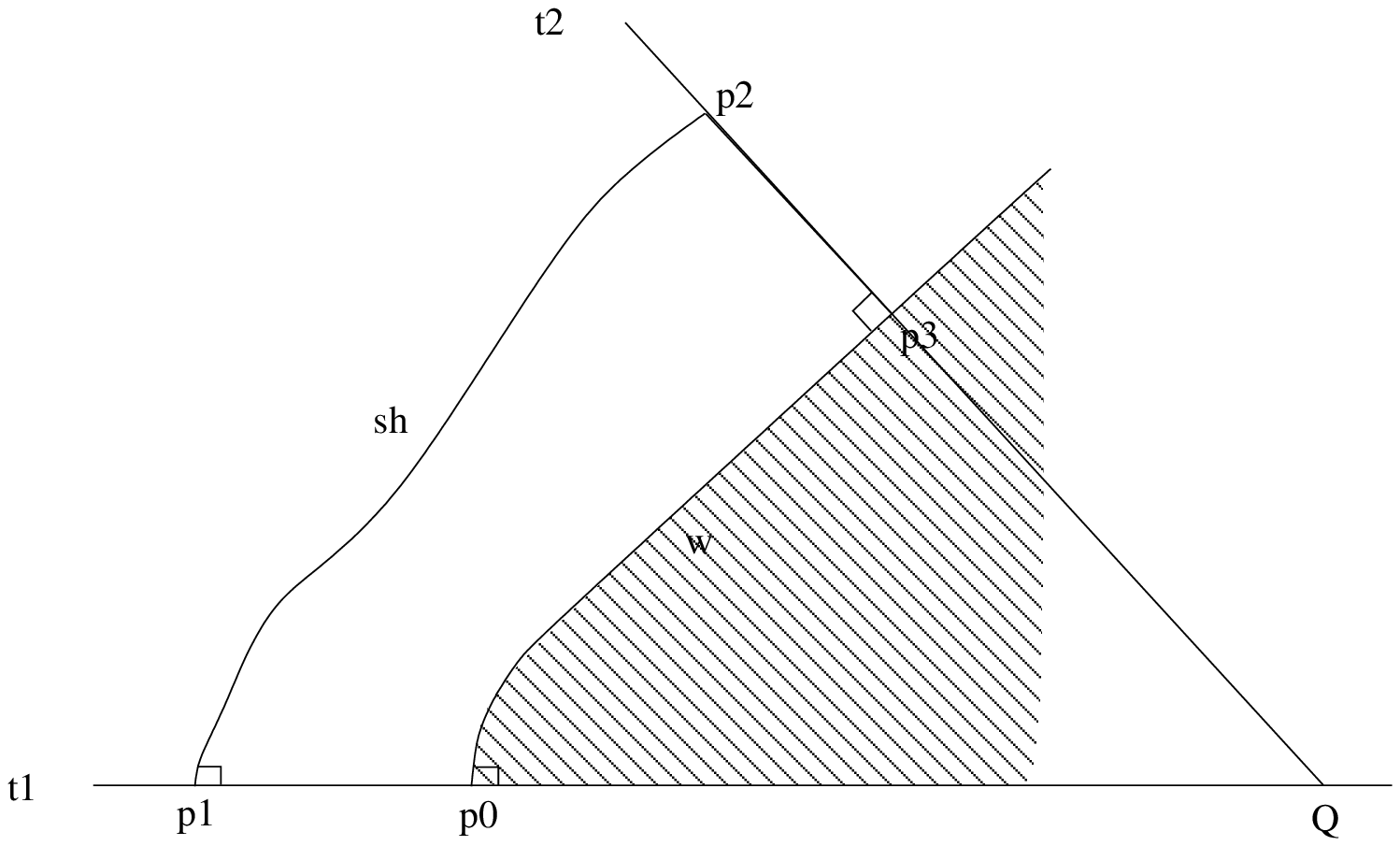}
		\caption{}\label{figure_polar}
	\end{psfrags}
\end{figure}
A direct computation yields that
\begin{equation*}
  Q_n:=\left(\frac{|P_{3,L}^{f_n}|}{\cos \tw},0\right).
\end{equation*}
Let $(r,\theta)$ be the polar coordinates centered at $Q_n$, that is,
\begin{equation*}
  {\rx}=Q_n+r(\cos\theta, \sin \theta).
\end{equation*}
In $(r,\theta)$-coordinates, we have $\Om_{f_n,L}\subset\{(r,\theta):r>0,\,\, \tw+\frac{\pi}{2}<\theta<\pi\}$.
We remark that, in $(r,\theta)$-coordinates,
we have $\der \Omega_{f_n,L}\cap \{\theta=\tw+\frac{\pi}{2}\}\neq \emptyset$ and
$\der \Omega_{f_n,L}\cap \{\theta=\tw+\pi\}\neq \emptyset$ for all $n\in \mathbb{N}\cup \{\infty\}$.
This property allows us to find coordinate transformations which map $\Om_{f_n,L}$ onto $\Om_{f_{\infty}, L}$.

Any point $\rx=(x_1,x_2)$ lying on $\Gam_{b,L}^{f_n}$ satisfies the equation $x_1=b(x_2)$ which can be rewritten in terms of $(r,\theta)$-coordinates as follows:
\begin{equation*}
  b(r\sin\theta)-r\cos \theta-\frac{|P_{3,L}^{f_n}|}{\cos \tw}=0\quad\tx{for $\frac{\pi}{2}+\tw<\theta<\pi$}.
\end{equation*}
By the property ($b_3$) stated in Definition \ref{definition-bluntbody-ftn}, we have
\begin{equation*}
\frac{\der}{\der r}\left(b(r\sin\theta)-r\cos \theta-\frac{|P_{3,L}^{f_n}|}{\cos \tw}\right)=b'(r\sin \theta)\sin \theta-\cos \theta>0\quad\tx{for $\frac{\pi}{2}+\tw\leq\theta\leq\pi$.}
\end{equation*}
So the implicit function theorem implies that there exists a unique $C^1$ function $b_n^{{\rm plr}}:[\frac{\pi}{2}+\tw,\pi]\rightarrow (0,\infty)$ such that
\begin{equation*}
  \Gam_{b,L}^{f_n}=\{r=b_n^{\rm plr}(\theta):\frac{\pi}{2}+\tw<\theta<\pi\}.
\end{equation*}

Similarly to $\Gam_{b,L}^{f_n}$, any point $\rx=(x_1,x_2)$ lying on $\Gam_{{\rm sh}, L}^{f_n}$ satisfies the equation $x_1=f_n(x_2)$ which can be rewritten in terms of $(r,\theta)$-coordinates as follows:
\begin{equation*}
  f_n(r\sin\theta)-r\cos\theta-\frac{|P_{3,L}^{f_n}|}{\cos \tw}=0 \quad\tx{for $\frac{\pi}{2}+\tw<\theta<\pi$}.
\end{equation*}
Since $\sin \theta>0$ and $\cot \theta<-\tan \tw$ for $\tw+\frac{\pi}{2}<\theta<\pi$, we obtain from \eqref{definition-iterset-shocks} and \eqref{shock-iterset-condtion1} that
\begin{equation*}
\begin{split}
  \frac{\der}{\der r}\left( f_n(r\sin\theta)-r\cos\theta-\frac{|P_{3,f_n}^L|}{\cos \tw}\right)
  &=f_n'(r\sin \theta)\sin \theta-\cos \theta\\
  &\ge \sin \theta(f_n'(r\sin \theta)+\tan\tw)\\
  &\ge \frac 34 \sin \theta \tan\tw>0\quad\quad\quad\quad\tx{on $\Gam_{{\rm sh}, L}^{f_n}$.}
  \end{split}
\end{equation*}
At $\theta=\pi$, we have
\begin{equation*}
\frac{\der}{\der r}\left( f_n(r\sin\theta)-r\cos\theta-\frac{|P_{3,f_n}^L|}{\cos \tw}\right)=1>0.
\end{equation*}
Therefore, by the implicit function theorem, there exists a unique $C^1$ function $f_n^{{\rm plr}}:[\frac{\pi}{2}+\tw,\pi]\rightarrow (0,\infty)$ such that
\begin{equation*}
  \Gam_{{\rm sh},L}^{f_n}=\{r=f_n^{\rm plr}(\theta):\tw+\frac{\pi}{2}<\theta<\pi\}.
\end{equation*}
It directly follows from \eqref{convergence-shocks} that $|P_{3, L}^{f_n}-P_{3, L}^{f_{\infty}}|$ converges to $0$ as $n$ tends to $\infty$, and this implies that
\begin{equation}
\label{convergence-C1-polar}
  \lim_{n\to \infty}
  \|b_n^{\rm plr}-b_{\infty}^{\rm plr}\|_{C^{1,\frac{\hat{\alp}}{2}}([\frac{\pi}{2}+\tw,\pi])}
  =0,\quad\tx{and}\quad
  \lim_{n\to \infty}
  \|f_n^{\rm plr}-f_{\infty}^{\rm plr}\|_{C^{1,\frac{\hat{\alp}}{2}}([\frac{\pi}{2}+\tw,\pi])}
  =0.
\end{equation}

Next, for $n\in \mathbb{N}\cup\{\infty\}$, we define two invertible mappings
\begin{equation*}
\begin{split}
  &\mcl{T}_n^{\rm plr}(r,\theta):=\left(\frac{f_{\infty}^{\rm plr}(\theta)-b_{\infty}^{\rm plr}(\theta)}{f_{n}^{\rm plr}(\theta)-b_{n}^{\rm plr}(\theta)}\left(r-b_n^{\rm plr}(\theta)\right)+b_{\infty}^{\rm plr}(\theta),\theta\right),\\
  &\mcl{P}_n(\rx):=\left(|\rx-Q_n|, \, \pi+\arctan\left(\frac{x_2}{x_1-\frac{|P_{3,L}^{f_n}|}{\sin \tw}}\right)\right),
  \end{split}
\end{equation*}
and, define a transformation mapping $\mcl{T}_n: \ol{\Om_{f_n,L}}\rightarrow \ol{\Om_{f_{\infty},L}}$ by
\begin{equation*}
  \mcl{T}_n(\rx):=\mcl{P}_{\infty}^{-1}\circ\mcl{T}_n^{\rm plr}\circ \mcl{P}_n(\rx).
\end{equation*}
Each $\mcl{T}_n:\Om_{f_n,L}\rightarrow \Om_{f_{\infty},L}$ is invertible, and \eqref{convergence-C1-polar} yields that
\begin{equation}
\label{convergence-C1-T}
  \lim_{n\to \infty} \|\mcl{T}_n-{\rm Id}\|_{C^{1,\frac{\hat{\alp}}{2}}(\ol{\Om_{f_n, L}})}
  =0,\quad\tx{and} \quad
  \lim_{n\to \infty} \|\mcl{T}^{-1}_n-{\rm Id}\|_{C^{1,\frac{\hat{\alp}}{2}}(\ol{\Om_{f_{\infty}, L}})}=0.
\end{equation}
Furthermore, $\{\mcl{T}_n^{-1}\}$ is uniformly bounded in $C^{2,\hat{\alp}}(K)$ for any compact subset $K$ of $\Om_{f_{\infty},L}$, and we have
\begin{equation}
\label{convergence-C2-T}
  \lim_{n\to \infty}\|\mcl{T}_n^{-1}-{\rm Id}\|_{C^{2,\frac{\hat{\alp}}{2}}(K)}=0.
\end{equation}
For each $n\in \mathbb{N}\cup\{\infty\}$, let us define
\begin{equation*}
  \hat{\psi}_n:=\psi^{f_n}\circ \mcl{T}_n^{-1},\quad
  \tx{and}\quad \hat{f}_n:=\mcl{H}(f_n)
\end{equation*}
for $\psi^{f_n}$ given by \eqref{definition-psi-update}.
The sequence $\{\hat{\psi}_n\}_{n\in \mathbb{N}}$ is bounded in $C^{1,\hat{\alp}}(\ol{\Om_{f_{\infty},L}})$, and bounded in $C^{2,\hat{\alp}}(K)$ for any compact subset $K$ of $\Om_{f_{\infty},L}$. Since each $\phi^{f_n}(=\psi^{f_n}-\psi_0)$ solves the nonlinear boundary value problem \eqref{fixed-nlbvp-bounded-phi} in $\Om_{f_n,L}$, it can be checked by using \eqref{convergence-C1-T}, \eqref{convergence-C2-T}, Arzel\`{a}-Ascoli theorem and the uniqueness of a classical solution to \eqref{fixed-nlbvp-bounded-phi} in $\Om_{f_{\infty},L}$ that the sequence $\{\hat{\psi}_n\}_{n\in \mathbb{N}}$ has a subsequence $\{\hat{\psi}_{n_j}\}$ satisfying that
\begin{equation*}
\lim_{n_j\to \infty}\|\hat{\psi}_{n_j}-\psi^{(f_{\infty})}\|
_{C^{1,\frac{\hat{\alp}}{2}}(\ol{\Om_{f_{\infty},L}})}=0,
\quad\tx{and}
\quad
\lim_{n_j\to \infty}\|\hat{\psi}_{n_j}-\psi^{f_{\infty}}\|
_{C^{2,\frac{\hat{\alp}}{2}}(K)}=0
\end{equation*}
for any compact subset $K$ of $\Om_{f_{\infty},L}$.
And, this implies that the sequence $\{\hat{f}_n\}$ has a subsequence $\{\hat{f}_{n_j}\}$ that converges to $\hat{f}_{\infty}$ in $C^{1,\frac{\hat{\alp}}{2}}([0,L])$.
Since the subsequence $\{\hat{f}_{n_j}\}$ is bounded in $C^{2,\hat{\alp}}_{(-\beta;-(1+\hat{\alp}),\{L\})}((0,L))$, one can further extract a subsequence $\{\hat{f}_{n_{j_k}}\}$ of $\{\hat{f}_{n_j}\}$ so that it converges to a function $\hat{f}_*\in \mcl{J}_{M_1,L}$ in $C^{2,\frac{\hat{\alp}}{2}}_{(-\beta;-(1+\frac{\hat{\alp}}{2}),\{L\})}((0,L))$.
From $C^{1,\frac{\hat{\alp}}{2}}$ convergence of $\{\hat f_{n_j}\}$ to $\hat{f}_{\infty}$, we get $\hat{f}_*=\hat{f}_{\infty}$. Furthermore, this shows that any subsequence of $\{\hat{f}_n\}$ contains a subsequence that converges to $\hat{f}_{\infty}$ in $C^{2,\frac{\hat{\alp}}{2}}_{(-\beta;-(1+\frac{\hat{\alp}}{2}),\{L\})}((0,L))$.
Therefore, we conclude that the iteration mapping $\mcl{H}:\mcl{J}_{M_1,L}\rightarrow \mcl{J}_{M_1,L}$ is continuous in $C^{2,\frac{\hat{\alp}}{2}}_{(-\beta;-(1+\frac{\hat{\alp}}{2}),\{L\})}((0,L))$.
\smallskip

{\textbf{3}.}
To complete the proof of Proposition \ref{proposition-wp-fbpbd}, it remains to fix a small constant $\eps_3\in(0, \eps_2]$, and a large constant $L_*\ge 1$ such that if $(\irho, \iu)\in D_{\infty}(\gam, B_0)$ satisfies $M_{\infty}=\frac{1}{\eps}\ge \frac{1}{\eps_3}$, and if $L\ge L_*$, then Condition \ref{condition2-for-parameters} (ii)--(v) are satisfied under the choices of $M_1$ and $M_2$ given by \eqref{choice-M1} and \eqref{choice-M2}, respectively.

Let $M_1$ and $M_2$ be given by \eqref{choice-M1} and \eqref{choice-M2}, respectively. For $\sigma_2, \hat{\sigma}$ from Lemmas \ref{lemma-distance-iterset}, \ref{lemma-estiamte-uR-3}, define
\begin{equation*}
\sigma_{\sharp}:=
\min\left\{
\frac{\sigma_2}{M_1+1},  \frac{\cot \tw}{4(M_1+1)}, \frac{\tan \tw}{4(M_1+1)}, \frac{\hat{\sigma}}{M_2}
\right\}.
\end{equation*}
By Lemma \ref{lemma-shock-polar2}, one can choose a small constant $\eps_{\sharp}\in(0, \eps_2]$ depending on $d_0$ so that if $\eps\in(0,\eps_{\sharp}]$, then
\begin{equation*}
  \mathfrak{q}_{\gam}(\eps)\le \sigma_{\sharp}.
\end{equation*}
For $\hat{\eps}$ from Lemma \ref{lemma-estiamte-uR-3}, we define
\begin{equation*}
  \eps_3:=\min\{\eps_{\sharp}, \hat{\eps}\}.
\end{equation*}
Finally, we choose $L_*$ as
\begin{equation*}
  L_*=H_0
\end{equation*}
for $H_0$ from Lemma \ref{lemma-estimate-phi-corner}. Under these choices of $(\eps_3, L_*)$, if $\eps\in(0, \eps_3]$ and $L\ge L_*$, then Condition \ref{condition2-for-parameters} (i)--(v) are all satisfied. Therefore all the arguments in the previous steps are valid so that if $\eps\in(0, \eps_3]$ and $L\ge L_*$, then, by Schauder fixed point theorem, the iteration mapping $\mcl{H}:\mcl{J}_{M_1,L}\rightarrow \mcl{J}_{M_1,L}$ has a fixed point $f_*\in \mcl{J}_{M_1,L}$ thus $(f_*, \psi^{f_*})$ solves Problem \ref{fbp-bdd-dmns} for $\psi^{f_*}$ given by \eqref{definition-psi-update}. By setting as $(\fshock, \psi):=(f_*, \psi^{f_*})$, the estimate \eqref{apriori-estimate-fbp-solution-bdd} holds with $\alp=\hat{\alp}$ due to Definition \ref{definition-of-shock-iterset} and Proposition \ref{wellposedness-lbvp-phi-cutoff}. Furthermore, since $f_*\in \mcl{J}_{M_1,L}$, it follows from Lemma \ref{lemma-distance-iterset}(a) that $f_*$ satisfies the estimate \eqref{uniform-lwrbd-detached-distance-bdd} with
$
\delta=\frac 34 d_0.
$
Finally, the subsonicity \eqref{subsonicity-fbp-bdd} can be directly checked by using Lemma \ref{lemma-coefficients}(b) and the fact that $\psi^{f_*}-\psi_0\in \mcl{I}_{M_2, L}^{f_*}$. The proof of Proposition \ref{proposition-wp-fbpbd} is completed.

\end{proof}

\section{The free boundary problem in $\R^2_+\setminus W_b^+$ (Problem \ref{problem-fbp-psi})}
\label{section-main-thm-pf}
\numberwithin{equation}{section}

Now we are ready to solve Problem \ref{problem-fbp-psi} by proving Theorem \ref{main-theorem-psi}, the main theorem of this paper. In proving Theorem \ref{main-theorem-psi}, the key ingredient is Proposition \ref{proposition-wp-fbpbd}, which is proved in the previous section.

\begin{proof}[Proof of Theorem \ref{main-theorem-psi}] The proof is given in four steps.

{\textbf{1.}} First of all, we fix $\bar{\eps}$ as
\begin{equation}\label{bar-eps-choice}
  \bar{\eps}=\eps_3
\end{equation}
for $\eps_3$ from Proposition \ref{proposition-wp-fbpbd}.
For the constant $L_*$ from Proposition \ref{proposition-wp-fbpbd}, set
\begin{equation*}
  L_n:=4(n+L_*)\quad\tx{for $n\in \mathbb{N}$}.
\end{equation*}
For each $n\in \mathbb{N}$,
Proposition \ref{proposition-wp-fbpbd} implies that if $(\irho, \iu)\in D_{\infty}(\gam, B_0)$ satisfies $\frac{1}{M_{\infty}}=\eps$ for $\eps\in(0, \eps_3]$, then Problem \ref{fbp-bdd-dmns} with $L=L_n$ has at least one solution $(f_n ,\psi_n)$ that satisfies the estimate
\begin{equation}
\label{uniform-estimate-seq}
 \|f_n-f_0\|_{2,\alp, (0, L_n)}^{(-\beta;-(1+\alp), \{L_n\})}+ \|\psi_n-\psi_0\|_{2,\alp, \Om_{f_n, L_n}}^{(-\beta;-(1+\alp),\{P_{2,L_n}^{f_n}\})}\le C\mathfrak{q}_{\gam}(\eps)
\end{equation}
for the constants $(C,\alp)$ from \eqref{apriori-estimate-fbp-solution-bdd}.
Since the constant $C$ in \eqref{uniform-estimate-seq} is the same for all $n\in \mathbb{N}$, by applying Arzel\`{a}-Ascoli theorem and the diagonal argument, one can extract a subsequence $\{f_{n_j}\}$ from $\{f_n\}$, and to find a function $f_*: \R_+\rightarrow \R$ so that
 \begin{equation}
 \label{subindex-limit}
  \lim_{n_j\to \infty} \|f_{n_j}-f_*\|_{2,\frac{{\alp}}{2}, (0, R)}=0 \quad{\tx{ for any constant $R>0$.}}
 \end{equation}
Furthermore, it follows from \eqref{free-bc} and \eqref{uniform-estimate-seq} that
\begin{equation}
\label{estimate-f-limiting}
  f_*(0)=b_0-d_0,\quad\tx{and}\quad \|f_*-f_0\|_{2,{\alp}, (0, L_n)}^{(-\beta;-(1+{\alp}), \{L_n\})}\le C\mathfrak{q}_{\gam}(\eps)\quad\tx{for all $n\in \mathbb{N}$}.
\end{equation}
\smallskip

{\textbf{2.}}
Fix $n\in \mathbb{N}$. For each $x_2\in(0, \frac{L_n}{2})$, we have
\begin{equation*}
\frac{\min\{|x_2-L_n|, 1+x_2\}}{1+x_2}\ge \frac 12.
\end{equation*}
Then it follows from Definitions \ref{definition-norms-w-decay} and \ref{definition-norms-f-decay-corner}  that
\begin{equation}
\label{Ln-estimate1}
\|f_*-f_0\|_{2,0, (0, \frac{L_n}{2})}^{(-\beta)}\le 2
\|f_*-f_0\|_{2,0, (0, L_n)}^{(-\beta;-(1+{\alp}), \{L_n\})}\le 2C\mathfrak{q}_{\gam}(\eps).
\end{equation}

For $x_2,x_2'\in (0, \frac{L_n}{2})$ with $0<x_2<x_2'<\frac{L_n}{2}$, set
\begin{equation*}
\begin{split}
&\Delta_{x_2,x_2'}^{(1)}:=\frac{\min\{|x_2-L_n|, |x_2'-L_n|, 1+x_2, 1+x_2'\}}{1+x_2'},\\
&a_1(x_2, x_2')  :=(1+x_2)^{2+\alp-\beta}
\Delta_{x_2,x_2'}^{(1)}\frac{|(f_*-f_0)''(x_2)-(f_*-f_0)''(x_2')|}{|x_2-x_2'|^{\alp}},\\
&a_2(x_2, x_2'):= (1+x_2)^{2+\alp-\beta}
\frac{|(f_*-f_0)''(x_2)-(f_*-f_0)''(x_2')|}{|x_2-x_2'|^{\alp}}.
\end{split}
\end{equation*}
If $|x_2-x_2'|\ge \frac 14(1+x_2')$, then we have
\begin{equation}
\label{estimate-adiff-1}
\begin{split}
  a_2(x_2, x_2')
  &\le 4^{\alp}\left((1+x_2)^{2-\beta}|(f_*-f_0)''(x_2)|
  +(1+x_2')^{2-\beta}|(f_*-f_0)''(x_2')|\right)\\
  &\le 2^{2\alp+1}\|f_*-f_0\|_{2,0, (0, \frac{L_n}{2})}^{(-\beta)}.
\end{split}
\end{equation}
Next, suppose that $|x_2-x_2'|< \frac 14(1+x_2')$. If $0<x_2<\frac{L_n}{10}$, then the assumption of $|x_2-x_2'|< \frac 14(1+x_2')$ implies that $0<x_2<x_2'<\frac{L_n}{4}$ from which we get
\begin{equation*}
\Delta_{x_2,x_2'}^{(1)}
=\frac{\min\{|x_2'-L_n|, 1+x_2\}}{1+x_2'}=
\frac{1+x_2}{1+x_2'}>\frac 34.
\end{equation*}
Then we have
\begin{equation}
\label{estimate-adiff-2}
a_2(x_2, x_2')=a_1(x_2, x_2')(\Delta_{x_2,x_2'}^{(1)})^{-1}
\le \frac 43\|f_*-f_0\|_{2,\alp, (0, L_n)}^{(-\beta, -(1+\alp), \{L_n\})}.
\end{equation}
Under the assumption of $|x_2-x_2'|< \frac 14(1+x_2')$, if $\frac{L_n}{10}\le x_2<x_2'\le \frac{L_n}{2}$, then a direct computation yields that
\begin{equation*}
\Delta_{x_2,x_2'}^{(1)}\ge
\frac{\min\{\frac{L_n}{2}, 1+\frac{L_n}{10}\}}{1+\frac{L_n}{2}}\ge \frac{1}{10}
\end{equation*}
so we get
\begin{equation}
\label{estimate-adiff-3}
a_2(x_2, x_2')=a_1(x_2, x_2')(\Delta_{x_2,x_2'}^{(1)})^{-1}
\le 10\|f_*-f_0\|_{2,\alp, (0, L_n)}^{(-\beta, -(1+\alp), \{L_n\})}.
\end{equation}
So we obtain from \eqref{estimate-f-limiting}--\eqref{estimate-adiff-3} that
\begin{equation}
\label{estimate-fstar-global}
\|f_*-f_0\|_{2,{\alp}, \R^+}^{(-\beta)}\le 10 C\mathfrak{q}_{\gam}(\eps)
\end{equation}
for the constants $(C,\alp)$ from \eqref{apriori-estimate-fbp-solution-bdd}.
For later use in the next step, we also point out that a simple adjustment of the previous argument yields the estimate
\begin{equation}\label{estimate-fn-global}
  \|f_n-f_0\|_{2,{\alp}, (0, \frac{L_n}{2})}^{(-\beta)}\le 10 C\mathfrak{q}_{\gam}(\eps)\quad\tx{for all $n\in \mathbb{N}$}.
\end{equation}
\smallskip

{\textbf{3.}} For each $n$, we remind readers that the point  $P_{3,L_n}^{f_n}$ is given by \eqref{definition-P3L}, Let us set
\begin{equation*}
 x_2^{(n)}:= {\tx{the $x_2$-coordinate of $P_{3,L_n}^{f_n}$}}.
\end{equation*}
A direct computation with using \eqref{definition-P3L}, \eqref{estimate-f-iterset} and \eqref{shock-iterset-condtion1} yields that
\begin{equation*}
\frac{(b_0-d_0+\frac 34 L_n\kappa_w)\kappa_w}{1+\kappa_w^2}\le  x_2^{(n)}\le \frac{(b_0-d_0+\frac 54 L_n \kappa_w)\kappa_w}{1+\kappa_w^2}.
\end{equation*}
Therefore, there exists a sufficiently large constant $N^*\in\mathbb{N}$ so that if $n\ge N^*$, then
\begin{equation}
\label{limit-of-eta-n}
\frac{\sin ^2\tw}{2}L_n\le  x_2^{(n)} \le \frac{3\sin ^2\tw}{2}L_n.
\end{equation}
For each $n\ge N^*$, we set
$\eta_n:=\frac{\sin ^2\tw}{2}L_n$,
and define a transformation mapping $T_n: \Om_{f_n,L_n}\rightarrow \R^2$ by
\begin{equation}
\label{definition-Tn-limiting}
  T_n(x_1,x_2)=\left(\frac{f_*(x_2)-b(x_2)}{f_n(x_2)-b(x_2)}(x_1-b(x_2))+b(x_2), x_2\right)=:(y_1,y_2).
\end{equation}
Each $T_n$ maps the set $\displaystyle{\{\rx\in \Om_{f_n,L_n}:x_2<\eta_n\}}$ onto the domain $\Om_{f_*}^{(n)}$ given by
\begin{equation*}
  \Om_{f_*}^{(n)}:=\{{\rm y}=(y_1,y_2)\in \R^2: f_*(y_2)<y_1<b(y_2),\,\,0<y_2<\eta_n\}.
\end{equation*}
For each $n\in \mathbb{N}$, we also have
\begin{equation*}
\begin{split}
  &T_n(\Gam_{{\rm sh}, L_n}^{f_n}\cap\{x_2<\eta_n\})=\{(f_*(y_2),y_2): 0<y_2<\eta_n\}=:\Gam_{{\rm sh}, n}^{f_*},\\
  &T_n(\Gam_{b, L_n}^{f_n}\cap\{x_2<\eta_n\})=\{(b(y_2),y_2): 0<y_2<\eta_n\}=:\Gam_{b,n}^{f_*}.
  \end{split}
\end{equation*}

By using Definition \ref{definition-bluntbody-ftn}, \eqref{definition-background}, \eqref{nonzero-detached-dist}, \eqref{subindex-limit}, \eqref{estimate-fstar-global} and \eqref{estimate-fn-global}, one can directly check the following properties:
\begin{itemize}
\item[(i)] $T_n$ is invertible in $\Om_{f_n, L_n}\cap\{x_2<\eta_n\}$;

\item[(ii)] There exists a constant $C>0$ depending only on $(\gam, B_0, d_0)$ such that
    \begin{equation*}
      \begin{split}
      &\|D_{\rm y}T_n^{-1}\|_{C^0(\ol{\Om_{f_*}^{(n)}})}\le C,\\
      &\sup_{{\rm y}=(y_1,y_2)\in \Om_{f_*}^{(n)}}|(1+y_2)^{2-\beta}D^2_{\rm y}T_n^{-1}({\rm y})|\le C,\\
      &\sup_{{\rm y}\neq {\rm y}'\in \Om_{f_*}^{(n)}}
      \left(1+\min\{y_2, y_2'\}\right)^{2+{\alp}-\beta}
      \frac{|D^2_{\rm y}T_n^{-1}({\rm y})-D^2_{\rm y}T_n^{-1}({\rm y}')|}{|{\rm y}-{\rm y}'|^{{\alp}}}\le C;
      \end{split}
    \end{equation*}
\item[(iii)]
\begin{equation*}
  \Om_{f_*}^{(N^*)}\subset \Om_{f_*}^{(N^*+1)} \subset \Om_{f_*}^{(N^*+2)}\subset \cdots \subset \Om_{f_*}^{(\infty)}:=\bigcup_{n=N^*}^{\infty} \Om_{f_*}^{(n)},
\end{equation*}
and the set $\Om_{f_*}^{(\infty)}$ is represented as
\begin{equation*}
    \Om_{f_*}^{(\infty)}=\{{\rm y}=(y_1,y_2)\in \R^2: f_*(y_2)<y_1<b(y_2),\,\,y_2>0\}.
\end{equation*}
For any compact subset $K$ of $\ol{\Om_{f_*}^{(\infty)}}$, we have
\begin{equation}
\label{limit-transformation}
  \lim_{n_j\to \infty}\|T_{n_j}^{-1}-{\rm Id}\|_{C^{2,\frac{{\alp}}{2}}(K)}=0
\end{equation}
where the sequence of subindex $\{n_j\}$ is same as the one from \eqref{subindex-limit}.
\end{itemize}
\smallskip

{\textbf{4.}} For each $n\ge N^*$, let us define a function $ \Psi_n:\Om_{f_*}^{(n)}\rightarrow \R$ by
\begin{equation*}
  \Psi_n({\rm y}):=(\psi_n-\psi_0)\circ T_n^{-1}({\rm y})+\psi_0({\rm y})
\end{equation*}
for the function $\psi_0$ given by \eqref{definition-background}.
By using \eqref{uniform-estimate-seq} and the two properties (i) and (ii) of $\{T_n\}_{n\ge N^*}$ stated in the previous step, one can find a constant $C>0$ depending only on $(\gam, B_0, d_0)$ so that, for each $n\ge N^*$,  $\Psi_n$ satisfies the estimate
\begin{equation}
\label{estimate-Psi-n}
  \|\Psi_n-\psi_0\|_{2,{\alp}, \Om_{f_*}^{(n)}}^{(-\beta)}\le C \mathfrak{q}_{\gam}(\eps).
\end{equation}
Therefore one can extract a subsequence $\{\Psi_{n_{j_l}}\}$ of $\{\Psi_{n_j}\}$ by using the Arzel\`{a}-Ascoli theorem and the diagonal argument, and find a function $\psi_*:\Om_{f_*}^{(\infty)}\rightarrow \R$ so that
\begin{equation}
\label{limit-Psi-nl}
  \lim_{n_l\to \infty}\|\Psi_{n_l}-\psi_*\|_{2,\frac{{\alp}}{2},K}=0\,\,\tx{for any compact subset $K$ of $\ol{\Om_{f_*}^{(\infty)}}$},
  \end{equation}
where the subsequence $\{\Psi_{n_j}\}$ is taken for the subindex sequence $\{n_j\}$ from \eqref{limit-transformation}.

And, it directly follows from \eqref{estimate-Psi-n} and \eqref{limit-Psi-nl} that
\begin{equation}
  \label{estimate-final-sol}
  \|\psi_*-\psi_0\|_{2,\hat{\alp}, \Om_{f_*}^{(\infty)}}^{(-\beta)}\le  C\mathfrak{q}_{\gam}(\eps).
  \end{equation}

It remains to check that $(f_*, \psi_*)$ solves Problem \ref{problem-fbp-psi}, and that it satisfies \eqref{shock-IC}--\eqref{apriori-estimate-fbp-solution} stated in Theorem \ref{main-theorem-psi}.
Since each $(f_n, \psi_n)$ solves
Problem \ref{fbp-bdd-dmns}, it follows from \eqref{subindex-limit}, \eqref{limit-transformation} and \eqref{limit-Psi-nl} that $(f_*, \psi_*)$ solves Problem \ref{problem-fbp-psi}.
Next, we check that   $(f_*, \psi_*)$ satisfies \eqref{shock-IC}--\eqref{apriori-estimate-fbp-solution}.
\begin{itemize}
\item[(i)]
It is already shown in \eqref{estimate-f-limiting} that $f_*$ satisfies \eqref{shock-IC};

\item[(ii)]
\eqref{estimate-final-sol} implies that
$\displaystyle{  \lim_{{y_2\to \infty}\atop{{\rm y}\in \Om_{f_*}^{(\infty)} }}|D_{\rm y}\psi_*(\rm y)-D_{\rm y}\psi_0|=0.}$
 This shows that $\psi_*$ satisfies  \eqref{Asymp-BCs-psi};

\item[(iii)]
For each $x_2\ge 0$,  it follows from  \eqref{uniform-lwrbd-detached-distance-bdd} that
\begin{equation*}
b(x_2)-f_*(x_2)=\lim_{n_j\to \infty} b(x_2)-f_{n_j}(x_2)\ge \frac 34d_0.
\end{equation*}
This shows that $f_*$ satisfies \eqref{uniform-lwrbd-detached-distance}  with choosing $\delta$ as $\delta=\frac 34 d_0$;

\item[(iv)]
It directly follows from \eqref{estimate-fstar-global} and \eqref{estimate-final-sol} that $(f_*, \psi_*)$ satisfies the estimate \eqref{apriori-estimate-fbp-solution} for $\hat{\alp}$ given by \eqref{choice-of-alpha}.
\end{itemize}

Therefore, the proof of Theorem \ref{main-theorem-psi} is completed.

\end{proof}

\section{Convexity of detached shocks (Proof of Theorem \ref{main-theorem-physical-var}($b$))}
\label{section-convexity}

For fixed constants $\gam>1$, $B_0>0$, $d_0>0$, and $\beta\in(0,1)$, let $(\eps_3, L_*)$ be from Proposition \ref{proposition-wp-fbpbd}. And, suppose that $(\irho, \iu)\in D_{\infty}(\gam, B_0)$ satisfies $M_{\infty}(=\frac{\iu}{\irho^{(\gam-1)/2}})=\frac{1}{\eps}$ for $\eps\in(0, \eps_3]$. As in the proof of Theorem \ref{main-theorem-psi} given in the previous section, let us set
\begin{equation*}
  L_n:=4(n+L_*)\quad\tx{for $n\in \mathbb{N}$}.
\end{equation*}
For each $n\in \mathbb{N}$, it follows from Proposition \ref{proposition-wp-fbpbd} that Problem  \ref{fbp-bdd-dmns} with $L=L_n$ has a solution $(f_{\rm sh}^{(n)}, \psi^{(n)})$ that satisfies the estimates \eqref{uniform-lwrbd-detached-distance-bdd} and \eqref{apriori-estimate-fbp-solution-bdd}.

\begin{proposition}\label{prop-convex}
There exists a constant $\hat{\eps}\in(0, \eps_3]$ depending only on $(\gam, B_0, d_0)$ so that if $M_{\infty}(=\frac{1}{\eps})\ge \frac{1}{\hat{\eps}}$, then we have
\begin{equation*}
  \frac{d^2}{dx_2^2}f_{\rm{sh}}^{(n)}(x_2)\ge 0
\end{equation*}
for  $0< x_2< L_n$.
\end{proposition}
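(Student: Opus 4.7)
The plan is to adapt the framework of \cite{CFX} to the present detached shock setting, compensating for the failure of condition (A6) there by bringing in maximum principles for the speed $|\mathbf{u}|$, the horizontal velocity $u_1$ and the vertical velocity $u_2$. Concretely, I would introduce a scalar quantity $w$ on $\Om_{\fshock^{(n)}, L_n}$, built out of first derivatives of $(u_1,u_2)$ evaluated along the normal and tangent of $\shock_{,L_n}^{\fshock^{(n)}}$, such that a sign for $w$ on the free boundary is equivalent to $(\fshock^{(n)})''(x_2)\ge 0$. A natural candidate is a suitable combination expressing $\der_{\bm\tau}\vartheta$, where $\vartheta$ is the flow-angle function $\mathbf{u}=|\mathbf{u}|(\cos\vartheta,\sin\vartheta)$ of the downstream state, rescaled by a positive factor that remains bounded away from $0$ thanks to the uniform subsonicity \eqref{subsonicity-fbp-bdd} and the estimate \eqref{estimate-bgd}.

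First I would differentiate the Rankine-Hugoniot relations at the shock to obtain an identity of the form
\begin{equation*}
\mathfrak{A}\bigl(\rho,\mathbf{u},s^\eps_{\rm st};\eps\bigr)\,(\fshock^{(n)})''(x_2)
=\mathfrak{B}\bigl(\rho,\mathbf{u}\bigr)\,w\bigl(\fshock^{(n)}(x_2),x_2\bigr),
\end{equation*}
where $\mathfrak{A},\mathfrak{B}$ are explicit smooth functions that are positive and bounded away from $0$ uniformly on $\ol{\Om_{\fshock^{(n)},L_n}}$ provided $\eps$ is small enough; this uses Lemma \ref{lemma-shock-polar2} together with \eqref{apriori-estimate-fbp-solution-bdd} and \eqref{subsonicity-fbp-bdd}. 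Thus $(\fshock^{(n)})''\ge 0$ on $[0,L_n]$ is reduced to $w\ge 0$ on $\shock_{,L_n}^{\fshock^{(n)}}$.

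Second I would check that $w$ satisfies a linear homogeneous elliptic equation $\mcl{L}^*w = \mcl{C}^*\,w$ in $\Om_{\fshock^{(n)},L_n}$ with $\mcl{L}^*$ uniformly elliptic and the zeroth-order coefficient $\mcl{C}^*$ of controlled sign. This equation is obtained by differentiating the non-divergence form \eqref{equation-psi-nondiv} (equivalently, by differentiating the system \eqref{E-system}) and using the irrotationality $\der_{x_1}u_2-\der_{x_2}u_1=0$ together with Bernoulli's law to rewrite $\nabla\rho$ in terms of $\nabla|\mathbf{u}|^2$. The failure of (A6) prevents a direct sign conclusion for $\mcl{C}^*$; this is remedied by the maximum principles for $|\mathbf{u}|$, $u_1$ and $u_2$ (all of which solve elliptic equations with controlled boundary data coming from Lemma \ref{lemma-shock-polar2}, \eqref{apriori-estimate-fbp-solution-bdd}), which yield pointwise two-sided bounds of the form $|\mathbf{u}-\mathbf{u}^{\eps}_{\rm st}|=O(\mathfrak{q}_\gam(\eps))$ and hence smallness of the problematic terms in $\mcl{C}^*$ for $\eps$ small.

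Third I would verify the boundary sign conditions for $w$:
\begin{itemize}
\item On $\Gam_{b,L_n}^{\fshock^{(n)}}$: the slip condition $\mathbf{u}\cdot{\bf n}_b=0$ combined with differentiation along $\Gam_b$ gives $\der_{\bm\tau}\vartheta=-\kappa_b|\mathbf{u}|$ where $\kappa_b$ is the curvature of $\Gam_b$; property $(b_4)$ (convexity of $W_b$) yields $\kappa_b\ge 0$, hence $w$ has the required sign on the wall. This is the step where convexity of the blunt body is crucial.
\item On $\Gam_{\rm sym}$: since $u_2=0$ and $\der_{x_2}u_1=0$ by symmetry, $w$ vanishes or has definite sign.
\item On $\Gam_{{\rm cutoff},L_n}^{\fshock^{(n)}}$: the slip condition $\nabla\psi\cdot\mathbf{n}_c=0$ together with the perpendicularity of $\Gam_{\rm cutoff}$ to $\Gam_b$ at $P_{3,L_n}^{\fshock^{(n)}}$ allows either an even reflection of $\psi$ across $\Gam_{\rm cutoff}$ (as used in Lemma \ref{lemma-estimate-nr-sym}) or a direct computation showing $\der_{{\bf n}_c}w$ has a definite sign.
\item At the corner $P_{3,L_n}^{\fshock^{(n)}}$ and at $(b_0,0)$: the weighted estimates in \eqref{apriori-estimate-fbp-solution-bdd} together with the asymptotic closeness of $(\rho,\mathbf{u})$ to $(\rho^\eps_{\rm st},\mathbf{u}^\eps_{\rm st})$ give a direct bound.
\end{itemize}
An application of the maximum principle and the Hopf lemma then yields $w\ge 0$ throughout $\ol{\Om_{\fshock^{(n)},L_n}}$, in particular on $\shock_{,L_n}^{\fshock^{(n)}}$, which is the desired convexity.

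The main obstacles I expect are, first, obtaining the sign of $\mcl{C}^*$ in the equation for $w$ without condition (A6): this is where a careful combination of the three maximum principles (for $|\mathbf{u}|$, $u_1$, $u_2$) together with the smallness of $\mathfrak{q}_\gam(\eps)$ must be used to dominate the indefinite terms, and this is the reason a new $\hat{\eps}\le \eps_3$ must be chosen. Second, the cut-off boundary $\Gam_{\rm cutoff}$ is artificial and has no physical monotonicity built in; the perpendicularity to $\Gam_b$ (Definition \ref{definition-bounded-domain}) together with the even reflection is essential, and the argument must be compatible with the $L$-independent constants so that the resulting bound survives the limit $L_n\to\infty$ to eventually give convexity of $\fshock$ itself.
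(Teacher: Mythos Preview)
Your proposal takes a genuinely different route from the paper, and as written it has a gap.

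\medskip
\textbf{What the paper actually does.} The paper does \emph{not} run a maximum principle on a curvature-type quantity $w$. Instead it proves that the downstream speed $q=|\mathbf{u}|$ is monotone along $\shock$ and then converts this into convexity by a purely algebraic shock-polar computation. The chain of lemmas is: (i) $u_1>0$ in $\ol{\Om}\setminus\{P_0\}$ and (ii) $u_2>0$ in $\ol{\Om}\setminus\ol{\Gam_{\rm sym}}$ (these are strict sign results, not the $O(\mathfrak{q}_\gam(\eps))$ smallness you invoke); (iii) $q$ cannot have a local extremum on $\Om\cup\ol{\shock}\cup\ol{\Gam_{\rm cutoff}}$ (an oblique boundary condition for $Q=\ln q$ is derived on each piece, including $\shock$, and the corners are handled via \cite{Lieberman}); (iv) a chain-of-balls contradiction argument then forces $q$ to be monotone along $\shock$ --- this is the step that replaces condition (A6) of \cite{CFX}, and it is here that the convexity $(b_4)$ of $W_b$ enters, through the sign of $\nabla Q\cdot{\bf n}_s$ on the curved part of $\Gam_b$. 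Finally, writing the shock tangent angle $\beta$ in terms of $q$ via the Rankine--Hugoniot relations, one computes $\frac{d}{dq}\sin^2\beta<0$ on ${\rm int}\,\shock$ by an explicit calculation with $\mcl{F}(\eta)=\frac{\gam+1}{\gam-1}\eta^2-\frac{2}{\gam-1}\eta^{\gam+1}-1$; since $\fshock'=\cot\beta$, the chain rule and the monotonicity of $q$ give $\fshock''\ge 0$.

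\medskip
\textbf{The gap in your proposal.} You reduce convexity to $w\ge 0$ on $\shock$ and then propose to obtain $w\ge 0$ in $\ol{\Om}$ by the maximum principle. But your list of ``boundary sign conditions'' covers $\Gam_b$, $\Gam_{\rm sym}$, $\Gam_{\rm cutoff}$ and the corners, and omits $\shock$ itself --- which is exactly the boundary piece on which you are trying to conclude the sign. In the \cite{CFX} framework this is handled by deriving an \emph{oblique} boundary condition for the curvature-type quantity on the shock from the differentiated Rankine--Hugoniot relations; you do differentiate RH once to get $(\fshock^{(n)})''=\mathfrak{B}w/\mathfrak{A}$, but that is a relation between two unknown signs, not a boundary condition usable in a maximum principle. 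Without that ingredient your argument is circular. Separately, making the zeroth-order coefficient $\mcl{C}^*$ merely $O(\mathfrak{q}_\gam(\eps))$-small does not give it a sign, so the maximum principle for $\mcl{L}^*w=\mcl{C}^*w$ is not directly available; you would need a further device (e.g.\ a multiplicative weight or a comparison function) that you have not specified. The paper sidesteps both issues by working with $q$ rather than with $w$.
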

Once we prove Proposition \ref{prop-convex}, the following result is directly obtained from passing to the limit $n\to \infty$.
\begin{theorem}
  \label{theorem-convexity-unbounded}
  There exists a constant $\hat{\eps}\in(0, \eps_3]$ depending only on $(\gam, B_0, d_0)$ so that if $M_{\infty}(=\frac{1}{\eps})\ge \frac{1}{\hat{\eps}}$, then the solution $(\fshock, \psi)$ to Problem \ref{problem-fbp-psi}, constructed in the proof of Theorem \ref{main-theorem-psi} in \S \ref{section-main-thm-pf} satisfies
  \begin{equation*}
    \frac{d^2}{dx_2^2}f_{\rm{sh}}(x_2)\ge 0\quad\tx{for  $x_2> 0$.}
  \end{equation*}
\end{theorem}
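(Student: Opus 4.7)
The plan is to deduce Theorem \ref{theorem-convexity-unbounded} from Proposition \ref{prop-convex} by carefully redoing the limiting construction in the proof of Theorem \ref{main-theorem-psi} in \S \ref{section-main-thm-pf}, but this time tracking the convexity of the approximate free boundaries. First I would choose $\hat{\eps}\in(0,\eps_3]$ to be exactly the constant provided by Proposition \ref{prop-convex}, and assume $M_\infty=1/\eps$ with $\eps\in(0,\hat{\eps}]$. With this $\hat{\eps}$, every cut-off solution $(f_{\rm sh}^{(n)},\psi^{(n)})$ to Problem \ref{fbp-bdd-dmns} obtained in Proposition \ref{proposition-wp-fbpbd} for $L=L_n=4(n+L_*)$ satisfies
\begin{equation*}
\frac{d^2}{dx_2^2}f_{\rm sh}^{(n)}(x_2)\ge 0\quad\tx{for all }x_2\in(0,L_n),
\end{equation*}
by Proposition \ref{prop-convex}.

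Next I would reuse the compactness argument carried out in Step 1 of the proof of Theorem \ref{main-theorem-psi}. There, from the uniform estimate \eqref{uniform-estimate-seq}, a subsequence $\{f_{n_j}\}$ of $\{f_{\rm sh}^{(n)}\}$ was extracted so that
\begin{equation*}
\lim_{n_j\to\infty}\|f_{n_j}-f_*\|_{2,\hat{\alp}/2,(0,R)}=0\quad\tx{for every fixed }R>0,
\end{equation*}
where $f_*=\fshock$ is the detached shock function produced by Theorem \ref{main-theorem-psi}. In particular, $(f_{n_j})''(x_2)\to f_*''(x_2)$ pointwise on $(0,\infty)$. Since the nonnegativity $f_{n_j}''\ge 0$ is preserved under pointwise convergence, I conclude that $f_*''(x_2)\ge 0$ for every $x_2>0$, which is the desired convexity.

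The only delicate point is to be sure that the selection of the approximating sequence is compatible with the hypotheses of Proposition \ref{prop-convex}. This is automatic: the proof of Theorem \ref{main-theorem-psi} selects, for each $n$, \emph{some} solution to Problem \ref{fbp-bdd-dmns} with $L=L_n$ provided by Proposition \ref{proposition-wp-fbpbd}, and Proposition \ref{prop-convex} applies uniformly to \emph{every} such solution once $\eps\le\hat{\eps}$. Thus by simply imposing the stronger smallness $\eps\le\hat{\eps}$ throughout the construction of \S \ref{section-main-thm-pf}, the resulting limit shock $\fshock$ inherits convexity from each $f_{\rm sh}^{(n)}$. Hence no new analytic work is required beyond Proposition \ref{prop-convex} itself; the only step is the routine verification that pointwise convergence of the second derivatives along the chosen subsequence transfers the inequality $f_{\rm sh}^{(n)\,\prime\prime}\ge 0$ to the limit $\fshock''\ge 0$, completing the proof.
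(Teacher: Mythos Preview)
Your proposal is correct and matches the paper's approach exactly: the paper states that Theorem \ref{theorem-convexity-unbounded} ``is directly obtained from passing to the limit $n\to\infty$'' once Proposition \ref{prop-convex} is established, and you have simply written out that limiting argument in detail using the $C^{2,\hat{\alp}/2}$ convergence of the subsequence $\{f_{n_j}\}$ from Step 1 of the proof of Theorem \ref{main-theorem-psi}.
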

Once Theorem \ref{theorem-convexity-unbounded} is proved, then the proof of Theorem \ref{main-theorem-physical-var}($b$) is given by adapting the proof of Theorem \ref{main-theorem-physical-var}($a$), which is given at the end of \S \ref{section-stream-function}.
Therefore, the rest of this section is devoted to proving Proposition \ref{prop-convex}.

For the rest of the section, we assume that $n\in \mathbb{N}$ is fixed. And, let $(f_{\rm sh}, \psi, \Om, \Gam_{{\rm{sh}}}, \Gam_{b}, \Gam_{{\rm cutoff}})$ denote $(f_{\rm sh}^{(n)}, \psi^{(n)},\Om_{f_{\rm sh}^{(n)},L_n}, \Gam_{{\rm{sh}},L_n}^{f_{\rm sh}^{(n)}}, \Gam_{b,L_n}^{f_{\rm sh}^{(n)}}, \Gam_{{\rm cutoff}, L_n}^{f_{\rm sh}^{(n)}})$.  Also, let $(P_{2,L}^f, P_{3,L}^f, L)$ denote $(P_{2,L_n}^{f_{\rm sh}^{(n)}}, P_{3,L_n}^{f_{\rm sh}^{(n)}}, L_n)$. Here,  $(\Om_{f_{\rm sh}^L,L}, \Gam_{{\rm{sh}},L}^{f_{\rm sh}^L}, \Gam_{b,L}^{f_{\rm sh}^L}, \Gam_{{\rm cutoff}, L}^{f_{\rm sh}^L})$ and $(P_{2,L}^{f_{\rm sh}}, P_{3,L}^{f_{\rm sh}})$ are given by Definition \ref{definition-bounded-domain}.

For ${\rx}\in \Om$, let us set
\begin{equation}
\label{definition-phisical-var-bdd}
\begin{split}
& \rho(\rx):=\hat{\rho}(|\nabla\psi(\rx)|^2),\quad
{\bf u}(\rx)=(u_1, u_2)(\rx):=\frac{(\der_{x_2}\psi, -\der_{x_1}\psi)(\rx)}{\rho(\rx)},\\
&c(\rx):={\rho^{(\gam-1)/2}(\rx)}.
\end{split}
\end{equation}
for $\hat{\rho}$ from Lemma \ref{lemma-rho-expression}.

Lemmas \ref{lemma-velocity1-positivity}
--\ref{lemma-q-monotonicity} stated and proved in the following are essential to prove Proposition \ref{prop-convex}. And, a proof of Proposition \ref{prop-convex} is given at the end of this section.

\begin{lemma}
\label{lemma-velocity1-positivity}
For $P_0$ given by \eqref{definition-P0}, $u_1$ satisfies
\begin{equation*}
u_1({\rx})>0\quad\tx{in $\ol{\Om}\setminus \{P_0\}$}\quad\tx{and}\quad u_1(P_0)=0.
\end{equation*}

\begin{proof} The proof is divided into three steps.

{\textbf{1.}}
As shown in the proof of Theorem \ref{main-theorem-physical-var}(a) in \S 3, the boundary condition $\psi=\ipsi$ on $\Gam_{{\rm sh}}$ stated in \eqref{nlbvp-in-fbp-bdd}, and the free boundary condition \eqref{free-bc} imply that
\begin{equation}\label{4.1}
(\rho{\bf u}-\irho{\bf u}_{\infty})\cdot ({\bf u}-{\bf u}_{\infty})
=(\rho u_1-\rho_{\infty}u_{\infty})(u_1-u_{\infty})+\rho u_2^2=0\quad\tx{on $\ol{\Gam_{{\rm sh}}}$}
\end{equation}
for ${\bf u}_{\infty}=(u_{\infty},0)$.
By using Lemma \ref{lemma-rho-expression} and the estimate \eqref{apriori-estimate-fbp-solution-bdd},
one can directly check that
\begin{align}
\label{estimate-vel-bdd}
&\rho>\rho_{\rm sonic}(=\left(\frac{2(\gam-1)B_0}{\gam+1}\right)^{\frac{1}{\gam-1}}).
\end{align}
Since $\rho_{\rm sonic}>\irho$, \eqref{estimate-vel-bdd} implies that
\begin{equation}
\label{rho-relation}
 \rho> \irho>0\quad\tx{in $\ol{\Om}$}.
\end{equation}
Since $(\rho, {\bf u})$ satisfies \eqref{RH-potential} on $\ol{\shock}$ with $(\rho^+, {\bf u}^+)=(\rho, {\bf u})$, and $(\rho^-, {\bf u}^-)=(\irho, {\bf u}_{\infty})$, \eqref{rho-relation} implies that $u_1\le |{\bf u}|<\iu$ on $\ol{\shock}$. Then we obtain from \eqref{4.1} that $\displaystyle{\rho u_1-\irho\ui=-\frac{\rho u_2^2}{u_1-\ui}\ge 0}$ on $\ol{\Gam_{{\rm sh}}}$, from which it follows that
\begin{equation}\label{5.3x}
u_1>0\qquad\mbox{on }\ol{\shock}.
\end{equation}
\smallskip

{\textbf{2.}}
If we show that $\psi_{x_2}>0$ in $\Om$, then it follows from \eqref{definition-phisical-var-bdd} and \eqref{estimate-vel-bdd} that $u_1>0$ in $\Om$.

Let us set
\begin{equation*}
  w:=\psi_{x_2}.
\end{equation*}
By bootstrap arguments and standard Schauder estimate theory, it can be checked that $\psi$ is $C^{\infty}$ in $\Om\cup {\rm int} \Gam_{{\rm{cutoff}}}$.
For $\rho$ given by \eqref{definition-phisical-var-bdd}, $\psi$ satisfies the Bernoulli's law $G(\rho, |\nabla\psi|^2)=B_0$ for $G$ given by \eqref{E-system-str-form} in $\ol{\Om}$.
We differentiate $G(\rho, |\nabla\psi|^2)=B_0$ with respect to $x_2$ in $\Om$ to obtain that
\begin{equation}
\label{rho-deriv-expression}
\partial_{x_2}\rho
=\frac{u_2w_{x_1}-u_1w_{x_2}}{c^2-|{\bf u}|^2}.
\end{equation}
Next, we differentiate Eq. 
\eqref{equation-psi-nondiv}, and use \eqref{rho-deriv-expression} to get
\begin{equation}
\label{4.4a-pre}
\begin{split}
&(c^2-u_1^2)w_{x_1x_1}-2u_1u_2w_{x_1x_2}+(c^2-u_2^2)w_{x_2x_2}
+\frac{A_1\psi_{x_1x_1}+2A_2w_{x_1}+A_3w_{x_2}}{\rho(c^2-|{\bf u}|^2)}=0
\end{split}
\end{equation}
for
\begin{equation*}
  \begin{split}
  &A_1=u_2((\gam-1)c^2+2u_1^2)w_{x_1}-u_1((\gam-1)c^2+2(c^2-u_2^2))w_{x_2},\\
  &A_2=u_1(c^2-u_1^2+u_2^2)w_{x_1}-u_2(c^2+u_1^2-u_2^2)w_{x_2},\\
  &A_3=u_2((\gam-1)c^2-2(c^2-u_1^2))w_{x_1}-u_1((\gam-1)c^2-2u_2^2)w_{x_2}.
  \end{split}
\end{equation*}
Due to \eqref{subsonicity-fbp-bdd}, we can rewrite Eq. \eqref{equation-psi-nondiv} as
\begin{equation}
\label{psi-x1x1-w}
  \psi_{x_1x_1}=\frac{2u_1u_2 w_{x_1}-(c^2-u_2^2)w_{x_2}}{c^2-u_1^2}.
\end{equation}
By substituting this expression into \eqref{4.4a-pre}, we obtain an equation for $w$ in the form of
\begin{equation}\label{4.4a}
  (c^2-u_1^2)w_{x_1x_1}-2u_1u_2w_{x_1x_2}+(c^2-u_2^2)w_{x_2x_2}
  +a_1w_{x_1}+a_2w_{x_2}=0\quad\tx{in $\Om$}
\end{equation}
for $a_{1,2}=a_{1,2}(\nabla\psi, w_{x_1}, w_{x_2})$. It directly follows from \eqref{subsonicity-fbp-bdd} that Eq. \eqref{4.4a} is uniformly elliptic in $\Om$. Therefore, the minimum of $w$ over $\ol{\Om}$ is acquired on $\der \Om$.
\smallskip

{\textbf{3.}}
The equation ${\rm{div}}\left(\frac{\nabla\psi}{\hat{\rho}(|\nabla\psi|^2)}\right)=0$ is uniformly elliptic in $\Om$ due to \eqref{subsonicity-fbp-bdd}, thus it follows from \eqref{nlbvp-in-fbp-bdd}, the minimum principle and Hopf's lemma that $\psi\ge \min\{0,\ipsi|_{\shock}\}$ in $\ol{\Om}$.
Since $\shock\subset \{x_2\ge 0\}$, \eqref{definition-ipsi} implies that $\ipsi\ge 0$ on $\Gam_{\rm sh}$ therefore we have $\psi \ge 0$ in $\ol{\Om}$.
Since $\psi$ is not a constant in $\ol{\Om}$, we obtain from the Hopf's lemma and the boundary condition $\psi=0$ on $\Gam_{\rm sym}\cup \Gam_b$ that
 \begin{equation}
 \label{deriv-psi-wall-sym-bdry}
   \psi_{{\bm \nu}}< 0\quad\tx{and}\quad
    \psi_{{\bm \tau}}=0
    \quad\tx{on $(\Gam_{\rm sym}\cup\Gam_{b})\setminus \{P_0\}$}
 \end{equation}
for the outward unit normal ${\bm \nu}$ and ${\bm\tau}=\bm{\nu}^{\perp}$ on $(\Gam_{\rm sym}\cup\Gam_{b})\setminus \{P_0\}$. On $\Gam_{\rm sym}\setminus\{P_0\}$, we have ${\bm \nu}\cdot {\bf e}_{x_2}=-1<0$. On $\Gam_b\setminus \{P_0\}$, it follows from $(b_3)$ in Definition \ref{definition-bluntbody-ftn} that ${\bm \nu}\cdot {\bf e}_{x_2}=\frac{-b'}{\sqrt{1+(b')^2}}<0$. Then \eqref{deriv-psi-wall-sym-bdry} implies that
\begin{equation*}
  w=\nabla\psi\cdot{\bf e}_2=\psi_{\bm\nu}\, {\bm\nu}\cdot {\bf e}_2>0\quad \tx{on $(\Gam_{\rm sym}\cup\Gam_{b})\setminus \{P_0\}$}.
\end{equation*}

Since $\psi$ is $C^1$ up to $\der \Om$, the boundary condition $\psi=0$ on $\Gam_{\rm sym}\cup \Gam_b$ combined with \eqref{b-property1} stated in Remark \ref{remark-b-function} implies that
\begin{equation*}
  \nabla\psi(P_0)=0,
\end{equation*}
from which it follows that $w(P_0)=0$. Note that the flow speed becomes $0$ at $P_0$, that is, $P_0$ is a {\emph{stagnation point}}.

On $\Gam_{{\rm{cutoff}}}$,  the outward unit normal ${\bm\nu}$ is given by ${\bm\nu}=(\cos\theta_w,\sin\theta_w)$. By differentiating the boundary condition $\psi_{\bm\nu}=0$ on $\Gam_{{\rm{cutoff}}}$ in the tangential direction ${\bm\tau}=(-\sin\theta_w,\cos\theta_w)$, we obtain that
\begin{equation}\label{4.5a}
\psi_{\bm\nu \bm\tau}=
\sin \tw\cos\tw(-\psi_{x_{1}x_1}+w_{x_2})
+(\cos^2\theta_w-\sin^2\theta_w) w_{x_1}=0,\qquad\mbox{on }\Gam_{{\rm{cutoff}}}.
\end{equation}
By substituting the expression \eqref{psi-x1x1-w} into \eqref{4.5a}, we get
\begin{equation}
\label{bc-w-cutoff}
\beta_1w_{x_1}+\beta_2w_{x_2}=0 \quad\tx{on $\Gam_{\rm{cutoff}}$}
\end{equation}
for ${\bm\beta}=(\beta_1, \beta_2)$ given by
\begin{equation*}
  \begin{split}
  &\beta_1=(\cos^2\tw-\sin^2\tw)(c^2-u_1^2)-2u_1u_2\cos\tw\sin\tw,\\
  &\beta_2=\cos\tw\sin\tw\left((c^2-u_1^2)+(c^2-u_2^2)\right).
  \end{split}
\end{equation*}
By \eqref{subsonicity-fbp-bdd} and \eqref{estimate-vel-bdd}, we have
\begin{equation*}
  {\bm\beta}\cdot{\bm\nu}=\cos \tw (c^2-({\bf u}\cdot {\bm\nu})^2)\ge \frac{2(\gam-1)B_0\cos \tw}{\gam+1} \sigma>0\quad\tx{on $\Gam_{\rm{cutoff}}$}
\end{equation*}
for $\sigma>0$ from \eqref{subsonicity-fbp-bdd}. Therefore the boundary condition \eqref{bc-w-cutoff} for $w$ is uniformly oblique on $\Gam_{\rm{cutoff}}$.

By the strong minimum principle, the Hopf's lemma and \eqref{5.3x}, we conclude that
\begin{equation*}
  w>0\quad\tx{in $\ol{\Om}\setminus \{P_0\}$},\quad w(P_0)=0,
\end{equation*}
and this proves Lemma \ref{lemma-velocity1-positivity}.
\end{proof}
\end{lemma}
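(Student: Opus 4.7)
Since $u_1 = \psi_{x_2}/\rho$ and $\rho > \rho_{\rm sonic} > 0$ by Lemma \ref{lemma-rho-expression}, the claim reduces to showing that $w := \psi_{x_2}$ satisfies $w > 0$ in $\overline{\Omega}\setminus\{P_0\}$ and $w(P_0)=0$. The plan is to derive a linear uniformly elliptic PDE for $w$ on $\Omega$, establish pointwise boundary inequalities on $\Gamma_{\rm sym}$, $\Gamma_b$, and $\Gamma_{\rm sh}$ using Hopf's lemma and the Rankine--Hugoniot relation, transform the cutoff condition into an oblique derivative condition, and then apply the strong minimum principle together with Hopf's lemma.

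First I would use the Dirichlet data $\psi = 0$ on $\Gamma_{\rm sym}\cup\Gamma_b$, $\psi = \psi_\infty \ge 0$ on $\Gamma_{\rm sh}$, together with the Neumann condition on $\Gamma_{\rm cutoff}$ and the uniform ellipticity from \eqref{subsonicity-fbp-bdd}, to get $\psi \ge 0$ in $\overline{\Omega}$ via the minimum principle. Since $\psi$ is not constant, Hopf's lemma then gives $\partial_{\bm\nu}\psi < 0$ on $(\Gamma_{\rm sym}\cup\Gamma_b)\setminus\{P_0\}$ where $\bm\nu$ is the outward normal. On $\Gamma_{\rm sym}$ one has $\bm\nu\cdot{\bf e}_2=-1$, giving $w>0$ directly. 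On $\Gamma_b$, combining $\partial_{\bm\nu}\psi<0$ with the tangential derivative $\partial_{\bm\tau}\psi=0$ and using $b'(x_2)>0$ for $x_2>0$ yields $w>0$. At $P_0=(b_0,0)$, since $\psi$ vanishes on both wedge and symmetry boundaries meeting at $P_0$ and $b'(0)=0$, the $C^1$ regularity forces $\nabla\psi(P_0)=0$, so $w(P_0)=0$.

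On the shock, I would exploit the Rankine--Hugoniot conditions together with the entropy condition $\rho > \rho_\infty$: the identity $(\rho u_1-\rho_\infty u_\infty)(u_1-u_\infty)+\rho u_2^2=0$ on $\Gamma_{\rm sh}$, combined with $|\mathbf{u}|<u_\infty$ behind the shock (which follows from $\rho > \rho_\infty$ and the Bernoulli relation), yields $\rho u_1 \ge \rho_\infty u_\infty > 0$ so that $w = \rho u_1 > 0$ on $\overline{\Gamma_{\rm sh}}$. Next I would differentiate the equation \eqref{equation-psi-nondiv} with respect to $x_2$ and substitute the Bernoulli relation to eliminate $\rho_{x_2}$; after using \eqref{equation-psi-nondiv} to remove $\psi_{x_1x_1}$, this yields a linear PDE of the form $(c^2-u_1^2)w_{x_1x_1}-2u_1u_2\,w_{x_1x_2}+(c^2-u_2^2)w_{x_2x_2}+a_1 w_{x_1}+a_2 w_{x_2}=0$ which is uniformly elliptic by \eqref{subsonicity-fbp-bdd}.

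The main obstacle will be handling the cutoff boundary $\Gamma_{\rm cutoff}$ where only a Neumann condition on $\psi$ is available. I would tangentially differentiate $\nabla\psi\cdot{\bf n}_c=0$ along $\Gamma_{\rm cutoff}$ to produce a linear first-order condition $\beta_1 w_{x_1}+\beta_2 w_{x_2}=0$ for $w$, then eliminate the $\psi_{x_1x_1}$ term using \eqref{equation-psi-nondiv}. The obliqueness $\bm\beta\cdot{\bf n}_c = \cos\theta_w(c^2-(\mathbf{u}\cdot{\bf n}_c)^2)$ is strictly positive thanks to the uniform subsonicity \eqref{subsonicity-fbp-bdd}, so the derived condition is uniformly oblique. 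With uniform ellipticity in $\Omega$, a uniformly oblique homogeneous boundary condition on $\Gamma_{\rm cutoff}$, and the strict positivity of $w$ on the remaining boundary components except at $P_0$, the strong minimum principle combined with Hopf's lemma (applied at any candidate interior or cutoff-boundary minimum point) forces $w>0$ in $\overline{\Omega}\setminus\{P_0\}$, completing the proof.
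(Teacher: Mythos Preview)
Your proposal is correct and follows essentially the same approach as the paper: both derive a uniformly elliptic equation for $w=\psi_{x_2}$ by differentiating \eqref{equation-psi-nondiv}, establish $w>0$ on $\Gam_{\rm sym}\cup\Gam_b\setminus\{P_0\}$ via $\psi\ge 0$ and Hopf's lemma, use the Rankine--Hugoniot identity together with $\rho>\irho$ to get $w>0$ on $\shock$, convert the Neumann condition on $\Gam_{\rm cutoff}$ into a uniformly oblique condition for $w$ by tangential differentiation and elimination of $\psi_{x_1x_1}$, and conclude by the strong minimum principle and Hopf's lemma. The only cosmetic difference is that you invoke Bernoulli to get $|{\bf u}|<\iu$ on $\shock$ whereas the paper cites the Rankine--Hugoniot conditions, but both arguments are valid.
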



\begin{lemma}
\label{lemma-velocity2-positivity}
There exists a constant $\eps_4\in(0, \eps_3]$ depending only on $(\gam, B_0, d_0)$ so that, for any $L\ge L_*$ and $M_{\infty}(=\frac{1}{\eps})\ge \frac{1}{\eps_4}$, the vertical velocity $u_2(\rx)$ given by \eqref{definition-phisical-var-bdd} satisfies
\begin{equation*}
u_2({\rm x})>0\quad\tx{in $\ol{\Om}\setminus \ol{\Gam_{\rm sym}}$},\quad\tx{and}\quad u_2({\rm x})=0\quad\tx{on $\ol{\Gam_{\rm sym}}$}.
\end{equation*}

\begin{proof} The proof is divided into four steps.

{\textbf{1.}}
Let us set
\begin{equation*}
  W:=\psi_{x_1}.
\end{equation*}
Since $\rho>\rho_{\rm sonic}$ in $\ol{\Om}$, it suffices to show that $W<0$ in $\ol{\Om}\setminus \ol{\Gam_{\rm sym}}$, and $W=0$ on $\ol{\Gam_{\rm sym}}$ to prove
Lemma \ref{lemma-velocity2-positivity}.

Similarly to \eqref{4.4a} in the proof of Lemma \ref{lemma-velocity1-positivity}, one can check that $W$ satisfies the uniformly elliptic equation
\begin{equation*}
  (c^2-u_1^2)W_{x_1x_1}-2u_1u_2W_{x_1x_2}+(c^2-u_2^2)W_{x_2x_2}
  +b_1W_{x_1}+b_2W_{x_2}=0\quad\tx{in $\Om$}
\end{equation*}
for $b_{j}=b_{j}(\nabla\psi, W_{x_1}, W_{x_2})$, j=1,2.
\smallskip

{\textbf{2.}} By following the argument in the proof of Theorem \ref{main-theorem-physical-var}(a), one can check that
\begin{equation}\label{u-ui-diff}
  |{\bf u}|-|{\bf u}_{\infty}|\le -\mu\quad\tx{on $\shock$}
\end{equation}
for some constant $\mu>0$.
Then, the free boundary condition \eqref{free-bc} implies that the unit normal ${\bm\nu}_{\rm sh}$ on $\Gam_{\rm sh}$ with pointing interior to $\Om$ is given by
$  {\bm\nu}_{\rm sh}=\frac{{\bf u}_{\infty}-{\bf u}}{|{\bf u}_{\infty}-{\bf u}|}
$. Then the vector field ${\bm\tau}_{\rm sh}$ given by ${\bm\tau}_{\rm sh}:={\bm\nu}_{\rm sh}^{\perp}(=\frac{(u_2, \iu-u_1)}{|{\bf u}_{\infty}-{\bf u}|})$ yields a tangential vector field along $\shock$.

By using \eqref{definition-phisical-var-bdd}, we rewrite \eqref{4.1} as
\begin{equation*}
  (\psi_{x_2}-\irho\ui)(\frac{\psi_{x_2}}{\rho}-\ui)
  +\frac{\psi_{x_1}^2}{\rho}=0\quad \tx{on $\Gam_{\rm sh}$}.
\end{equation*}
By differentiating this expression along $\Gam_{\rm sh}$ in the direction of ${\bm\tau}_{\rm sh}$, and by using \eqref{definition-phisical-var-bdd}, we get
\begin{equation}\label{4.2}
\left(2u_1-(1+\frac{\rho_{\infty}}{\rho})u_{\infty}\right)
\partial_{\bm\tau_{\rm{sh}}}(\psi_{x_2})
-\left(|{\bf u}|^2-\frac{\rho_{\infty}}{\rho}u_1u_{\infty}\right)
\partial_{\bm\tau_{\rm{sh}}}\rho-2u_2\partial_{\bm\tau_{\rm{sh}}}W=0
\quad\tx{on $\Gam_{\rm sh}$}.
\end{equation}
Similarly to \eqref{rho-deriv-expression}, by differentiating the Bernoulli's law $G(\rho, |\nabla\psi|^2)=B_0$ on $\shock$ in the direction of ${\bm \tau_{\rm{sh}}}$ and using \eqref{definition-phisical-var-bdd}, we get
\begin{equation}\label{4.2a}
\partial_{\bm\tau_{\rm{sh}}}\rho=
\frac{u_2\partial_{\bm\tau_{\rm{sh}}}W
-u_1\partial_{\tau_{\rm{sh}}}(\psi_{x_2})}{c^2-|{\bf u}|^2}\quad\tx{on $\shock$.}
\end{equation}
Next, we find an expression $\der_{\bm\tau_{\rm sh}}(\psi_{x_2})$ in terms of $(W_{x_1}, W_{x_2})$. By the definition of ${\bm \tau}_{\rm sh}$, we have
\begin{equation}
\label{psi-x2-tau-deriv}
  \der_{\bm\tau_{\rm sh}}(\psi_{x_2})
  =\frac{u_2W_{x_2}+(\ui-u_1)\psi_{x_2x_2}}{|{\bf u}-{\bf u}_{\infty}|} \quad\tx{on $\shock$.}
\end{equation}
We substitute the expression
$
\psi_{x_2x_2}=\frac{2u_1u_2 W_{x_2}-(c^2-u_1^2)W_{x_1}}{c^2-u_2^2}
$, directly given from Eq. \eqref{equation-psi-nondiv}, into \eqref{psi-x2-tau-deriv} to get
\begin{equation*}
  \der_{\bm\tau_{\rm sh}}(\psi_{x_2})
  =\frac{1}{|{\bf u}-{\bf u}_{\infty}|}
  \left(u_2W_{x_2}+\frac{(\ui-u_1)(2u_1u_2 W_{x_2}-(c^2-u_1^2)W_{x_1})}{c^2-u_2^2}\right).
\end{equation*}
By inverting the relation
\begin{equation}
\label{derivative-on-shock}
  \begin{pmatrix}
  \der_{\bm\nu_{\rm sh}}\\
  \der_{\bm\tau_{\rm sh}}
  \end{pmatrix}
  =\frac{1}{|{\bf u}_{\infty}-{\bf u}|}\begin{pmatrix}
  \ui-u_1 & -u_2\\
  u_2 & \ui-u_1
  \end{pmatrix}
  \begin{pmatrix}
  \der_{x_1}\\
  \der_{x_2}
  \end{pmatrix},
\end{equation}
we get
\begin{equation*}
  W_{x_1}=\frac{(\ui-u_1)W_{\bm\nu_{\rm sh}}+u_2W_{\bm\tau_{\rm sh}}}{|{\bf u}-{\bf u}_{\infty}|},\quad
  W_{x_2}=\frac{-u_2W_{\bm\nu_{\rm sh}}+(\ui-u_1)W_{\bm\tau_{\rm sh}}}{|{\bf u}-{\bf u}_{\infty}|}.
\end{equation*}
So we can rewrite \eqref{psi-x2-tau-deriv} as
\begin{equation}
\label{psi-x2-tau-deriv-2}
  \der_{\bm\tau_{\rm sh}}(\psi_{x_2})=
  \frac{\alp_1W_{\bm\nu_{\rm sh}}+\alp_2W_{\bm\tau_{\rm sh}}}{|{\bf u}_{\infty}-{\bf u}|^2} \quad\tx{on $\shock$}
\end{equation}
with
\begin{equation*}
  \alp_1=-\frac{1}{c^2-u_2^2}\begin{pmatrix}\ui-u_1\\ -u_2\end{pmatrix}^T
  \begin{pmatrix}
  c^2-u_1^2 &u_1u_2\\
  u_1u_2 &c^2-u_2^2
  \end{pmatrix}
  \begin{pmatrix}\ui-u_1\\ -u_2\end{pmatrix}.
\end{equation*}
The term $\alp_2$ can be directly computed as well, but since $\alp_1$ is essential in proving Lemma \ref{lemma-velocity2-positivity}, we only give the exact value of $\alp_1$ here.
The eigenvalues of the matrix $\begin{pmatrix}
  c^2-u_1^2 &u_1u_2\\
  u_1u_2 &c^2-u_2^2
  \end{pmatrix}$ are $c^2$ and $c^2(1-M^2)$ for $M^2:=\frac{|{\bf u}|^2}{c^2}$. So it follows from Proposition \ref{proposition-wp-fbpbd}(iii) and \eqref{u-ui-diff} that, there exists a constant $\mu_0>0$ such that
\begin{equation}
\label{oblique-constant}
  \alp_1\le -\mu_0\quad\tx{on ${\shock}$.}
\end{equation}
We substitute the expression \eqref{4.2a} into \eqref{4.2}, then use the expression \eqref{psi-x2-tau-deriv-2} to rewrite \eqref{4.2} as
\begin{equation}
\label{bc-for-W}
  \beta_1W_{\bm\nu_{\rm sh}}+\beta_2W_{\bm \tau_{\rm sh}}=0\quad\tx{on $\Gam_{\rm sh}$}
\end{equation}
for $\beta_{1,2}=\beta_{1,2}(\nabla\psi)$. Particularly, the coefficient $\beta_1$ is given by
\begin{equation*}
  \beta_1=-\frac{\alp_1}{|{\bf u}-{\bf u}_{\infty}|}
  \left(\ui\left(1+\frac{\irho}{\rho}(1-\frac{u_1^2}{c^2-|{\bf u}|^2})
  \right)-u_1\left(2+\frac{|{\bf u}|^2}{c^2-|{\bf u}|^2}\right)\right).
\end{equation*}
By Lemma \ref{lemma-shock-polar2},
Proposition \ref{proposition-wp-fbpbd}(ii)--(iii), \eqref{definition-phisical-var-bdd},
\eqref{estimate-vel-bdd}, \eqref{u-ui-diff}, and \eqref{oblique-constant}, there exist constants $\mu_1>0$ and  $C>0$ with $C$ depending only on $(\gam, B_0, d_0)$ such that
\begin{equation*}
  \beta_1\ge \mu_1\left(\ui-C\mathfrak{q}_{\gam}(\eps)\right)\quad\tx{on $\Gam_{\rm sh}$}
\end{equation*}
for $\mathfrak{q}_{\gam}(\eps)$ from Lemma \ref{lemma-shock-polar2}.
Therefore there exists a constant $\eps_{4}\in(0, \eps_{3}]$ depending only on $(\gam, B_0, d_0)$ so that if $\eps\in(0,\eps_4]$, then we have $\beta_1\ge \frac{\mu_1\iu}{2}>0$ on $\shock$ from which it follows that the boundary condition \eqref{bc-for-W} is strictly oblique on $\Gam_{\rm sh}$.
\smallskip

{\textbf{3.}}
Similarly to \eqref{bc-w-cutoff}, one can derive from \eqref{4.5a} that
\begin{equation}
\label{bc-W-cutoff}
  \kappa_1 W_{x_1}+\kappa_2 W_{x_2}=0 \quad
  \tx{on $\Gam_{\rm cutoff}$ }
\end{equation}
for $\bm\kappa=(\kappa_1, \kappa_2)$ given by
\begin{equation*}
  \begin{split}
  &\kappa_1=-\left((c^2-u_1^2)+(c^2-u_2^2)\right)\cos\tw\sin\tw ,\\
  &\kappa_2=(c^2-u_2^2)(\cos^2\tw-\sin^2\tw)+2 u_1u_2\cos\tw\sin\tw.
  \end{split}
\end{equation*}
By Proposition \ref{proposition-wp-fbpbd}(iii) and the definition of $\rhosonic$ given in \eqref{definition-rho-sonic}, we have
\begin{equation*}
  {\bm\kappa}\cdot{\bm\nu}=\sin \tw (c^2-({\bf u}\cdot {\bm\nu})^2)\ge \frac{2(\gam-1)B_0\sin \tw}{\gam+1} \sigma>0\quad\tx{on $\Gam_{\rm{cutoff}}$}
\end{equation*}
for the outward unit normal ${\bm\nu}=(\cos\theta_w,\sin\theta_w)$ on $\Gam_{\rm{cutoff}}$, where $\sigma>0$ is from Proposition \ref{proposition-wp-fbpbd}(iii). Therefore the boundary condition \eqref{bc-W-cutoff} for $W$ is strictly oblique on $\Gam_{\rm{cutoff}}$.
\smallskip

{\textbf{4.}} On $\Gam_b\setminus\{(b_0,0)\}$, we differentiate the boundary condition $\psi=0$ along the tangential direction ${\bm\tau}_b:=\frac{(b'(x_2),1 )}{\sqrt{1+(b'(x_2))^2}}$, and use \eqref{definition-phisical-var-bdd} to get
$W=\frac{-\rho u_1}{b'(x_2)}$ on $\Gam_b$.
Then it follows from $(b_3)$ of Definition \ref{definition-bluntbody-ftn}, Lemma \ref{lemma-velocity1-positivity} and \eqref{rho-relation} that
$W<0$ on $\Gam_b\setminus \{(b_0,0)\}$.
Finally, we differentiate the boundary condition $\psi=0$ with respect to $x_1$ on $\Gam_{{\rm sym}}$ to obtain that
\begin{equation}
\label{W-estimate-1}
 W=0\quad\tx{on $\ol{\Gam_{\rm sym}}$.}
\end{equation}
Then, by the strong maximum principle and the Hopf's lemma, we conclude that
\begin{equation}
\label{W-estimate-2}
  W<0\quad\tx{in $\ol{\Om}\setminus \ol{\Gam_{\rm sym}}$}
\end{equation}
provided that $\eps\in (0, \eps_4]$. The estimates \eqref{W-estimate-1} and \eqref{W-estimate-2} combined with \eqref{estimate-vel-bdd} completes the proof of Lemma \ref{lemma-velocity2-positivity}.
\end{proof}
\end{lemma}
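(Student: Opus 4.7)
The plan is to reduce the positivity of $u_2$ to a sign analysis of $W := \psi_{x_1}$, since $u_2 = -W/\rho$ and $\rho > \rho_{\rm sonic} > 0$ by \eqref{estimate-vel-bdd}. Thus I want to show $W = 0$ on $\ol{\Gam_{\rm sym}}$ and $W < 0$ strictly in $\ol{\Om} \setminus \ol{\Gam_{\rm sym}}$. The engine will be a strong maximum principle / Hopf's lemma for a second-order linear uniformly elliptic equation satisfied by $W$ with appropriate boundary conditions on each portion of $\der\Om$.

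First I would differentiate \eqref{equation-psi-nondiv} in $x_1$ and use the Bernoulli law $G(\rho,|\nabla\psi|^2)=B_0$ to substitute for $\der_{x_1}\rho$ (in the spirit of \eqref{rho-deriv-expression} from the proof of Lemma~\ref{lemma-velocity1-positivity}); the resulting PDE for $W$ is uniformly elliptic thanks to the strict subsonicity in Proposition~\ref{proposition-wp-fbpbd}(iii). Next I would analyze the boundary conditions piece by piece. On $\Gam_{\rm sym}$, differentiating $\psi\equiv 0$ along $x_1$ gives $W=0$ directly. On $\Gam_b\setminus\{P_0\}$, differentiating $\psi=0$ tangentially along the graph $x_1=b(x_2)$ and using \eqref{definition-phisical-var-bdd} produces $W = -\rho u_1/b'(x_2)$, which is strictly negative by Definition~\ref{definition-bluntbody-ftn}($b_3$), Lemma~\ref{lemma-velocity1-positivity}, and \eqref{rho-relation}. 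On $\Gam_{\rm cutoff}$, tangential differentiation of $\nabla\psi\cdot{\bf n}_c=0$, combined with \eqref{equation-psi-nondiv} to eliminate the pure second derivative along $\bm\nu$, yields an oblique condition $\kappa_1 W_{x_1}+\kappa_2 W_{x_2}=0$ whose obliqueness constant $\bm\kappa\cdot\bm\nu$ is proportional to $c^2 - ({\bf u}\cdot\bm\nu)^2$ and therefore bounded below by subsonicity.

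The decisive step, and the main obstacle, is the shock boundary $\Gam_{\rm sh}$. There I would start from the Rankine-Hugoniot jump as written in \eqref{4.1}, differentiate it tangentially along $\bm\tau_{\rm sh}=\bm\nu_{\rm sh}^\perp$ (where $\bm\nu_{\rm sh}=({\bf u}_\infty-{\bf u})/|{\bf u}_\infty-{\bf u}|$ by the free boundary condition), and substitute the tangential derivatives of $\rho$ (via Bernoulli on the shock) and of $\psi_{x_2}$ (via \eqref{equation-psi-nondiv} to express $\psi_{x_2x_2}$ in terms of $W_{x_1},W_{x_2}$). After rearranging into the $(\bm\nu_{\rm sh},\bm\tau_{\rm sh})$ frame via \eqref{derivative-on-shock}, I expect to arrive at an oblique relation $\beta_1 W_{\bm\nu_{\rm sh}}+\beta_2 W_{\bm\tau_{\rm sh}}=0$. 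The genuine difficulty is proving $\beta_1>0$: the natural computation shows that $\beta_1$ factors as $(-\alp_1/|{\bf u}-{\bf u}_\infty|)$ times a quantity resembling $u_\infty(1+\rho_\infty/\rho) - u_1 \cdot (\text{bounded factor})$, with $\alp_1<0$ uniformly by the matrix calculation after \eqref{psi-x2-tau-deriv-2} (its eigenvalues $c^2$ and $c^2(1-M^2)$ are bounded below via Proposition~\ref{proposition-wp-fbpbd}(iii)). Bounding the second factor below away from zero will be the pinch point: from Lemma~\ref{lemma-shock-polar2} and \eqref{apriori-estimate-fbp-solution-bdd} both $u_1$ and $\rho_\infty/\rho$ are $O(\mathfrak{q}_\gam(\eps))$, so the factor is close to $u_\infty>0$. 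This is precisely where the strong-shock hypothesis and the smallness parameter $\eps_4\in(0,\eps_3]$ enter; shrinking $\eps$ makes the "bad" perturbation terms $O(\mathfrak{q}_\gam(\eps))$ lose to the positive $u_\infty$, yielding uniform strict obliqueness.

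Once all four boundary conditions are in place (Dirichlet $W=0$ on $\Gam_{\rm sym}$, strict sign $W<0$ on $\Gam_b\setminus\{P_0\}$, and strictly oblique homogeneous conditions on $\Gam_{\rm sh}$ and $\Gam_{\rm cutoff}$), I will conclude by the strong minimum principle and the oblique-derivative Hopf lemma: if $W$ attained its maximum $0$ anywhere in $\ol{\Om}\setminus\ol{\Gam_{\rm sym}}$ it would have to do so on $\Gam_{\rm sh}\cup\Gam_{\rm cutoff}$ (the other two are already handled), contradicting the oblique boundary conditions via Hopf. Therefore $W<0$ in $\ol{\Om}\setminus\ol{\Gam_{\rm sym}}$, equivalently $u_2>0$ there, while $u_2=0$ on $\ol{\Gam_{\rm sym}}$ from $W=0$, which is the desired conclusion.
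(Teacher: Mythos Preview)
Your proposal is correct and follows essentially the same route as the paper: reduce to the sign of $W=\psi_{x_1}$, derive a uniformly elliptic equation for $W$ by differentiating \eqref{equation-psi-nondiv} and substituting via Bernoulli, obtain $W=0$ on $\Gam_{\rm sym}$ and $W<0$ on $\Gam_b\setminus\{P_0\}$ directly, derive strictly oblique homogeneous conditions on $\Gam_{\rm cutoff}$ and $\Gam_{\rm sh}$ (the latter via tangential differentiation of the Rankine--Hugoniot relation, the $(\bm\nu_{\rm sh},\bm\tau_{\rm sh})$ change of frame, and the eigenvalue bound giving $\alp_1\le -\mu_0$), and conclude by the strong maximum principle and Hopf's lemma. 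Your identification of the shock obliqueness as the place where $\eps_4$ must be chosen small---so that $\beta_1\gtrsim u_\infty - C\mathfrak{q}_\gamma(\eps)>0$---is exactly the paper's mechanism.
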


For ${\bf u}=(u_1, u_2)$ given by \eqref{definition-phisical-var-bdd}, let us set
\begin{equation}
\label{definition-q}
  q({\rm x}):=|{\bf u}(\rx)|(=
  \sqrt{u_1^2({\rm x})+u_2^2(\rm x)})\quad\tx{in $\ol{\Om}$},
\end{equation}
that is, $q({\rm x})$ is the local speed at ${\rm x}\in \ol{\Om}$. Then, the Mach number $M$ is written as $M=\frac qc$.
By Lemmas \ref{lemma-velocity1-positivity} and \ref{lemma-velocity2-positivity}, we have $q>0$ in $\ol{\Om}\setminus\{P_0\}$, thus we can define $\Theta(\rx)$ and $Q$ by
\begin{equation}
\label{u-into-q-Theta}
\Theta:=\arctan \frac{u_2}{u_1},\quad
Q:=\ln q\quad\tx{in $\ol{\Om}\setminus\{P_0\}$.}
\end{equation}

\begin{lemma}
\label{lemma-critical-val-speed}
One can further reduce $\eps_4$  from Lemma \ref{lemma-velocity2-positivity} so that if $M_{\infty}(=\frac{1}{\eps})\ge \frac{1}{\eps_4}$, then the local speed $q$ cannot attain its local extrema on $\Om\cup \ol{\Gam_{{\rm sh}}}\cup \ol{\Gam_{{\rm cutoff}}}$.

\begin{proof}
The proof is divided in six steps.

{\textbf{1.}}
Note that $(\rho, {\bf u})$ given by \eqref{definition-phisical-var-bdd} satisfy the equations stated in \eqref{E-system}. In order to compute a second order differential equation for $Q$ in $\Om$, we take the following steps:
\begin{itemize}
\item[(i)] By differentiating the Bernoulli's law
$
\frac 12 q^2+\mathfrak{h}(\rho)=B_0
$ in the direction of ${\bf u}$,
we get the expression
    \begin{equation}
    \label{deriv-Bernoulli}
      {\bf u}\cdot \nabla \rho=-\frac{q}{\mathfrak{h}'(\rho)}{\bf u}\cdot \nabla q\quad\tx{in $\Om$.}
    \end{equation}

\item[(ii)] Due to \eqref{estimate-vel-bdd}, we can  rewrite the first equation in \eqref{E-system} as $\nabla \cdot {\bf u}+\frac{{\bf u}\cdot \nabla \rho}{\rho}=0$. Then, we substitute the expression \eqref{deriv-Bernoulli} into  $\nabla \cdot {\bf u}+\frac{{\bf u}\cdot \nabla \rho}{\rho}=0$ to get
    \begin{equation}
    \label{q-equation1}
      \frac 1q\nabla \cdot {\bf u}-\frac{1}{c^2}{\bf u}\cdot \nabla q=0\quad\tx{in $\Om$}.
    \end{equation}

\item[(iii)] Next, we use the definition \eqref{u-into-q-Theta} to rewrite \eqref{q-equation1} and the second equation in \eqref{E-system} in terms of $(Q,\Theta)$ in $\Om$ as follows:
    \begin{equation*}
\begin{cases}
      \Theta_{x_1}\sin \Theta-\Theta_{x_2}\cos \Theta=(1-M^2)(Q_{x_1}\cos \Theta+Q_{x_2}\sin \Theta)\\
      \Theta_{x_1}\cos \Theta+\Theta_{x_2}\sin \Theta
      =Q_{x_2}\cos \Theta-Q_{x_1}\sin \Theta
    \end{cases}
    \end{equation*}
\end{itemize}
from which we obtain that
\begin{equation}\label{4.3}
\begin{cases}
\Theta_{x_1}=
-Q_{x_1} M^2\sin\Theta\cos\Theta+
Q_{x_2}(1-M^2\sin^2\Theta)\\
\Theta_{x_2}=-Q_{x_1}(1-M^2\cos^2\Theta)+
Q_{x_2} M^2\sin\Theta\cos\Theta
\end{cases}\quad\tx{in $\Om$.}
\end{equation}
Since $\Theta$ is $C^2$ in $\Om$, \eqref{4.3} yields that
\begin{equation}
\label{equation-for-Q}
\begin{split}
\sum_{i,j=1}^2 \der_{x_i}(a_{ij}\der_{x_j}Q)=0\quad\tx{in $\Om$}
\end{split}
\end{equation}
with
\begin{equation*}
  a_{11}=1-M^2\cos^2\Theta,\quad a_{12}=-a_{21}=M^2\sin\Theta\cos\Theta,\quad
  a_{22}=1-M^2\sin^2\Theta.
\end{equation*}
By Proposition \ref{proposition-wp-fbpbd}(iii), Eq.\eqref{equation-for-Q} is uniformly elliptic in $\Om$. And, it follows from Lemmas \ref{lemma-velocity1-positivity} and \ref{lemma-velocity2-positivity} that $q=0$ at ${\rx}=P_0$, and $q(\rx)>0$ in $\ol{\Om}\setminus \{P_0\}$. This implies that $Q$ is not a constant in $\Om$. Therefore, by the strong maximum principle,  $Q$ cannot attain its local extrema in $\Om$, or equivalently $q(=e^{Q})$ cannot attain its local extrema in $\Om$.
\smallskip

{\textbf{2.}} By \eqref{estimate-vel-bdd}, Lemmas \ref{lemma-velocity1-positivity} and  \ref{lemma-velocity2-positivity}, we have $\rho q>0$ on $\Gam_{{\rm{cutoff}}}$. So the boundary condition $\nabla\psi\cdot{\bf n}_c=0$ on $\Gam_{{\rm{cutoff}}}$, stated in \eqref{nlbvp-in-fbp-bdd}, implies that $\Theta-\tw=m\pi$ for some $m\in \mathbb{Z}$ so we have
\begin{equation*}
 \nabla\Theta\cdot {\bm\tau}_{\rm c}=0\quad\tx{on $\Gam_{{\rm{cutoff}}}$}
\end{equation*}
for a unit tangential ${\bm\tau}_{\rm c}$ along $\Gam_{{\rm{cutoff}}}$. Furthermore, we can represent ${\bm\tau}_{\rm c}$ as ${\bm\tau}_{\rm c}=(\cos \Theta, \sin \Theta)^{\perp}$ on $\Gam_{{\rm{cutoff}}}$. We substitute the expression \eqref{4.3} into the boundary condition for $\Theta$ stated right above, then apply Proposition \ref{proposition-wp-fbpbd}(iii) to get
\begin{equation}\label{4.18}
\nabla Q\cdot {\bf n}_c=0\quad\mbox{on }\Gam_{{\rm{cutoff}}},
\end{equation}
where ${\bf n}_c$ represents a unit normal
on $\Gam_{{\rm{cutoff}}}$. Therefore, by the Hopf's lemma, $q(=e^Q)$ cannot attain its local extrema on $\Gam_{{\rm{cutoff}}}$.
\smallskip

{\textbf{3.}} As pointed out eariler in the proof of Lemma \ref{lemma-velocity2-positivity},
\begin{equation}\label{definition-nu-shock}
  {\bm\nu}_{\rm sh}=\frac{{\bf u}_{\infty}-{\bf u}}{|{\bf u}_{\infty}-{\bf u}|}
\end{equation}
is the unit normal on $\shock$ with pointing interior to $\Om$. So ${\bm\tau}_{\rm sh}={\bm\nu}_{\rm sh}^{\perp}(=\frac{(u_2, \iu-u_1)}{|{\bf u}_{\infty}-{\bf u}|})$ yields a unit tangential along $\shock$.
By differentiating the Bernoulli's law
$\frac 12 q^2+\mathfrak{h}(\rho)=B_0$
in the tangential direction ${\bm\tau}_{\rm sh}$ along $\shock$, we get
\begin{equation}
\label{rho-tau-deriv}
  \rho_{{\bm\tau}_{\rm sh}}=-\rho M^2 Q_{{\bm\tau}_{\rm sh}}\quad\tx{on $\shock$}.
\end{equation}
Next, we rewrite \eqref{4.1} in terms of $(\rho, q, \Theta)$ as
\begin{equation}
\label{4.5}
(\rho q\cos\Theta-\rho_{\infty}u_{\infty})(q\cos\Theta-u_{\infty})
+\rho q^2 \sin^2\Theta
=0
\quad\tx{on $\shock$},
\end{equation}
and differentiate this boundary condition in the direction of ${\bm\tau}_{\rm sh}$ along $\shock$. Then, by using the expression \eqref{rho-tau-deriv}, one can derive that
\begin{equation}
\label{Theta-tau1}
 \Theta_{{\bm\tau}_{\rm sh}}=\frac{T_1 Q_{{\bm\tau}_{\rm sh}} }{
 (1+\frac{\irho}{\rho})\ui \sin \Theta}\quad\tx{on ${\rm int}\, \Gam_{\rm sh}$}
\end{equation}
for
\begin{equation*}
  T_1=(M^2-2)q+\ui(1-M^2)\cos \Theta+\frac{\irho\ui}{\rho}\cos \Theta.
\end{equation*}
Since $\sin \Theta>0$ on ${\rm int}\, \Gam_{\rm sh}$ due to Lemma \ref{lemma-velocity2-positivity}, the expression \eqref{Theta-tau1} is well defined.

Another way to compute $ \Theta_{{\bm\tau}_{\rm sh}}$ on $\shock$ is to use \eqref{derivative-on-shock} and \eqref{4.3}.
A lengthy but direct computation yields that
\begin{equation}
\label{Theta-tau2}
  \Theta_{{\bm\tau}_{\rm sh}}
  =\frac{1}{|{\bf u}-{\bf u}_{\infty}|^2}\left(a_{\nu} Q_{{\bm\nu}_{\rm sh}}+a_{\tau}Q_{{\bm\tau}_{\rm sh}}\right)\quad\tx{on ${\rm int}\,\Gam_{\rm sh}$}
\end{equation}
for
\begin{equation}
\label{definition-anu-atau}
  \begin{split}
  &a_{\nu}=-\frac{|{\bf u}_{\infty}-{\bf u}|^2}{c^2}(c^2-({\bf u}\cdot {\bm \nu}_{\rm sh})^2),\\
  &a_{\tau}=M^2 \iu \sin \Theta (\iu\cos \Theta-q).
  \end{split}
\end{equation}

From \eqref{Theta-tau1} and \eqref{Theta-tau2}, we obtain that
\begin{equation}
\label{bc-for-Q-on-shock}
{\bm\beta}_{\rm sh}\cdot \nabla Q=0 \quad\tx{on ${\rm int}\,\Gam_{\rm sh}$}.
\end{equation}
for
\begin{equation}
\label{definition-bc-Q-on-shock}
{\bm\beta}_{\rm sh}:=
-\frac{a_{\nu}}{|{\bf u}_{\infty}-{\bf u}|^2}{\bm\nu}_{\rm sh}
+
\left(\frac{T_1}{(1+\frac{\irho}{\rho})\ui\sin\Theta}
-\frac{a_{\tau}}{|{\bf u}_{\infty}-{\bf u}|^2}\right)
{\bm\tau}_{\rm sh}.
\end{equation}
Since
\begin{equation}\label{oblique-sh}
{\bm\beta}_{\rm sh}\cdot {\bm\nu}_{\rm sh}=  -\frac{a_{\nu}}{|{\bf u}_{\infty}-{\bf u}|^2}\ge 1-M^2\ge \sigma>0\quad\tx{on $\ol{\shock}$}
\end{equation}
due to Proposition \ref{proposition-wp-fbpbd}(iii), the boundary condition \eqref{bc-for-Q-on-shock} is strictly oblique on $\rm{int}\,\Gam_{\rm sh}$.
Therefore, the Hopf's lemma implies that $q(=e^Q)$ cannot attain its local extrema on $\rm{int}\,\Gam_{\rm sh}$.
\smallskip

{\textbf{4.}}
Suppose that $q$ attains its local minimum at $P_{2,L}^f$. Then, there exists a small constant $r>0$ so that $\der ({B_r(P_{2,L}^f)}\cap {\Om})\subset
(\shock \cup \der B_r(P_{2,L}^f)\cup \Gam_{{\rm{cutoff}}}) $, $P_0\not\in \ol{B_r(P_{2,L}^f)}\cap \ol{\Om}$ and
\begin{equation}
\label{local-min-Q}
Q(P_{2,L}^f)=\inf_{B_r(P_{2,L}^f)\cap \Om}Q.
\end{equation}
Since $Q$ is bounded in $\ol{B_r(P_{2,L}^f)}\cap \ol{\Om}$, there exists a constant $m>0$ so that
\begin{equation*}
0\le Q-Q(P_{2,L}^f)\le m\quad\tx{in $\ol{B_r(P_{2,L}^f)}\cap \ol{\Om}$.}
\end{equation*}
Let us set $Q^*(\rx):=Q(\rx)-Q(P_{2,L}^f)$, and define a unit vector field ${\bm\beta}$ on $\Gam_{{\rm{cutoff}}}\cup \shock$ by
\begin{equation*}
  {\bm\beta}:=\begin{cases}
  -{\bf n}_c\quad&\mbox{on $\Gam_{{\rm{cutoff}}}$}\\
  \frac{{\bm\beta}_{\rm sh}}{|{\bm\beta}_{\rm sh}|}
   \quad&\mbox{on $\shock$}
  \end{cases}
\end{equation*}
for ${\bm\beta}_{\rm sh}$ given by \eqref{definition-bc-Q-on-shock}.
By \eqref{equation-for-Q}, \eqref{4.18} and \eqref{bc-for-Q-on-shock}, $Q^*\in C^2({B_r(P_{2,L}^f)}\cap {\Om})\cap C^0(\ol{{B_r(P_{2,L}^f)}\cap {\Om}})$ satisfies
\begin{equation*}
  \begin{split}
  &\sum_{i,j=1}^2 \der_{x_i}(a_{ij}\der_{x_j}Q^*)=0\quad\tx{in ${B_r(P_{2,L}^f)}\cap {\Om}$},\\
  &{\bm\beta}\cdot \nabla Q^*=0
 \quad\mbox{on}\,\, B_r(P_{2,L}^f)\cap (\Gam_{{\rm{cutoff}}}\cup \,\shock)\setminus\{P_{2,L}^f\}.
  \end{split}
\end{equation*}
Let ${\bf n}$ be the inward unit normal on $(\Gam_{{\rm{cutoff}}}\cup \,\shock)\setminus\{P_{2,L}^f\}$. Then, it directly follows from the definition of ${\bm\beta}$ and \eqref{oblique-sh} that ${\bm\beta}$ satisfies
\begin{equation}
\label{standard-obliqueness}
  {\bm\beta}\cdot {\bf n}\ge \om_0\quad\tx{on $B_r(P_{2,L}^f)\cap (\Gam_{{\rm{cutoff}}}\cup \,\shock)\setminus\{P_{2,L}^f\}$}
\end{equation}
for some constant $\om_0>0$.

Set
\begin{equation*}
  {\bm\tau}_{c}:=(\sin \tw, -\cos \tw).
\end{equation*}

{\emph{Claim: ${\bm\beta}_{\rm sh}(P_{2,L}^f)\cdot {\bm \tau}_c>0.$}}
\smallskip

The claim will be verified in the next step. For now, we assume that the claim is true, then derive a contradiction from \eqref{local-min-Q} by showing that $Q^*(P_{2,L}^f)>0$.

Let $\Sigma_0$ be the line parallel to $(\cos \tw, \sin \tw)$ with passing through $P_{2,L}^f$. And, for $\sigma>0$, let $\Sigma_{\sigma}$ be the line parallel to ${\bf n}_{\sigma}:=(\cos (\tw-\sigma), \sin (\tw-\sigma))$ with passing through $P_{2,L}^f$. Then, ${\bm \tau}_c$ is a unit normal of $\Sigma_0$. Since $\Theta=\tw$ at $P_{2,L}^f$, \eqref{definition-nu-shock} easily yields that
\begin{equation}\label{corner-normal-relation1}
  {\bm\nu}_{\rm sh}\cdot {\bm \tau}_c=\frac{\iu\sin \tw}{|{\bf u}_{\infty}-{\bf u}|}>0\quad\tx{at $P_{2,L}^f$}.
\end{equation}
Therefore, one can choose a small constant $\sigma\in(0, \frac{\tw}{4})$ so that
\begin{equation}
\label{corner-obliqueness-sptcurve}
 \min\{ \frac{{\bm \beta}_{\rm sh}}{{|\bm \beta}_{\rm sh}|}\cdot {\bf n}_{\sigma}, \,\,{\bm\nu}_{\rm sh}\cdot {\bf n}_{\sigma}, -{\bf n}_c\cdot {\bf n}_{\sigma}\}\ge \lambda_0 \quad\tx{at $P_{2,L}^f$}
\end{equation}
for some constant $\lambda_0>0$. This shows that $\Sigma_{\sigma}$ yields a supporting line at $P_{2,L}^f$ with satisfying all the conditions to apply \cite[Lemma 4.4.2]{CF3}. Since $\shock$ is $C^{1,\alpha}$ up to $P_{2,L}^f$, and $\Gam_{\rm cutoff}$ is a line, it follows from \cite[Lemma 4.4.2]{CF3} that, by flattening $\shock$ near $P_{2,L}^f$ and using \eqref{standard-obliqueness} and \eqref{corner-obliqueness-sptcurve}, we can apply \cite[Lemma 2.2]{Lieberman} to obtain that $Q^*(P_{2, L}^f)>0$. But this contradicts to \eqref{local-min-Q}. Therefore, we conclude that $Q$ cannot attain its local minimum at $P_{2, L}^f$, and this implies that $q(=e^Q)$ cannot attain its local minimum at $P_{2,L}^f$. Also, one can similarly argue that $q$ cannot attain its local maximum at $P_{2,L}^f$ either.
\smallskip

{\textbf{5.}} It remains to verify the claim stated in the previous step. The claim is verified by a direct computation. Observe that
\begin{equation*}
  {\bm\nu}_{\rm sh}\cdot{\bm \tau}_c=\frac{\iu\sin \tw}{|{\bf u}_{\infty}-{\bf u}|},\quad
  {\bm \tau}_{\rm sh}\cdot {\bm \tau}_c=\frac{q-\iu\cos\tw}{|{\bf u}_{\infty}-{\bf u}|}\quad\tx{at $P_{2,L}^f$}
\end{equation*}
because $\Theta=\tw$ at $P_{2,L}^f$. Then, a straightforward computation with using \eqref{definition-anu-atau} and \eqref{definition-bc-Q-on-shock} yields that
\begin{equation*}
{\bm\beta}_{\rm sh}(P_{2,L}^f)\cdot {\bm \tau}_c=
\frac{1}{ |{\bf u}_{\infty}-{\bf u}|}\left(
\mcl{J}_1+\mcl{J}_2\right)
\end{equation*}
for
\begin{equation*}
  \begin{split}
  &\mcl{J}_1=q\tan \tw+\frac{(\iu\cos \tw-q)((2-M^2)q+M^2\cos \tw)}{(1+\frac{\irho}{\rho})\iu\sin \tw},\\
  &\mcl{J}_2=M^2\iu\sin \tw \left(\frac{(\iu\cos \tw-q)^2}{|{\bf u}_{\infty}-{\bf u}|^2}-\frac{({\bf u}\cdot{\bm\nu}_{\rm sh})^2}{q^2}\right).
  \end{split}
\end{equation*}

By Proposition \ref{proposition-wp-fbpbd}(ii) and \eqref{definition-phisical-var-bdd}, we have
\begin{equation*}
  \iu\cos \tw-q\ge \iu\cos \tw-C\mathfrak{q}_{\gam}(\eps)
\end{equation*}
for $\mathfrak{q}_{\gam}(\eps)$ from Lemma \ref{lemma-shock-polar2}. Therefore, if we reduce $\eps_4$ further, then we have $ \iu\cos \tw-q>0$ at $P_{2,L}^f$ provided that $M_{\infty}\ge \frac{1}{\eps_4}$. Then, we obtain that $\mcl{J}_1>0$ because $M^2<1$ at $P_{2,L}^f$ due to Proposition \ref{proposition-wp-fbpbd}(iii).

Since $\Theta=\tw$ at $P_{2,L}^f$, we have
\begin{equation*}
 \frac{({\bf u}\cdot{\bm\nu}_{\rm sh})^2}{q^2}
 =\frac{|(\cos \tw, \sin \tw)\cdot (\iu -q\cos \tw, -q\sin \tw)|^2}{|{\bf u}_{\infty}-{\bf u}|^2}
 =\frac{(\iu\cos \tw-q)^2}{|{\bf u}_{\infty}-{\bf u}|^2}.
\end{equation*}
This implies that $\mcl{J}_2=0$.
So we conclude that ${\bm\beta}_{\rm sh}(P_{2,L}^f)\cdot {\bm \tau}_c=
\frac{\mcl{J}_1}{ |{\bf u}_{\infty}-{\bf u}|}>0$ provided that $M_{\infty}\ge \frac{1}{\eps_4}$.
\smallskip

{\textbf{6.}} It remains to show that $q$ cannot attain its local minimum at neither $P_1$ nor $P_{3, L}^f$.

Along the boundary $\Gam_{{\rm sym}}\cup\Gam_{b}$, since $|{\bf u}|>0$ away from $P_0$, the slip boundary condition ${\bf u}\cdot {\bf n}_s=0$ for the inward unit normal ${\bf n}_s$ implies that $({\bf n}_s)^{\perp} \parallel (\cos\Theta,\sin\Theta)$ on $(\Gam_{{\rm sym}}\cup\Gam_{b})\setminus \{P_0\}$. This implies that $\Theta=0$ on $\Gam_{{\rm sym}}$, and that $\Theta=\tw$ on $\Gam_{b}\cap \{x_2\ge h_0\}$. So we have ${\bf n}_s^{\perp}\cdot \nabla\Theta=0$  on $\Gam_{{\rm sym}}\cup (\Gam_{b}\cap \{x_2\ge h_0\})$ for $h_0$ from Definition \ref{definition-bluntbody-ftn}. A direct computation with using \eqref{4.3} shows that it is equivalent to
\begin{equation}\label{4.19}
\nabla Q\cdot {\bf n}_s=0\quad\tx{on $\Gam_{{\rm sym}}\cup (\Gam_{b}\cap \{x_2\ge h_0\})$.}
\end{equation}

At $P_1$, we have ${\bm \beta}_{\rm sh}=\overrightarrow{(1,0)}$ because $f'(0)=0$. This can be directly checked from \eqref{nlbvp-in-fbp-bdd} and \eqref{free-bc}. Since $\overrightarrow{(1,0)}$ is tangential to $\Gam_{\rm sym}$, one can adjust the argument in Step 4 with using \eqref{4.19} to conclude that $q$ cannot attain its local minimum at $P_1$. For a similar reason, $q$ cannot attain its local minimum at $P_{3,L}^f$. Finally, one can easily adjust the argument in Step 4 and the argument right above to conclude that $q$ cannot attain its local maximum at the points $\{P_1, P_{2,L}^f, P_{3,L}^f\}$. This completes the proof of Lemma \ref{lemma-critical-val-speed}.
\end{proof}	
\end{lemma}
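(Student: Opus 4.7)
The strategy is to show that $Q := \ln q$ satisfies a uniformly elliptic linear divergence-form equation in $\Om$ together with homogeneous oblique boundary conditions on $\Gam_{\rm sh}$ and $\Gam_{\rm cutoff}$, so that the strong maximum principle and Hopf's lemma rule out extrema in the interior of these arcs, while the three corner points $\lb$, $P_{2,L}^f$, $P_{3,L}^f$ are treated by supporting-line arguments of Lieberman type. The key inputs are uniform subsonicity from Proposition \ref{proposition-wp-fbpbd}(iii), the strict positivity of $q$ away from the stagnation point $P_0$ provided by Lemmas \ref{lemma-velocity1-positivity}--\ref{lemma-velocity2-positivity}, and the closeness estimates controlled by $\mathfrak{q}_{\gam}(\eps)$ from Proposition \ref{proposition-wp-fbpbd}(ii).

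For the interior equation, I would first differentiate the Bernoulli law $\tfrac12 q^2 + \mathfrak{h}(\rho) = B_0$ along streamlines to express ${\bf u}\cdot\nabla\rho$ in terms of ${\bf u}\cdot\nabla q$, and substitute this into the continuity equation to obtain $\nabla\cdot{\bf u} = (q/c^2)\,{\bf u}\cdot\nabla q$. Writing ${\bf u} = q(\cos\Theta,\sin\Theta)$, this identity together with the irrotationality $\der_{x_1}u_2 = \der_{x_2}u_1$ forms a linear algebraic system expressing $\nabla\Theta$ in terms of $\nabla Q$ with coefficients depending on $M^2$. Equality of mixed partials $\Theta_{x_1x_2} = \Theta_{x_2x_1}$ then yields a divergence-form equation $\sum_{i,j}\der_{x_i}(a_{ij}\der_{x_j}Q) = 0$ whose principal matrix is $I - M^2\hat{\bf u}\otimes\hat{\bf u}$, uniformly elliptic by Proposition \ref{proposition-wp-fbpbd}(iii). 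Since $q$ vanishes at $P_0$ but is positive elsewhere, $Q$ is non-constant, and the strong maximum principle excludes interior extrema.

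On $\Gam_{\rm cutoff}$, the slip condition combined with $q > 0$ forces $\Theta\equiv \tw$, so $\der_{{\bm\tau}_c}\Theta = 0$; the algebraic system then converts this into $\nabla Q\cdot{\bf n}_c = 0$, and Hopf's lemma rules out boundary extrema on ${\rm int}\,\Gam_{\rm cutoff}$. On $\Gam_{\rm sh}$ I would differentiate the scalar Rankine--Hugoniot identity $(\rho{\bf u} - \irho{\bf u}_\infty)\cdot({\bf u} - {\bf u}_\infty) = 0$ in the tangential direction ${\bm\tau}_{\rm sh}$, eliminate $\der_{{\bm\tau}_{\rm sh}}\rho$ via Bernoulli, and equate the result with the expression for $\der_{{\bm\tau}_{\rm sh}}\Theta$ obtained by applying the interior $\nabla\Theta$-$\nabla Q$ system in shock-aligned coordinates. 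The resulting condition ${\bm\beta}_{\rm sh}\cdot\nabla Q = 0$ has normal component proportional to $c^2 - ({\bf u}\cdot{\bm\nu}_{\rm sh})^2 \ge \sigma c^2 > 0$ by Proposition \ref{proposition-wp-fbpbd}(iii), hence is strictly oblique.

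The main obstacle is the corner $P_{2,L}^f$ where $\Gam_{\rm sh}$ meets $\Gam_{\rm cutoff}$, since standard Hopf fails there. I would use a supporting-line argument in the spirit of \cite[Lemma 4.4.2]{CF3} and \cite[Lemma 2.2]{Lieberman}: exhibit a direction ${\bf n}_\sigma$, obtained by slightly rotating the bisector of the corner, such that ${\bm\beta}_{\rm sh}$, ${\bm\nu}_{\rm sh}$ and $-{\bf n}_c$ all have strictly positive inner product with ${\bf n}_\sigma$ at $P_{2,L}^f$. The compatibility reduces to verifying ${\bm\beta}_{\rm sh}(P_{2,L}^f)\cdot{\bm\tau}_c > 0$, where ${\bm\tau}_c$ is tangent to $\Gam_{\rm cutoff}$; using $\Theta = \tw$ at the corner, this inner product splits into two pieces, and a direct computation shows that the second piece vanishes identically while the first is strictly positive provided $\iu\cos\tw - q > 0$ at $P_{2,L}^f$, an inequality which holds by Proposition \ref{proposition-wp-fbpbd}(ii) once $\eps$ is further reduced. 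The remaining corners $\lb$ and $P_{3,L}^f$ are handled analogously and more easily: at $\lb$ the condition $\fshock'(0) = 0$ makes ${\bm\beta}_{\rm sh}$ horizontal, tangential to $\Gam_{\rm sym}$, and on $\Gam_{\rm sym}\cup(\Gam_b\cap\{x_2\ge h_0\})$ the slip condition forces $\Theta$ to be constant, which via the $\nabla\Theta$-$\nabla Q$ system converts into a homogeneous Neumann condition $\nabla Q\cdot{\bf n}_s = 0$ that meshes with the oblique conditions on the adjacent arcs. Finally, local maxima are excluded by applying the same arguments to $-Q$.
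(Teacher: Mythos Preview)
Your proposal is correct and follows essentially the same route as the paper's proof: deriving the divergence-form elliptic equation for $Q=\ln q$ from the $(Q,\Theta)$ hodograph system, the homogeneous Neumann condition on $\Gam_{\rm cutoff}$ and the oblique condition ${\bm\beta}_{\rm sh}\cdot\nabla Q=0$ on $\Gam_{\rm sh}$ with normal component controlled by $c^2-({\bf u}\cdot{\bm\nu}_{\rm sh})^2$, and then the Lieberman--type supporting-line argument at the corners, with the decisive sign check ${\bm\beta}_{\rm sh}(P_{2,L}^f)\cdot{\bm\tau}_c>0$ reducing to $\iu\cos\tw-q>0$ after one computed piece vanishes. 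One small remark: the coefficient matrix actually obtained in the paper is not the symmetric matrix $I-M^2\hat{\bf u}\otimes\hat{\bf u}$ but has an antisymmetric off-diagonal part $a_{12}=-a_{21}=M^2\sin\Theta\cos\Theta$; this does not affect ellipticity of the principal part, so your conclusion stands.
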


\begin{lemma}
\label{lemma-q-monotonicity}
For $\eps_4$ from Lemma \ref{lemma-critical-val-speed}, if $M_{\infty}(=\frac{1}{\eps})\ge \frac{1}{\eps_4}$, then $q$ monotonically increases along $\shock$ from $P_1$ to $P_{2,L}^f$.

\begin{proof}
By Lemmas \ref{lemma-velocity1-positivity} and \ref{lemma-velocity2-positivity}, we have $\Theta(P_{2,L}^f)>0$ and $\Theta(P_1)=0$. Then it follows from Lemma \ref{lemma-shock-polar-general} that
\begin{equation}
q(P_1)<q(P_{2,L}^f).
\end{equation}
Suppose that $q$ does not monotonically increase along $\shock$ from $P_1$ to $P_{2, L}^f$. Then, there exist two distinct points $P^*=(f(x_2^*), x_2^*)$ and $P^{\sharp}=(f(x_2^{\sharp}), x_2^{\sharp})$ satisfying the following two properties:
\begin{itemize}
\item[(i)] $0\le x_2^*<x_2^{\sharp}\le L$;
\item[(ii)] $q(P^*)>q(P^{\sharp}).$
\end{itemize}
Then, one can find at least one $x_2^A\in [0, x_2^{\sharp}]$ so that a point $A:=(f(x_2^{A}), x_2^A)$ satisfies that
\begin{equation*}
  q(A)=\max_{x_2\in [0, x_2^{\sharp}]} q(f(x_2), x_2).
\end{equation*}
In other words, $q(A)$ is the maximal local speed along $\shock$ between $P_1$ and $P^{\sharp}$. Next, we fix $x_2^B\in [x_2^A, L]$ so that a point $B:=(f(x_2^B), x_2^B)$ satisfies that
\begin{equation*}
  q(B)=\min_{x_2\in [x_2^A, L]} q(f(x_2), x_2),
\end{equation*}
that is, $q(B)$ is the minimal local speed along $\shock$ between $P_A$ and $P^{f}_{2,L}$. Finally, let us fix $x_2^{\hat{A}}\in[0, x_2^B]$ so that a point $\hat A:=(f(x_2^{\hat A}), x_2^{\hat A})$ satisfies that
\begin{equation*}
  q(\hat A)=\max_{x_2\in [0, x_2^{B}]} q(f(x_2), x_2).
\end{equation*}
Since $q(B)\le q(P^{\sharp})<q(P^*)\le q(\hat A)$, we have $q(\hat A)-q(B)>0$. Let us set
\begin{equation*}
  \mu:=\frac{q(\hat A)-q(B)}{4}.
\end{equation*}
By Proposition \ref{proposition-wp-fbpbd}(ii) and \eqref{definition-q}, there exists a constant $C_*>0$ satisfying that, for any $r>0$,
\begin{equation*}
\underset{B_{r}(\rx)\cap \Om}{\rm Osc}  q \le C_*\mathfrak{q}_{\gam}(\eps) r^{\alp}\quad\tx{for all $\rx\in \ol{\Om}$.}
\end{equation*}
So we have
\begin{equation}\label{q-osc-R}
  \underset{B_{R_*}(\rx)\cap \Om}{\rm Osc}  q \le \mu\quad\tx{for $R_*:=
  \left(\frac{\mu}{C_*\mathfrak{q}_{\gam}(\eps)}\right)
  ^{\frac{1}{\alp}}$}.
\end{equation}

Since $q\in C^{\alp}(\ol{\Om})$, one can fix a point $A_1\in \der (B_{R_*}(\hat A)\cap \Om)$ so that
\begin{equation*}
  q(A_1):=\max_{\ol{B_{R_*}(\hat A)\cap \Om}} q.
\end{equation*}
Note that $A_1\neq \hat{A}$ because a local extremum of $q$ cannot be attained on $\shock$ due to Lemma \ref{lemma-critical-val-speed}. Inductively, for each $n\ge 2$, one can fix a point $A_n\in \der (B_{R_*}(A_{n-1})\cap \Om)$ so that
\begin{equation*}
  q(A_n)=\max_{\ol{B_{R_*}(A_{n-1})\cap\Om}} q.
\end{equation*}
Define
\begin{equation*}
  N:=\min\{n: A_n\in \der \Om,\,\,n\ge 1\}.
\end{equation*}
Since $\ol{\Om}$ is compact, $N$ is finite. Furthermore, by \eqref{4.19} and Hopf's lemma, $A_n$ lies on $\Gam_{b}\cap \{0<x_2< h_0\}$, the curved part of $\Gam_{b}$. Note that $q(A_N)>0$ because $q(A_N)\ge q(P_1)$.

Similarly to the sequence $\{A_1, \cdots, A_N\}$, we construct a sequence of points by using the point $B$. We fix a point $B_1\in \der (B_{R_*}(B)\cap \Om)$ so that $\displaystyle{q(B_1)=\min_{\ol{B_{R_*}(B)\cap \Om}}} q$. Note that $B_1\neq B$ due to Lemma \ref{lemma-critical-val-speed}. Inductively, for each $m\ge 2$, fix $B_m\in \der (B_{R_*}(B_{m-1})\cap \Om)$ so that $\displaystyle{q(B_m)=\min_{\ol{B_{R_*}(B_{m-1})\cap \Om}}} q$. And, we define
\begin{equation*}
  M:=\min\{m: B_m\in \der \Om,\,\,m\ge 1\}.
\end{equation*}
Then, $M$ is finite, and we have $B_M\in \Gam_{b}\cap \{0\le x_2\le h_0\}$.

By the definition of $R_*$ given in \eqref{q-osc-R}, we have
\begin{equation*}
 q(A_n)-q(B_m)\ge q(\hat{A})-q(B)=4\mu \quad\tx{for $1\le n\le N$, $1\le m\le M$,}
\end{equation*}
from which it follows that
\begin{equation}
\label{location-of-B-on-W}
  B_{R_*}(A_n)\cap B_{R_*}(B_m)\cap \Om =\emptyset
  \quad\tx{for $1\le n\le N$, $1\le m\le M$.}
\end{equation}
Since $x_2^{\hat{A}}<x_2^B$, \eqref{location-of-B-on-W} implies that $B_M$ must lie on $\Gam_{b}\cap \{0<x_2<h_0\}$, that is, $B_M\neq P_0$.

Similarly to Step 6 in the proof of Lemma \ref{lemma-critical-val-speed}, along the boundary $\Gam_{b}$, the slip boundary condition ${\bf u}\cdot {\bf n}_s=0$ for the inward unit normal ${\bf n}_s$ implies that the vector field $(\cos \Theta, \sin \Theta)$ yields a unit tangential  on $\Gam_{b}$. Since $(\cos \Theta, \sin \Theta)$ points away from $P_0$ along $\Gam_{b}$, by the convexity of the blunt body $W_b$, stated in Definition \ref{definition-bluntbody-ftn} ($b_4$), we have
\begin{equation}
\label{convexity-along-w-Theta}
  (\cos \Theta, \sin \Theta)\cdot \nabla\Theta<0
  \quad\tx{on $\Gam_{b}\cap\{0<x_2< h_0\}$.}
\end{equation}
Note that we have $\nabla Q\cdot {\bf n}_s=(\cos \Theta, \sin \Theta)\cdot \nabla\Theta$ on $\Gam_{b}$ by a direct computation with using \eqref{4.3}, for the inward unit normal ${\bf n}_s$ on $\Gam_{b}$. Here, $Q$ is defined by $Q=\ln q$ as in the previous lemma. Therefore, \eqref{convexity-along-w-Theta} implies that
\begin{equation*}
\nabla Q\cdot {\bf n}_s<0\quad\tx{on $\Gam_{{\rm sym}}\cup (\Gam_{b}\cap \{0<x_2< h_0\})$,}
\end{equation*}
from which $(\nabla Q\cdot {\bf n}_s)(B_M)<0$ is given. But this contradicts to Hopf's lemma because $B_M$ is a local minimum point of $Q(=\ln q)$ in $\ol{B_{R_*}(B_{M-1})\cap \Om}$. Therefore, we conclude that $q$ monotonically increases along $\shock$ from $P_1$ to $P_{2,L}^f$.
\end{proof}
\end{lemma}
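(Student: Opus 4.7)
The plan is to argue by contradiction and propagate an extremum along a chain of balls from the shock to the curved portion of $\Gam_b$, where the convexity condition $(b_4)$ from Definition \ref{definition-bluntbody-ftn} contradicts Hopf's lemma. First I would establish the endpoint inequality $q(P_1)<q(P_{2,L}^f)$: the symmetry and cutoff slip conditions force $\Theta(P_1)=0$ and $\Theta(P_{2,L}^f)=\tw>0$, and since the shock belongs to the strong branch of the shock polar (cf.\ Definition \ref{notation-strongshock} and Proposition \ref{proposition-wp-fbpbd}(ii)), monotonicity of $q$ with respect to $\Theta$ along the strong branch of Lemma \ref{lemma-shock-polar-general} yields the desired inequality. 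Assuming $q$ is not monotone along $\shock$, I would then extract two shock points $\hat A=(f(x_2^{\hat A}),x_2^{\hat A})$ and $B=(f(x_2^B),x_2^B)$ with $x_2^{\hat A}<x_2^B$, where $\hat A$ realizes the maximum of $q$ on the shock segment below $B$ and $B$ realizes the minimum of $q$ on the shock segment above $\hat A$, giving $q(\hat A)-q(B)>0$; set $\mu:=(q(\hat A)-q(B))/4$.

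Next, I would exploit the uniform $C^{\alp}$ estimate of $q$ coming from Proposition \ref{proposition-wp-fbpbd}(ii) and \eqref{definition-phisical-var-bdd} to fix $R_*>0$ so that $\underset{B_{R_*}(\rx)\cap \Om}{{\rm Osc}}\, q\le \mu$ for every $\rx\in\ol{\Om}$. I would then inductively build two sequences $\{A_n\}_{n=0}^{N}$ and $\{B_m\}_{m=0}^{M}$ with $A_0=\hat A$, $B_0=B$, where $A_n\in\partial(B_{R_*}(A_{n-1})\cap\Om)$ attains $\max_{\ol{B_{R_*}(A_{n-1})\cap\Om}}q$ and $B_m\in\partial(B_{R_*}(B_{m-1})\cap\Om)$ attains $\min_{\ol{B_{R_*}(B_{m-1})\cap\Om}}q$. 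Lemma \ref{lemma-critical-val-speed} rules out local $q$-extrema in $\Om\cup{\rm int}\,\Gam_{\rm sh}\cup{\rm int}\,\Gam_{\rm cutoff}$ and at $P_{2,L}^f$, $P_1$, $P_{3,L}^f$; the boundary condition \eqref{4.19} combined with Hopf's lemma excludes them on $\Gam_{\rm sym}$ and on the straight part $\Gam_b\cap\{x_2\ge h_0\}$. Hence each step of the two chains genuinely produces a strictly larger (resp.\ strictly smaller) value of $q$, and by compactness of $\ol{\Om}$ both chains terminate at points $A_N$ and $B_M$ lying on the curved arc $\Gam_b\cap\{0<x_2<h_0\}$.

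The decisive step is to exclude $B_M=P_0$ and then invoke the convexity of $W_b$. The oscillation bound yields $q(A_n)-q(B_m)\ge 4\mu$ for all admissible $n,m$, which forces $B_{R_*}(A_n)\cap B_{R_*}(B_m)\cap\Om=\emptyset$; combined with the initial ordering $x_2^{\hat A}<x_2^B$, the ascending-maximum chain occupies a neighborhood of the short curved arc of $\Gam_b$ that separates $B$ from the stagnation corner $P_0=(b_0,0)$, so the descending-minimum chain $\{B_m\}$ cannot reach $P_0$, giving $B_M\ne P_0$. Then $B_M$ is a smooth boundary point at which $Q=\ln q$ attains a local minimum over $\ol{B_{R_*}(B_{M-1})\cap\Om}$. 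On one hand, Hopf's lemma applied to the uniformly elliptic equation \eqref{equation-for-Q} forces $\nabla Q\cdot {\bf n}_s>0$ at $B_M$ for the inward unit normal ${\bf n}_s$. On the other hand, the slip condition ${\bf u}\cdot{\bf n}_s=0$ makes $(\cos\Theta,\sin\Theta)$ a unit tangential on $\Gam_b$ pointing away from $P_0$, and the convexity $b''\ge 0$ of $(b_4)$ forces the inclination $\Theta$ to decrease along $\Gam_b$ in that direction, so $(\cos\Theta,\sin\Theta)\cdot\nabla\Theta<0$ on $\Gam_b\cap\{0<x_2<h_0\}$; translating this via \eqref{4.3} gives $\nabla Q\cdot{\bf n}_s<0$ at $B_M$, which contradicts Hopf.

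The main obstacle I anticipate is precisely the geometric ``blocking'' argument that excludes $B_M=P_0$. Both chains terminate on the same short curved arc $\Gam_b\cap\{0<x_2<h_0\}$, and one must verify that the descending minimum chain cannot squeeze past the ascending maximum chain to reach the stagnation corner, despite the chains' freedom to zigzag. I expect this to require a careful bookkeeping of how the $x_2$-coordinates of the chain points evolve, using the graph structure of $\shock$, the non-intersection of the $R_*$-balls forced by the $4\mu$-gap in $q$-values, and a quantitative version of the ordering $x_2^{\hat A}<x_2^B$ to show that any path from $B$ to a neighborhood of $P_0$ in $\ol{\Om}$ must cross the region occupied by the $\{A_n\}$-chain.
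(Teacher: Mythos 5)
Your proposal is correct and follows essentially the same route as the paper: the endpoint comparison via the shock polar, the $C^{\alp}$ oscillation radius $R_*$, the two extremum-propagating ball chains terminated on the curved arc $\Gam_b\cap\{0<x_2<h_0\}$ by Lemma \ref{lemma-critical-val-speed} and \eqref{4.19}, the disjointness of the chains forced by the $4\mu$ gap together with $x_2^{\hat A}<x_2^B$ to exclude $B_M=P_0$, and the final contradiction between Hopf's lemma and the convexity condition $(b_4)$ via \eqref{4.3}. The blocking step you flag as needing careful bookkeeping is treated just as tersely in the paper itself, so no additional machinery is required beyond what you describe.
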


Finally, we are ready to prove Proposition \ref{prop-convex} by applying previous lemmas.

\begin{proof}[Proof of Proposition \ref{prop-convex}] The proof is divided into two steps.

{\textbf{1.}}
By \eqref{definition-nu-shock}, a unit tangential ${\bm \tau}_{\rm sh}$ along $\shock$ is given by
$
  {\bm\tau}_{\rm sh}=\frac{(u_2, \iu-u_1)}{|{\bf u}_{\infty}-{\bf u}|}.
$
Then the angle $\beta$ between ${\bm \tau}_{\rm sh}$ and the positive $x_1$-axis is given by
\begin{equation*}
  \beta=\arctan \left(\frac{\iu-u_1}{u_2}\right).
\end{equation*}
By Lemma \ref{lemma-velocity2-positivity} and \eqref{u-ui-diff}, we have $\beta\in(0, \frac{\pi}{2})$ on ${\rm int} \shock$.

{\emph{Claim: For the local speed $q=|{\bf u}|$, we have
\begin{equation*}
  \frac{d }{dq}\sin^2\beta <0\quad\tx{on ${\rm int}\,\shock$.}
\end{equation*}}}

We first assume that the claim holds true, then prove that $\fshock''(x_2)\ge 0$ for $0<x_2<L$. Since $\shock$ is the graph of $x_1=\fshock(x_2)$, the definition of $\beta$ implies that $\fshock'(x_2)=\cot \beta$ for $x_2\in[0, L]$. So the chain rule yields that
\begin{equation*}
  \fshock''(x_2)=\frac{d\cot\beta}{dx_2}=
  -\csc^2\beta\frac{d\beta}{dq}\frac{dq}{dx_2}\quad\tx{on ${\rm int}\,\shock$.}
\end{equation*}
By Lemma \ref{lemma-q-monotonicity} and the fact of $\beta\in(0, \frac{\pi}{2})$, if the claim stated above holds true, then we obtain that $\fshock''(x_2)\ge 0$ because of $\frac{d\beta}{dq}=\frac{1}{2\sin\beta\cos\beta}\frac{d}{dq}\sin^2\beta<0$ on ${\rm int} \shock$.

\smallskip

{\textbf{2.}} To complete the proof, it remains to verify the claim stated in Step 1. Direct computations with using \eqref{definition-q}, \eqref{u-into-q-Theta} and the definition of $\beta$ show that the Rankine-Hugoniot conditions $\rho{\bf u}\cdot {\bm \nu}_{\rm sh}=\irho {\bf u}_{\infty}\cdot {\bm\nu}_{\rm sh}$ and ${\bf u}\cdot {\bm \tau}_{\rm sh}={\bf u}_{\infty}\cdot {\bm \tau}_{\rm sh}$ on $\shock$ can be written as
\begin{equation*} q\sin(\Theta-\beta)=
-\frac{\rho_{\infty}}{\rho}u_{\infty}\sin\beta,\quad
q\cos(\Theta-\beta)=u_{\infty}\cos\beta.
\end{equation*}
Here, ${\bm \nu}_{\rm sh}$ represents the inward unit normal on $\shock$. From this expression, we obtain that
\begin{equation*}
\frac{\iu^2}{q^2}(1-\sin^2\beta)
+\frac{\irho^2\iu^2}{\rho^2q^2}\sin^2\beta=1\quad
\tx{on ${\rm int}\shock$}.
\end{equation*}
We solve this for $\sin^2\beta$ to get
\begin{equation}
\label{sin-beta-q-function}
  \sin^2\beta=\frac{q^2-\iu^2}{\iu^2(\frac{\irho^2}{\rho^2}-1)}
  \quad\tx{on ${\rm int} \,\shock$.}
\end{equation}
We differentiate the Bernoulli's law $\frac 12 q^2+\frac{\gam \rho^{\gam-1}}{\gam-1}=\frac 12 \iu^2+\frac{\gam \irho^{\gam-1}}{\gam-1}$, and use \eqref{definition-phisical-var-bdd} to get $\frac{d\rho}{dq}=-\frac{\rho q}{c^2}$. Then, we differentiate \eqref{sin-beta-q-function} with respect to $q$ to get
\begin{equation}
\label{derivative-sin-beta-q}
  \frac{d}{dq}\sin^2\beta=
  \frac{2q}{\iu^2(\frac{\irho^2}{\rho^2}-1)^2}
  \left(\frac{\irho^2}{\rho^2}-1
  -\frac{q^2-\iu^2}{c^2}\frac{\irho^2}{\rho^2}\right)
  \quad\tx{on ${\rm int}\, \shock$}.
\end{equation}
By rearranging the Bernoulli's law, we have $\frac{q^2-\iu^2}{c^2}=
\frac{2}{\gam-1}(\frac{\irho^{\gam-1}}{\rho^{\gam-1}}-1)$. We substitute this expression into \eqref{derivative-sin-beta-q} to finally obtain that
\begin{equation*}
  \frac{d}{dq}\sin^2\beta=
  \frac{2q}{\iu^2(\frac{\irho^2}{\rho^2}-1)^2}\mcl{F}(\frac{\irho}{\rho})
\end{equation*}
for $\mcl{F}(\eta)$ given by
\begin{equation*}
  \mcl{F}(\eta)=\frac{\gam+1}{\gam-1}\eta^2
  -\frac{2}{\gam-1}\eta^{\gam+1}-1.
\end{equation*}
By \eqref{rho-relation}, we have $0<\frac{\irho}{\rho}<1$. Since $\mcl{F}(1)=0$ and $\mcl{F}'(\eta)=\frac{2(\gam+1)}{\gam-1}\eta(1-\eta^{\gam-1})>0$ for $\gam>1$ and $0<\eta<1$, we have $\mcl{F}(\frac{\irho}{\rho})<0$ on $\shock$. And, this finally yields that $\frac{d}{dq}\sin^2\beta<0$ on ${\rm int}\,\shock$. This verifies the claim stated in Step 1. Therefore, the proof of Proposition \ref{prop-convex} is completed.

\end{proof}


\vspace{.25in}
\noindent
{\bf Acknowledgements:}
The research of Myoungjean Bae was supported in part by  Samsung Science and Technology Foundation
under Project Number SSTF-BA1502-02.
The research of Wei Xiang was supported in part by the Research Grants Council of the HKSAR, China (Project CityU 11303518, Project CityU 11332916, and Project CityU 11304817).


\end{document}